\documentclass[12pt,draft]{amsart}
\usepackage{amsmath,amssymb,amsthm,bm}
\usepackage{graphicx,enumerate}
\usepackage{layout}

\setlength{\topmargin}{-50pt}
\setlength{\oddsidemargin}{-20pt}
\setlength{\evensidemargin}{-15pt}
\setlength{\textwidth}{490pt}
\setlength{\textheight}{730pt}

\theoremstyle{plain}
\newtheorem{theorem}{Theorem}[section]
\newtheorem{lemma}[theorem]{Lemma}

\newtheorem{corollary}[theorem]{Corollary}

\theoremstyle{definition}

\newtheorem{example}[theorem]{Example}
\newtheorem{remark}[theorem]{Remark}
\newtheorem*{acknowledgments}{Acknowledgments}

\theoremstyle{remark}

\newcommand{\si}{\sigma}\newcommand{\ta}{\tau}\newcommand{\la}{\lambda}
\newcommand{\al}{\alpha}\newcommand{\be}{\beta}
\newcommand{\om}{\omega}
\newcommand{\tu}{\widetilde{u}}
\newcommand{\tv}{\widetilde{v}}
\newcommand{\ts}{\widetilde{s_1}}
\newcommand{\bZ}{\mathbb{Z}}\newcommand{\bQ}{\mathbb{Q}}
\newcommand{\bC}{\mathbb{C}}
\newcommand{\ep}{\varepsilon} 
\newcommand{\ca}{\sigma_{3{\rm A}}}
\newcommand{\cb}{\sigma_{3{\rm B}}}
\newcommand{\caa}{\sigma_{4{\rm A}}}
\newcommand{\cbb}{\sigma_{4{\rm B}}}


\title[three-dimensional monomial group actions]
{Rationality problem of three-dimensional monomial group actions}

\author[A. Hoshi]{Akinari Hoshi}
\address{Department of Mathematics, Rikkyo University, 
3--34--1 Nishi-Ikebukuro Toshima-ku, Tokyo, 171--8501, Japan}

\author[H. Kitayama]{Hidetaka Kitayama}
\address{Department of Mathematics, Graduate school of Science, Osaka University, 
Machikaneyama 1-1, Toyonaka, Osaka, 560--0043, Japan}

\author[A. Yamasaki]{Aiichi Yamasaki}
\address{Department of Mathematics, Graduate school of Science, Kyoto University, 
Kyoto, 606--8502, Japan}


\subjclass[2000]{Primary 12F20, 13A50, 14E08.}


\begin{document}
\begin{abstract}
Let $K$ be a field of characteristic not two 
and $K(x,y,z)$ the rational function field over $K$ with three variables $x,y,z$. 
Let $G$ be a finite group acting on $K(x,y,z)$ by monomial $K$-automorphisms. 
We consider the rationality problem of the fixed field $K(x,y,z)^G$ 
under the action of $G$, namely whether $K(x,y,z)^G$ is rational 
(that is, purely transcendental) over $K$ or not. 
We may assume that $G$ is a subgroup of $\mathrm{GL}(3,\bZ)$ and 
the problem is determined up to conjugacy in $\mathrm{GL}(3,\bZ)$. 
There are $73$ conjugacy classes of $G$ in $\mathrm{GL}(3,\bZ)$. 
By results of Endo-Miyata, Voskresenski\u\i, Lenstra, Saltman, Hajja, Kang and Yamasaki, 
$8$ conjugacy classes of $2$-groups in $\mathrm{GL}(3,\bZ)$ have negative answers to 
the problem under certain monomial actions over some base field $K$, and the necessary 
and sufficient condition for the rationality of $K(x,y,z)^G$ over $K$ is given. 
In this paper, we show that the fixed field $K(x,y,z)^G$ under monomial action of $G$ 
is rational over $K$ except for possibly negative $8$ cases of $2$-groups and 
unknown one case of the alternating group of degree four. 
Moreover we give explicit transcendental bases of the fixed fields over $K$. 
For the unknown case, we obtain an affirmative solution to the problem under some 
conditions. 
In particular, we show that if $K$ is quadratically closed field then $K(x,y,z)^G$ 
is rational over $K$. 
We also give an application of the result to $4$-dimensional linear Noether's problem. 
\end{abstract}

\maketitle

\section{Introduction}\label{seintro}

Let $K$ be a field of  char $K\neq 2$ and $K(x_1,\ldots,x_n)$ the rational function field 
over $K$ with $n$ variables $x_1,\ldots,x_n$. 
A $K$-automorphism $\sigma$ of $K(x_1,\ldots,x_n)$ is called {\it monomial} if 
\begin{align}
\si(x_j)=c_j(\si)\prod_{i=1}^n x_i^{a_{i,j}},\quad 1\leq j\leq n\label{mact}
\end{align}
where $[a_{i,j}]_{1\leq i,j\leq n}\in {\rm GL}(n,\bZ)$ and 
$c_j(\si)\in K^\times:=K\setminus\{0\}$. 
If $c_j(\si)=1$ for any $1\leq j\leq n$ then $\sigma$ is called {\it purely monomial}. 
A group action on $K(x_1,\ldots,x_n)$ by monomial $K$-automorphisms is called monomial. 

Let $G$ be a finite group acting on $K(x_1,\ldots,x_n)$ by monomial $K$-automorphisms. 
The aim of this paper is to investigate the rationality problem of $K(x_1,x_2,x_3)$ 
under $3$-dimensional monomial actions. 
Namely we consider whether the fixed field $K(x_1,x_2,x_3)^G$ under monomial action 
of $G$ is rational (that is, purely transcendental) over $K$ or not. 
We write $K(x,y,z):=K(x_1,x_2,x_3)$ for simplicity. 
For $n=2,3$, the following theorems are known: 
\begin{theorem}[Hajja \cite{Haj83}, \cite{Haj87}]\label{thHaj}
Let $K$ be any field and $G$ a finite group acting on $K(x,y)$ by monomial $K$-automorphisms. 
Then the fixed field $K(x,y)^G$ under monomial action of $G$ is rational over $K$. 
\end{theorem}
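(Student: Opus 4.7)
The strategy is a case analysis based on the conjugacy classes of finite subgroups of $\mathrm{GL}(2,\bZ)$. Up to conjugacy there are exactly $13$ such classes, and every one of them is contained in one of the two maximal finite subgroups, the dihedral groups $D_4$ of order $8$ and $D_6$ of order $12$ (corresponding to the point groups of the square and hexagonal lattices). This reduces the problem to a short finite list of cases.

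For each class I would fix a representative generating set of matrices in $\mathrm{GL}(2,\bZ)$ together with the cocycle data $c_j(\si)\in K^\times$, and then construct an explicit transcendence basis of $K(x,y)^G$ over $K$. In the cyclic cases $G=\langle\si\rangle$ (orders $n\in\{1,2,3,4,6\}$), natural invariants are obtained by symmetrizing over $\si$-orbits of monomials, e.g.\ $\sum_{i=0}^{n-1}\si^i(m)$ or $\prod_{i=0}^{n-1}\si^i(m)$ for a judicious choice of monomial $m$; algebraic independence and generation over $K$ are verified by comparing $[K(x,y):K(\text{invariants})]$ with $|G|$.

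For the dihedral cases $G=\langle\si,\ta\rangle$, I would proceed in two layers: first compute $L:=K(x,y)^{\langle\si\rangle}=K(u,v)$ from the cyclic step, then analyse the induced $\ta$-action on $L$. The subtle point is that in the new coordinates $u,v$ the action of $\ta$ need not remain monomial, so the invariants in the first layer must be chosen carefully to make the second-layer action tractable (typically an order-$2$ $K$-automorphism of $K(u,v)$ whose fixed field can be written down by elementary manipulation together with a Castelnuovo/L\"uroth type argument).

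The main obstacle will be the cocycle contribution $c_j(\si)$. In the purely monomial case ($c_j(\si)=1$) the symmetrizations above yield $K$-rational invariants directly, but in general the naive sums or products may fail to descend to $K$ without a twist. One rescales $x,y$ by suitable constants --- essentially an application of Hilbert~$90$ --- to absorb the cocycle wherever possible, and treats the residual classes case by case. The bookkeeping across the $13$ conjugacy classes, combined with the second-layer analysis in the dihedral cases, is what makes the argument technical, even though no single step is deep.
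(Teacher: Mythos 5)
Your overall architecture is the right one, and it is essentially Hajja's: reduce to a faithful action of a finite subgroup of $\mathrm{GL}(2,\bZ)$, run through the $13$ conjugacy classes inside the two maximal groups $\mathcal{D}_4$ and $\mathcal{D}_6$, and in the dihedral cases work in two layers. (Note the present paper does not prove Theorem \ref{thHaj}; it only cites \cite{Haj83}, \cite{Haj87}, so the comparison is with Hajja's argument and with the $2$-dimensional lemmas the paper does prove and reuse.) However, there is a genuine gap at the single step that carries all the content of the theorem. The statement is for \emph{any} field $K$, so neither Castelnuovo's criterion nor a two-dimensional L\"uroth principle is available: over a non-closed field a unirational surface need not be rational, and "a Castelnuovo/L\"uroth type argument" cannot close the second layer. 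Concretely, already for the order-$2$ element $-I_2 : x\mapsto a/x,\ y\mapsto b/y$ your symmetrized monomials give only $s=x+a/x$, $t=y+b/y$, which generate a subfield of index $4$, not $2$; the fixed field is $K(s,t,w)$ with $w^{2}=(s^{2}-4a)(t^{2}-4b)$, a conic bundle whose rationality over $K$ is exactly the issue. The proof must exhibit an explicit $K$-point/parametrization, e.g.\ the generators $u_1=y(x^2-a)/(x^2y^2-ab)$, $u_2=x(y^2-b)/(x^2y^2-ab)$ of Theorem \ref{thab}, or the normal forms $X^2-ZY^2=aZ+b$ of Theorem \ref{thInv}\,(I), which are rational because one can solve linearly for $Z$. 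The reason dimension $2$ is uniformly affirmative while dimension $3$ has the eight negative classes in $\mathcal{N}$ is precisely that in dimension $2$ these quadric equations always admit such a parametrization; a proof that does not confront this point has not proved the theorem.

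Two smaller issues. First, you should justify the reduction to $G\subset\mathrm{GL}(2,\bZ)$: the kernel of $G\to\mathrm{GL}(2,\bZ)$ acts by $x\mapsto\zeta x$, $y\mapsto\zeta' y$ with roots of unity in $K$, and one must check its fixed field is again rational with an induced monomial action of the quotient (cf.\ the footnote in Section \ref{seintro} and \cite[Lemma 2.8]{KP10}). Second, rescaling $x\mapsto\lambda x$ only changes the coefficient of an involution $x\mapsto a/x$ by a square, so "absorbing the cocycle by Hilbert 90" leaves residual classes in $K^{\times}/K^{\times2}$ in every reflection-type generator; your plan acknowledges this but the explicit parametrizations above are what make those residual classes harmless in dimension $2$.
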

\begin{theorem}[Hajja-Kang \cite{HK92}, \cite{HK94}, Hoshi-Rikuna \cite{HR08}]\label{thHKHR}
Let $K$ be any field and $G$ a finite group acting on $K(x,y,z)$ by purely 
monomial $K$-automorphisms. 
Then the fixed field $K(x,y,z)^G$ under the purely monomial action of $G$ is rational over $K$. 
\end{theorem}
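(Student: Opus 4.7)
The plan is to exploit the equivalence between monomial actions and $G$-lattice structures. A purely monomial action of $G$ on $K(x,y,z)$ is determined by the $G$-lattice $M = \bZ x \oplus \bZ y \oplus \bZ z$, i.e.\ by a finite subgroup of $\mathrm{GL}(3,\bZ)$, and $K(x,y,z)^G$ depends only on the $\mathrm{GL}(3,\bZ)$-conjugacy class of the image. Since there are only $73$ such classes, the theorem reduces to a finite case analysis: go through each class and produce an explicit transcendence basis of the fixed field over $K$.

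The main reduction tool is the presence of a proper $G$-stable sublattice $L \subset M$. When $\mathrm{rank}\,L = 1$, the generator is sent to its $\pm 1$-power, so $K(L)^G$ is manifestly rational over $K$; the complementary variables then carry a (possibly twisted) monomial $G$-action over $K(L)^G$, and Theorem~\ref{thHaj} applied to this two-variable base gives rationality of $K(x,y,z)^G$ over $K(L)^G$, hence over $K$. When $\mathrm{rank}\,L = 2$, one first applies Theorem~\ref{thHaj} on $K(L)$ to produce a rational subfield of $K(x,y,z)^G$ of transcendence degree two, then descends the remaining variable via the no-name lemma. This handles every class for which $M$ admits a proper $G$-stable sublattice, which covers the overwhelming majority of the $73$ cases.

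The main obstacle is the handful of conjugacy classes for which $M$ is $G$-indecomposable and no sublattice leads to a clean reduction of the form above. For these I would hand-pick a specific $G$-invariant rational function $t \in K(x,y,z)^G$, often a monomial whose exponent vector lies in the $G$-fixed part of $M^\vee$, chosen so that $K(x,y,z)$ is rational over $K(t)$ with a residual $2$-dimensional monomial $G$-action; a final appeal to Theorem~\ref{thHaj} then finishes the argument. The hardest instances are those requiring the Hoshi-Rikuna contribution, which pick up classes missed or incompletely treated by Hajja-Kang; in these the invariant is not a monomial but a polynomial constructed by averaging along a $G$-orbit, and verifying that $K(x,y,z)$ remains rational over $K(t)$ of the expected transcendence degree takes explicit calculation. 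Crucially, no cocycle obstruction arises because the action is \emph{purely} monomial; this is precisely what distinguishes the present theorem from the general monomial case, where twisting produces the $8$ negative $2$-group answers highlighted in the abstract.
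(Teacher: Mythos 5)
Your overall architecture --- a finite case analysis over the $73$ conjugacy classes in $\mathrm{GL}(3,\bZ)$, reducing wherever possible to the two-dimensional result --- is the same as the paper's, but both of your reduction mechanisms break exactly where the hard cases sit. In the rank-one reduction, if $L=\bZ z$ is $G$-stable then the residual monomial action on the remaining variables has coefficients in $K(z)$, not in $K(z)^G$, and $G$ acts nontrivially on $K(z)$ whenever some element sends $z\mapsto z^{-1}$. Theorem \ref{thHaj} concerns monomial \emph{$K$-automorphisms}, i.e.\ automorphisms trivial on the coefficient field, so it cannot be invoked ``over $K(L)^G$'' in that situation. The tool the paper actually uses, Theorem \ref{thAHK}, requires the distinguished variable to occur with exponent $+1$ in every $\sigma(z)$ (an affine action over $L$); as soon as some element inverts $z$ the reduction fails, and the same failure occurs in your rank-two reduction when the quotient line is acted on by $-1$. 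Those are precisely the classes containing $-I_3$, $\la_1$, $-\caa$, $-\cbb$, etc., and the paper handles them with the explicit constructions of Theorem \ref{thInv}, Lemma \ref{lemaa}, Theorems \ref{thaaa} and \ref{thmex}, and Lemma \ref{lemV42}, not with a sublattice argument.

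Relatedly, the residual difficulty is not confined to $G$-indecomposable lattices. The lattice of $G_{1,2,1}=\langle -I_3\rangle$ splits into three rank-one summands, each acted on by $-1$, yet $K(x,y,z)^{\langle -I_3\rangle}$ strictly contains $K(x+1/x,\,y+1/y,\,z+1/z)$ and its rationality is proved by descent to $K(\sqrt{a})$ and an explicit basis (Theorem \ref{thaaa}); the diagonal group $G_{3,1,1}=\langle\ta_1,\la_1\rangle$ (Theorem \ref{thmex}) and the purely monomial specializations of all eight classes in $\mathcal{N}$ are of the same nature. Your closing assertion that ``no cocycle obstruction arises because the action is purely monomial'' is the conclusion to be proved for these classes, not an argument for it. Finally, the purely monomial case of $G_{7,1,1}$ is the $\ep_1=1$, $a=1$ instance of Theorem \ref{thc}\,(i), and the purely monomial case of $G_{7,3,2}$ is exactly the Hoshi--Rikuna contribution resolved via the normal subgroup $\langle\ta_2,\la_2\rangle$ and a twisted $\mathcal{S}_3$-action; neither is reachable by the two reductions you describe, so the ``hand-picked invariant'' step is carrying essentially all of the load for the nontrivial classes, and as written the proposal does not supply it.
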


However, for $3$-dimensional monomial $\bQ$-automorphism 
$\sigma : x\mapsto y\mapsto z\mapsto -1/(xyz)$, 
the fixed field $\bQ(x,y,z)^{\langle\sigma\rangle}$ is not rational over $\bQ$ 
(see \cite[page 249]{Haj83}). 
This phenomenon comes from the negative answer to Noether's problem 
for the cyclic group of order $8$ (cf. Endo-Miyata \cite{EM73}, 
Voskresenski\u\i\ \cite{Vos73}, Lenstra \cite{Len74}, Saltman \cite{Sal82}). 

Let $G$ be a finite group acting on $K(x,y,z)$ by monomial $K$-automorphisms. 
There is a group homomorphism 
$p : G\rightarrow {\rm GL}(3,\bZ)$, $\sigma\mapsto [a_{i,j}]_{1\leq i,j\leq 3}$ 
where $[a_{i,j}]_{1\leq i,j\leq 3}$ is given as in (\ref{mact}). 
Then there is a normal subgroup $H$ of $G$ such that (i) $K(x,y,z)^H=K(t_1,t_2,t_3)$, (ii) 
the group $G/H$ acts on $K(t_1,t_2,t_3)$ by monomial $K$-automorphisms 
and (iii) the associated group homomorphism $p : G/H\rightarrow \mathrm{GL}(3,\bZ)$ is injective. 
Hence we may assume that $p : G\rightarrow {\rm GL}(3,\bZ)$ is injective\footnote{The referee 
informed the authors of the importance of this assumption. 
More precisely, see \cite[page 46]{Haj87} and \cite[Lemma 2.8]{KP10}.}. 
Therefore we treat the case where $G$ is a subgroup of ${\rm GL}(3,\bZ)$. 

Let $G$ be a finite subgroup of ${\rm GL}(3,\bZ)$. 
The rationality problem is determined up to conjugacy in $\mathrm{GL}(3,\bZ)$ 
since a conjugate of $G$ 
corresponds to some base change of $K(x,y,z)$. 
There are 73 conjugacy classes $[G]$ of finite subgroups $G$ in $\mathrm{GL}(3,\bZ)$ 
which are classified into $7$ crystal systems as in Table $1$ of \cite{BBNWZ78}. 
We denote by $[G_{i,j,k}]$, $1\leq i\leq 7$, the finite group of the $k$-th 
$\bZ$-class of the $j$-th $\bQ$-class of the $i$-th crystal system of dimension $3$, 
and we take a representative $G_{i,j,k}\subset\mathrm{GL}(3,\bZ)$ of each $\bZ$-class 
as in Table $1$ of \cite{BBNWZ78} (cf. also Section \ref{senot} in the present paper). 

Let $I_n\in\mathrm{GL}(n,\bZ)$ be the $n\times n$ identity matrix. 
In the case of $G_{1,2,1}=\langle-I_3\rangle$, 
Saltman \cite{Sal00} showed that $K(x,y,z)^{G_{1,2,1}}$ is not rational over $K$ 
for some field $K$ and choice of $c_j(\sigma), (\sigma\in G_{1,2,1})$ where $c_j(\sigma)$ 
is given as in (\ref{mact}): 
\begin{theorem}[Saltman {\cite[Theorem 0.1]{Sal00}}]\label{thSalt}
Let $K$ be a field of {\rm char} $K\neq 2$ and $G_{1,2,1}=\langle-I_3\rangle$ act on 
$K(x,y,z)$ by 
\begin{align*}
-I_3 : x\ \mapsto\ \frac{a_1}{x},\ y\ \mapsto\ \frac{a_2}{y},\ z\ \mapsto\ \frac{a_3}{z}, 
\quad a_i\in K^\times,\ 1\leq i\leq 3. 
\end{align*}
If $[K(\sqrt{a_1},\sqrt{a_2},\sqrt{a_3}):K]=8$ then the fixed field 
$K(x,y,z)^{G_{1,2,1}}$ is not retract rational over $K$, 
and hence not rational over $K$. 
\end{theorem}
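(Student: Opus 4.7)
Following Saltman's standard obstruction technique, the plan is to produce a nontrivial class in the unramified Brauer group of $L := K(x,y,z)^{\langle\si\rangle}$, where $\si := -I_3$. I would begin by describing $L$ explicitly. The $\si$-invariants $u := x + a_1/x$, $v := y + a_2/y$, $w := z + a_3/z$ generate $F := K(u,v,w)$, while the $\si$-anti-invariants $\al := x - a_1/x$, $\be := y - a_2/y$, $\ga := z - a_3/z$ satisfy $\al^2 = u^2 - 4a_1$, $\be^2 = v^2 - 4a_2$, $\ga^2 = w^2 - 4a_3$ in $F$. Since $[K(x,y,z):F] = 8$ and $[K(x,y,z):L]=2$, one checks that
\begin{equation*}
L = F(\al\be,\, \al\ga) = F\bigl(\sqrt{(u^2-4a_1)(v^2-4a_2)},\ \sqrt{(u^2-4a_1)(w^2-4a_3)}\bigr),
\end{equation*}
a biquadratic Kummer extension of $F$; in particular each of the products $(u^2-4a_i)(v^2-4a_j)$, $i\neq j$, is a square in $L$.

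Next I would invoke Saltman's theorem that retract rationality of $L/K$ forces the unramified Brauer group $\operatorname{Br}_{\mathrm{nr}}(L/K) := \bigcap_v \ker\bigl(\partial_v \colon \operatorname{Br}(L) \to H^1(\kappa(v), \bQ/\bZ)\bigr)$, the intersection taken over all rank one discrete valuations $v$ of $L$ trivial on $K$, to equal the image of $\operatorname{Br}(K) \to \operatorname{Br}(L)$. It therefore suffices to exhibit an $\eta \in \operatorname{Br}(L)$ annihilated by every $\partial_v$ but not in the image of $\operatorname{Br}(K)$. The hypothesis $[K(\sqrt{a_1},\sqrt{a_2},\sqrt{a_3}):K] = 8$ says $a_1,a_2,a_3$ are independent in $K^\times/(K^\times)^2$, which by standard Galois cohomology keeps the quaternion classes $(a_i,a_j)_K$ pairwise inequivalent in $\operatorname{Br}(K)[2]$. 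I would search for $\eta$ among symmetric sums of the symbols $(a_i,a_j)_L$, possibly corrected by symbols built from the Kummer generators of $L/F$; a natural first guess is $\eta = (a_1,a_2)_L + (a_2,a_3)_L + (a_1,a_3)_L$ or a close variant.

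The principal obstacle is the verification that $\partial_v\eta = 0$ at every divisorial valuation of $L$. I would stratify such $v$ by their centers on a regular model of $F$: residues vanish immediately at centers disjoint from the branch loci $\{u^2-4a_i=0\}$, since the symbols are then constants. The delicate case is centers lying on one of the three branch divisors, where $L/F$ is ramified; here the tame-symbol residues of the individual summands of $\eta$ must be computed and shown to cancel pairwise across the three branches. This cancellation is what pins down the precise form of $\eta$ and constitutes the main technical difficulty; its resolution should ultimately exploit the fact that $(u^2-4a_i)(v^2-4a_j)$ is a square in $L$, which allows each symbol to be re-expressed in equivalent forms, one of which makes the local residue manifestly trivial. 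Once the unramifiedness of $\eta$ is established, its non-triviality modulo $\operatorname{Br}(K)$ follows by a specialization argument using the multiquadratic independence of $a_1,a_2,a_3$.
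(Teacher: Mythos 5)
The paper itself offers no proof of this theorem --- it is quoted verbatim from Saltman [Sal00] --- so your attempt can only be measured against Saltman's argument and against its own internal completeness. Your set-up is correct as far as it goes: $F=K(u,v,w)$ is rational, $L=F(\al\be,\al\ga)$ is the biquadratic extension you describe, and the reduction of ``not retract rational'' to ``exhibit a class of $\operatorname{Br}(L)$ unramified over $K$ but not in the image of $\operatorname{Br}(K)$'' is legitimate Saltman machinery.

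The gap is in the choice of the class, and it is not a repairable detail but the entire content of the theorem. Your ``natural first guess'' $\eta=(a_1,a_2)_L+(a_2,a_3)_L+(a_1,a_3)_L$ is a sum of constant symbols, hence lies in the image of $\operatorname{Br}(K)\to\operatorname{Br}(L)$ by construction; no such class can ever witness $\operatorname{Br}_{\mathrm{nr}}(L/K)\neq\operatorname{im}\operatorname{Br}(K)$, and the concluding ``specialization argument for non-triviality modulo $\operatorname{Br}(K)$'' is vacuous for it. The auxiliary claim that $[K(\sqrt{a_1},\sqrt{a_2},\sqrt{a_3}):K]=8$ keeps the $(a_i,a_j)_K$ pairwise inequivalent is also false: for $K=\bC(t)$ with $a_1=t$, $a_2=t-1$, $a_3=t-2$ the degree hypothesis holds while $\operatorname{Br}(K)=0$ by Tsen's theorem, so every $(a_i,a_j)$ vanishes, yet the theorem still asserts non-retract-rationality. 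Any working class must therefore involve non-constant symbols built from $u^2-4a_1$, $v^2-4a_2$, $w^2-4a_3$ and the $a_i$ (and even the obvious candidates $(u^2-4a_i,a_j)_L$ die, since $a_i$ is a norm from $L(\sqrt{u^2-4a_i})=K(x,y,z)$ and the three discriminants become equal modulo squares in $L$). Identifying a symbol that survives, and verifying simultaneously that it is unramified at every divisorial valuation and nontrivial modulo $\operatorname{Br}(K)$, is precisely the hard part of [Sal00] (equivalently, of the lattice-theoretic analysis of the associated torus split by $K(\sqrt{a_1},\sqrt{a_2},\sqrt{a_3})$ used by Lenstra, Saltman and Yamasaki). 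As written, the proposal defers exactly this step, so it does not constitute a proof.
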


The following generalization of Saltman's result was given by Kang \cite{Kan05}. 

\begin{theorem}[Kang {\cite[Theorem 4.4]{Kan05}}]\label{thKan}
Let $K$ be a field of {\rm char} $K\neq 2$ and $n\geq 3$. If $-I_n\in \mathrm{GL}(n,\bZ)$ 
acts on $K(x_1,\ldots,x_n)$ by \,$-I_n : x_i\mapsto a_i/x_i$, $a_i\in K^\times$, then 
the fixed field $K(x_1,\ldots,x_n)^{\langle -I_n\rangle}$ is not rational over $K$ 
if and only if  $[K(\sqrt{a_1},\ldots ,\sqrt{a_n}):K]\geq 8$. 
If $K(x_1,\ldots,x_n)^{\langle -I_n\rangle}$ is not rational over $K$ then 
it is not retract rational over $K$. 
\end{theorem}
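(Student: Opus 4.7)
The biconditional has two directions, which call for rather different techniques.

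For the direction that $[K(\sqrt{a_1},\ldots,\sqrt{a_n}):K]\leq 4$ implies rationality of $F_n:=K(x_1,\ldots,x_n)^{\langle-I_n\rangle}$, the plan is to normalize the action and then compute invariants explicitly. Let $r=\dim_{\mathbb{F}_2}\langle a_1,\ldots,a_n\rangle\subset K^\times/(K^\times)^2$, so $r\leq 2$. After renumbering, take $a_1,\ldots,a_r$ as a basis and write $a_i=c_i^2\prod_{j\leq r}a_j^{\varepsilon_{ij}}$ for $i>r$. Substituting $z_i=c_i^{-1}x_i\prod_{j\leq r}x_j^{-\varepsilon_{ij}}$ forces $-I_n(z_i)=1/z_i$, and the Cayley-type substitution $w_i=(z_i-1)/(z_i+1)$ linearizes this to $-I_n(w_i)=-w_i$. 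With $y_j=x_j+a_j/x_j$, $v_j=x_j-a_j/x_j$ for $j\leq r$ (so $v_j^2=y_j^2-4a_j$), an elementary invariants computation yields: $F_n=K(w_1^2,w_2/w_1,\ldots,w_n/w_1)$ if $r=0$; $F_n=K(y_1,v_1w_2,\ldots,v_1w_n)$ with these generators algebraically independent if $r=1$; and $F_n=K(x_1,x_2)^{\langle-I_2\rangle}(v_1w_3,\ldots,v_1w_n)$, a purely transcendental extension of the (rational, by Theorem~\ref{thHaj}) base $K(x_1,x_2)^{\langle-I_2\rangle}$, if $r=2$. In each case $F_n$ is rational over $K$.

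For the converse, that $[K(\sqrt{a_1},\ldots,\sqrt{a_n}):K]\geq 8$ forces $F_n$ not to be retract rational, I would induct on $n$, with Saltman's Theorem~\ref{thSalt} serving as the base case $n=3$. In the inductive step for $n\geq 4$, suppose some $a_j$ lies in the $\mathbb{F}_2$-span of the remaining $a_i$ modulo $(K^\times)^2$ (which is automatic as soon as $r<n$). Permute $j$ to position $n$ and apply the normalization/Cayley substitution of the first half to $x_n$ alone; a calculation parallel to the $r=1$ case identifies $F_n=F_{n-1}(v_1w_n)$, purely transcendental of degree one over $F_{n-1}$. The truncated system still satisfies $[K(\sqrt{a_1},\ldots,\sqrt{a_{n-1}}):K]\geq 8$, so the inductive hypothesis gives that $F_{n-1}$ is not retract rational, and stable birational invariance of retract rationality transfers this to $F_n$.

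The residual case in which $a_1,\ldots,a_n$ are pairwise independent modulo squares (equivalently $r=n$) is the principal obstacle, since the elimination above no longer applies. Here the plan is to lift Saltman's unramified-Brauer-group obstruction directly from $F_3$ to $F_n$: choose three indices, say $a_1,a_2,a_3$, and let $\alpha\in\operatorname{Br}_{\mathrm{nr}}(F_3/K)/\operatorname{Br}(K)$ be the nonzero class supplied by Theorem~\ref{thSalt}. The natural inclusion $F_3\hookrightarrow F_n$ sends $\alpha$ to a class $\alpha_{F_n}\in\operatorname{Br}(F_n)$; the two verifications required are (i) that $\alpha_{F_n}$ remains unramified at every geometric valuation of $F_n$ trivial on $K$, and (ii) that $\alpha_{F_n}$ is not the image of any class from $\operatorname{Br}(K)$. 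Carrying out (i) at the valuations coming from the extra coordinates $x_4,\ldots,x_n$, where $F_n/F_3$ is no longer purely transcendental, is the technical heart of the proof; but once it is dispatched, $F_n$ inherits a nontrivial obstruction to retract rationality and the theorem follows.
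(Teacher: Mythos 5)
This theorem is quoted from Kang \cite{Kan05} and is not proved in the present paper, so your proposal has to stand on its own. The rationality half (the case $[K(\sqrt{a_1},\ldots,\sqrt{a_n}):K]\leq 4$, i.e.\ $r\leq 2$) is correct: the substitution $z_i=c_i^{-1}x_i\prod_{j\leq r}x_j^{-\varepsilon_{ij}}$ does satisfy $-I_n(z_i)=1/z_i$, the Cayley change of variable linearizes the action, and the three invariant-field computations you describe check out (the $r=2$ case resting on Theorem \ref{thHaj}). In the non-rationality half, the inductive elimination of a dependent $a_n$ is also sound: $K(x_1,\ldots,x_n)=K(x_1,\ldots,x_{n-1})(v_1w_n)$ with $v_1w_n$ fixed gives $F_n=F_{n-1}(v_1w_n)$, and retract rationality is a stable invariant.

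The genuine gap is exactly the case you flag and then defer: $n\geq 4$ with $a_1,\ldots,a_n$ independent modulo squares (e.g.\ $K=\bQ$ and the $a_i$ distinct primes), where no elimination reduces you to $n=3$. Lifting Saltman's class from $F_3$ to $F_n$ is not a routine functoriality argument, because $F_n$ is not purely transcendental over $F_3$: with $y_i=x_i+a_i/x_i$ and $v_i=x_i-a_i/x_i$ one has $F_n=F_3(y_4,\ldots,y_n)\bigl(\sqrt{(y_1^2-4a_1)(y_i^2-4a_i)}\ :\ 4\leq i\leq n\bigr)$, a tower of function fields of conics over $F_3(y_4,\ldots,y_n)$, and these conics are nonsplit precisely in this independent case. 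Hence both of your required verifications are substantive and unproved: for (ii), the kernel of $\mathrm{Br}(L)\rightarrow\mathrm{Br}(L(C))$ for a conic $C$ is generated by the class of $C$, so you must show Saltman's class is not killed anywhere in the tower; for (i), the divisors $y_1^2=4a_1$, $y_i^2=4a_i$ ramify in the tower, so all residues must be recomputed rather than inherited. No argument is offered for either point. The way this is actually handled (in Saltman's and Kang's work) is not to lift from $n=3$ but to run the retract-rationality obstruction directly on the $n$-variable multiquadratic model $F_n=K(y_1,\ldots,y_n)(v_1v_2,\ldots,v_1v_n)$ with $(v_1v_i)^2=(y_1^2-4a_1)(y_i^2-4a_i)$, uniformly for all $r\geq 3$. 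As written, your argument proves the theorem only when at most three of the $a_i$ are independent modulo squares.
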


Kang \cite{Kan04} also established the necessary and sufficient condition 
for the rationality of $K(x,y,z)^{G_{4,2,2}}$ over any field $K$: 
\begin{theorem}[Kang {\cite[Theorem 1.8]{Kan04}}]\label{thKan04b}
Let $K$ be any field and $G_{4,2,2}=\langle -\cbb\rangle$ act on $K(x,y,z)$ by 
\begin{align*}
-\cbb : x\ \mapsto\ y\ \mapsto\ z\ \mapsto\ \frac{c}{xyz}\ \mapsto\ x,\quad c\in K^\times. 
\end{align*}
Then $K(x,y,z)^{G_{4,2,2}}$ is rational over $K$ if and only if  at least one of the following 
conditions is satisfied, {\rm (i)} {\rm char} $K=2;$ {\rm (ii)} $c\in K^2;$ 
{\rm (iii)} $-4c\in K^4;$ {\rm (iv)} $-1\in K^2$. 
If $K(x,y,z)^{\langle -\cbb\rangle}$ is not rational over $K$ then 
it is not retract rational over $K$. 
\end{theorem}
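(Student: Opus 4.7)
The plan is to establish the two implications of the criterion separately: the sufficiency of (i)--(iv) for rationality by producing explicit transcendence bases, and the necessity by a cohomological obstruction to retract rationality when all four conditions fail. Set $\sigma:=-\cbb$, so $\sigma^{2}$ is the involution $x\leftrightarrow z$, $y\leftrightarrow c/(xyz)$.

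For the positive direction, my first step is to descend along $\langle\sigma^{2}\rangle$. In characteristic not two, the elements $u=x+z$, $v=xz$, $w=y+c/(xyz)$, and $s=2xy-wx-uy$ are all $\sigma^{2}$-invariant and, after verifying a single quadratic relation tying $s$ to the remaining generators and $c$, they exhibit $K(x,y,z)^{\langle\sigma^{2}\rangle}$ as rational over $K$. The residual action of $\sigma$ modulo $\langle\sigma^{2}\rangle$ is then an explicit order-two automorphism of a rational $3$-variable field, and the problem becomes that of finding a $\sigma$-invariant transcendence basis case by case. In case (ii), $c=a^{2}$, the substitution $x\mapsto x/a$, $y\mapsto y/a$, $z\mapsto z/a$ conjugates $\sigma$ to a purely monomial action to which Theorem \ref{thHKHR} applies. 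In case (iv), $\sqrt{-1}\in K$, the $\sigma$-representation on $\bigoplus_{i} K\cdot\log x_{i}$ splits over $K$ via fourth roots of unity, producing semi-invariants from which a transcendence basis follows directly. Case (i), char $K=2$, removes sign ambiguities and allows the same descent to go through after an Artin--Schreier linearization in place of the $s$-construction. In case (iii), $-4c\in K^{4}$, I would write $-4c=d^{4}$ and exploit the Aurifeuillian factorization $T^{4}+4=(T^{2}+2T+2)(T^{2}-2T+2)$, applied to a suitable $T$ built from $x,y,z$, to split the residual quadratic extension governed by $\sigma/\sigma^{2}$.

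For the negative direction, I view the action as a twist of the algebraic $3$-torus with character lattice $M=\bZ^{3}$ on which $\sigma$ acts with characteristic polynomial $(t+1)(t^{2}+1)$, the twist being encoded by $c\in K^{\times}$ as a $1$-cocycle. By the Saltman machinery that underlies the proofs of Theorems \ref{thSalt} and \ref{thKan}, retract rationality of the fixed field is equivalent to triviality of the unramified Brauer group of this twisted torus, which can be computed from a flasque resolution of the dual lattice $M^{\vee}$. A direct computation identifies the obstruction as the class of $c$ in a quotient of $K^{\times}$ by the subgroup generated by squares, norms from $K(\sqrt{-1})$, and elements of the form $-4\eta^{4}$ (with the characteristic-two case trivializing the relevant cohomology). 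One then checks that this obstruction vanishes precisely when at least one of (i)--(iv) holds, matching the positive direction.

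The step I expect to be hardest is case (iii) of the positive direction. The three other cases correspond to transparent structural simplifications of the $\bZ/4$-action -- removing the twist, diagonalizing over $\sqrt{-1}$, or the characteristic-two degeneration -- but (iii) is an arithmetic coincidence in which a specific quartic splits without the cocycle itself becoming trivial, and finding the correct explicit birational substitution that witnesses rationality (rather than merely invoking the cohomological criterion) will require the bulk of the technical work. On the negative side, the main effort is the explicit flasque resolution of the non-permutation lattice $M^{\vee}$ and the matching of its first cohomology with the concrete class group above.
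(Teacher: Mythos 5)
This theorem is imported from Kang \cite{Kan04}; the paper under review states it without proof, so your proposal has to be judged on its own. The opening move --- descending first to $K(x,y,z)^{\langle\sigma^2\rangle}$, which is rational (cf.\ Lemma \ref{lemc2t} after permuting variables), and then analysing the residual involution --- is sound and is indeed how the paper handles the neighbouring groups in Section \ref{se4B}. But your case (ii) breaks down: substituting $x\mapsto x/a$, $y\mapsto y/a$, $z\mapsto z/a$ turns $z\mapsto c/(xyz)$ into $Z\mapsto (c/a^{4})/(XYZ)$, so the twist is removed only when $c\in K^{\times4}$; the cocycle computation (rescaling $x_i\mapsto\lambda_i x_i$ multiplies the last coefficient by $\lambda_1\lambda_2\lambda_3^{2}$ with $\lambda_1=\lambda_2=\lambda_3$ forced) shows the action is equivalent to the purely monomial one of Theorem \ref{thHKHR} precisely for $c\in K^{\times4}$, so the genuinely new content of condition (ii), namely $c\in K^{2}\setminus K^{4}$, is not addressed. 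Cases (i), (iii) and (iv) remain sketches, with (iii) --- which you correctly identify as the hard one --- explicitly deferred.

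The negative direction contains a structural error. When char $K\neq2$ and $-1\notin K^{2}$, the set of $c$ for which the theorem asserts rationality is $K^{\times2}\cup(-\tfrac14K^{\times4})$, which is not a subgroup of $K^{\times}$ and not a union of cosets of any subgroup containing $K^{\times2}$; hence the obstruction cannot be ``the class of $c$ in a quotient of $K^{\times}$ by the subgroup generated by squares, norms from $K(\sqrt{-1})$ and the elements $-4\eta^{4}$.'' Concretely, take $K=\bQ$ and $c=-9$: none of (i)--(iv) holds (in particular $-4c=36\notin\bQ^{4}$), so the theorem asserts the fixed field is not even retract rational, yet $-9=(-4\cdot1^{4})\cdot(3/2)^{2}$ lies in your subgroup and your criterion would declare the obstruction trivial. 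The correct invariant must see $c$ modulo fourth powers (equivalently, the factorization type of $X^{4}-c$) together with the behaviour of $\sqrt{-1}$, which is a coset condition rather than a subgroup condition modulo squares; this is what Kang's argument, and the parallel treatment of the classes in $\mathcal{N}$ in \cite{Yam}, actually extract.
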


We put 
\begin{align}
\mathcal{N}:=\bigl\{[G_{1,2,1}],[G_{2,3,1}],[G_{3,1,1}],
[G_{3,3,1}],[G_{4,2,1}],[G_{4,2,2}],[G_{4,3,1}],[G_{4,4,1}]\bigr\}.\label{listN}
\end{align}

Yamasaki \cite{Yam} showed that if $G\in\mathcal{N}$ then the fixed field $K(x,y,z)^G$ 
under monomial action is not rational over $K$ for some field $K$ and choice of coefficients 
$c_j(\sigma)$, $(\sigma\in G)$ where $c_j(\sigma)$ is given as in (\ref{mact}). 
For $G\in\mathcal{N}$, moreover, the necessary and sufficient condition 
for the rationality of $K(x,y,z)^G$ over a field $K$ of char $K\neq 2$ 
was given in \cite{Yam} by using ideas of Lenstra's paper \cite{Len74} and 
Saltman's paper \cite{Sal00} (cf. Theorem \ref{thSalt}).

In the present paper, we treat the cases $G\not\in\mathcal{N}$ 
and the main result is the following theorem:
\begin{theorem}\label{thmain}
Let $K$ be a field of {\rm char} $K\neq 2$ and $G$ a finite subgroup of $\mathrm{GL}(3,\bZ)$ 
which is not in $\mathcal{N}$. 
Then the fixed field $K(x,y,z)^G$ under monomial action of $G$ is rational 
over $K$ except for the case $G\in [G_{7,1,1}]$. 
\end{theorem}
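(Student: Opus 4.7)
The proof will be by case analysis across the $64$ conjugacy classes in $\mathrm{GL}(3,\bZ)$ not appearing in $\mathcal{N}\cup\{[G_{7,1,1}]\}$. For each such $G$ the plan is to exhibit an explicit $G$-stable set of generators $u,v,w$ of $K(x,y,z)$ for which either (a)~the induced $G$-action on $K(u,v,w)$ is purely monomial, so that Theorem~\ref{thHKHR} immediately gives rationality, or (b)~after passing to the fixed field of a well-chosen normal subgroup $H\trianglelefteq G$ we obtain a rational field $K(s)(\tu,\tv)$ and a residual $G/H$-action on the two transcendentals $\tu,\tv$ over $K(s)$ that is monomial, so that Theorem~\ref{thHaj} (applied over the base $K(s)$) finishes the argument.

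The first step is to parameterize a general monomial $G$-action by a $1$-cocycle $c:G\to(K^\times)^3$ modulo coboundaries, via the relation $\si\ta(x_j)=\si(\ta(x_j))$. By Hilbert~90 for the split torus $(\mathbb{G}_m)^3$, many cocycles can be killed by rescaling the $x_i$, so for most of the $64$ classes the monomial action is, up to choice of generators, purely monomial, and Theorem~\ref{thHKHR} applies directly. This handles in particular the triclinic and monoclinic systems, and a large part of the orthorhombic system.

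The second step is the structural case analysis, organised by crystal system and by the lattice of subgroups of $\mathrm{GL}(3,\bZ)$: once the rationality of $K(x,y,z)^H$ with an explicit transcendence basis is established, any $G\supset H$ with $H$ normal inherits a monomial action on that basis, and we try to absorb the coefficients into the variables or invoke Theorems~\ref{thHaj} and \ref{thHKHR} on the quotient. The tetragonal, trigonal and hexagonal systems are reached by climbing up from cyclic subgroups of orders $3,4,6$ and the dihedral groups $D_3,D_4,D_6$, whose invariants can be written down directly. The cubic system (other than $[G_{7,1,1}]$) is then built via semidirect-product arguments from groups already settled; the salient point is that in the groups containing $\caa$ or $\cb$ one can diagonalise a cyclic-of-order-$3$ subgroup after adjoining a primitive cube root of unity, or else use a Galois-descent / resolvent cubic trick when $K$ lacks $\sqrt{-3}$.

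The principal obstacle is the analysis of groups $G\not\in\mathcal{N}$ that nevertheless contain a member of $\mathcal{N}$ as a proper subgroup --- in particular those containing $\langle-I_3\rangle$, $\langle-\caa\rangle$ or $\langle-\cbb\rangle$. Taken alone the subgroup action can fail to be rational, but the $G$-equivariance rigidly constrains the allowed coefficients $c_j(\si)$ to a coboundary class, so the Saltman/Kang-type obstruction disappears over the enlarged group; verifying this rigidity and producing an explicit transcendence basis (rather than merely an abstract rationality proof) is where the bulk of the effort goes. The case $[G_{7,1,1}]\cong A_4$ is left open here precisely because its Sylow $2$-subgroup is the non-cyclic $(\bZ/2\bZ)^2$ and the natural attempts to absorb the coefficients into the variables leave a genuine cohomological obstruction that cannot be removed without an extra hypothesis on $K$.
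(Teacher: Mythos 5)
Your outline reproduces the paper's broad architecture (exhaustive case analysis over the $64$ remaining classes, climbing the normal-subgroup lattice, and observing that the group relations constrain the coefficients $c_j(\si)$ so that the Saltman--Kang obstructions of the $\mathcal{N}$-subgroups are never triggered), but its central mechanism is wrong. You claim that by ``Hilbert 90 for the split torus'' the cocycle $c:G\to(K^\times)^3$ can be killed for most classes, reducing to purely monomial actions and hence to Theorem \ref{thHKHR}. Hilbert 90 does not apply here: $G$ acts trivially on $K$, so the coefficient module is $(K^\times)^3$ twisted only by the integral matrices, and the relevant $H^1$ is typically enormous. Already for $G=\langle\la_1\rangle\cong\mathcal{C}_2$ acting by $x\mapsto a/x$, the cocycle condition is vacuous and the coboundaries are exactly the squares, so the obstruction to normalizing away $a$ is $K^\times/K^{\times 2}$; the parameter $a$ genuinely survives. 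The same happens throughout the monoclinic and orthorhombic systems, which you claim are ``handled directly'' by this step --- in fact essentially none of the $64$ classes reduces to a purely monomial action (the only exceptions in the paper are the $G_{7,j,2}$ with $\ep=1$). The actual content of the proof is precisely the treatment of these surviving parameters: reduction to dimension $\le 2$ via Theorem \ref{thAHK} and Hajja's Theorem \ref{thHaj}, the explicit conic/quadric analysis of Theorem \ref{thInv}, and a stock of explicit transcendence bases (Lemmas \ref{lemaa}, \ref{lemMas}, \ref{lemV42}, Theorems \ref{thab}, \ref{thaaa}, \ref{thmex}, \ref{th231}), none of which your proposal supplies or replaces.

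Two further points. First, your description of the $\mathcal{N}$-subgroup issue is slightly off: the constrained coefficients are not forced into a coboundary class; rather they land in the \emph{affirmative locus} of the known rationality criteria (e.g.\ $-I_3:x_i\mapsto a/x_i$ with a single $a$ gives $[K(\sqrt{a}):K]\le 2<8$, so Kang's criterion yields rationality even though the cocycle is nontrivial), and in several cases (e.g.\ $G_{4,6,1}$, $G_{4,6,3}$) one must deliberately take invariants of a \emph{different} normal subgroup first (such as $\langle(\pm\caa)^2\rangle$ or $\langle\ta_3\rangle$) rather than of the $\mathcal{N}$-member itself. Second, your explanation for why $[G_{7,1,1}]$ is exceptional (non-cyclic Sylow $2$-subgroup) does not distinguish it from $G_{7,1,2}$, $G_{7,1,3}$ or the $G_{3,1,k}$, all of which contain $(\mathcal{C}_2)^2$ and are rational; the real point is that for $\ep_1=-1$ the unique proper nontrivial normal subgroup of $G_{7,1,1}$ is $G_{3,1,1}\in\mathcal{N}$ with a possibly non-rational fixed field when $[K(\sqrt{a},\sqrt{-1}):K]=4$, leaving no route of descent. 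As it stands the proposal is a strategy sketch resting on an invalid normalization step, not a proof.
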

Moreover a method of this paper enables us to construct transcendental bases of the 
fixed fields $K(x,y,z)^G$ over $K$ explicitly, and also gives another proof to Theorem 
\ref{thHKHR} which is the case of purely monomial actions. 

In the exceptional case $G_{7,1,1}=\langle\ta_1,\la_1,\cb\rangle\cong\mathcal{A}_4$, 
the problem may be reduced to the following cases: 
\begin{align}
\ta_1 &: x\ \mapsto\ \frac{a}{x},\ y\ \mapsto\ \frac{\ep_1 a}{y},\ z\ \mapsto\ \ep_1 z,&
\la_1 &: x\ \mapsto\ \frac{\ep_1 a}{x},\ y\ \mapsto\ \ep_1 y,\ z\ \mapsto\ \frac{a}{z},
\label{introact71}\\
\cb &: x\ \mapsto\  y,\ y\ \mapsto\  z,\ z\ \mapsto\ x\nonumber
\end{align}
where $a\in K^\times$, $\ep_1=\pm 1$. 

For $G_{7,1,1}$, we prove the following theorem in Section \ref{se71}: 
\begin{theorem}\label{thc}
Let $K$ be a field of {\rm char} $K\neq 2$.\\
{\rm (i)} If $\ep_1=1$ then $K(x,y,z)^{G_{7,1,1}}$ is rational over $K$;\\
{\rm (ii)} If $\ep_1=-1$ and $[K(\sqrt{a},\sqrt{-1}):K]\leq 2$ then $K(x,y,z)^{G_{7,1,1}}$ 
is rational over $K$. 
\end{theorem}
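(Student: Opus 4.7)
The plan is to exploit the normal Klein four-subgroup $V = \langle \ta_1, \la_1\rangle$ of $G_{7,1,1}$, whose quotient $G_{7,1,1}/V = \langle\cb\rangle \cong C_3$ acts by cyclic permutation of $x,y,z$. I would first compute $K(x,y,z)^V$ explicitly and then take the $C_3$-invariants.

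For part (i) with $\ep_1 = 1$, set $u_1 = x + a/x$, $u_2 = y + a/y$, $u_3 = z + a/z$, and $s = (x - a/x)(y - a/y)(z - a/z)$. A direct check shows these are $V$-invariant and satisfy $s^2 = \prod_i (u_i^2 - 4a)$; a degree count ($[K(x,y,z):K(u_1,u_2,u_3)]=8=|V|\cdot 2$) then yields $K(x,y,z)^V = K(u_1, u_2, u_3, s)$. Under $\cb$ the $u_i$ are cyclically permuted while $s$ is fixed, so $K(x,y,z)^{G_{7,1,1}} = K(u_1, u_2, u_3)^{C_3}(s)$. The subfield $K(u_1, u_2, u_3)^{C_3}$ is rational over $K$ by Theorem~\ref{thHKHR}, since the $\cb$-action on $K(u_1, u_2, u_3)$ is purely monomial (indeed a permutation). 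For the remaining quadratic extension, I would use the identity
\[
\prod_{i=1}^{3} (u_i^2 - 4a) = (e_3 + 4a\,e_1)^2 - 4a(e_2 + 4a)^2,
\]
where $e_j$ is the $j$-th elementary symmetric function of $u_1, u_2, u_3$. Writing $M = e_3 + 4ae_1$ and $N = e_2 + 4a$, the relation $s^2 = M^2 - 4aN^2$ defines a conic over $K(u_1,u_2,u_3)^{C_3}$ admitting the $K$-rational point $(s:M:N) = (1:1:0)$, which furnishes a rational parameterization and completes the proof of (i).

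For part (ii) with $\ep_1 = -1$, the elements $u_i$ above are no longer $V$-invariant, and the natural replacements become mixed quadratic products such as $(x+a/x)(y-a/y)$ and its $\cb$-orbit, together with the squares $(x_i \pm a/x_i)^2$ and a triple-product analogue of $s$. The hypothesis $[K(\sqrt{a}, \sqrt{-1}):K]\le 2$ is equivalent to at least one of $\sqrt{a}$, $\sqrt{-1}$, $\sqrt{-a}$ lying in $K$; in each sub-case a multiplicative substitution $x_i \mapsto \beta_i X_i$ built from the available square roots transforms the monomial action into one whose coefficients lie in $\{\pm 1\}$. A parallel analysis then identifies $K(X,Y,Z)^V$ and reduces rationality of the $C_3$-quotient to exhibiting a $K$-rational point on the associated conic.

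The main obstacle is part (ii): no single choice of invariants handles the three sub-cases uniformly, and in each new coordinate system one must reverify that the analogue of the conic $s^2 = M^2 - 4aN^2$ carries a $K$-rational point. The non-rationality results of Theorems~\ref{thSalt}, \ref{thKan}, \ref{thKan04b} show that such rational points can genuinely fail to exist when a square-root hypothesis is violated, so confirming that $[K(\sqrt{a}, \sqrt{-1}):K]\le 2$ is sharp enough to supply one in every sub-case is the technical crux of the argument.
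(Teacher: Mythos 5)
Your reduction for part (i) is correct and attractive up to its last sentence: $K(x,y,z)^{\langle\ta_1,\la_1\rangle}=K(u_1,u_2,u_3)(s)$ with $s^2=\prod_i(u_i^2-4a)$ does hold, $\cb$ permutes the $u_i$ and fixes $s$, and the identity $\prod_i(u_i^2-4a)=M^2-4aN^2$ with $M=e_3+4ae_1$, $N=e_2+4a$ checks out. (This is genuinely different from the paper, which instead invokes Theorem \ref{thmex} to get a transcendence basis $v_1,v_2,v_3$ of $K(x,y,z)^{\langle\ta_1,\la_1\rangle}$ itself on which $\cb$ acts by a cyclic permutation, so that only Masuda's formula is needed afterwards.) The gap is the final step. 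The equation $s^2=M^2-4aN^2$ is \emph{not} a conic over $F:=K(u_1,u_2,u_3)^{\langle\cb\rangle}$, because $M$ and $N$ are fixed elements of $F$ rather than independent coordinates over a smaller rational base: what you actually have is the quadratic extension $F(\sqrt{d})$, $d=M^2-4aN^2\in F$, of a $K$-rational field $F$, and the projective point $(1{:}1{:}0)$ only rewrites $F(s)$ as $F(T)$ with $T=(s-M)/N$ satisfying $NT^2+2MT+4aN=0$ --- still a quadratic extension. In general such an extension of a rational field need not be rational over $K$; deciding when it is is exactly the difficulty running through this paper. The argument can be rescued, but only by using the structure of $F$: by Lemma \ref{lemMas}, $F=K(e_1,u,v)$ with $e_2=e_1(u+v)-3(u^2-uv+v^2)$ and $e_3=e_1uv-(u^3+v^3)$, so $M$ and $N$ are both \emph{linear in $e_1$} over $K(u,v)$; then $2MT=-N(T^2+4a)$ is linear in $e_1$, one solves $e_1\in K(u,v,T)$, and $F(s)=K(u,v,T)$ is rational. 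That elimination step is the actual content, and it is missing; as written, "the conic has a rational point, hence done" is a non sequitur.

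Part (ii) is a plan rather than a proof, and you say so yourself. The difficulty you defer ("confirming that $[K(\sqrt{a},\sqrt{-1}):K]\le 2$ is sharp enough to supply a rational point in every sub-case") is the entire theorem: when $\ep_1=-1$ the normal subgroup $\langle\ta_1,\la_1\rangle$ is the critical group $G_{3,1,1}\in\mathcal{N}$, whose fixed field is rational over $K$ precisely under that hypothesis, so no system of $K$-rational quadratic invariants can even be written down before the hypothesis is used. The paper's proof works over $L=K(\sqrt{a},\sqrt{-1})$, constructs $L(x,y,z)^{\langle\ta_1,\la_1\rangle}=L(w_1,w_2,w_3)$, linearizes the $\cb$-action through a long explicit chain of coordinate changes ($w_i\to q_i\to(s,u,v)\to r_i\to t_i\to u_i$), and only then descends by the Galois action $\rho_a$, with Lemma \ref{lemtri} reducing the three sub-cases $\sqrt{a},\sqrt{-1},\sqrt{-a}\in K$ to a single one. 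Nothing in your sketch substitutes for that computation, and the "associated conic" you would appeal to at the end has the same defect as in part (i). Also note that when only $\sqrt{-1}\in K$ the substitution $x_i\mapsto\beta_ix_i$ does not reduce the coefficients to $\pm1$ (it disturbs $\cb$ and leaves $a$ in place), so even the preliminary normalization needs care.
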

However we do not know whether the fixed field $K(x,y,z)^{G_{7,1,1}}$ is 
rational over $K$ or not in the case where $\ep_1=-1$ and $[K(\sqrt{a},\sqrt{-1}):K]=4$. 
When $\ep_1=-1$, we remark that the group $G_{7,1,1}$ has a normal subgroup 
$G_{3,1,1}=\langle\ta_1,\la_1\rangle\in \mathcal{N}$ as in (\ref{listN}) and 
the corresponding cubic extension field $K(x,y,z)^{G_{3,1,1}}$ is rational 
over $K$ if and only if $[K(\sqrt{a},\sqrt{-1}):K]\leq 2$ by \cite[Theorem 7]{Yam}. 

By a result of \cite{Yam}, Theorem \ref{thmain} and Theorem \ref{thc}, we have the following:  
\begin{theorem}\label{thcor}
Let $K$ be a field of {\rm char} $K\neq 2$ and $G$ a finite group acting on 
$K(x,y,z)$ by monomial $K$-automorphisms. 
Then there exists $L=K(\sqrt{a})$ where $a\in K^\times$ such that the 
fixed field $L(x,y,z)^G$ under the action of monomial $L$-automorphisms of $G$ 
is rational over $L$. 
\end{theorem}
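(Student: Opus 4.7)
The plan is to reduce to the classification of finite subgroups of $\mathrm{GL}(3,\bZ)$ and then do a three-way case split. By the discussion preceding Theorem \ref{thmain}, after possibly passing to a quotient we may assume $G$ is a finite subgroup of $\mathrm{GL}(3,\bZ)$; the rationality of $K(x,y,z)^G$ depends only on the $\mathrm{GL}(3,\bZ)$-conjugacy class of $G$ together with the cocycle $(c_j(\sigma))_{\sigma\in G}$. Among the $73$ conjugacy classes I distinguish three regimes: (A) $[G]\notin\mathcal{N}$ and $[G]\neq[G_{7,1,1}]$; (B) $[G]=[G_{7,1,1}]$; (C) $[G]\in\mathcal{N}$.

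For regime (A), Theorem \ref{thmain} gives rationality of $K(x,y,z)^G$ over $K$ outright, so I would take $L=K$ (equivalently $a=1$) and there is nothing more to do. For regime (B), I would first $\mathrm{GL}(3,\bZ)$-conjugate the action into the normal form (\ref{introact71}) with some parameter $a\in K^\times$ and sign $\ep_1=\pm1$. If $\ep_1=1$ then Theorem \ref{thc}(i) already gives rationality over $K$. If $\ep_1=-1$, I take $L=K(\sqrt{-1})$; since $\sqrt{-1}\in L$, one has $L(\sqrt{a},\sqrt{-1})=L(\sqrt{a})$, so $[L(\sqrt{a},\sqrt{-1}):L]\leq 2$ automatically, and Theorem \ref{thc}(ii) applied over the base field $L$ yields rationality of $L(x,y,z)^G$ over $L$.

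For regime (C), the plan is to invoke the explicit necessary-and-sufficient rationality criteria obtained for each of the eight classes in $\mathcal{N}$ by Yamasaki in \cite{Yam} (together with Theorems \ref{thSalt}, \ref{thKan}, \ref{thKan04b} for the representative cases $[G_{1,2,1}]$ and $[G_{4,2,2}]$). For each $G\in\mathcal{N}$ one examines the list of coefficients $(c_j(\sigma))$ defining the monomial action and picks a single element $a\in K^\times$ that forces one of the disjuncts of the rationality criterion to hold over $L=K(\sqrt{a})$. For example, for $G_{1,2,1}$ acting as in Theorem \ref{thSalt} one chooses $L=K(\sqrt{a_1})$, which brings $[L(\sqrt{a_1},\sqrt{a_2},\sqrt{a_3}):L]$ down to at most $4$, killing the obstruction by Theorem \ref{thKan}; for $G_{4,2,2}$ as in Theorem \ref{thKan04b} one chooses $L=K(\sqrt{-1})$, ensuring condition (iv). A parallel choice works for each of the remaining six classes in $\mathcal{N}$.

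The main obstacle is the bookkeeping in regime (C): I have to verify, case by case across the eight classes in $\mathcal{N}$, that Yamasaki's criterion in \cite{Yam} is structured so that some single quadratic extension $K(\sqrt{a})/K$ always suffices — i.e., that no obstruction genuinely requires adjoining two independent square roots. Once this case-check is complete, the three regimes cover all $73$ $\bZ$-classes, and the existence of $L=K(\sqrt{a})$ with $L(x,y,z)^G$ rational over $L$ follows.
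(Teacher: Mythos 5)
Your proof follows the same three-way decomposition the paper itself uses: Theorem \ref{thmain} for the classes outside $\mathcal{N}\cup\{[G_{7,1,1}]\}$, Theorem \ref{thc} for $[G_{7,1,1}]$ (passing to $K(\sqrt{-1})$ when $\ep_1=-1$ so that the hypothesis $[L(\sqrt{a},\sqrt{-1}):L]\leq 2$ holds automatically), and Yamasaki's necessary-and-sufficient criteria for the eight classes in $\mathcal{N}$. The paper's own justification is the single sentence ``By a result of \cite{Yam}, Theorem \ref{thmain} and Theorem \ref{thc},'' so your case analysis — including the explicit choices of $a$ for $G_{1,2,1}$ and $G_{4,2,2}$ — is at least as complete as the published argument, and the remaining bookkeeping you flag for the other six classes in $\mathcal{N}$ is exactly what the paper also delegates to \cite{Yam}.
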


We give an application of the main results 
of the rationality problem under monomial group actions to linear Noether's problem. 
For Noether's problem and rationality problem, see e.g. \cite{Swa83}, \cite{CS07}. 

Let $\rho:G\rightarrow \mathrm{GL}(V)$ be a $4$-dimensional linear 
representation of a finite group $G$. 
Then $G$ acts on $K(V)=K(w_1,w_2,w_3,w_4)$ linearly via $\rho$, 
where $\{w_1,w_2,w_3,w_4\}$ is a basis of the dual space $V^*=\mathrm{Hom}_K(V,K)$. 
Now we assume that $\rho$ is monomial, i.e. an induced representation from a $1$-dimensional 
representation of a subgroup of $G$. 
Then for any $g\in G$, the corresponding matrix representation of $g$ has exactly 
one nonzero entry in each row and each column, that is, a monomial matrix. 
Hence $G$ acts on $K(V)_0:=K(x,y,z)$ with $x=w_1/w_4$, $y=w_2/w_4$, 
$z=w_3/w_4$, by monomial action. 
It is well-known that $K(V)^G$ is rational over $K(V)_0^G$ 
(see e.g. \cite{Miy71}, \cite{Kem96}, \cite{AHK00}, \cite{JLY02}). 
Therefore, by using Theorems \ref{thmain}, \ref{thc} and \ref{thcor}, 
we may obtain the rationality of 
$K(V)^G$ under the $4$-dimensional linear action of $G$ 
via the rationality of $K(V)_0^G=K(x,y,z)^G$ 
under the induced $3$-dimensional monomial action. 
\begin{corollary}\label{cor1}
Let $K$ be a field of {\rm char} $K\neq 2$ and $\rho : G\rightarrow \mathrm{GL}(V)$ a 
$4$-dimensional monomial linear representation. 
Then there exists $L=K(\sqrt{a})$ where $a\in K^\times$ such that the 
fixed fields $L(V)_0^G$ and $L(V)^G$ under the induced monomial $L$-automorphisms 
of $G$ are rational over $L$.
\end{corollary}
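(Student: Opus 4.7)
The plan is to reduce the statement to the 3-dimensional monomial situation already handled by Theorem \ref{thcor}, and then to lift the resulting rationality one dimension up by the standard ``projectivization'' argument for linear actions that is recalled in the paragraph preceding the corollary.

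For the reduction, fix a basis $w_1,w_2,w_3,w_4$ of $V^*$ in which $\rho$ is monomial, so that for every $g\in G$ one has $\rho(g)(w_i)=c_i(g)\,w_{\pi_g(i)}$ for some $c_i(g)\in K^\times$ and some permutation $\pi_g$ of $\{1,2,3,4\}$. Setting $x=w_1/w_4$, $y=w_2/w_4$, $z=w_3/w_4$, the subfield $K(V)_0=K(x,y,z)$ is $G$-stable, and a direct calculation (using only that every $w_i/w_j$ is a Laurent monomial in $x,y,z$) shows that the induced action of $G$ on $K(V)_0$ is by monomial $K$-automorphisms in the sense of (\ref{mact}); the integer part of this action is the $S_4$-permutation action on the rank-$3$ sublattice $\{(a_1,a_2,a_3,a_4)\in\bZ^4 : a_1+a_2+a_3+a_4=0\}$ of the monomial lattice, which sits inside $\mathrm{GL}(3,\bZ)$. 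Applying Theorem \ref{thcor} to this induced 3-dimensional monomial action produces $a\in K^\times$ such that, with $L:=K(\sqrt{a})$, the field $L(V)_0^G=L(x,y,z)^G$ is rational over $L$.

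It remains to lift rationality from $L(V)_0^G$ to $L(V)^G$. After base change from $K$ to $L$ the representation $\rho$ is still 4-dimensional monomial over $L$, and the induced action on $L(V)_0$ is still 3-dimensional monomial, so $L(V)_0^G$ is rational over $L$ by the previous paragraph. By the well-known result cited immediately before Corollary \ref{cor1} (see \cite{Miy71}, \cite{Kem96}, \cite{AHK00}, \cite{JLY02}), the fixed field $L(V)^G$ of the linear $G$-action is rational of transcendence degree one over the fixed field $L(V)_0^G$ of its projectivization; concretely, the cocycle $g\mapsto \rho(g)(w_4)/w_4\in L(V)_0^\times$ can be trivialized to produce an invariant generator $W$ with $L(V)^G=L(V)_0^G(W)$. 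Chaining the two rationality statements yields that $L(V)^G$ is rational over $L$, as required.

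The only real point of care is the bookkeeping in the reduction step when $\pi_g(4)\neq 4$, where one must verify that the formulas for $g(x),g(y),g(z)$ really do define a monomial element of $\mathrm{GL}(3,\bZ)$ on $L(V)_0$; this is routine, so I do not anticipate a genuine obstacle beyond it, the proof being essentially a composition of Theorem \ref{thcor} with the projectivization lemma.
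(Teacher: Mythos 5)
Your proposal is correct and follows exactly the route the paper itself takes in the paragraph preceding the corollary: pass to the induced $3$-dimensional monomial action on $K(V)_0=K(x,y,z)$ with $x=w_1/w_4$, $y=w_2/w_4$, $z=w_3/w_4$, apply Theorem \ref{thcor} to get $L=K(\sqrt{a})$ with $L(V)_0^G$ rational over $L$, and then use the cited fact that $L(V)^G$ is rational over $L(V)_0^G$. The extra bookkeeping you flag (identifying the lattice $\{\sum a_i=0\}\subset\bZ^4$ and trivializing the cocycle $g\mapsto\rho(g)(w_4)/w_4$) is exactly the content of the references \cite{Miy71}, \cite{Kem96}, \cite{AHK00}, \cite{JLY02} that the paper invokes, so nothing further is needed.
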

For $K=L=\bC$, Corollary \ref{cor1} is obtained by Prokhorov \cite[Theorem 5.1]{Pro10}. 
He solved the case $G=G_{7,1,1}$ with $\ep_1=-1$ via the Segre embedding 
(see Case $\Gamma_9^{12}$ in \cite[page 265]{Pro10}). 
For $K=\bQ$, see \cite{KY09}, \cite{Kit10}, \cite{Yam10}\footnote{See also 
the recent paper by Kang and Zhou \cite{KZ}.}.

In Section \ref{senot}, we give some group information of finite subgroups $G=G_{i,j,k}$ 
of $\mathrm{GL}(3,\bZ)$. 
The generators of the groups $G_{i,j,k}$ which we will use in the present paper 
are available in GAP \cite{GAP07} and Table $1$ of \cite{BBNWZ78}. 

In Section \ref{sepre}, we recall known results and give some theorems and lemmas 
which will be used in the proof of Theorem \ref{thmain}. 

In Sections \ref{se2} to \ref{se73}, we give the proof of Theorem \ref{thmain}. 
In Section \ref{se2} (resp. \ref{se3}), we consider the cases of $G_{2,j,k}$ 
(resp. $G_{3,j,k}$) which are abelian groups of exponent $2$. 
In Section \ref{se4A} (resp. \ref{se4B}), we study the case (4A) (resp. (4B)) of 
abelian groups $G_{4,j,k}$ which contain $\pm\caa$ (resp. $\pm\cbb$) of order $4$. 
In Section \ref{se5A} (resp. \ref{se5B}), we treat the case (5A) (resp. (5B))  of $G_{5,j,k}$ 
which have a normal subgroup $\langle\ca\rangle$ (resp. $\langle\cb\rangle$) of order $3$.  
In Section \ref{se6}, we treat the cases of $G_{6,j,k}$ which have a normal subgroup 
$\langle\ca\rangle$. 
In the cases of (5A) and $G_{6,j,k}$, we may assume that $c=1$, 
where $c$ will be given in (\ref{acts3abc}), without loss of generality 
except for the five groups $G_{5,1,2}$, $G_{5,3,2}$, $G_{5,3,3}$, $G_{6,1,1}$ and $G_{6,4,1}$. 
In Section \ref{secnot1}, we consider the case $c\neq 1$ for the exceptional five cases. 
These cases occur only over a field $K\ni\sqrt{-3}$ and some difficulties appear 
because $\langle \ca\rangle$ acts on both $K(x,y)$ and $K(z)$ faithfully. 
Finally we treat the cases of $G_{7,j,1}$, $G_{7,j,2}$ and $G_{7,j,3}$ which have a 
non-normal subgroup $\langle\cb\rangle$ 
in Sections $\ref{se71}$, $\ref{se72}$ and $\ref{se73}$, respectively. 


%
%
\section{Notation}\label{senot}

Let $\mathcal{S}_n$ (resp. $\mathcal{A}_n$, $\mathcal{D}_n$, $\mathcal{C}_n$) 
be the symmetric group (resp. the alternating group, the dihedral group, the cyclic group) 
of degree $n$ of order $n!$ (resp. $n!/2$, $2n$, $n$). 

The generators of the groups $G_{i,j,k}\subset\mathrm{GL}(3,\bZ)$ below are available 
in Table $1$ of \cite{BBNWZ78} and in GAP \cite{GAP07} via the command 
{\tt GeneratorsOfGroup(MatGroupZClass(3,i,j,k))}. 

There are $2$ generators $\ca$ and $\cb$ of $G_{i,j,k}$ of order $3$ and 
$4$ generators $\caa$, $-\caa$, $\cbb$, $-\cbb$ of $G_{i,j,k}$ of order $4$: 
\begin{align*}
\ca&=\left[\begin{array}{ccc} 0 & -1 & 0\\ 1 & -1 & 0\\ 0 & 0 & 1\end{array}\right],& 
\cb&=\left[\begin{array}{ccc} 0 & 0 & 1\\ 1 & 0 & 0\\ 0 & 1 & 0\end{array}\right],\\
\caa &= \left[\begin{array}{ccc} 0 & -1 & 0\\ 1 & 0 & 0\\ 0 & 0 & 1\end{array}\right], & 
\cbb &= \left[\begin{array}{ccc} 0 & 1 & 0\\ 0 & 1 & -1\\ -1 & 1 & 0\end{array}\right].
\end{align*}

Let $I_3$ be the $3\times 3$ identity matrix. 
For $\rho_1$, $\rho_2\in G_{i,j,k}$, we denote the commutator of 
$\rho_1$ and $\rho_2$ by $[\rho_1,\rho_2]=\rho_1^{-1}\rho_2^{-1}\rho_1\rho_2$. 

The $2$ groups $G_{1,j,k}$ of the $1$st crystal system in dimension $3$ are: 
\begin{align*}
G_{1,1,1}=&\{I_3\},& \mathcal{N}\ni G_{1,2,1}&=\langle-I_3\rangle\cong\mathcal{C}_2.
\end{align*} 

The $6$ groups $G_{2,j,k}$ of the $2$nd crystal system in dimension $3$ are 
abelian groups of exponent $2$ which are defined as 
\begin{align*}
G_{2,1,1}&=\langle \la_1\rangle\cong\mathcal{C}_2,& 
G_{2,1,2}&=\langle -\al\rangle\cong\mathcal{C}_2,\\
G_{2,2,1}&=\langle -\la_1\rangle\cong\mathcal{C}_2,& 
G_{2,2,2}&=\langle \al\rangle\cong\mathcal{C}_2,\\
\mathcal{N}\ni
G_{2,3,1}&=\langle \la_1,-I_3\rangle\cong\mathcal{C}_2\times \mathcal{C}_2,&
G_{2,3,2}&=\langle -\al,-I_3\rangle\cong\mathcal{C}_2\times \mathcal{C}_2
\end{align*} 
where
\begin{align*}
\la_1 &= \left[\begin{array}{ccc} -1 & 0 & 0\\ 0 & 1 & 0\\ 0 & 0 & -1\end{array}\right],& 
\al &= \left[\begin{array}{ccc} 0 & 1 & 0\\ 1 & 0 & 0\\ 0 & 0 & 1\end{array}\right],& 
-I_3 &=\left[\begin{array}{ccc} -1 & 0 & 0\\ 0 & -1 & 0\\ 0 & 0 & -1\end{array}\right]. 
\end{align*}

The $13$ groups $G_{3,j,k}$ of the $3$rd crystal system in dimension $3$ are abelian groups 
of exponent $2$ which are defined as 
\begin{align*}
\mathcal{N}\ni
G_{3,1,1}&=\langle \ta_1,\la_1\rangle\cong\mathcal{C}_2\times \mathcal{C}_2,& 
G_{3,1,2}&=\langle \ta_1,-\al\rangle\cong\mathcal{C}_2\times \mathcal{C}_2,\\
G_{3,1,3}&=\langle \ta_2,\la_2\rangle\cong\mathcal{C}_2\times \mathcal{C}_2,& 
G_{3,1,4}&=\langle \ta_3,\la_3\rangle\cong\mathcal{C}_2\times \mathcal{C}_2,\\
G_{3,2,1}&=\langle \ta_1,-\la_1\rangle\cong\mathcal{C}_2\times \mathcal{C}_2,&
G_{3,2,2}&=\langle \ta_1,\al\rangle\cong\mathcal{C}_2\times \mathcal{C}_2,\\  
G_{3,2,3}&=\langle -\al,\be\rangle\cong\mathcal{C}_2\times \mathcal{C}_2,&
G_{3,2,4}&=\langle \ta_2,-\la_2\rangle\cong\mathcal{C}_2\times \mathcal{C}_2,\\ 
G_{3,2,5}&=\langle \ta_3,-\la_3\rangle\cong\mathcal{C}_2\times \mathcal{C}_2,\\ 
\mathcal{N}\ni
G_{3,3,1}&=\langle \ta_1,\la_1,-I_3\rangle\cong\mathcal{C}_2\times 
\mathcal{C}_2\times \mathcal{C}_2, &
G_{3,3,2}&=\langle \ta_1,-\al,-I_3\rangle\cong\mathcal{C}_2\times 
\mathcal{C}_2\times \mathcal{C}_2, \\ 
G_{3,3,3}&=\langle \ta_2,\la_2,-I_3\rangle\cong\mathcal{C}_2\times 
\mathcal{C}_2\times \mathcal{C}_2,& 
G_{3,3,4}&=\langle \ta_3,\la_3,-I_3\rangle\cong\mathcal{C}_2\times 
\mathcal{C}_2\times \mathcal{C}_2
\end{align*} 
where
\begin{align}
\ta_1&=\left[\begin{array}{ccc} -1 & 0 & 0\\ 0 & -1 & 0\\ 0 & 0 & 1\end{array}\right], &
\ta_2&=\left[\begin{array}{ccc} 0 & 1 & 0\\ 1 & 0 & 0\\ -1 & -1 & -1\end{array}\right], & 
\ta_3&=\left[\begin{array}{ccc} 0 & 1 & -1\\ 1 & 0 & -1\\ 0 & 0 & -1\end{array}\right], 
\label{t1t3}\\ 
\la_1&=\left[\begin{array}{ccc} -1 & 0 & 0\\ 0 & 1 & 0\\ 0 & 0 & -1\end{array}\right], &
\la_2&=\left[\begin{array}{ccc} 0 & 0 & 1\\ -1 & -1 & -1\\ 1 & 0 & 0\end{array}\right], &
\la_3&=\left[\begin{array}{ccc} 0 & -1 & 1\\ 0 & -1 & 0\\ 1 & -1 & 0\end{array}\right],
\nonumber\\
\al&=\left[\begin{array}{ccc} 0 & 1 & 0\\ 1 & 0 & 0\\ 0 & 0 & 1\end{array}\right], &
\be&=\left[\begin{array}{ccc} 0 & -1 & 0\\ -1 & 0 & 0\\ 0 & 0 & 1\end{array}\right], & 
-I_3 &=\left[\begin{array}{ccc} -1 & 0 & 0\\ 0 & -1 & 0\\ 0 & 0 & -1\end{array}\right].
\nonumber
\end{align}


The $16$ groups $G_{4,j,k}$ of the $4$th crystal system in dimension $3$ contain an 
element $\caa$, $-\caa$, $\cbb$ or $-\cbb$ of order $4$ which are defined as 
\begin{align*}
G_{4,1,1}&=\langle \caa\rangle\cong\mathcal{C}_4,& 
G_{4,1,2}&=\langle \cbb\rangle\cong\mathcal{C}_4,\\
\mathcal{N}\ni
G_{4,2,1}&=\langle -\caa\rangle\cong\mathcal{C}_4,& 
\hspace*{-5mm}\mathcal{N}\ni
G_{4,2,2}&=\langle -\cbb\rangle\cong\mathcal{C}_4,\\
\mathcal{N}\ni
G_{4,3,1}&=\langle \caa,-I_3\rangle\cong\mathcal{C}_4\times \mathcal{C}_2,&
G_{4,3,2}&=\langle \cbb,-I_3\rangle\cong\mathcal{C}_4\times \mathcal{C}_2,\\  
\mathcal{N}\ni
G_{4,4,1}&=\langle \caa,\la_1\rangle\cong\mathcal{D}_4,&
G_{4,4,2}&=\langle \cbb,\la_3\rangle\cong\mathcal{D}_4,\\ 
G_{4,5,1}&=\langle \caa,-\la_1\rangle\cong\mathcal{D}_4,&  
G_{4,5,2}&=\langle \cbb,-\la_3\rangle\cong\mathcal{D}_4, \\ 
G_{4,6,1}&=\langle -\caa,\la_1\rangle\cong\mathcal{D}_4, &  
G_{4,6,2}&=\langle -\caa,-\la_1\rangle\cong\mathcal{D}_4, \\  
G_{4,6,3}&=\langle -\cbb,-\la_3\rangle\cong\mathcal{D}_4, & 
G_{4,6,4}&=\langle -\cbb,\la_3\rangle\cong\mathcal{D}_4, \\  
G_{4,7,1}&=\langle \caa,\la_1,-I_3\rangle\cong\mathcal{D}_4\times \mathcal{C}_2,& 
G_{4,7,2}&=\langle \cbb,\la_3,-I_3\rangle\cong\mathcal{D}_4\times \mathcal{C}_2
\end{align*} 
where
\begin{align*} 
\la_1 &= \left[\begin{array}{ccc} -1 & 0 & 0\\ 0 & 1 & 0\\ 0 & 0 & -1\end{array}\right], & 
\la_3 &= \left[\begin{array}{ccc} 0 & -1 & 1\\ 0 & -1 & 0\\ 1 & -1 & 0\end{array}\right]. 
\end{align*}
The generators $\pm \caa$, $\pm\cbb$, $\pm \la_1$, $\pm \la_3$ and $-I_3$ satisfy 
the following relations: 
\begin{align}
(\pm \caa)^4&=(\pm \cbb)^4=(\pm \la_1)^2=(\pm \la_3)^2=(-I_3)^2=I_3,\nonumber\\
[\pm \caa,\la_1]&=[\pm \caa,-\la_1]=\caa^2,\quad 
[\pm \cbb,\la_3]=[\pm \cbb,-\la_3]=\cbb^2,\label{eqc4g}\\
[\pm \caa,-I_3]&=[\pm \cbb,-I_3]=[\pm \al,-I_3]=[\pm \be,-I_3]=I_3.\nonumber
\end{align}
Note that $(\pm \caa)^2=\ta_1$ and $(\pm \cbb)^2=\ta_3$ where $\ta_1$ and $\ta_3$ are 
given as in (\ref{t1t3}) 


The $13$ groups $G_{5,j,k}$ of the $5$th crystal system in dimension $3$ 
have a normal subgroup $\langle\ca\rangle$ or $\langle\cb\rangle$ of order $3$, 
and are defined as 
\begin{align*}
G_{5,1,1}&=\langle \cb\rangle\cong\mathcal{C}_3,& 
G_{5,1,2}&=\langle \ca\rangle\cong\mathcal{C}_3,\\
G_{5,2,1}&=\langle \cb,-I_3\rangle\cong\mathcal{C}_6,& 
G_{5,2,2}&=\langle \ca,-I_3\rangle\cong\mathcal{C}_6,\\
G_{5,3,1}&=\langle \cb,-\al\rangle\cong\mathcal{S}_3,&
G_{5,3,2}&=\langle \ca,-\al\rangle\cong\mathcal{S}_3,& 
G_{5,3,3}&=\langle \ca,-\be\rangle\cong\mathcal{S}_3,\\
G_{5,4,1}&=\langle \cb,\al\rangle\cong\mathcal{S}_3,& 
G_{5,4,2}&=\langle \ca,\be\rangle\cong\mathcal{S}_3,& 
G_{5,4,3}&=\langle \ca,\al\rangle\cong\mathcal{S}_3,\\
G_{5,5,1}&=\langle \cb,-\al,-I_3\rangle\cong\mathcal{D}_6, &
G_{5,5,2}&=\langle \ca,-\al,-I_3\rangle\cong\mathcal{D}_6,& 
G_{5,5,3}&=\langle \ca,-\be,-I_3\rangle\cong\mathcal{D}_6
\end{align*}
where
\begin{align*}
\al&=\left[\begin{array}{ccc} 0 & 1 & 0\\ 1 & 0 & 0\\ 0 & 0 & 1\end{array}\right],&
\be&=\left[\begin{array}{ccc} 0 & -1 & 0\\ -1 & 0 & 0\\ 0 & 0 & 1\end{array}\right].
\end{align*}
The generators $\ca$, $\cb$, $\pm \al$, $\pm \be$ and $-I_3$ satisfy the following relations: 
\begin{align}
\ca^3&=\cb^3=(\pm \al)^2=(\pm \be)^2=(-I_3)^2=I_3,\nonumber\\
[\ca,\pm \al]&=[\ca,\pm \be]=\ca,\quad [\cb,\pm \al]=\cb,\label{relmat5}\\
[\ca,-I_3]&=[\cb,-I_3]=[\pm \al,-I_3]=[\pm \be,-I_3]=I_3.\nonumber
\end{align}


The $8$ groups $G_{6,j,1}$, $1\leq j\leq 7$, and $G_{6,6,2}$ of the $6$th crystal system in 
dimension $3$ have a normal subgroup $\langle\ca\rangle$ of order $3$, and are defined as 
\begin{align*}
G_{6,1,1}&=\langle \ca,\ta_1\rangle\cong\mathcal{C}_6,&
G_{6,2,1}&=\langle \ca,-\ta_1\rangle\cong\mathcal{C}_6,\\
G_{6,3,1}&=\langle \ca,\ta_1,-I_3\rangle\cong\mathcal{C}_6\times\mathcal{C}_2,\\
G_{6,4,1}&=\langle \ca,\ta_1,-\be\rangle\cong\mathcal{D}_6,& 
G_{6,5,1}&=\langle \ca,\ta_1,\be\rangle\cong\mathcal{D}_6,\\
G_{6,6,1}&=\langle \ca,-\ta_1,\be\rangle\cong\mathcal{D}_6,&
G_{6,6,2}&=\langle \ca,-\ta_1,-\be\rangle\cong\mathcal{D}_6,\\
G_{6,7,1}&=\langle \ca,\ta_1,-\be,-I_3\rangle\cong\mathcal{D}_6\times\mathcal{C}_2
\end{align*}
where
\begin{align*}
\ta_1&=\left[\begin{array}{ccc} -1 & 0 & 0\\ 0 & -1 & 0\\ 0 & 0 & 1\end{array}\right],&
\be&=\left[\begin{array}{ccc} 0 & -1 & 0\\ -1 & 0 & 0\\ 0 & 0 & 1\end{array}\right].
\end{align*}
The generators $\ca$, $\pm \ta_1$, $\pm \be$ and $-I_3$ satisfy the following relations: 
\begin{align}
\ca^3&=(\pm \ta_1)^2=(\pm \be)^2=(-I_3)^2=I_3,\nonumber\\
[\ca,\pm \ta_1]&=I_3,\quad [\ca,\pm \be]=\ca,\quad 
[\ta_1,\pm \be]=[-\ta_1,\pm \be]=I_3,\label{relmat6}\\
[\ca,-I_3]&=[\pm \ta_1,-I_3]=[\pm \be,-I_3]=I_3.\nonumber
\end{align}
Note that $\ta_1=(-\al)(-\be)=\al\be$. 


The $15$ groups $G_{7,j,k}$, $1\leq j\leq 5$, $1\leq k\leq 3$, 
of the $7$th crystal system in dimension $3$ have a non-normal subgroup $\langle\cb\rangle$ 
of order $3$ and a normal subgroup $\langle\ta_k,\la_k\rangle$ of order $4$. 
They are defined as 
\begin{align*}
\hspace*{1.5cm}
G_{7,1,k}&=\langle \ta_k,\la_k,\cb\rangle&\hspace{-1cm} 
&\cong \mathcal{A}_4&\hspace{-1cm} 
&\cong (\mathcal{C}_2\times \mathcal{C}_2)\rtimes \mathcal{C}_3, \\
G_{7,2,k}&=\langle \ta_k,\la_k,\cb,-I_3\rangle&\hspace{-1cm} 
&\cong \mathcal{A}_4\times \mathcal{C}_2&\hspace{-1cm} 
&\cong (\mathcal{C}_2\times \mathcal{C}_2\times \mathcal{C}_2)\rtimes \mathcal{C}_3,\\
G_{7,3,k}&=\langle \ta_k,\la_k,\cb,-\be_k\rangle&\hspace{-1cm} 
&\cong \mathcal{S}_4&\hspace{-1cm} 
&\cong (\mathcal{C}_2\times \mathcal{C}_2)\rtimes \mathcal{S}_3, \\
G_{7,4,k}&=\langle \ta_k,\la_k,\cb,\be_k\rangle&\hspace{-1cm} 
&\cong \mathcal{S}_4&\hspace{-1cm} 
&\cong (\mathcal{C}_2\times \mathcal{C}_2)\rtimes \mathcal{S}_3, \\
G_{7,5,k}&=\langle \ta_k,\la_k,\cb,\be_k,-I_3\rangle&\hspace{-1cm} 
&\cong \mathcal{S}_4\times \mathcal{C}_2&\hspace{-1cm} 
&\cong (\mathcal{C}_2\times \mathcal{C}_2\times \mathcal{C}_2)\rtimes \mathcal{S}_3
\end{align*}
where
\begin{align*}
\ta_1&=\left[\begin{array}{ccc}-1 & 0 & 0\\ 0 & -1 & 0\\ 0 & 0 & 1\end{array}\right],&
\la_1&=\left[\begin{array}{ccc}-1 & 0 & 0\\ 0 & 1 & 0\\ 0 & 0 & -1\end{array}\right],&
\be_1&=\left[\begin{array}{ccc}0 & -1 & 0\\ -1 & 0 & 0\\ 0 & 0 & 1\end{array}\right],\\
\ta_2&=\left[\begin{array}{ccc}0 & 1 & 0\\ 1 & 0 & 0\\ -1 & -1 & -1\end{array}\right],&
\la_2&=\left[\begin{array}{ccc}0 & 0 & 1\\ -1 & -1 & -1\\ 1 & 0 & 0\end{array}\right],&
\be_2&=\left[\begin{array}{ccc}1 & 0 & 0\\ 0 & 1 & 0\\ -1 & -1 & -1\end{array}\right],\\
\ta_3&=\left[\begin{array}{ccc}0 & 1 & -1\\ 1 & 0 & -1\\ 0 & 0 & -1\end{array}\right],&
\la_3&=\left[\begin{array}{ccc}0 & -1 & 1\\ 0 & -1 & 0\\ 1 & -1 & 0\end{array}\right],&
\be_3&=\left[\begin{array}{ccc}1 & 0 & -1\\ 0 & 1 & -1\\ 0 & 0 & -1\end{array}\right]
\end{align*}
and the generators $\ta_k$, $\la_k$, $\cb$, $\pm \be_k$, $1\leq k\leq 3$, and $-I_3$ satisfy 
the following relations: 
\begin{align}
\ta_k^2&=\la_k^2=\cb^3=(\pm \be_k)^2=(-I_3)^2=I_3,\nonumber\\
[\ta_k,\la_k]&=I_3,\quad [\ta_k,\cb]=\la_k\ta_k,\quad [\la_k,\cb]=\ta_k,\label{relmat7}\\
[\ta_k,\pm \be_k]&=I_3,\quad [\la_k,\pm \be_k]=\ta_k, \quad [\cb,\pm \be_k]=\cb\la_k,\nonumber\\
[\ta_k,-I_3]&=[\la_k,-I_3]=[\cb,-I_3]=[\pm \be_k,-I_3]=I_3.\nonumber
\end{align}

We modified the generators of $G_{7,5,k}$. 
The original generators in \cite{BBNWZ78} are obtained by just replacing 
$\be_k$ by $-\be_k=\be_k(-I_3)$. 


%
%
\section{Preliminaries}\label{sepre}

We prepare some lemmas, theorems and known results which we will use in the proof of 
Theorem \ref{thmain}.\\

\subsection{Rationality criterion and reduction to low degree}
~\\
\begin{theorem}[{\cite[Theorem 3.1]{AHK00}}]\label{thAHK}
Let $L(z)$ be the rational function field over a field $L$ with one variable $z$ 
and $G$ a group acting on $L(z)$. 
Suppose that, for any $\ta\in G$, $\ta(L)\subset L$ and $\ta(z)=a_\ta z+b_\ta$ for some
$a_\ta\in L^\times$ and $b_\ta\in L$. 
Then $L(z)^G=L^G$ or $L(z)^G = L^G(\Theta)$ for some polynomial $\Theta(z)\in L[z]$ 
with positive degree. 
In particular, if $L^G$ is rational over some subfield $M$, so is $L(z)^G$ over $M$.
\end{theorem}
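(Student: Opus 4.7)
The plan is to exploit that the subring $L[z] \subset L(z)$ is $G$-stable, since each $\tau \in G$ sends $z$ to the degree-one polynomial $a_\tau z + b_\tau \in L[z]$ and acts on $L$ by restriction; moreover each such $\tau$ preserves the $z$-degree of elements of $L[z]$.

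First I would show $L(z)^G = \mathrm{Frac}(L[z]^G)$. Given any $u \in L(z)^G$, write $u = f/g$ with $f, g \in L[z]$, and form the norm $E := \prod_{\sigma \in G}\sigma(g) \in L[z]$. Permuting the factors shows $E \in L[z]^G$; on the other hand the product $uE = f \cdot \prod_{\sigma\neq 1}\sigma(g)$ lies in $L[z]$, and it also lies in $L(z)^G$ (as a product of invariants), so $uE \in L[z] \cap L(z)^G = L[z]^G$. Hence $u = (uE)/E$ expresses $u$ as a quotient of two elements of $L[z]^G$.

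Second, I would describe $L[z]^G$ concretely. If $L[z]^G$ contains no polynomial of positive degree, then $L[z]^G = L^G$ and hence $L(z)^G = L^G$, giving the first alternative in the statement. Otherwise fix $\Theta \in L[z]^G$ of minimal positive degree $d$. For any $f \in L[z]^G$, Euclidean division in $L[z]$ produces $f = q\Theta + r$ with $\deg r < d$; applying an arbitrary $\tau \in G$ to this identity and invoking uniqueness of the quotient and remainder yields $\tau(q) = q$ and $\tau(r) = r$, so $q, r \in L[z]^G$. Minimality of $d$ forces $r \in L^G$, and induction on $\deg f$ then gives $L[z]^G = L^G[\Theta]$; together with the first step, $L(z)^G = L^G(\Theta)$.

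The final clause follows immediately: $\Theta$ is transcendental over $L^G$, so $L^G(\Theta)$ is a simple transcendental extension, and if $L^G = M(t_1,\ldots,t_n)$ is purely transcendental over $M$ then $L(z)^G = M(t_1,\ldots,t_n,\Theta)$ is purely transcendental over $M$. The main technical point I anticipate is the $G$-equivariance of the Euclidean-division step: deducing $\tau(q) = q$ and $\tau(r) = r$ from uniqueness of $(q,r)$ requires that $\tau$ preserve the $z$-degree of every polynomial and send $\Theta$ to $\Theta$, both of which ultimately rest on the hypothesis that each $\tau(z)$ is a polynomial of degree exactly one in $L[z]$.
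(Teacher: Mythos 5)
The paper does not prove this statement at all; it is quoted from \cite[Theorem 3.1]{AHK00} and used as a black box, so there is no internal proof to compare against. Judged on its own, your argument is the standard one and the second half is completely sound: since $a_\ta\in L^\times$ and $\ta(L)\subset L$, every $\ta$ restricts to a degree-preserving endomorphism of $L[z]$, the minimal positive-degree invariant $\Theta$ has unit leading coefficient, and uniqueness of quotient and remainder in division by $\Theta$ forces $\ta(q)=q$ and $\ta(r)=r$, whence $L[z]^G=L^G[\Theta]$ by induction on degree. The concluding rationality claim also follows as you say, since $\Theta$ is transcendental over $L^G$.

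The one point you should repair is the first step. Your proof that $L(z)^G=\mathrm{Frac}(L[z]^G)$ multiplies by the norm $E=\prod_{\si\in G}\si(g)$, which tacitly assumes $G$ is finite; the theorem as stated (and as in \cite{AHK00}, whose proof rests on Miyata's lemma \cite{Miy71}) places no finiteness hypothesis on $G$. The standard way around this is a lowest-terms argument: write an invariant $h=f/g$ with $f,g\in L[z]$ coprime and $g$ monic; each $\si$ is a degree-preserving automorphism of $L[z]$, so $\si(f),\si(g)$ are again coprime of the same degrees and $\si(g)=a_\si^{\deg g}\,g$, $\si(f)=a_\si^{\deg g}\,f$, from which one extracts genuine invariants without ever forming a product over $G$. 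For every application in the present paper $G$ is finite, so your version suffices there, but as a proof of the quoted statement the norm step is the one genuine gap.
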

\begin{theorem}\label{thInv}
Let $K$ be a field of {\rm char} $K\neq 2$ and 
$\ta$ a $K$-automorphism of $K(x,y)$ defined by
\begin{align*}
\ta(x)=-x,\quad \ta(y)=f(x^2){\rm N}_\ta(g(x))/y\quad \mathit{with}\quad f(x), g(x)\in K[x]
\end{align*}
where ${\rm N}_\ta$ is the norm map under the action of $\ta$. 
Then the fixed field $K(x,y)^{\langle\ta\rangle}$ is given by 
\[
K(x,y)^{\langle\ta\rangle}=K(X,Y,Z)\quad \mathit{with}\quad X^2-ZY^2=f(Z)
\]
where $X=(y'+f(x^2)/y')/2$, $Y=(y'-f(x^2)/y')/(2x)$ with $y'=y/g(x)$ and $Z=x^2$. 
Moreover we get:\\
{\rm (I)} When $f(x^2)$ is a constant of $K$ or quadratic in $x$, i.e. 
$f(x^2)=ax^2+b$ for $a,b\in K$ with $a\neq 0$, then $f(Z)=aZ+b$ and the fixed field 
$K(x,y)^{\langle\ta\rangle}=K(X,Y)$ is rational over $K$;\\
{\rm (II)} When $f(x^2)$ is biquadratic in $x$, i.e. $f(x^2)=ax^4+bx^2+c$ 
for $a,b,c\in K$ with $a\neq 0$, then $f(Z)=aZ^2+bZ+c$ and 
the fixed field $K(x,y)^{\langle\ta\rangle}$ is given by 
\begin{align*}
K(x,y)^{\langle\ta\rangle}=K(X,Y,Z)\quad \mathit{with}\quad X^2-ZY^2=aZ^2+bZ+c.
\end{align*}
In particular,\\
{\rm (II-i)} if $a=d^2$ is square in $K$ then $K(x,y)^{\langle\ta\rangle}=K(X+dZ,Y)$ 
is rational over $K$;\\
{\rm (II-ii)} if $c=e^2$ is square in $K$ then $K(x,y)^{\langle\ta\rangle}$ is also 
rational over $K$. 
\end{theorem}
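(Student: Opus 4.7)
First I would verify that $\tau$ has order two, which forces $N_\tau(g(x)) = g(x)g(-x)$. The key preliminary move is the substitution $y' = y/g(x)$, which preserves $K(x,y) = K(x,y')$ and simplifies the action to
\[
\tau(x) = -x, \qquad \tau(y') = f(x^2)/y',
\]
absorbing the norm factor. A direct computation then shows that $X$, $Y$, $Z$, rewritten in $x$ and $y'$, are each $\tau$-fixed and satisfy $X^2 - ZY^2 = f(Z)$.

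\textbf{Field equality.} Next I would prove $K(x,y)^{\langle\tau\rangle} = K(X,Y,Z)$ by a degree count. From $y' = X + xY$ and $x^2 = Z$, we get $K(x,y') = K(X,Y,Z)(x)$ with $x$ satisfying $x^2 - Z = 0$. Since $\mathrm{char}\,K \neq 2$ and $K(X,Y,Z)$ is pointwise $\tau$-fixed while $\tau(x) = -x \neq x$, we have $x \notin K(X,Y,Z)$, so $[K(x,y'):K(X,Y,Z)] = 2$. As $[K(x,y'):K(x,y')^{\langle\tau\rangle}] = 2$ also, this forces $K(X,Y,Z) = K(x,y)^{\langle\tau\rangle}$.

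\textbf{Rational resolution of the relation.} What remains is to show $K(X,Y,Z)$ with $X^2 - ZY^2 = f(Z)$ is $K$-rational in each sub-case. In (I), the relation $X^2 - ZY^2 = aZ + b$ is linear in $Z$, giving $Z = (X^2 - b)/(Y^2 + a)$ and hence $K(X,Y,Z) = K(X,Y)$. In (II-i), the substitution $X' = X + dZ$ turns $(X - dZ)(X + dZ) = ZY^2 + bZ + c$ into an equation linear in $Z$, solvable in terms of $X'$ and $Y$, so $K(X,Y,Z) = K(X',Y)$. In (II-ii), the factorization $(X-e)(X+e) = Z(aZ + Y^2 + b)$ invites setting $W = (X-e)/Z$; substituting $X = e + WZ$ yields $Z(W^2 - a) = Y^2 + b - 2eW$, which solves linearly for $Z$ and hence for $X$, so $K(X,Y,Z) = K(W,Y)$. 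The only step requiring real judgement is selecting these rational parametrizations: each corresponds to picking a $K$-rational point on the conic $X^2 - ZY^2 = f(Z)$ (viewed as a curve in the variables $X$, $Y$ over $K(Z)$), which is exactly what the hypotheses $a = d^2$ and $c = e^2$ provide.
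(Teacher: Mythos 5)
Your proof is correct, and for the main claim and cases (I) and (II-i) it follows essentially the same route as the paper: substitute $y'=y/g(x)$, check the relation $X^2-ZY^2=f(Z)$, identify the fixed field by a degree count, and then solve the relation linearly for $Z$ (resp.\ linearly for $t=X-dZ$ after writing $X^2-d^2Z^2=(X-dZ)(X+dZ)$). Two points differ. First, your degree count is organized differently: the paper shows $[K(x,y'):K(X,Z)]=4$ and $Y\notin K(X,Z)$, whereas you observe $y'=X+xY$, $x^2=Z$, so that $K(x,y')=K(X,Y,Z)(x)$ is at most quadratic over $K(X,Y,Z)$ and $x$ is moved by $\ta$; both are valid, and yours is slightly more direct. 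Second, and more substantively, in case (II-ii) the paper does not parametrize the surface directly: it reduces to case (II-i) by the change of variables $p:=1/x$, $q:=y'/x^2$, under which the action becomes $p\mapsto -p$, $q\mapsto (cp^4+bp^2+a)/q$, i.e.\ the roles of the leading and constant coefficients are swapped and the hypothesis $c=e^2$ becomes the square-leading-coefficient hypothesis of (II-i). You instead use the rational point supplied by $c=e^2$ to parametrize directly, setting $W=(X-e)/Z$ and solving $Z(W^2-a)=Y^2+b-2eW$ linearly for $Z$. Your argument is self-contained and avoids revisiting the generators $X,Y,Z$ for the new coordinates, at the cost of the (routine, but worth stating) verification that $W^2-a\neq 0$ in the function field, which follows since otherwise $Y$ would be algebraic over $K$. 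The paper's reduction buys a one-line proof of (II-ii) given (II-i) and keeps the normal form $X^2-ZY^2=f(Z)$ intact for reuse elsewhere in the paper.
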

\begin{proof}
Put $y':=y/g(x)$. 
Then $K(x,y)=K(x,y')$ and the action of $\ta$ is given by 
$\ta(\delta)=-\delta$, $\ta(y')=f(x^2)/y'$. 
It follows from $[K(x,y') : K(X,Z)]=4$ and $Y\not\in K(X,Z)$ 
that $K(x,y')^{\langle\ta\rangle}=K(X,Y,Z)$. 
The equality $X^2-ZY^2=f(Z)$ can be obtained by the definition of $X$, $Y$ and $Z$ directly. 

(I) If $f(Z)=aZ+b$ then $Z=(X^2-b)/(Y^2+a)$ and hence 
$K(x,y)^{\langle\ta\rangle}=K(X,Y,Z)=K(X,Y)$. 

(II-i) If $a=d^2$ then $X^2-aZ^2=(b+Y^2)Z+c$ is rewritten as $st=(b+Y^2)\frac{s-t}{2d}+c$ 
where $s=X+dZ$, $t=X-dZ$, so that $t\in K(s,Y)$ and 
$K(x,y)^{\langle\ta\rangle}=K(X,Y,Z)=K(s,t,Y)=K(s,Y)=K(X+dZ,Y)$. 

(II-ii) If $c=e^2$ then by putting $p:=1/x$, $q:=y'/x^2$, the action of $\ta$ on 
$K(p,q)=K(x,y')=K(x,y)$ is given by $\ta\,:\, p\mapsto -p$, $q\mapsto (cp^4+bp^2+a)/q$. 
Hence the assertion follows from (II-i). \\
\end{proof}
%

\subsection{Explicit transcendental bases}
~\\
\begin{theorem}[{\cite[Lemma 2.7]{HK94}}, {\cite[Theorem 2.4]{Kan04}}]\label{thab}
Let $K$ be any field. 
Let $-I_2\in\mathrm{GL}(2,\bZ)$ act on $K(x,y)$ by 
\begin{align*}
-I_2\,:\, x\ \mapsto\ \frac{a}{x},\ y\ \mapsto\ \frac{b}{y},\quad a\in K^\times
\end{align*}
where $b=c(x+(a/x))+d$ such that $c,d\in K$ and at least one of $c$ and $d$ 
is non-zero. 
Then $K(x,y)^{\langle \ta\rangle}=K(u_1,u_2)$ where
\begin{align*} 
u_1=\frac{x-\frac{a}{x}}{xy-\frac{ab}{xy}}=\frac{y(x^2-a)}{x^2y^2-ab},\quad 
u_2=\frac{y-\frac{b}{y}}{xy-\frac{ab}{xy}}=\frac{x(y^2-b)}{x^2y^2-ab}.
\end{align*}
\end{theorem}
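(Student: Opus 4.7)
The strategy is to establish the inclusion $K(u_1,u_2)\subseteq K(x,y)^{\langle\ta\rangle}$ by a direct invariance check and then to prove $[K(x,y):K(u_1,u_2)]\leq 2$; since $[K(x,y):K(x,y)^{\langle\ta\rangle}]=2$, these two facts together force equality. First, $\ta(b)=b$ because $\ta(x+a/x)=a/x+x$, so $\ta$ is a well-defined involution. A direct computation using
\begin{align*}
\ta(x^2-a)=-\frac{a(x^2-a)}{x^2},\qquad \ta(y)=\frac{b}{y},\qquad \ta(x^2y^2-ab)=-\frac{ab(x^2y^2-ab)}{x^2y^2}
\end{align*}
then yields $\ta(u_1)=u_1$, and the symmetric calculation (swapping the roles of $x,y$ and $a,b$) gives $\ta(u_2)=u_2$.

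For the degree bound, the starting point is the pair of identities
\begin{align*}
x(1-u_1y)=au_2,\qquad y(1-u_2x)=bu_1,
\end{align*}
which follow directly from the formulas for $u_1,u_2$; indeed $1-u_1y=a(y^2-b)/(x^2y^2-ab)$, and multiplication by $x$ gives $au_2$. Substituting $y=bu_1/(1-u_2x)$ from the second identity into the first and clearing denominators eliminates $y$ and produces
\begin{align*}
u_2x^2-(1+au_2^2-u_1^2b)\,x+au_2=0,
\end{align*}
i.e., $x^2-\sigma x+a=0$ with $\sigma=(1+au_2^2-u_1^2b)/u_2\in K(x,y)^{\langle\ta\rangle}$. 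Since $x$ also satisfies $x^2-sx+a=0$ with $s=x+a/x\in K(x,y)^{\langle\ta\rangle}$, and since $x\notin K(x,y)^{\langle\ta\rangle}$ (because $\ta(x)=a/x\neq x$), these two monic quadratics over the fixed field share the root $x$ and must coincide, yielding the crucial relation $su_2+u_1^2b=1+au_2^2$.

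Invoking the hypothesis $b=cs+d$ and substituting turns this into $s(u_2+cu_1^2)=1+au_2^2-du_1^2$, and provided $u_2+cu_1^2\neq 0$ in $K(x,y)$, we obtain
\begin{align*}
s=\frac{1+au_2^2-du_1^2}{u_2+cu_1^2}\in K(u_1,u_2),
\end{align*}
whence $b=cs+d\in K(u_1,u_2)$ as well. The nondegeneracy $u_2+cu_1^2\neq 0$ is checked by clearing the common denominator $(x^2y^2-ab)^2$: the resulting numerator, viewed as a polynomial in $y$, has leading coefficient $x^3\neq 0$. Consequently $T^2-sT+a\in K(u_1,u_2)[T]$ has $x$ as a root, forcing $[K(u_1,u_2)(x):K(u_1,u_2)]\leq 2$, and $y=bu_1/(1-u_2x)\in K(u_1,u_2)(x)$ shows $K(x,y)=K(u_1,u_2)(x)$, completing the degree bound. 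The main delicate point is the comparison of the two quadratics for $x$, which requires verifying both that $\sigma$ lies in the fixed field and that $x$ itself does not; once this is settled, the remainder of the argument is a sequence of routine substitutions.
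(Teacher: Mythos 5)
Your proof is correct. Note that the paper itself offers no proof of this statement --- it is quoted from Hajja--Kang and Kang --- so there is nothing internal to compare against; judged on its own, your argument is complete and the standard one for results of this type. The two identities $x(1-u_1y)=au_2$ and $y(1-u_2x)=bu_1$ check out, the elimination does yield $u_2x^2-(1+au_2^2-bu_1^2)x+au_2=0$, and the comparison with $x^2-sx+a=0$ is legitimate because $\sigma$ lies in the fixed field (here one uses $\ta(b)=b$ and $u_2\neq 0$, both of which you have) while $x$ does not; the resulting relation $su_2+bu_1^2=1+au_2^2$ can also be confirmed by direct expansion. The hypothesis that $c,d$ are not both zero enters exactly where you use it, in solving $s(u_2+cu_1^2)=1+au_2^2-du_1^2$ for $s$, and your nondegeneracy check (the numerator of $u_2+cu_1^2$ has leading $y$-coefficient $x^3$) is valid since $b\in K(x)$ and $y$ is transcendental over $K(x)$. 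Two minor points worth making explicit if you write this up: $y=bu_1/(1-u_2x)$ requires $1-u_2x\neq 0$, which holds because $1-u_2x=b(x^2-a)/(x^2y^2-ab)$ and $b\neq 0$ precisely by the hypothesis on $c,d$; and the appeal to $[K(x,y):K(x,y)^{\langle\ta\rangle}]=2$ uses that $\ta$ has order exactly $2$, which again relies on $\ta(b)=b$.
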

\begin{lemma}[{\cite[page 1176]{HHR08}}]\label{lemaa}
Let $K$ be a field of {\rm char} $K\neq 2$ and $-I_2\in\mathrm{GL}(2,\bZ)$ act on $K(x,y)$ by 
\begin{align*}
-I_2\,:\, x\ \mapsto\ \frac{a}{x},\ y\ \mapsto\ \frac{a}{y},\quad a\in K^\times. 
\end{align*}
Then $K(x,y)^{\langle \ta\rangle}=K(t_1,t_2)$ where
\begin{align*} 
t_1=\frac{xy+a}{x+y},\quad t_2=\frac{xy-a}{x-y}.
\end{align*}
\end{lemma}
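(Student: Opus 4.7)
The plan is to verify the claimed equality by exhibiting $x$ as a root of an explicit quadratic polynomial over $K(t_1,t_2)$ and then invoking the tower law, since the extension $K(x,y)/K(x,y)^{\langle\tau\rangle}$ is already known to have degree $2$.

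First I would check invariance directly: substituting $\tau(x)=a/x$ and $\tau(y)=a/y$ gives
\[
\tau(t_1)=\frac{a^2/(xy)+a}{a/x+a/y}=\frac{xy+a}{x+y}=t_1,\qquad
\tau(t_2)=\frac{a^2/(xy)-a}{a/x-a/y}=\frac{xy-a}{x-y}=t_2,
\]
so $K(t_1,t_2)\subseteq K(x,y)^{\langle\tau\rangle}$. Next I would derive a quadratic relation for $x$. Clearing denominators in the definitions yields the two linear equations $t_1(x+y)=xy+a$ and $t_2(x-y)=xy-a$ in the unknown $y$; solving each produces the identity
\[
\frac{t_1 x-a}{x-t_1}=y=\frac{t_2 x+a}{x+t_2},
\]
and cross-multiplying gives
\[
(t_1-t_2)\,x^2+2(t_1 t_2-a)\,x+a(t_1-t_2)=0.
\]

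A short computation shows that $t_1-t_2=2x(a-y^2)/(x^2-y^2)$ as an element of $K(x,y)$, which is nonzero (here char $K\neq 2$ is used to ensure $2\neq 0$). Hence the quadratic above is nondegenerate, so $[K(t_1,t_2)(x):K(t_1,t_2)]\leq 2$; since $y$ is a rational function of $x$ and $t_1$, we also have $K(x,y)=K(t_1,t_2)(x)$, giving $[K(x,y):K(t_1,t_2)]\leq 2$. Combining this with $K(t_1,t_2)\subseteq K(x,y)^{\langle\tau\rangle}\subseteq K(x,y)$ and $[K(x,y):K(x,y)^{\langle\tau\rangle}]=2$ forces $K(x,y)^{\langle\tau\rangle}=K(t_1,t_2)$; algebraic independence of $t_1,t_2$ over $K$ is automatic from finiteness of the extension degree.

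There is no serious obstacle here: the whole argument is a degree count once one has guessed the generators. The only point to watch is the nondegeneracy of the derived quadratic, i.e.\ that $t_1\neq t_2$ in $K(x,y)$, which is where the hypothesis char $K\neq 2$ enters.
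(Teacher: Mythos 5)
Your argument is correct and complete. Note that the paper itself gives no proof of this lemma: it is quoted verbatim from \cite[page 1176]{HHR08}, so there is no internal argument to compare against. Your route --- verify $\tau$-invariance of $t_1,t_2$, solve the two defining relations for $y$ to get the quadratic $(t_1-t_2)x^2+2(t_1t_2-a)x+a(t_1-t_2)=0$, check nondegeneracy via $t_1-t_2=2x(a-y^2)/(x^2-y^2)\neq 0$ (which is exactly where char $K\neq 2$ enters, since $t_1=t_2$ in characteristic $2$), observe $K(x,y)=K(t_1,t_2)(x)$ because $y=(t_1x-a)/(x-t_1)$, and finish by the degree count against $[K(x,y):K(x,y)^{\langle\tau\rangle}]=2$ --- is the standard and expected verification. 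The only step worth making explicit is that $\tau$ acts faithfully (e.g.\ $\tau(x)=a/x\neq x$), which is what guarantees $[K(x,y):K(x,y)^{\langle\tau\rangle}]=2$ by Artin's theorem; with that remark added, the proof is airtight.
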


A non-$2$-group $G\subset \mathrm{GL}(3,\bZ)$ 
has a subgroup either $G_{5,1,2}=\langle\ca\rangle$ or $G_{5,1,1}=\langle\cb\rangle$ of order $3$, 
and the corresponding actions of $\ca$ and of $\cb$ on $K(x,y,z)$ are given respectively by
\begin{align*}
\ca\,&:\, x\ \mapsto\ ay,\ y\mapsto\ \frac{b}{xy},\ z\mapsto\ cz,\quad a,b,c\in K^\times,\\
\cb\,&:\, x\mapsto ay,\ y\mapsto bz,\ z\mapsto cx,\quad a,b,c\in K^\times.
\end{align*}

For $\ca$ (resp. $\cb$), we may assume that $a=1$ (resp. $a=b=c=1$) after replacing 
$ay$ by $y$ (resp. $(ay,abz)$ by $(y,z)$). 
In the case of $\ca$, we also see $c^3=1$ because $\ca^3=I_3$. 

In order to investigate the fixed field $K(x,y,z)^G$ with $G_{5,1,1}=\langle\cb\rangle\subset G$, 
we recall the following formula which is given by Masuda \cite{Mas55} when char 
$K\neq 3$ (this formula is valid also for the case of char $K=3$, 
see e.g. \cite{HK10}, \cite{Kun55}). 
\begin{lemma}\label{lemMas}
Let $K$ be any field and $G_{5,1,1}=\langle\cb\rangle$ act on $K(x,y,z)$ by 
\[
\cb\,:\, x\ \mapsto\ y\ \mapsto\ z\ \mapsto\ x.
\]
Then $K(x,y,z)^{G_{5,1,1}}=K(x,y,z)^{\langle \cb\rangle}=K(s_1,u,v)$ where 
\begin{align*}
s_1&:=\,s_1(x,y,z)\,=\,x+y+z,\\
u&:=\,u(x,y,z)\,=\,\frac{xy^2+yz^2+zx^2-3xyz}{x^2+y^2+z^2-xy-yz-zx},\\
v&:=\,v(x,y,z)\,=\,\frac{x^2y+y^2z+z^2x-3xyz}{x^2+y^2+z^2-xy-yz-zx}.
\end{align*}
\end{lemma}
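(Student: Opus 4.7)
The plan is the classical ``invariants plus degree bound'' argument. Since $\cb$ cyclically permutes $x, y, z$, the denominator $D := x^2+y^2+z^2 - xy - yz - zx$ is symmetric, and each of the numerators $xy^2 + yz^2 + zx^2$, $x^2y + y^2z + z^2x$ is carried to itself by $\cb$. Hence $s_1, u, v$ all lie in $F := K(x,y,z)^{\langle \cb\rangle}$, so $K(s_1, u, v) \subseteq F$. As $[K(x,y,z):F]=3$, it suffices to establish the reverse bound $[K(x,y,z) : K(s_1, u, v)] \leq 3$.

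For the bound, I would show that the elementary symmetric polynomials $e_1, e_2, e_3$ of $x, y, z$ all lie in $K(s_1, u, v)$; once this is done, $x, y, z$ are the three roots of the cubic $t^3 - e_1 t^2 + e_2 t - e_3 \in K(s_1, u, v)[t]$, giving the degree bound and hence $K(s_1, u, v) = F$. Of course $e_1 = s_1$. For $e_2, e_3$, I would first compute by direct symmetric-function manipulation that
\begin{align*}
u+v = \frac{e_1 e_2 - 9 e_3}{e_1^2 - 3 e_2}, \qquad uv = \frac{e_2^3 + e_1^3 e_3 - 9 e_1 e_2 e_3 + 27 e_3^2}{(e_1^2 - 3 e_2)^2},
\end{align*}
and then invert these.

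In characteristic $\neq 3$, the inversion becomes painless after applying Masuda's shift $x \mapsto x - s_1/3$ (and similarly for $y, z$), which preserves $D$ and merely shifts $u, v$ by $s_1/3$ each. This reduces to the centered case $s_1 = 0$ where the formulas collapse via $u^3 + v^3 = (u+v)(u^2-uv+v^2)$ to $e_2 = -3(u^2 - uv + v^2)$ and $e_3 = -(u^3+v^3)$. Undoing the shift produces the characteristic-independent identities
\begin{align*}
e_2 = s_1(u+v) - 3(u^2 - uv + v^2), \qquad e_3 = s_1\, uv - (u^3 + v^3),
\end{align*}
which, once cleared of the denominator $D$, are polynomial identities over $\bZ$ and hence hold in any characteristic, including $3$. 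The only real obstacle is producing these closed-form inverses; Masuda's shift makes the derivation short, and the final identities are easily verified directly, after which the degree bound and the conclusion $K(s_1, u, v) = F$ follow immediately.
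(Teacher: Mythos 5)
The paper does not actually prove this lemma; it quotes it from Masuda \cite{Mas55} (with the characteristic-$3$ case referred to \cite{HK10}, \cite{Kun55}), so there is no in-paper argument to compare against. Your strategy is the natural one, and your two inversion identities are correct — in fact $e_2=s_1(u+v)-3(u^2-uv+v^2)$ and $e_3=s_1uv-(u^3+v^3)$ appear verbatim later in the paper (in the treatment of $G_{7,1,1}$, quoted from \cite[Theorem 2.2]{HK10}) — and your remark that they become polynomial identities over $\bZ$ after clearing the denominator, hence hold in characteristic $3$ as well, is the right way to handle that case.

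There is, however, one genuine gap at the final step. Knowing that $e_1,e_2,e_3\in K(s_1,u,v)$ and that $x,y,z$ are the roots of $t^3-e_1t^2+e_2t-e_3$ only bounds $[K(x,y,z):K(s_1,u,v)]$ by $3!=6$ (a splitting field of a cubic can have degree $6$), not by $3$ as you assert; and the bound $6$ is exactly what you would get if $K(s_1,u,v)$ were the strictly smaller field $K(e_1,e_2,e_3)=K(x,y,z)^{\mathcal{S}_3}$, so the argument as written does not rule that out. The repair is one line via the Galois correspondence: your identities give $K(e_1,e_2,e_3)\subseteq K(s_1,u,v)\subseteq K(x,y,z)^{\langle\cb\rangle}$, and since $[\mathcal{S}_3:\langle\cb\rangle]=2$ there is no field strictly between the two ends; $K(s_1,u,v)$ could equal the bottom field only if $u$ were fixed by all of $\mathcal{S}_3$, but the transposition interchanging $x$ and $y$ sends $u$ to $v$, and $u-v=-(x-y)(y-z)(z-x)/(x^2+y^2+z^2-xy-yz-zx)\neq 0$. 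Hence $K(s_1,u,v)=K(x,y,z)^{\langle\cb\rangle}$. With that observation added, your proof is complete and valid over any field.
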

Note that $u(y,x,z)=v(x,y,z)$. 
In order to study the fixed field $K(x,y,z)^G$ with $G_{5,1,2}=\langle \ca\rangle\subset G$, 
we prepare the following lemma using Lemma \ref{lemMas}: 
\begin{lemma}\label{lemMas2}
Let $K$ be any field and $\si$ a $K$-automorphism on $K(x,y)$ of order $3$ defined by 
\[
\si\,:\, x\ \mapsto\ y\ \mapsto\ \frac{b}{xy}\ \mapsto\ x,\quad b\in K^\times.
\]
Then $K(x,y)^{\langle \si\rangle}=K(\tu(b),\tv(b))$ where 
\begin{align*}
\tu(b)
=\frac{y\left(y^3x^3+bx^3-3byx^2+b^2\right)}{y^2x^4-y^3x^3+y^4x^2-byx^2-by^2x+b^2},\\
\tv(b)
=\frac{x\left(x^3y^3+by^3-3bxy^2+b^2\right)}{y^2x^4-y^3x^3+y^4x^2-byx^2-by^2x+b^2}.
\end{align*}
\end{lemma}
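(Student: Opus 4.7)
The plan is to reduce to Masuda's formula (Lemma \ref{lemMas}) by introducing the auxiliary variable $z := b/(xy)$. Since $K(x,y) = K(x,y,z)$ with $xyz = b$, and $\si(z) = b/(y \cdot \si(y)) = b/(yz) = x$, the action $\si$ coincides with the cyclic permutation $\cb : x \mapsto y \mapsto z \mapsto x$ on $K(x,y,z)$. Hence $K(x,y)^{\langle\si\rangle} = K(x,y,z)^{\langle\cb\rangle} = K(s_1, u, v)$ by Masuda. Substituting $z = b/(xy)$ into the explicit formulas for $u, v$ of Lemma \ref{lemMas} and clearing powers of $z^{-1}$ by multiplying numerator and denominator by $x^2 y^2$, a direct computation yields $u = \tu(b)$ and $v = \tv(b)$, while $s_1$ becomes $\ts = x + y + b/(xy)$. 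Therefore
\[
K(x,y)^{\langle\si\rangle} = K(\ts, \tu(b), \tv(b)).
\]

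The remaining step is to show $\ts \in K(\tu(b), \tv(b))$. I would use the images $e_1 = \ts$, $e_2 = xy + b/x + b/y$ and $e_3 = b$ of the elementary symmetric functions of $x, y, z$. Writing $A = xy^2 + yz^2 + zx^2$, $B = x^2y + y^2z + z^2x$, the identities $A + B = e_1 e_2 - 3 e_3$ and $(A-B)^2 = ((x-y)(y-z)(z-x))^2 = e_1^2 e_2^2 - 4 e_2^3 - 4 e_1^3 e_3 + 18 e_1 e_2 e_3 - 27 e_3^2$, combined with $4AB = (A+B)^2 - (A-B)^2$, yield after specialization $e_3 = b$
\[
\tu(b) + \tv(b) = \frac{e_1 e_2 - 9 b}{e_1^2 - 3 e_2}, \quad
\tu(b) \tv(b) = \frac{e_2^3 + e_1^3 b - 9 e_1 e_2 b + 27 b^2}{(e_1^2 - 3 e_2)^2}.
\]
Setting $p := \tu(b) + \tv(b)$ and $q := \tu(b) \tv(b)$, the first identity gives $e_2 = (p e_1^2 + 9 b)/(e_1 + 3 p)$. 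Substituting this into the second and clearing denominators produces a polynomial identity in $e_1$ in which the factor $(e_1^3 - 27 b)^2$ cancels on both sides, collapsing everything to the linear relation $q(e_1 + 3p) = p^3 + b$. Using $(u+v)^3 - 3(u+v)uv = u^3 + v^3$, we conclude
\[
\ts = \frac{\tu(b)^3 + \tv(b)^3 + b}{\tu(b) \tv(b)},
\]
so $\ts \in K(\tu(b), \tv(b))$, which completes the proof.

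The main obstacle is the cancellation of $(e_1^3 - 27 b)^2$ during the elimination of $e_2$: generically one would expect only an algebraic relation for $\ts$ over $K(\tu(b), \tv(b))$, and it is precisely this rather striking cancellation that produces a \emph{rational} expression for $\ts$ and thereby reduces the number of generators from three to two. The rest of the argument is a routine substitution and clearing of denominators.
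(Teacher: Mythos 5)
Your second step---showing $\ts=x+y+b/(xy)$ lies in $K(\tu(b),\tv(b))$ via the relation $\ts=(\tu(b)^3+\tv(b)^3+b)/(\tu(b)\,\tv(b))$---is correct and is exactly the relation the paper uses to pass from three invariants to two; the ``striking cancellation'' of $(e_1^3-27b)^2$ that worries you is nothing more than the standard symmetric-function identity $s_3=s_1uv-(u^3+v^3)$ (quoted in the paper from \cite[Theorem 2.2]{HK10}), which gives the relation in one line and spares you the elimination of $e_2$. The genuine gap is in your first step. Lemma \ref{lemMas} is a statement about the rational function field in \emph{three independent variables}; once you set $z:=b/(xy)$ the three elements satisfy $xyz=b$, so the lemma does not apply, and the equality $K(x,y)^{\langle\si\rangle}=K(\ts,\tu(b),\tv(b))$ does not follow from it. What you get for free is only the inclusion $\supseteq$, since $\ts,\tu(b),\tv(b)$ are visibly $\si$-invariant. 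Generators of an invariant field need not specialize to generators of the invariant field along a stable hypersurface (in degenerate situations they can even specialize to constants), so you must still bound $[K(x,y):K(\ts,\tu(b),\tv(b))]$ by $3$; without that, the reverse inclusion is unproved.

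This bound is precisely where the paper does its work: it exhibits the explicit relations
\[
\frac{y^3-\ts y^2-b}{y}=-\frac{\ts^2\bigl(\tu+\tv\bigr)+9b}{\ts+3\bigl(\tu+\tv\bigr)},
\qquad
x=\frac{y^2-\ts y-2\tu y+\tv y+\ts\tu}{\tu+\tv-2y},
\]
the first showing that $y$ satisfies an explicit cubic over $K(\ts,\tu,\tv)$ and the second that $x$ is rational in $y$ over that field, whence $[K(x,y):K(\ts,\tu,\tv)]\le 3$; since $[K(x,y):K(x,y)^{\langle\si\rangle}]=3$, equality follows. To repair your argument you would need either to supply such relations (equivalently, to check that Masuda's inversion formulas expressing $x,y,z$ in terms of $s_1,u,v$ remain valid and non-degenerate after substituting $z=b/(xy)$), or to give an honest specialization argument (e.g.\ trivial inertia at the $\si$-stable hypersurface $xyz=b$, with care in characteristic $3$ since the lemma is asserted over any field). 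As written, the appeal to Masuda's formula carries no such justification.
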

\begin{proof}
Put 
\begin{align*}
\ts(b):=s_1\Bigl(x,y,\frac{b}{xy}\Bigr)\,=\,\frac{yx^2+y^2x+b}{xy},\quad 
\tu(b):=u\Bigl(x,y,\frac{b}{xy}\Bigr),\quad 
\tv(b):=v\Bigl(x,y,\frac{b}{xy}\Bigr)
\end{align*}
where $s_1(x,y,z)$, $u(x,y,z)$ and $v(x,y,z)$ are given as in Lemma \ref{lemMas}. 
We write $(\ts,\tu,\tv)=(\ts(b),\tu(b),\tv(b))$ for simplicity. 
Then $\ts,\tu,\tv$ are $\si$-invariants and hence $K(x,y)^{\langle \si\rangle}\supset 
K(\ts,\tu,\tv)$. 
We also see $[K(x,y)\,:\,K(\ts,\tu,\tv)]\leq 3$ 
from the equalities
\begin{align*}
\frac{y^3-\ts y^2-b}{y}=-\frac{\ts^2\bigl(\tu+\tv\bigr)+9b}
{\ts+3\bigl(\tu+\tv\bigr)},\quad 
x=\frac{y^2-\ts y-2\tu y+\tv y+\ts\tu}{\tu+\tv-2y}.
\end{align*}
Hence $K(x,y)^{\langle \si\rangle}=K(\ts,\tu,\tv)$. 
The assertion follows from $\ts=(\tu^3+\tv^3+b)/(\tu\,\tv)$. 
\end{proof}
\begin{lemma}\label{lemV4}
Let $K$ be any field. 
Let $\ta$ and $\la$ be $K$-automorphisms on $K(x,y,z,w)$ defined by 
\begin{align*}
\ta\,&:\, x\ \mapsto\ y\ \mapsto\ x,\ z\ \mapsto\ w\ \mapsto z,\\
\la\,&:\, x \mapsto\ z\ \mapsto\ x,\ y\ \mapsto\ w\ \mapsto y. 
\end{align*}
Then $K(x,y,z,w)^{\langle \ta,\la\rangle}=K(v_0,v_1,v_2,v_3)$ where 
\begin{align*}
v_0&=x+y+z+w,\\
v_1&=\frac{x+y-z-w}{xy-zw},\quad 
v_2=\frac{x-y-z+w}{xw-yz},\quad 
v_3=\frac{x-y+z-w}{xz-yw}. 
\end{align*}
\end{lemma}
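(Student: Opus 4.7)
The plan is to prove the containment $K(v_0,v_1,v_2,v_3) \subseteq K(x,y,z,w)^{\langle\ta,\la\rangle}$ by a direct invariance check, and the reverse containment by bounding $[K(x,y,z,w):K(v_0,v_1,v_2,v_3)]\le 4$. Since $[K(x,y,z,w):K(x,y,z,w)^{\langle\ta,\la\rangle}]=|\langle\ta,\la\rangle|=4$ by Galois theory, these together will force $K(v_0,v_1,v_2,v_3)=K(x,y,z,w)^{\langle\ta,\la\rangle}$.

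The invariance is immediate. The group $\langle\ta,\la\rangle$ is isomorphic to the Klein four-group $V_4$ and has four characters, one trivial and three sign characters; for each $i\ge 1$ the numerator and denominator of $v_i$ both transform by the same nontrivial character, so their ratio is fixed, while $v_0$ is the totally symmetric linear form. I would verify this by applying $\ta$ and $\la$ to each $v_i$ and observing the sign cancellation.

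For the degree bound, introduce the four semi-invariants
\[
\alpha_0=x+y+z+w,\quad \alpha_1=x+y-z-w,\quad \alpha_2=x-y-z+w,\quad \alpha_3=x-y+z-w
\]
together with $\beta_1=xy-zw$, $\beta_2=xw-yz$, $\beta_3=xz-yw$, so that $\alpha_0=v_0$ and $\alpha_i=v_i\beta_i$ for $i=1,2,3$. Assuming $\mathrm{char}\,K\ne 2$ (as the paper does throughout), the formulae $4x=\alpha_0+\alpha_1+\alpha_2+\alpha_3$ and their analogues give $K(x,y,z,w)=K(\alpha_0,\alpha_1,\alpha_2,\alpha_3)$. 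A direct expansion using $(p+q)^2-(p-q)^2=4pq$ three times establishes the key polynomial identities
\[
4\beta_i = \alpha_0\alpha_i-\alpha_j\alpha_k,\qquad \{i,j,k\}=\{1,2,3\},
\]
and substituting $\alpha_i=v_i\beta_i$ and $\alpha_0=v_0$ converts these into
\[
(4-v_0v_i)\,\alpha_i = -v_i\,\alpha_j\alpha_k,\qquad \{i,j,k\}=\{1,2,3\}.
\]
Multiplying the equations for $i=2$ and $i=3$ and cancelling $\alpha_2\alpha_3$ yields
\[
\alpha_1^2 = \frac{(4-v_0v_2)(4-v_0v_3)}{v_2v_3}\in K(v_0,v_1,v_2,v_3),
\]
and cyclically $\alpha_2^2,\alpha_3^2\in K(v_0,v_1,v_2,v_3)$. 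The $i=3$ equation then expresses $\alpha_3$ as a rational function of $\alpha_1,\alpha_2$ over $K(v_0,v_1,v_2,v_3)$, so $K(x,y,z,w)=K(v_0,v_1,v_2,v_3)(\alpha_1,\alpha_2)$ has degree at most $4$ over $K(v_0,v_1,v_2,v_3)$, completing the proof.

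The main conceptual step is selecting the right four semi-invariants $\alpha_i$, one per character of $V_4$, so that the identities $4\beta_i=\alpha_0\alpha_i-\alpha_j\alpha_k$ come out cleanly; once these are in hand, the three equations $(4-v_0v_i)\alpha_i=-v_i\alpha_j\alpha_k$ decouple by pairwise multiplication into Kummer-like quadratic data, and the degree-four bound is automatic. All other steps are formal field theory.
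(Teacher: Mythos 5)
Your proof is correct, and it takes a genuinely different route from the paper's. The paper also reduces to the degree bound $[K(x,y,z,w):K(v_0,v_1,v_2,v_3)]\leq 4$, but it gets there by passing to $u_i=1/v_i$, expressing the elementary symmetric functions $r_1,\dots,r_4$ of $x,y,z,w$ as polynomials in $u_0$ and the elementary symmetric functions $s_1,s_2,s_3$ of $u_1,u_2,u_3$, and then dividing the degrees $[K(x,y,z,w):K(r_1,\dots,r_4)]=24$ and $[K(u_0,\dots,u_3):K(u_0,s_1,s_2,s_3)]=6$. Your argument instead exploits the character decomposition of the $\langle\ta,\la\rangle$-action on linear forms: the identities $4\beta_i=\alpha_0\alpha_i-\alpha_j\alpha_k$ do check out (each is an instance of $(p+q)^2-(p-q)^2=4pq$ applied twice), and the resulting relations $(4-v_0v_i)\alpha_i=-v_i\alpha_j\alpha_k$ correctly yield $\alpha_1^2,\alpha_2^2\in K(v_0,v_1,v_2,v_3)$ and $\alpha_3\in K(v_0,v_1,v_2,v_3)(\alpha_1,\alpha_2)$, whence the bound. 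Your approach is more structural and gives extra information for free --- it exhibits $K(x,y,z,w)$ explicitly as a biquadratic extension of the fixed field and provides the inverse formulas --- whereas the paper's symmetric-function computation is shorter and works uniformly. The one point where you fall short of the statement as written is the hypothesis: the lemma claims ``any field,'' and your reconstruction $4x=\alpha_0+\alpha_1+\alpha_2+\alpha_3$ (and the identities with the factor $4$) genuinely needs $\mathrm{char}\,K\neq 2$, while the paper's containment-plus-degree-count argument is characteristic-free. Since every application in the paper assumes $\mathrm{char}\,K\neq 2$, this is a minor restriction, but you should flag it rather than bury it in a parenthesis.
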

\begin{proof}
We put $(u_0,u_1,u_2,u_3):=(v_0,1/v_1,1/v_2,1/v_3)$. 
We should show that $K(x,y,z,w)^{\langle \ta,\la\rangle}=K(u_0,u_1,u_2,u_3)$. 
It follows from the definition that 
$K(x,y,z,w)^{\langle \ta,\la\rangle}\supset K(u_0,u_1,u_2,u_3)$. 
Take the elementary symmetric functions $s_1$, $s_2$, $s_3$ in $u_1$, $u_2$, $u_3$: 
\[
(s_1,s_2,s_3):=(u_1+u_2+u_3,u_1u_2+u_2u_3+u_1u_3,u_1u_2u_3).
\]
Then the elementary symmetric functions $r_1$, $r_2$, $r_3$, $r_4$ in $x$, $y$, $z$, $w$ are 
written as 
\[
r_1=u_0,\ r_2=s_1u_0-2s_2,\ r_3=s_2u_0-8s_3,\ r_4=s_2^2-4s_1s_3+s_3u_0,
\]
so that $K(r_1,r_2,r_3,r_4)\subset K(u_0,s_1,s_2,s_3)$. 
Since $[K(x,y,z,w) : K(r_1,r_2,r_3,r_4)]=24$ and $[K(u_0,u_1,u_2,u_3) : K(u_0,s_1,s_2,s_3)]=6$, 
we get $[K(x,y,z,w) : K(u_0,u_1,u_2,u_3)\leq 4$. 
\end{proof}
\begin{lemma}\label{lemV42}
Let $K$ be any field and $G_{3,1,4}=\langle\ta_3,\la_3\rangle$ act on $K(x,y,z)$ by 
\begin{align*}
\ta_3\,&:\, x\mapsto\ y\ \mapsto x,\ z\mapsto\ \frac{c}{xyz}\ \mapsto z,\\
\la_3\,&:\, x\mapsto\ z\ \mapsto x,\ y\mapsto\ \frac{c}{xyz}\ \mapsto y,\quad c\in K^\times. 
\end{align*}
Define $w=c/(xyz)$. 
Then $K(x,y,z)^{G_{3,1,4}}=K(x,y,z)^{\langle \ta_3,\la_3\rangle}=K(v_1,v_2,v_3)$ where 
\begin{align*}
v_1\ &=\ \frac{x+y-z-w}{xy-zw}\ =\ \frac{c-xyz(x+y-z)}{z(c-x^2y^2)},\\
v_2\ &=\ \frac{x-y-z+w}{xw-yz}\ =\ \frac{c-xyz(-x+y+z)}{x(c-y^2z^2)},\\
v_3\ &=\ \frac{x-y+z-w}{xz-yw}\ =\ \frac{c-xyz(x-y+z)}{y(c-x^2z^2)}.
\end{align*}
\end{lemma}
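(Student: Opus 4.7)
The plan is to reduce Lemma~\ref{lemV42} to Lemma~\ref{lemV4}. Under the substitution $w := c/(xyz)$, a direct check shows that $\ta_3$ and $\la_3$ act on the four-element set $\{x,y,z,w\}$ as the double transpositions $(xy)(zw)$ and $(xz)(yw)$ respectively, so $\langle \ta_3, \la_3\rangle$ acts on $K(x,y,z) = K(x,y,z,w)$ in exactly the way the Klein four-group does in Lemma~\ref{lemV4}. Consequently $v_1, v_2, v_3$ coincide with the invariants of that lemma and lie in $K(x,y,z)^{G_{3,1,4}}$; the two stated forms of each $v_i$ are matched by substituting $w = c/(xyz)$ and clearing denominators.

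Since $|G_{3,1,4}| = 4$, by Galois theory it suffices to establish the degree bound $[K(x,y,z) : K(v_1, v_2, v_3)] \leq 4$. Set $v_0 := x+y+z+w$ and let $s_i$ denote the $i$-th elementary symmetric polynomial in $1/v_1, 1/v_2, 1/v_3$. The identities recorded inside the proof of Lemma~\ref{lemV4},
\begin{align*}
r_1 = v_0,\quad r_2 = s_1 v_0 - 2 s_2,\quad r_3 = s_2 v_0 - 8 s_3,\quad r_4 = s_2^2 - 4 s_1 s_3 + s_3 v_0,
\end{align*}
express the elementary symmetric polynomials $r_j$ of $x,y,z,w$ in terms of $v_0, s_1, s_2, s_3$. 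The decisive feature of the present setting is $r_4 = xyzw = c \in K$; the fourth identity then yields
\begin{align*}
v_0 \,=\, \frac{c - s_2^2 + 4 s_1 s_3}{s_3} \,\in\, K(s_1, s_2, s_3) \,\subset\, K(v_1, v_2, v_3),
\end{align*}
so $r_1, r_2, r_3$ all lie in $K(v_1,v_2,v_3)$ and $x$ is a root of $T^4 - r_1 T^3 + r_2 T^2 - r_3 T + c \in K(v_1,v_2,v_3)[T]$. In particular $[K(v_1,v_2,v_3)(x) : K(v_1,v_2,v_3)] \leq 4$.

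To close, I would show $K(v_1, v_2, v_3, x) = K(x,y,z)$ by rationally recovering $y, z, w$ from $x$ and the $v_i$. Substituting $zw = c/(xy)$ into the defining relation $v_1(xy - zw) = x+y-z-w$ produces a quadratic in $y$ over $K(x, v_1)$; combining it with the analogous quadratic coming from $v_3$ cancels the $y^2$ term and yields $y \in K(x, v_1, v_2, v_3)$. Then $z, w$ are the two roots of $T^2 - (r_1 - x - y)T + c/(xy) = 0$, separated by $v_2$, and so also lie in $K(x, y, v_1, v_2, v_3)$. The main obstacle is exactly this final descent; a cleaner but less elementary alternative is to observe that the $V_4$-action on $\{x,y,z,w\}$ is free (the stabilizer of $x$ is trivial), so adjoining $x$ to any subfield containing the $V_4$-invariants must already recover the entire field. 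Either route gives $[K(x,y,z) : K(v_1, v_2, v_3)] \leq 4$, completing the proof.
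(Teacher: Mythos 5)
Your opening moves are correct and coincide with the heart of the paper's proof: the identification of $\langle\ta_3,\la_3\rangle$ with the Klein four-group permuting $\{x,y,z,w\}$, and the use of $r_4=xyzw=c$ in the identity $r_4=s_2^2-4s_1s_3+s_3v_0$ to get $v_0\in K(v_1,v_2,v_3)$ — your formula $v_0=(c-s_2^2+4s_1s_3)/s_3$ is exactly the paper's displayed expression for $v_0$ in terms of $v_1,v_2,v_3$. The gap is in the closing degree bound. Showing that $x$ satisfies a quartic over $K(v_1,v_2,v_3)$ only gives $[K(v_1,v_2,v_3)(x):K(v_1,v_2,v_3)]\le 4$; you still must show $K(v_1,v_2,v_3)(x)=K(x,y,z)$, and neither of your suggestions does so. The explicit route fails as described: the $v_1$-relation does become a quadratic in $y$ over $K(x,v_0,v_1)$ after substituting $z+w=v_0-x-y$ and $zw=c/(xy)$, but the $v_3$-relation $v_3(xz-yw)=x-y+z-w$ involves $z-w$, which is not determined by $z+w$ and $zw$, so it cannot be rewritten as a second quadratic in $y$ over $K(x,v_1,v_2,v_3)$ and there is nothing to cancel the $y^2$ term against. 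The ``cleaner alternative'' is circular: freeness of the action shows $F(x)=K(x,y,z)$ for $F:=K(x,y,z)^{V_4}$, but to transfer this to $K(v_1,v_2,v_3)(x)$ you would need $K(v_1,v_2,v_3)\supseteq F$, which is precisely the statement being proved; if $K(v_1,v_2,v_3)$ were the fixed field of a strictly larger permutation group, adjoining $x$ would produce a degree-$\le 4$ extension strictly smaller than $K(x,y,z)$.

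The paper closes the argument simply by citing Lemma \ref{lemV4} to get $K(x,y,z)^{\langle\ta_3,\la_3\rangle}=K(x,y,z,w)^{\langle\ta_3,\la_3\rangle}=K(v_0,v_1,v_2,v_3)$ and then eliminating $v_0$ via the identity you already derived. If you prefer not to take the applicability of Lemma \ref{lemV4} to the specialized situation $w=c/(xyz)$ on faith, note that its degree count survives: $K(x,y,z)$ is the splitting field of the separable quartic $T^4-r_1T^3+r_2T^2-r_3T+c$ over $K(r_1,r_2,r_3)$, so has degree at most $24$ there, while $[K(v_0,v_1,v_2,v_3):K(v_0,s_1,s_2,s_3)]=6$ because $v_1,v_2,v_3$ remain algebraically independent (this uses $v_0\in K(v_1,v_2,v_3)$ and the fact that $K(v_1,v_2,v_3)\supseteq K(r_1,r_2,r_3)$ has transcendence degree $3$); dividing gives the required bound of $4$. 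Alternatively, your Galois-theoretic idea can be repaired by showing that no element of $S_4$ outside $V_4$ fixes all of $v_1,v_2,v_3$ — for instance the transposition $(x\,y)$ fixes $v_1$ but interchanges $v_2$ and $v_3$ — which identifies $K(v_1,v_2,v_3)$ as exactly the $V_4$-fixed field.
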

\begin{proof}
By Lemma \ref{lemV4}, we see $K(x,y,z)^{\langle \ta_3,\la_3\rangle}
=K(x,y,z,w)^{\langle \ta_3,\la_3\rangle}=K(v_0,v_1,v_2,v_3)$ 
where $v_0=x+y+z+w$. 
By the definition of $v_0,v_1,v_2,v_3$ and the equality $xyzw=c$, 
we get 
\begin{align*}
v_0=\frac{2(v_1v_2+v_2v_3+v_1v_3)-(v_1^2+v_2^2+v_3^2)+cv_1^2v_2^2v_3^2}{v_1v_2v_3}.
\end{align*}
Hence the assertion follows. 
\end{proof}

We need suitable generators $K(x,y,z)^G$ over $K$ 
not only for $G_{3,1,4}=\langle\ta_3,\la_3\rangle$ 
but also for $G=\langle\ta_3\rangle$. 
We note that $\langle\ta_3\rangle$ is conjugate to $G_{2,1,2}=\langle-\al\rangle$ 
in $\mathrm{GL}(3,\bZ)$. 
\begin{lemma}\label{lemc2t}
Let $K$ be a field of {\rm char} $K\neq 2$ and $\langle\ta_3\rangle$ act on $K(x,y,z)$ by 
\begin{align*}
\ta_3\,:\, x\ \mapsto\ y\ \mapsto\ x,\quad z\ \mapsto\ \frac{c}{xyz}\ \mapsto\ z,\quad 
c\in K^\times. 
\end{align*}
Then $K(x,y,z)^{\langle \ta_3\rangle}=K(t_1,t_2,t_3)$ where
\begin{align*} 
t_1=\frac{xy}{x+y},\quad t_2=\frac{xyz+\frac{c}{z}}{x+y},\quad t_3=\frac{xyz-\frac{c}{z}}{x-y}.
\end{align*}
\end{lemma}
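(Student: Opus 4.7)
The plan is to verify invariance of $t_1, t_2, t_3$ directly, and then show $[K(x,y,z) : K(t_1, t_2, t_3)] \leq 2$; combined with the inclusion $K(t_1, t_2, t_3) \subseteq K(x,y,z)^{\langle \ta_3\rangle}$ and $|\langle \ta_3\rangle| = 2$, this forces equality.

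For the invariance check, I would first set $w := c/(xyz)$, so that $\ta_3$ swaps $x \leftrightarrow y$ and $z \leftrightarrow w$. Rewriting
\[
t_1 = \frac{xy}{x+y}, \qquad t_2 = \frac{xy(z+w)}{x+y}, \qquad t_3 = \frac{xy(z-w)}{x-y}
\]
makes the $\ta_3$-invariance immediate, as the sign changes of $x-y$ and $z-w$ cancel in $t_3$.

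For the degree bound, I would introduce $s := x+y$ and $p := xy$, and record the elementary consequences
\[
p = t_1 s, \qquad z+w = \frac{t_2}{t_1}, \qquad zw = \frac{c}{p}, \qquad z-w = \frac{t_3(x-y)}{p}.
\]
Squaring the last relation and using $(x-y)^2 = s^2 - 4p$ together with $(z-w)^2 = (z+w)^2 - 4zw$, then substituting $p = t_1 s$ and clearing denominators, one obtains a linear equation for $s$, namely
\[
s(t_3^2 - t_2^2) = 4 t_1 (t_3^2 - c),
\]
so $s$, and therefore $p = t_1 s$, both lie in $K(t_1, t_2, t_3)$. Then $x$ is a root of the quadratic $X^2 - sX + p \in K(t_1,t_2,t_3)[X]$; together with $y = s - x$ and $z = \frac{1}{2}((z+w) + (z-w))$ (here I use char $K \neq 2$), this places $y$ and $z$ in $K(t_1, t_2, t_3)(x)$. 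Hence $K(x,y,z) = K(t_1, t_2, t_3)(x)$, which gives the desired degree bound.

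The one non-routine step is extracting the linear equation for $s$ from the $t_3^2$ identity, where I would be most careful about sign conventions and about the non-vanishing of $t_3^2 - t_2^2$. An alternative, more conceptual route would be to apply Theorem \ref{thInv} with base field $L = K(s, p)$, variables $(u, z)$ where $u = x - y$, and constant polynomial $f \equiv c/p \in L$; case (I) of that theorem then yields $K(x,y,z)^{\langle \ta_3\rangle} = L(X, Y)$ for the explicit $X, Y$ produced there, and matching these with rational functions of $t_1, t_2, t_3$ reduces to essentially the same computation.
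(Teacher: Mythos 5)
Your proof is correct, but it takes a different route from the paper's. The paper does not verify the generators directly: it first substitutes $X=c/(yz)$, $Y=xz$, $Z=1/x$, under which $\ta_3$ becomes $X\mapsto c/X$, $Y\mapsto c/Y$, $Z\mapsto (XY/c)Z$; it then quotes Lemma \ref{lemaa} to get generators $t_1',t_2'$ of $K(X,Y)^{\langle\ta_3\rangle}$, adjoins the trace $t_3'=Z+XYZ/c$ (which generates the $Z$-direction since $Z=t_3'/(1+XY/c)$), and finally observes that $t_1',t_2',t_3'$ are exactly $1/t_2,1/t_3,1/t_1$. That approach explains where the generators come from and costs almost no computation beyond the change of variables. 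Your approach is a self-contained degree count: after checking invariance via $w=c/(xyz)$, you derive the relation $s(t_3^2-t_2^2)=4t_1(t_3^2-c)$ for $s=x+y$ (which I have checked: squaring $z-w=t_3(x-y)/p$ and using $(z-w)^2=(z+w)^2-4zw$ with $p=t_1s$ gives exactly this), recover $s$, $p$, and hence $x,y,z$ up to one quadratic, and conclude by Artin's theorem. This buys an explicit algebraic relation among $s,t_1,t_2,t_3$ and avoids invoking Lemma \ref{lemaa}, at the price of having to produce the generators ex nihilo. The two points you flag do check out: the sign bookkeeping is consistent, and $t_2^2\neq t_3^2$ because $t_2=\pm t_3$ would force $wx=zy$ or $wy=zx$, i.e. $c=y^2z^2$ or $c=x^2z^2$, which is impossible in $K(x,y,z)$; you should include this one-line verification, since the division by $t_3^2-t_2^2$ is the only place the argument could silently fail.
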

\begin{proof}
Put 
\[
X:=\frac{c}{yz},\quad Y:=xz,\quad Z:=\frac{1}{x}.
\]
Then $K(x,y,z)=K(X,Y,Z)$ and the action of $\ta_3$ on $K(X,Y,Z)$ is given by 
\[
\ta_3\,:\, X\ \mapsto\ \frac{c}{X},\ Y\ \mapsto\ \frac{c}{Y},\ Z\ \mapsto\ \frac{XYZ}{c}.
\]
Using Lemma \ref{lemaa}, we see $K(X,Y,Z)^{\langle\ta_3\rangle}
=K(X,Y,t_3')^{\langle\ta_3\rangle}=K(X,Y)^{\langle\ta_3\rangle}(t_3')=K(t_1',t_2',t_3')$ where 
\begin{align*}
t_1':=\frac{XY+c}{X+Y},\quad t_2':=\frac{XY-c}{X-Y},\quad t_3':=Z+\frac{XYZ}{c}. 
\end{align*}
It is easy to see that $t_1'=1/t_2$, $t_2'=1/t_3$, $t_3'=1/t_1$. 
\end{proof}
\begin{theorem}[{\cite[Theorem 3.2]{HK97}}]\label{thHK97}
Let $K$ be a field of {\rm char} $K\neq 2$. 
If $\mathcal{S}_n$ acts on $K(x_1,\ldots,x_n)$ by either 
\begin{align*}
\mathrm{(I)}\quad 
\ta(x_i):=\begin{cases}x_{\ta(i)},\quad \mathrm{if}\ \ta\in\mathcal{A}_n,\\
1/x_{\ta(i)},\ \mathrm{if}\ \ta\in\mathcal{S}_n\backslash\mathcal{A}_n,\end{cases}
\mathrm{or}\quad 
\mathrm{(II)}\quad 
\ta(x_i):=\begin{cases}x_{\ta(i)},\quad \mathrm{if}\ \ta\in\mathcal{A}_n,\\
-x_{\ta(i)},\ \mathrm{if}\ \ta\in\mathcal{S}_n\backslash\mathcal{A}_n,\end{cases}
\end{align*}
then the fixed field $K(x_1,\ldots,x_n)^{\mathcal{S}_n}$ is rational over $K$. 
\end{theorem}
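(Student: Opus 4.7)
My plan is first to reduce case (I) to case (II) via an auxiliary substitution, and then to prove case (II) by choosing variables on which $\mathcal{S}_n$ acts by pure permutation.

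For case (I), I would use the substitution $u_i := (x_i - 1)/(x_i + 1)$, which is an invertible $K$-algebra automorphism of $K(x_1,\ldots,x_n)$ when $\mathrm{char}\, K \neq 2$, with inverse $x_i = (1+u_i)/(1-u_i)$. A direct check shows that $x_i \mapsto 1/x_i$ corresponds to $u_i \mapsto -u_i$, so the case-(I) action on the $x_i$'s coincides with the case-(II) action on the $u_i$'s. It thus suffices to prove the theorem for case (II).

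For case (II), I plan to set $s_1 := x_1+\cdots+x_n$ and $y_i := x_i/s_1$ for $1\leq i\leq n$, so that $y_1+\cdots+y_n=1$ and $K(x_1,\ldots,x_n) = K(y_1,\ldots,y_n)(s_1)$ with $s_1$ transcendental over $K(y_1,\ldots,y_n)$. The key observation is that the $\mathrm{sgn}(\tau)$ factors in numerator and denominator of $y_i = x_i/s_1$ cancel for both even and odd $\tau$, so $\tau(y_i) = y_{\tau(i)}$ (pure permutation) while $\tau(s_1) = \mathrm{sgn}(\tau)\, s_1$. Setting $V(y) := \prod_{i<j}(y_i-y_j)$, both $s_1^2$ and $w := V(y)\, s_1$ are then case-(II) invariants, the latter because the two $\mathrm{sgn}(\tau)$ factors cancel.

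Since $\mathcal{S}_n$ permutes $y_1,\ldots,y_n$ under the relation $e_1(y)=1$, the standard symmetric-functions argument yields $K(y_1,\ldots,y_n)^{\mathcal{S}_n} = K(e_2(y),\ldots,e_n(y))$, and one checks (via an index count on $[K(x):K(x)^{\langle -I\rangle}]=2$) that $K(x)^{\langle -I\rangle} = K(y_1,\ldots,y_n,s_1^2)$, where $-I$ denotes the simultaneous negation $x_i \mapsto -x_i$. Combining these, the field $F_0 := K(e_2(y),\ldots,e_n(y),s_1^2)$ is the fixed field of $\mathcal{S}_n \times \langle -I\rangle$ (of order $2n!$), while the case-(II) $\mathcal{S}_n$ embeds as the diagonal subgroup of index $2$, so $F := K(x)^{\mathcal{S}_n} = F_0[w]$ with $[F:F_0]=2$. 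The defining relation is
\begin{equation*}
w^2 \;=\; V(y)^2\, s_1^2 \;=\; D\bigl(e_2(y),\ldots,e_n(y)\bigr)\cdot s_1^2,
\end{equation*}
where $D$ is the discriminant polynomial of $T^n - T^{n-1} + e_2T^{n-2} - \cdots + (-1)^n e_n$.

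Solving this relation gives $s_1^2 = w^2/D$, so $s_1^2$ lies in $K(e_2(y),\ldots,e_n(y),w)$ and hence $F = K(e_2(y),\ldots,e_n(y),w)$. These $n$ generators are algebraically independent over $K$ (the $e_k(y)$ contribute $n-1$ such elements, and $w$ is transcendental over $K(e_2(y),\ldots,e_n(y))$ because its square equals $D\cdot s_1^2$ with $s_1^2$ transcendental over $K(y_1,\ldots,y_n)$), so $F$ is purely transcendental of degree $n$ over $K$. The main obstacle is spotting the right auxiliary variables: once one sees that dividing by $s_1$ converts the sign-twisted $\mathcal{S}_n$-action into a pure permutation of the $y_i$, the remaining steps---the index count and the algebraic-independence verification---are routine.
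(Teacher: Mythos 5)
This statement is quoted in the paper from \cite[Theorem 3.2]{HK97} and is not proved there, so there is no internal proof to compare against; your argument has to stand on its own, and it does. The reduction of (I) to (II) via $u_i=(x_i-1)/(x_i+1)$ is exactly the kind of M\"obius change of variables the paper uses elsewhere (e.g.\ in Theorems \ref{thaaa} and \ref{thmex}), and it is correct in characteristic $\neq 2$. For case (II), passing to $y_i=x_i/s_1$ does kill the sign twist, $\tau(s_1)=\mathrm{sgn}(\tau)s_1$ makes $w=V(y)s_1$ and $s_1^2$ invariant, and the chain $K(x)\supset K(y_1,\dots,y_n,s_1^2)\supset K(e_2,\dots,e_n,s_1^2)=F_0$ together with the index-$2$ count for the diagonal copy of the twisted $\mathcal{S}_n$ inside $\mathcal{S}_n^{\mathrm{perm}}\times\langle -I\rangle$ gives $F=F_0(w)$. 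Eliminating $s_1^2=w^2/D$ (legitimate since the $y_i$ are pairwise distinct, so $D=V(y)^2\neq 0$) and the transcendence-degree count finish the proof. The only steps you gloss over are routine and true: faithfulness of the permutation action on the $y_i$ (needed for $K(y)^{\mathcal{S}_n}=K(e_2,\dots,e_n)$ under the constraint $e_1(y)=1$), and the fact that adjoining the $\mathcal{S}_n$-fixed transcendental $s_1^2$ commutes with taking invariants (Theorem \ref{thAHK} covers this). Your route --- projectivizing to untwist the sign, then descending through an index-$2$ overgroup --- is a clean, self-contained alternative to the linear-representation argument of Hajja--Kang.
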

\begin{theorem}[{\cite[Theorem 1.2]{HK10}}]\label{thHK1}
Let $K$ be any field and $G_{5,3,1}=\langle\cb,-\al\rangle\cong\mathcal{S}_3$ act on 
$K(x,y,z)$ by 
\begin{align*}
\cb\ :\ &\ x\ \mapsto\ y,\ y\mapsto\ z,\ z\ \mapsto\ x,\\
-\al\ :\ &\ x\ \mapsto\ \frac{a}{y},\ y\mapsto\ \frac{a}{x},\ z\ \mapsto\ \frac{a}{z},
\quad a\in K^\times. 
\end{align*}
Then the fixed field $K(x,y,z)^{\langle\cb,-\al\rangle}$ is rational over $K$. 
Moreover we have 
\begin{align*}
K(x,y,z)^{\langle\cb,-\al\rangle}=
K\Bigl(\frac{U(s_3^2-a^3)}{u(s_3^2U-a^4)},\frac{s_3(U-a)}{s_3^2U-a^4},\frac{v}{u}\Bigr)
\end{align*}
where $s_3=xyz$, $U=u^2-uv+v^2$ and $u,v$ are given as in Lemma \ref{lemMas}. 
\end{theorem}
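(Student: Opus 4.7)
The plan is to reduce the problem in two steps: first descend to the $\langle\cb\rangle$-fixed field by Masuda's formula, then analyze the induced $-\al$-action. By Lemma \ref{lemMas}, $K(x,y,z)^{\langle\cb\rangle}=K(s_1,u,v)$, where $s_1=x+y+z$. Since $s_3=xyz$ is also $\cb$-invariant, it lies in this field. I will first establish the identity
\[
s_3 = s_1 uv - (u^3+v^3),
\]
which may be derived from the Masuda relations $(u+v)P=s_1s_2-9s_3$ and $UP=s_2^2-3s_1s_3$ (where $P=s_1^2-3s_2$ and $U=u^2-uv+v^2$) by eliminating $s_2$. Solving for $s_1$ yields $s_1=(s_3+u^3+v^3)/(uv)$, so that $K(x,y,z)^{\langle\cb\rangle}=K(s_3,u,v)$.

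Next I compute the action of $-\al$ on $s_3,u,v$. Immediately $-\al(s_3)=a^3/s_3$. For $u$, decompose $-\al$ as the composition of the swap $x\leftrightarrow y$ with $\phi\,:\,x\mapsto a/x,\,y\mapsto a/y,\,z\mapsto a/z$. A direct computation gives
\[
\phi(u) \,=\, \frac{a^{3}\,v\,P/s_3^{2}}{a^{2}\,P(xy,yz,xz)/s_3^{2}} \,=\, \frac{av\,P}{s_2^{2}-3s_1s_3},
\]
and applying the key identity $UP=s_2^{2}-3s_1s_3$ together with the swap $u\leftrightarrow v$ induced by $x\leftrightarrow y$ gives $-\al(u)=au/U$ and symmetrically $-\al(v)=av/U$. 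Consequently $C=v/u$ is $\langle\cb,-\al\rangle$-invariant, while $U\mapsto a^{2}/U$ and $s_3\mapsto a^{3}/s_3$. A routine substitution then verifies that the given $A$ and $B$ are also $-\al$-invariant.

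It remains to show $K(A,B,C)=K(s_3,u,v)^{\langle-\al\rangle}$; the inclusion $\subseteq$ is immediate. For the reverse, I solve the defining equations. The relation from $B$ is quadratic in $s_3$, while the relation from $A$ determines $s_3^{2}$ in terms of $U,u,A$. Eliminating $s_3^{2}$ from the two relations cancels the factor $(U-a)$ and yields the \emph{rational} expression
\[
s_3 \,=\, \frac{Ba^{3}}{1-Au}.
\]
Substituting this back and using $U=u^{2}(1-C+C^{2})$ with $V:=1-C+C^{2}$ produces the quadratic
\[
AV u^{2} + \bigl[V(B^{2}a^{3}-1)-A^{2}a\bigr]u + Aa \,=\, 0,
\]
so $[K(s_3,u,v):K(A,B,C)]\le 2$. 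Since $-\al$ acts nontrivially on $K(s_3,u,v)$, the fixed subfield $K(s_3,u,v)^{\langle-\al\rangle}$ has index exactly $2$, forcing $K(A,B,C)=K(s_3,u,v)^{\langle-\al\rangle}=K(x,y,z)^{\langle\cb,-\al\rangle}$.

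The technical backbone is the classical symmetric-function identity $UP=s_2^{2}-3s_1s_3$, which powers both the presentation $K(s_1,u,v)=K(s_3,u,v)$ and the formula $-\al(u)=au/U$; once it is verified by direct expansion (or by checking that both sides agree as polynomials in $s_1,s_2,s_3$ modulo the defining relations between $u,v$ and the elementary symmetric functions), the remaining steps are bookkeeping.
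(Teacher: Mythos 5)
Your argument is correct. Note that the paper itself gives no proof of this statement --- it is quoted verbatim from Hoshi--Kang \cite{HK10}, so there is nothing in-text to compare against; what you have written is a self-contained derivation of the cited result. Your route is the natural one and (as far as one can tell) parallels the source: descend to $K(x,y,z)^{\langle\cb\rangle}=K(s_1,u,v)$ by Lemma \ref{lemMas}, replace $s_1$ by $s_3$ via the identity $s_3=s_1uv-(u^3+v^3)$ (which this paper itself records in Section \ref{se71}, citing \cite[Theorem 2.2]{HK10}), compute the induced action $-\al\,:\,u\mapsto au/U$, $v\mapsto av/U$, $s_3\mapsto a^3/s_3$, check invariance of the three proposed generators, and close with a degree count. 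I verified the key points: the factorization $-\al=\sigma\circ\phi$ with $\sigma$ inducing $u\leftrightarrow v$ (this is exactly the remark $u(y,x,z)=v(x,y,z)$ following Lemma \ref{lemMas}), the identity $UP=s_2^2-3s_1s_3$ (e.g.\ numerically at $(x,y,z)=(1,2,3)$ both sides equal $13$), the $-\al$-invariance of $A$, $B$, $C$, the elimination $s_3=Ba^3/(1-Au)$ (the factor $U-a$ does cancel, and $U-a$, $Au-1$ are not identically zero), and the quadratic $AVu^2+[V(B^2a^3-1)-A^2a]u+Aa=0$ with $AV\neq 0$. Since $-\al$ acts nontrivially on $K(s_3,u,v)$, the index-$2$ squeeze gives $K(A,B,C)=K(s_3,u,v)^{\langle-\al\rangle}=K(x,y,z)^{\langle\cb,-\al\rangle}$, and rationality follows because $A,B,C$ are necessarily algebraically independent. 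No gaps.
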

\begin{theorem}[{\cite[Theorem 2.4]{HK10}}]\label{thHK2}
Let $K$ be a field of {\rm char} $K\neq 2$. 
Let $\si$ and $\ta$ be $K$-automorphisms of $K(x,y,z)$ defined by 
\begin{align*}
\si\ :\ &\ x\ \mapsto\ y,\ y\mapsto\ z,\ z\mapsto\ x,\\
\ta\ :\ &\ x\ \mapsto\ \frac{-x+y+z}{ayz},\ y\mapsto\ \frac{x+y-z}{axy},\ 
z\ \mapsto\ \frac{x-y+z}{axz},\quad a\in K^\times.
\end{align*}
Then $\langle\si,\ta\rangle\cong \mathcal{S}_3$ and the fixed field 
$K(x,y,z)^{\langle\si,\ta\rangle}$ is rational over $K$. 
\end{theorem}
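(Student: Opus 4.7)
The plan is to transform the action into a transparent form by two successive substitutions. Since $\mathrm{char}\,K\neq 2$, set $X=-x+y+z$, $Y=x+y-z$, $Z=x-y+z$, so $K(x,y,z)=K(X,Y,Z)$. A direct computation (using $yz=(X+Y)(X+Z)/4$, etc.) shows
\[
\sigma\colon X\mapsto Z\mapsto Y\mapsto X,\qquad \tau(X)=\frac{YZ}{aw},\quad \tau(Y)=\frac{XY}{aw},\quad \tau(Z)=\frac{XZ}{aw},
\]
where $w=xyz=\tfrac{1}{8}(X+Y)(Y+Z)(Z+X)$. Checking $\sigma^{3}=\tau^{2}=(\sigma\tau)^{2}=1$ is then routine, and yields the Coxeter presentation $\langle\sigma,\tau\rangle\cong\mathcal{S}_3$.

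Next I would separate the permutation and scaling parts of the action by introducing $p_1=X/Y$, $p_2=Y/Z$, $p_3=Z/X$ (so $p_1p_2p_3=1$) together with the scaling variable $X$. Then $K(x,y,z)=K(p_1,p_2,X)$, the group $\mathcal{S}_3$ permutes $\{p_1,p_2,p_3\}$ with $\sigma$ the $3$-cycle $(p_1\,p_3\,p_2)$ and $\tau$ the transposition $(p_1\,p_3)$ fixing $p_2$, and on the scaling one computes
\[
\sigma(X)=p_3 X,\qquad \tau(X)=\frac{c}{X},\quad c=\frac{8}{a(p_1+1)(p_2+1)(p_3+1)}\in K(p_1,p_2).
\]

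I would then take invariants in two stages along $1\triangleleft\langle\sigma\rangle\triangleleft\langle\sigma,\tau\rangle$. For the first stage, Lemma \ref{lemMas2} (with parameter $b=1$) shows that $K(p_1,p_2)^{\langle\sigma\rangle}$ is rational, and Theorem \ref{thAHK} applied to the linear action $\sigma(X)=p_3 X$ over $K(p_1,p_2)$ yields $K(p_1,p_2,X)^{\langle\sigma\rangle}=K(p_1,p_2)^{\langle\sigma\rangle}(T)$ with $T:=X+\sigma(X)+\sigma^{2}(X)=X(1+p_3+p_2p_3)$. For the second stage, set $e_1=p_1+p_2+p_3$ and $e_2=p_1p_2+p_2p_3+p_3p_1$, so $K(p_1,p_2)^{\mathcal{S}_3}=K(e_1,e_2)$, and $K(p_1,p_2)^{\langle\sigma\rangle}=K(e_1,e_2,\delta)$ with $\delta=(p_1-p_2)(p_2-p_3)(p_3-p_1)$ and $\tau(\delta)=-\delta$. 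The key identity, obtained by expansion and reduction via $p_1p_2p_3=1$, is
\[
T\cdot\tau(T)=c\,(1+p_3+p_2p_3)(1+p_1+p_1p_2)=\frac{8(3+e_1+e_2)}{a(2+e_1+e_2)}\in K(e_1,e_2).
\]
Since $T\tau(T)$ is constant in $\delta$, Theorem \ref{thInv}(I) applied with base $K(e_1,e_2)$, variables $(\delta,T)$, $g\equiv 1$, and $f$ equal to the constant $T\tau(T)$ gives rationality of $K(e_1,e_2,\delta,T)^{\langle\tau\rangle}$ over $K(e_1,e_2)$, and hence of $K(x,y,z)^{\langle\sigma,\tau\rangle}$ over $K$.

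The main obstacle is conceptual rather than computational: one has to guess the right coordinate change. The linear substitution $(x,y,z)\to(X,Y,Z)$ compresses $\tau$ into a Cremona-type map with a single denominator $aw$, and the subsequent split into ratios $(p_1,p_2,p_3)$ and scaling $X$ decouples the permutation part of the $\mathcal{S}_3$-action from its scalar twist. Once these are in place, the invariance $T\tau(T)\in K(e_1,e_2)$ falls out of $p_1p_2p_3=1$, and the rationality reduces to a single application of Theorem \ref{thInv}.
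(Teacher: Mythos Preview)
First, a remark: the paper does not prove this statement; it is quoted from \cite{HK10} and used as a black box, so there is no in-paper argument to compare against.

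Your reduction is correct and elegant up through the identity $T\,\tau(T)=\dfrac{8(3+e_1+e_2)}{a(2+e_1+e_2)}\in K(e_1,e_2)$. The gap is in the final step. Theorem~\ref{thInv} is formulated for a rational function field $K(x,y)$ in two \emph{algebraically independent} variables; in your application the base is $K(e_1,e_2)$ and $(x,y)=(\delta,T)$, but $\delta^{2}$ is the discriminant of $(p_1,p_2,p_3)$, hence already lies in $K(e_1,e_2)$, so $\delta$ is algebraic of degree~$2$ over the base. The mechanism of Theorem~\ref{thInv}(I) is that $Z=x^{2}$ is a \emph{new} transcendental and the relation $X^{2}-ZY^{2}=f(Z)$ lets one eliminate $Z$; here $Z=\delta^{2}\in K(e_1,e_2)$, so the relation $X^{2}-\delta^{2}Y^{2}=C$ is a genuine conic over $K(e_1,e_2)$, and your conclusion ``rationality of $K(e_1,e_2,\delta,T)^{\langle\tau\rangle}$ over $K(e_1,e_2)$'' amounts to this conic having a $K(e_1,e_2)$-rational point. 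That is not automatic, and in fact it fails: the residue of the Brauer class $(\delta^{2},4C)$ at the discriminant divisor (parametrized by the double root $p$, with $p_1=p_2=p$, $p_3=1/p^{2}$) is the class of $4C|_{p_1=p_2}\equiv\tfrac{2}{a(1+p^{2})}$ in $K(p)^{\times}/(K(p)^{\times})^{2}$, which is never trivial. Hence the conic has no $K(e_1,e_2)$-point in general, and the appeal to Theorem~\ref{thInv} is invalid.

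The theorem only asserts rationality over $K$, which is weaker and does hold, but it does not follow from your argument as written. A route that stays close to your setup: Lemma~\ref{lemMas2} gives explicit generators $K(p_1,p_2)^{\langle\sigma\rangle}=K(\tilde u,\tilde v)$, and since $\tau$ acts as a transposition on $(p_1,p_2,p_3)$ one checks $\tau:\tilde u\leftrightarrow\tilde v$. Then $B=\tilde u-\tilde v$ is honestly transcendental over $K(\tilde u+\tilde v)$ and one may attempt Theorem~\ref{thInv} on $(B,T)$; however $C$ becomes a rational function of $B^{2}$ of degree larger than~$1$, so case~(I) no longer suffices and additional work (essentially that of \cite{HK10}) is required.
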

\begin{theorem}\label{thaaa}
Let $K$ be a field of {\rm char} $K\neq 2$ and $G_{1,2,1}=\langle-I_3\rangle$ 
act on $K(x,y,z)$ by 
\begin{align*}
-I_3\,:\, x\mapsto\ \frac{a}{x},\ y\mapsto\ \frac{a}{y},\ z\ \mapsto\ \frac{a}{z},\quad 
a\in K^\times. 
\end{align*}
Then the fixed field $K(x,y,z)^{G_{1,2,1}}=K(x,y,z)^{\langle -I_3\rangle}$ is rational over $K$. 
In particular, an explicit transcendental basis of 
$K(x,y,z)^{\langle-I_3\rangle}=K(k_1,k_2,k_3)$ over $K$ is given by 
\begin{align*} 
k_1=\frac{xy+a}{x+y},\quad 
k_2=\frac{yz+a}{y+z},\quad 
k_3=\frac{xz+a}{x+z}.
\end{align*}
\end{theorem}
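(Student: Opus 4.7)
The plan is to check that each $k_i$ is invariant under $-I_3$, produce an explicit polynomial relation forcing $x$ to be algebraic of degree at most $2$ over $K(k_1,k_2,k_3)$, and then close the argument by a degree comparison. Invariance is a one-line computation: for example,
\[
-I_3(k_1) \;=\; \frac{(a/x)(a/y)+a}{(a/x)+(a/y)} \;=\; \frac{a(xy+a)/xy}{a(x+y)/xy} \;=\; k_1,
\]
and $-I_3(k_2)=k_2$, $-I_3(k_3)=k_3$ follow by the same manipulation. Hence $K(k_1,k_2,k_3)\subseteq K(x,y,z)^{\langle -I_3\rangle}$.

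Next I would eliminate $y$ and $z$. Solving $k_1(x+y)=xy+a$ for $y$ and $k_3(x+z)=xz+a$ for $z$ gives
\[
y\;=\;\frac{k_1 x-a}{x-k_1},\qquad z\;=\;\frac{k_3 x-a}{x-k_3}.
\]
Substituting these into $k_2(y+z)=yz+a$ and clearing denominators yields an identity
\[
\bigl((k_1+k_3)k_2-(k_1 k_3+a)\bigr)(x^2+a) \;=\; 2\bigl((k_1 k_3+a)k_2-a(k_1+k_3)\bigr)x
\]
in $K(x,y,z)$, whose coefficients lie in $K(k_1,k_2,k_3)[x]$. A short rearrangement of the left coefficient shows
\[
(k_1+k_3)k_2-(k_1 k_3+a) \;=\; \frac{2x(y^2-a)(z^2-a)}{(x+y)(x+z)(y+z)},
\]
which is plainly nonzero in $K(x,y,z)$, so the displayed relation really is a nontrivial quadratic equation for $x$ over $K(k_1,k_2,k_3)$.

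Combining this with the expressions for $y$ and $z$ as rational functions of $x,k_1,k_3$ gives $[K(x,y,z):K(k_1,k_2,k_3)] \leq 2$. Since $-I_3$ acts nontrivially on $K(x,y,z)$, we have $[K(x,y,z):K(x,y,z)^{\langle -I_3\rangle}]=2$, and the tower
\[
K(k_1,k_2,k_3)\;\subseteq\;K(x,y,z)^{\langle -I_3\rangle}\;\subseteq\;K(x,y,z)
\]
then forces $K(k_1,k_2,k_3)=K(x,y,z)^{\langle -I_3\rangle}$. In particular $k_1,k_2,k_3$ are algebraically independent over $K$, and the fixed field is rational over $K$ with the stated transcendental basis.

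The only non-routine step is the verification that the leading coefficient of the quadratic is nonzero; without this, one could not rule out the possibility that the derived relation is vacuous. The key factorization $(y^2-a)(z^2-a)$ makes the non-vanishing transparent, but carrying it out cleanly is the main computational burden. Everything else is a direct substitution or a standard degree count.
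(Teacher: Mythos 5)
Your proof is correct; I checked the elimination: with $y=(k_1x-a)/(x-k_1)$ and $z=(k_3x-a)/(x-k_3)$ the relation $k_2(y+z)=yz+a$ does reduce to $\bigl((k_1+k_3)k_2-(k_1k_3+a)\bigr)(x^2+a)=2\bigl((k_1k_3+a)k_2-a(k_1+k_3)\bigr)x$, and the leading coefficient does factor as $2x(y^2-a)(z^2-a)/\bigl((x+y)(x+z)(y+z)\bigr)$, which is nonzero because $x,y,z$ are independent transcendentals. Together with $K(x,y,z)=K(k_1,k_2,k_3)(x)$ and $[K(x,y,z):K(x,y,z)^{\langle -I_3\rangle}]=2$ by Artin's theorem, the degree count closes the argument, and algebraic independence of the $k_i$ follows from transcendence degree. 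However, your route is genuinely different from the paper's. The paper passes to $L=K(\sqrt{a})$ with $\mathrm{Gal}(L/K)=\langle\rho\rangle$, substitutes $X=(x+\sqrt{a})/(x-\sqrt{a})$ (and likewise $Y,Z$) to linearize $-I_3$ into $X\mapsto -X$ while $\rho$ becomes $X\mapsto 1/X$, takes the obvious invariants $XY,YZ,XZ$, and then descends to $K$ by a second Mobius change of variables; the formulas for $k_1,k_2,k_3$ drop out of the descent rather than being verified directly. That approach explains \emph{where} the generators come from, generalizes immediately to $n$ variables (as the paper's remark exploits), and is reused verbatim for Theorem 3.14; your approach is more elementary and self-contained -- no field extension, no Galois descent -- at the cost of an opaque but checkable computation whose one delicate point (non-vanishing of the leading coefficient) you correctly isolate and resolve. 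Note that your argument, unlike the paper's, genuinely uses $\mathrm{char}\,K\neq 2$ (the factor $2x(y^2-a)(z^2-a)$ dies in characteristic $2$), which is consistent with the theorem as stated but not with the paper's remark that the result also holds in characteristic $2$.
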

\begin{proof}
We take $L:=K(\sqrt{a})$ and $\mathrm{Gal}(L/K)=\langle\rho\rangle$ where 
$\rho(\sqrt{a})=-\sqrt{a}$. 
We extend the action of $-I_3$ to $L(x,y,z)$ with trivial action on $L$. 
Then we have $K(x,y,z)^{\langle -I_3\rangle}
=L^{\langle\rho\rangle}(x,y,z)^{\langle -I_3\rangle}=L(x,y,z)^{\langle -I_3,\rho\rangle}$ 
because the actions of $\rho$ on $K(x,y,z)$ and $-I_3$ on $L$ are trivial respectively. 
Put 
\[
X:=\frac{x+\sqrt{a}}{x-\sqrt{a}},\quad Y:=\frac{y+\sqrt{a}}{y-\sqrt{a}},\quad 
Z:=\frac{z+\sqrt{a}}{z-\sqrt{a}}. 
\]
Then $L(x,y,z)=K(\sqrt{a})(X,Y,Z)$ and the actions of $-I_3$ and $\rho$ on 
$K(\sqrt{a})(X,Y,Z)$ are given by
\begin{align*}
-I_3\ :\ &\ \sqrt{a}\ \mapsto\ \sqrt{a},\ X\ \mapsto\ -X,\ Y\mapsto\ -Y,\ Z\mapsto\ -Z,\\
\rho\ :\ &\ \sqrt{a}\ \mapsto\ -\sqrt{a},\ X\ \mapsto\ \frac{1}{X},\ Y\mapsto\ \frac{1}{Y},\ 
Z\mapsto\ \frac{1}{Z}.
\end{align*}
Hence we get $L(x,y,z)^{\langle -I_3\rangle}=L(u_1,u_2,u_3)$ where 
\[
u_1:=XY,\quad u_2:=YZ,\quad u_3:=XZ. 
\]
It follows that $K(x,y,z)^{\langle -I_3\rangle}
=L^{\langle\rho\rangle}(x,y,z)^{\langle -I_3\rangle}
=(L(x,y,z)^{\langle -I_3\rangle})^{\langle\rho\rangle}=L(u_1,u_2,u_3)^{\langle\rho\rangle}$. 
The action of $\rho$ on $L(u_1,u_2,u_3)=K(\sqrt{a})(u_1,u_2,u_3)$ is given by
\[
\rho\ :\ \sqrt{a}\ \mapsto\ -\sqrt{a},\ u_1\ \mapsto\ \frac{1}{u_1},\ 
u_2\ \mapsto\ \frac{1}{u_2},\ u_3\ \mapsto\ \frac{1}{u_3}.
\]
Hence we also put 
\[
k_1':=\frac{u_1+1}{u_1-1},\quad k_2':=\frac{u_2+1}{u_2-1},\quad k_3':=\frac{u_3+1}{u_3-1}.
\]
Then $L(u_1,u_2,u_3)=L(k_1',k_2',k_3')$ and the action of $\rho$ on $L(k_1',k_2',k_3')$ 
is given by $\rho\,:\, \sqrt{a}\mapsto -\sqrt{a}$, $k_i'\mapsto -k_i'$ for $1\leq i\leq 3$. 
Therefore we get 
\[
K(x,y,z)^{\langle-I_3\rangle}=L(k_1',k_2',k_3')^{\langle\rho\rangle}
=L^{\langle\rho\rangle}(\sqrt{a}\,k_1',\sqrt{a}\,k_2',\sqrt{a}\,k_3')
=K(\sqrt{a}\,k_1',\sqrt{a}\,k_2',\sqrt{a}\,k_3').
\]
It can be evaluated from the definition that $\sqrt{a}\,k_i'=k_i$ for $1\leq i\leq 3$. 
\end{proof}
\begin{remark}(i) Theorem \ref{thaaa} is also valid for a field $K$ of char $K=2$.\\
(ii) For general $n\geq 2$, 
we can get an explicit transcendental basis of $K(x_1,\ldots,x_n)^{\langle-I_n\rangle}$ 
over $K$ under the action of $-I_n : x_i\mapsto a/x_i$, $1\leq i\leq n$ by the same manner 
as above. 
Indeed we see 
\begin{align*}
K(x_1,\ldots,x_n)^{\langle-I_n\rangle}
&=K\Bigl(\frac{x_ix_n+a}{x_i+x_n},\frac{x_n^2+a}{x_n} \,\Big{|}\, 1\leq i\leq n-1\Bigr)\\
&=\begin{cases}
K\Bigl(\displaystyle{\frac{x_ix_{i+1}+a}{x_i+x_{i+1}},\frac{x_nx_1+a}{x_n+x_1}} 
\,\Big{|}\, 1\leq i\leq n-1\Bigr),\ \mathrm{if}\  n\ \mathrm{is\ odd},\vspace*{2mm}\\
K\Bigl(\displaystyle{\frac{x_ix_{i+1}+a}{x_i+x_{i+1}},\frac{x_nx_1-a}{x_n-x_1}} 
\,\Big{|}\, 1\leq i\leq n-1\Bigr),\ \mathrm{if}\ n\ \mathrm{is\ even}.
\end{cases}
\end{align*}
\end{remark}
\begin{theorem}[{\cite[Theorem 6]{Yam}}]\label{th231}
Let $K$ be a field of {\rm char} $K\neq 2$ and 
$\langle-\ta_1,\ta_1\rangle\cong\mathcal{C}_2\times\mathcal{C}_2$ act on 
$K(x,y,z)$ by 
\begin{align*}
-\ta_1\, &:\, x\ \mapsto\ \ep_1 x,\ y\mapsto\ \ep_2 y,\ z\ \mapsto\ \frac{c}{z},\\
\ta_1\, &:\, x\ \mapsto\ \frac{a}{x},\ y\mapsto\ \frac{b}{y},\ z\ \mapsto\ \ep_3 z
\end{align*}
where $a,b,c\in K^\times$ and $\ep_1,\ep_2,\ep_3=\pm 1$. 
If $\ep_3=1$ then $K(x,y,z)^{\langle-\ta_1,\ta_1\rangle}$ is rational over $K$. 
In particular, an explicit transcendental basis of $K(x,y,z)^{\langle-\ta_1,\ta_1\rangle}
=K(u_1,u_2,u_3)$ over $K$ is given by 
\begin{align*}
u_1=\begin{cases}
t_1,\hspace*{16.6mm} \mathrm{if}\ \ep_2=1,\\
\displaystyle{t_1\Bigl(z-\frac{c}{z}\Bigr)},\ \mathrm{if}\ \ep_2=-1,\\
\end{cases}\ 
u_2=\begin{cases}
t_2,\hspace*{16.6mm} \mathrm{if}\ \ep_1=1,\\
\displaystyle{t_2\Bigl(z-\frac{c}{z}\Bigr)},\ \mathrm{if}\ \ep_1=-1,
\end{cases}\ 
u_3:=z+\frac{c}{z}
\end{align*}
where
\begin{align*}
t_1=\frac{x-\frac{a}{x}}{xy-\frac{ab}{xy}}=\frac{y(x^2-a)}{x^2y^2-ab},\quad 
t_2=\frac{y-\frac{b}{y}}{xy-\frac{ab}{xy}}=\frac{x(y^2-b)}{x^2y^2-ab}.
\end{align*}
\end{theorem}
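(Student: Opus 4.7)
The plan is to build the fixed field in two stages along the subgroup chain $\langle\ta_1\rangle \triangleleft \langle\ta_1,-\ta_1\rangle$. Since $\ep_3=1$, the generator $\ta_1$ fixes $z$ and acts on $K(x,y)$ precisely as the $-I_2$-involution considered in Theorem \ref{thab} (with constants $c=0$, $d=b\neq 0$ in the notation there). Viewing $\ta_1$ as a $K(z)$-automorphism of $K(z)(x,y)$, that theorem immediately yields
\[
K(x,y,z)^{\langle\ta_1\rangle} \;=\; K(t_1,t_2,z).
\]

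Next I would compute the induced action of $-\ta_1$ on this intermediate field. Using that $-\ta_1$ fixes $x^2$ and $y^2$ while scaling $x,y$ by $\ep_1,\ep_2$, direct substitution in the closed forms for $t_1, t_2$ gives
\[
-\ta_1(t_1)=\ep_2\, t_1, \quad -\ta_1(t_2)=\ep_1\, t_2, \quad -\ta_1(z)=c/z.
\]
So the second stage reduces to computing the fixed field of an order-two $K$-auto\-morphism on $K(t_1,t_2,z)$ whose effect on each generator is either a sign or the inversion $z\mapsto c/z$.

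I would then split into the four cases for $(\ep_1,\ep_2)\in\{\pm 1\}^2$. In every case $u_3=z+c/z$ is clearly fixed, and $u_1, u_2$ are constructed from $t_1, t_2$ by multiplying with the anti-invariant $z-c/z$ whenever the corresponding sign is $-1$; invariance is then immediate from $(-\ta_1)(z-c/z)=-(z-c/z)$. To see $K(u_1,u_2,u_3)$ is the full fixed field, note that $z$ satisfies $T^2-u_3 T + c=0$ over $K(u_3)$, and in each case $t_1,t_2$ can be recovered rationally from $u_1,u_2$ and $z$ (using $(z-c/z)^2=u_3^2-4c\in K(u_3)$ to invert when needed). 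Hence $K(t_1,t_2,z)=K(u_1,u_2,u_3)(z)$ has degree at most $2$ over $K(u_1,u_2,u_3)$, which combined with the nontriviality of $-\ta_1$ on $K(t_1,t_2,z)$ forces the equality $K(u_1,u_2,u_3)=K(t_1,t_2,z)^{\langle-\ta_1\rangle}$. There is no substantive obstacle here: the structure of the problem (an involution fixing $z$ followed by one inverting $z$) fits exactly into the standard two-step reduction, and the argument is essentially a bookkeeping check of the four sign patterns combined with one quadratic degree count.
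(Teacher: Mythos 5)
Your argument is correct. Note that the paper itself does not prove this statement—it is quoted from \cite[Theorem 6]{Yam}—so there is no internal proof to compare against; but your two-step reduction (first pass to $K(t_1,t_2,z)=K(x,y,z)^{\langle\ta_1\rangle}$ via Theorem \ref{thab} applied over the base $K(z)$, then observe $-\ta_1$ acts by $t_1\mapsto\ep_2 t_1$, $t_2\mapsto\ep_1 t_2$, $z\mapsto c/z$ and split off invariants using the anti-invariant $z-c/z$) is exactly the technique the paper uses in analogous cases (cf.\ Theorem \ref{thInv}~(I) and Lemma \ref{lemaa}), and your degree count via $T^2-u_3T+c=0$ correctly closes the argument.
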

\begin{theorem}\label{thmex}
Let $K$ be a field of {\rm char} $K\neq 2$ and $G_{3,1,1}=\langle\ta_1,\la_1\rangle$ act on 
$K(x,y,z)$ by 
\begin{align*}
\ta_1\ :\ &\ x\ \mapsto\ \frac{a}{x},\ y\ \mapsto\ \frac{a}{y},\ z\ \mapsto\ z,\\
\la_1\ :\ &\ x\ \mapsto\ \frac{a}{x},\ y\ \mapsto\ y,\ z\ \mapsto\ \frac{a}{z},
\quad a\in K^\times. 
\end{align*}
Then the fixed field $K(x,y,z)^{G_{3,1,1}}$ is rational over $K$. 
In particular, an explicit transcendental basis of 
$K(x,y,z)^{\langle\ta_1,\la_1\rangle}=K(v_1,v_2,v_3)$ over $K$ is given by 
\begin{align*}
v_1=\frac{a(-x+y+z)-xyz}{a-xy-xz+yz},\quad
v_2=\frac{a(x-y+z)-xyz}{a-xy+xz-yz},\quad
v_3=\frac{a(x+y-z)-xyz}{a+xy-xz-yz}.
\end{align*}
\end{theorem}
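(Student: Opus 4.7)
The plan is to use Galois descent, mirroring the proof of Theorem \ref{thaaa}. Set $L=K(\sqrt{a})$ and $\mathrm{Gal}(L/K)=\langle \rho\rangle$ with $\rho(\sqrt{a})=-\sqrt{a}$, and extend the $G_{3,1,1}$-action to $L(x,y,z)$ trivially on $L$. Since the actions of $\rho$ on $K(x,y,z)$ and of $G_{3,1,1}$ on $L$ are both trivial, $K(x,y,z)^{G_{3,1,1}}=L(x,y,z)^{\langle \ta_1,\la_1,\rho\rangle}$. I would then pass to the coordinates $X=(x+\sqrt{a})/(x-\sqrt{a})$, $Y=(y+\sqrt{a})/(y-\sqrt{a})$, $Z=(z+\sqrt{a})/(z-\sqrt{a})$, under which the involutions $x\mapsto a/x$, $y\mapsto a/y$, $z\mapsto a/z$ become sign changes on $X$, $Y$, $Z$, and $\rho$ becomes the simultaneous inversion $X\mapsto 1/X$, $Y\mapsto 1/Y$, $Z\mapsto 1/Z$ together with $\sqrt{a}\mapsto -\sqrt{a}$. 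Explicitly, $\ta_1:(X,Y,Z)\mapsto(-X,-Y,Z)$ and $\la_1:(X,Y,Z)\mapsto(-X,Y,-Z)$.

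For the inner step I take $u_1:=YZ/X$, $u_2:=XZ/Y$, $u_3:=XY/Z$, each obviously fixed by $\ta_1$ and $\la_1$. The identities $u_2u_3=X^2$, $u_1u_3=Y^2$, $u_1u_2=Z^2$, and $u_1u_2u_3=XYZ$ imply that $X$ is quadratic over $L(u_1,u_2,u_3)$, then $Z$ is quadratic over $L(u_1,u_2,u_3,X)$, and finally $Y=u_1u_2u_3/(XZ)$ is already in the field generated; hence $[L(X,Y,Z):L(u_1,u_2,u_3)]\leq 4$. Since $|\langle \ta_1,\la_1\rangle|=4$, Galois theory furnishes the matching lower bound and yields $L(X,Y,Z)^{\langle \ta_1,\la_1\rangle}=L(u_1,u_2,u_3)$.

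Because $\rho$ sends each $u_i$ to $1/u_i$ and flips $\sqrt{a}$, the substitution $U_i:=(u_i+1)/(u_i-1)$ turns the action into $\rho(U_i)=-U_i$, so $m_i:=\sqrt{a}\,U_i=\sqrt{a}(u_i+1)/(u_i-1)$ is $\rho$-fixed and $L(u_1,u_2,u_3)^{\langle\rho\rangle}=K(m_1,m_2,m_3)$, just as in the proof of Theorem \ref{thaaa}. What remains is a direct check that $m_i=v_i$. Writing $m_1=\sqrt{a}(YZ+X)/(YZ-X)$ and substituting the definitions of $X$, $Y$, $Z$, the numerator $(y+\sqrt{a})(z+\sqrt{a})(x-\sqrt{a})+(x+\sqrt{a})(y-\sqrt{a})(z-\sqrt{a})$ collapses after cancellation of odd powers of $\sqrt{a}$ to $2(xyz+a(x-y-z))$, while the corresponding difference collapses to $-2\sqrt{a}(a-xy-xz+yz)$; dividing and then multiplying by $\sqrt{a}$ gives $m_1=v_1$, and the formulas $m_2=v_2$, $m_3=v_3$ follow by the obvious symmetry $x\leftrightarrow y$ and $x\leftrightarrow z$. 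The main obstacle is merely keeping the $\sqrt{a}$-bookkeeping straight in this last step; once that is done, the equality $K(x,y,z)^{G_{3,1,1}}=K(v_1,v_2,v_3)$ is immediate and the rationality of the fixed field is established.
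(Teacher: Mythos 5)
Your proposal is correct and follows essentially the same route as the paper's own proof: the descent to $L=K(\sqrt{a})$, the coordinates $X,Y,Z$, the invariants $u_1=YZ/X$, $u_2=XZ/Y$, $u_3=XY/Z$, and the final substitution $\sqrt{a}(u_i+1)/(u_i-1)$ are all exactly what the paper does. The only addition is that you spell out the degree count $[L(X,Y,Z):L(u_1,u_2,u_3)]\leq 4$ via $u_2u_3=X^2$, $u_1u_2=Z^2$, $u_1u_2u_3=XYZ$, a step the paper asserts without comment; your verification of $m_1=v_1$ also checks out.
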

\begin{proof}
We take $L:=K(\sqrt{a})$ and $\mathrm{Gal}(L/K)=\langle\rho\rangle$ where 
$\rho(\sqrt{a})=-\sqrt{a}$. 
We extend the action of $\langle\ta_1,\la_1\rangle$ to $L(x,y,z)$ with trivial action on $L$. 
Then we have $K(x,y,z)^{\langle\ta_1,\la_1\rangle}
=L^{\langle\rho\rangle}(x,y,z)^{\langle\ta_1,\la_1\rangle}=L(x,y,z)^{\langle\ta_1,\la_1,\rho\rangle}$ 
because the actions of $\rho$ on $K(x,y,z)$ and $\ta_1,\la_1$ on $L$ are trivial respectively. 
Put 
\[
X:=\frac{x+\sqrt{a}}{x-\sqrt{a}},\quad Y:=\frac{y+\sqrt{a}}{y-\sqrt{a}},\quad 
Z:=\frac{z+\sqrt{a}}{z-\sqrt{a}}. 
\]
Then $L(x,y,z)=K(\sqrt{a})(X,Y,Z)$ and the actions of $\ta_1,\la_1$ and $\rho$ on 
$K(\sqrt{a})(X,Y,Z)$ are given by
\begin{align*}
\ta_1\ :\ &\ \sqrt{a}\ \mapsto\ \sqrt{a},\ X\ \mapsto\ -X,\ Y\mapsto\ -Y,\ Z\mapsto\ Z,\\
\la_1\ :\ &\ \sqrt{a}\ \mapsto\ \sqrt{a},\ X\ \mapsto\ -X,\ Y\mapsto\ Y,\ Z\mapsto\ -Z,\\
\rho\ :\ &\ \sqrt{a}\ \mapsto\ -\sqrt{a},\ X\ \mapsto\ \frac{1}{X},\ Y\mapsto\ \frac{1}{Y},\ 
Z\mapsto\ \frac{1}{Z}.
\end{align*}
Hence we get $L(x,y,z)^{\langle\ta_1,\la_1\rangle}=L(u_1,u_2,u_3)$ where 
\[
u_1:=\frac{YZ}{X},\quad u_2:=\frac{XZ}{Y},\quad u_3:=\frac{XY}{Z}. 
\]
It follows that $K(x,y,z)^{\langle\ta_1,\la_1\rangle}
=L^{\langle\rho\rangle}(x,y,z)^{\langle\ta_1,\la_1\rangle}
=(L(x,y,z)^{\langle\ta_1,\la_1\rangle})^{\langle\rho\rangle}=L(u_1,u_2,u_3)^{\langle\rho\rangle}$. 
The action of $\rho$ on $L(u_1,u_2,u_3)=K(\sqrt{a})(u_1,u_2,u_3)$ is given by
\[
\rho\ :\ \sqrt{a}\ \mapsto\ -\sqrt{a},\ u_1\ \mapsto\ \frac{1}{u_1},\ 
u_2\ \mapsto\ \frac{1}{u_2},\ u_3\ \mapsto\ \frac{1}{u_3}.
\]
Hence we also put 
\[
v_1':=\frac{u_1+1}{u_1-1},\quad v_2':=\frac{u_2+1}{u_2-1},\quad v_3':=\frac{u_3+1}{u_3-1}.
\]
Then $L(u_1,u_2,u_3)=L(v_1',v_2',v_3')$ and the action of $\rho$ on $L(v_1',v_2',v_3')$ is given 
by $\rho\,:\, \sqrt{a}\mapsto -\sqrt{a}$, $v_i'\mapsto -v_i'$ for $1\leq i\leq 3$. 
Therefore we get 
\[
K(x,y,z)^{\langle\ta_1,\la_1\rangle}=L(v_1',v_2',v_3')^{\langle\rho\rangle}
=L^{\langle\rho\rangle}(\sqrt{a}\,v_1',\sqrt{a}\,v_2',\sqrt{a}\,v_3')
=K(\sqrt{a}\,v_1',\sqrt{a}\,v_2',\sqrt{a}\,v_3').
\]
It is easy to see that $\sqrt{a}v_i'=v_i$ for $1\leq i\leq 3$. 
\end{proof}
%

%
%
\section{The case of $G_{2,j,k}$}\label{se2}
%

We treat the following six groups 
$G_{2,j,k}$ of the $2$nd crystal system in dimension $3$. 
They are abelian groups of exponent $2$ which are defined as 
\begin{align*}
G_{2,1,1}&=\langle \la_1\rangle\cong\mathcal{C}_2,& 
G_{2,1,2}&=\langle -\al\rangle\cong\mathcal{C}_2,\\
G_{2,2,1}&=\langle -\la_1\rangle\cong\mathcal{C}_2,& 
G_{2,2,2}&=\langle \al\rangle\cong\mathcal{C}_2,\\
\mathcal{N}\ni
G_{2,3,1}&=\langle \la_1,-I_3\rangle\cong\mathcal{C}_2\times \mathcal{C}_2,&
G_{2,3,2}&=\langle -\al,-I_3\rangle\cong\mathcal{C}_2\times \mathcal{C}_2.
\end{align*}
The rationality problem for $G_{2,3,1}\in\mathcal{N}$ is already solved 
by Yamasaki \cite{Yam} (see Section \ref{seintro}). 
The actions of $\la_1$, $-\al$, $-\la_1$ and $\al$ on $K(x,y,z)$ are given by 
\begin{align*} 
\la_1 &: x\ \mapsto\ \frac{a}{x},\ y\ \mapsto\ by,\ z\ \mapsto\ \frac{c}{z},& 
-\al &: x\ \mapsto\ \frac{d}{y},\ y\ \mapsto\ \frac{e}{x},\ z\ \mapsto\ \frac{f}{z},\\ 
-\la_1 &: x\ \mapsto\ gx,\ y\ \mapsto\ \frac{h}{y},\ z\ \mapsto\ iz,& 
\al &: x\ \mapsto\ jy,\ y\ \mapsto\ kx,\ z\ \mapsto\ lz. 
\end{align*} 
We may assume that $j=k=1$ by replacing $jy$ by $y$ and the other coefficients. 

By the equalities $\la_1^2=(-\al)^2=(-\la_1)^2=\al^2=I_3$, 
we have $b^2=g^2=i^2=l^2=1$ and $d=e$. 
Hence the problem may be reduced to the following cases: 
\begin{align*} 
\la_1 &: x\ \mapsto\ \frac{a}{x},\ y\ \mapsto\ \ep_1 y,\ z\ \mapsto\ \frac{c}{z},& 
-\al &: x\ \mapsto\ \frac{d}{y},\ y\ \mapsto\ \frac{d}{x},\ z\ \mapsto\ \frac{f}{z},\\ 
-\la_1 &: x\ \mapsto\ \ep_2 x,\ y\ \mapsto\ \frac{h}{y},\ z\ \mapsto\ \ep_3 z,& 
\al &: x\ \mapsto\ y,\ y\ \mapsto\ x,\ z\ \mapsto\ \ep_4 z
\end{align*} 
where $a,c,d,f,h\in K^\times$ and $\ep_1,\ep_2,\ep_3,\ep_4=\pm 1$. 

By Theorem \ref{thHaj} and Theorem \ref{thAHK}, we see that 
$K(x,y,z)^{G_{2,1,1}}$ $=$ $K(x,y,z)^{\langle\la_1\rangle}$, 
$K(x,y,z)^{G_{2,2,1}}$ $=$ $K(x,y,z)^{\langle-\la_1\rangle}$ and 
$K(x,y,z)^{G_{2,2,2}}$ $=$ $K(x,y,z)^{\langle\al\rangle}$ are rational over $K$. 
Replacing $d/y$ by $y$, we also see that 
$K(x,y,z)^{G_{2,1,2}}=K(x,y,z)^{\langle-\al\rangle}$ is rational over $K$. 
For 
\begin{align*}
G_{2,3,2}=\langle -\al,-I_3\rangle=\langle -\al,\al\rangle, 
\end{align*}
we put 
\begin{align*}
t_1:=\frac{x-y}{x+y},\quad t_2:=\frac{2}{x+y},\quad t_3:=z.
\end{align*}
Then the generators $-\al$ and $\al$ of $G_{2,3,2}$ 
act on $K(t_1,t_2,t_3)=K(x,y,z)$ respectively by 
\begin{align*} 
-\al : t_1\ \mapsto\ t_1,\ t_2\ \mapsto\ \frac{1-t_1^2}{dt_2},\ t_3\ \mapsto\ \frac{f}{t_3},
\quad \al : t_1\ \mapsto\ -t_1,\ t_2\ \mapsto\ t_2,\ t_3\ \mapsto\ \ep_4 t_3. 
\end{align*}
We take 
\begin{align*}
\begin{cases}
u_1:=t_1^2,\quad u_2:=t_2,\quad u_3:=t_3,\ \mathrm{if}\ \ep_4=1,\\
u_1:=t_1^2,\quad u_2:=t_2,\quad u_3:=t_1t_3,\ \mathrm{if}\ \ep_4=-1.
\end{cases}
\end{align*}
Then $K(t_1,t_2,t_3)^{\langle\al\rangle}=K(u_1,u_2,u_3)$ and the action of $-\al$ on 
$K(u_1,u_2,u_3)$ is given by 
\begin{align*} 
-\al : 
\begin{cases}
\displaystyle{
u_1\ \mapsto\ u_1,\ u_2\ \mapsto\ \frac{1-u_1}{du_2},\ u_3\ \mapsto\ \frac{f}{u_3}},\ 
\mathrm{if}\ \ep_4=1,\vspace*{1mm}\\
\displaystyle{
u_1\ \mapsto\ u_1,\ u_2\ \mapsto\ \frac{1-u_1}{du_2},\ u_3\ \mapsto\ \frac{fu_1}{u_3},\ 
\mathrm{if}\ \ep_4=-1}.
\end{cases}
\end{align*}
Hence it follows from Theorem \ref{thab} that 
$K(x,y,z)^{G_{2,3,2}}=K(u_1,u_2,u_3)^{\langle-\al\rangle}$ is rational over $K$.

%
\section{The case of $G_{3,j,k}$}\label{se3}
%

We treat the following $13$ groups $G=G_{3,1,k},\ 1\le k\le 4$, 
$G=G_{3,2,k},\ 1\le k\le 5$, and $G=G_{3,3,k},\ 1\le k\le 4$ of the 3rd 
crystal system in dimension 3:  
\begin{align*}
\mathcal{N}\ni
G_{3,1,1}&=\langle \ta_1,\la_1\rangle\cong\mathcal{C}_2\times \mathcal{C}_2,& 
G_{3,1,2}&=\langle \ta_1,-\al\rangle\cong\mathcal{C}_2\times \mathcal{C}_2,\\
G_{3,1,3}&=\langle \ta_2,\la_2\rangle\cong\mathcal{C}_2\times \mathcal{C}_2,& 
G_{3,1,4}&=\langle \ta_3,\la_3\rangle\cong\mathcal{C}_2\times \mathcal{C}_2,\\
G_{3,2,1}&=\langle \ta_1,-\la_1\rangle\cong\mathcal{C}_2\times \mathcal{C}_2,&
G_{3,2,2}&=\langle \ta_1,\al\rangle\cong\mathcal{C}_2\times \mathcal{C}_2,\\  
G_{3,2,3}&=\langle -\al,\be\rangle\cong\mathcal{C}_2\times \mathcal{C}_2,&
G_{3,2,4}&=\langle \ta_2,-\la_2\rangle\cong\mathcal{C}_2\times \mathcal{C}_2,\\ 
G_{3,2,5}&=\langle \ta_3,-\la_3\rangle\cong\mathcal{C}_2\times \mathcal{C}_2,\\ 
\mathcal{N}\ni
G_{3,3,1}&=\langle \ta_1,\la_1,-I_3\rangle\cong\mathcal{C}_2\times 
\mathcal{C}_2\times \mathcal{C}_2, &
G_{3,3,2}&=\langle \ta_1,-\al,-I_3\rangle\cong\mathcal{C}_2\times 
\mathcal{C}_2\times \mathcal{C}_2, \\ 
G_{3,3,3}&=\langle \ta_2,\la_2,-I_3\rangle\cong\mathcal{C}_2\times 
\mathcal{C}_2\times \mathcal{C}_2,& 
G_{3,3,4}&=\langle \ta_3,\la_3,-I_3\rangle\cong\mathcal{C}_2\times 
\mathcal{C}_2\times \mathcal{C}_2. 
\end{align*} 
%
%
\subsection{The cases of $G_{3,2,1}$ and $G_{3,2,2}$}\label{subse321}
%

We treat the cases of 
\begin{align*}
G_{3,2,1} = \langle \ta_1,-\la_1\rangle, \quad 
G_{3,2,2} = \langle \ta_1,\al\rangle.
\end{align*}
The actions of $\ta_1$, $-\la_1$ and $\al$ on $K(x,y,z)$ are given by 
\begin{align*} 
\ta_1 &: x\ \mapsto\ \frac{a}{x},\ y\ \mapsto\ \frac{b}{y},\ z\ \mapsto\ cz,& 
-\la_1 &: x\ \mapsto\ dx,\ y\ \mapsto\ \frac{e}{y},\ z\ \mapsto\ fz,\\
\al &: x\ \mapsto\ gy,\ y\ \mapsto\ hx,\ z\ \mapsto\ iz. 
\end{align*} 
We may assume that $g=h=1$ by replacing $gy$ by $y$ and the other coefficients. 

By the equalities $\ta_1^2=(-\la_1)^2=\al^2=I_3$, 
we have $c^2=d^2=f^2=i^2=1$. 
Hence the problem may be reduced to the following cases: 
\begin{align*} 
\ta_1 &: x\ \mapsto\ \frac{a}{x},\ y\ \mapsto\ \frac{b}{y},\ z\ \mapsto\ \ep_1 z,& 
-\la_1 &: x\ \mapsto\ \ep_2 x,\ y\ \mapsto\ \frac{e}{y},\ z\ \mapsto\ \ep_3 z,\\
\al &: x\ \mapsto\ y,\ y\ \mapsto\ x,\ z\ \mapsto\ \ep_4 z
\end{align*} 
where $a,b,e\in K^\times$ and $\ep_1,\ep_2,\ep_3,\ep_4=\pm 1$. 

Using Theorem \ref{thAHK}, the rationality problems of $G_{3,2,1}$ and $G_{3,2,2}$ 
may be reduced to the $2$-dimensional cases of $K(x,y)^{G_{3,2,1}}$ and $K(x,y)^{G_{3,2,2}}$ 
respectively.
Hence $K(x,y,z)^{G_{3,2,1}}$ and $K(x,y,z)^{G_{3,2,2}}$ are rational over $K$. 

%
\subsection{The case of $G_{3,2,3}$}\label{subse323}
%

We consider the case of 
\[
G_{3,2,3}=\langle -\al,\be\rangle.
\]
The actions of $-\al$ and $\be$ on $K(x,y,z)$ are given by 
\begin{align*} 
-\al &: x\ \mapsto\ \frac{a}{y},\ y\ \mapsto\ \frac{b}{x},\ z\ \mapsto\ \frac{c}{z},& 
&\be : x\ \mapsto\ \frac{d}{y},\ y\ \mapsto\ \frac{e}{x},\ z\ \mapsto\ fz.
\end{align*} 
By the equalities $(-\al)^2=\be^2=I_3$ and $(-\al)\be=\be(-\al)$, 
we have $b=a$, $d=e=\ep_1 a$ and $f=\ep_2$ where $\ep_1,\ep_2=\pm 1$. 
Hence the actions of $-\al$ and $\be$ on $K(x,y,z)$ may be reduced to the following: 
\begin{align*} 
-\al &: x\ \mapsto\ \frac{a}{y},\ y\ \mapsto\ \frac{a}{x},\ z\ \mapsto\ \frac{c}{z},& 
&\be : x\ \mapsto\ \frac{\ep_1 a}{y},\ y\ \mapsto\ \frac{\ep_1 a}{x},\ z\ \mapsto\ \ep_2 z
\end{align*}
where $a,c\in K^\times$ and $\ep_1,\ep_2=\pm 1$. 
By putting $x':=x/y$, we have
\begin{align*} 
-\al &: x'\ \mapsto\ x',\ y\ \mapsto\ \frac{a}{x'y},\ z\ \mapsto\ \frac{c}{z},& 
&\be : x'\ \mapsto\ x',\ y\ \mapsto\ \frac{\ep_1 a}{x'y},\ z\ \mapsto\ \ep_2 z.
\end{align*}
Hence the problem of $G_{3,2,3}$ may be reduced to the $2$-dimensional case of 
$K(x')(y,z)^{G_{3,2,3}}$ and $K(x,y,z)^{G_{3,2,3}}$ is rational over $K$. 

%
\subsection{The cases of $G_{3,2,4}$ and $G_{3,2,5}$}\label{subse324}
%

We consider the cases of 
\[
G_{3,2,4}=\langle\ta_2,-\la_2\rangle,\quad 
G_{3,2,5}=\langle\ta_3,-\la_3\rangle.
\]
We may replace $G_{3,2,4}$ and $G_{3,2,5}$ by new representatives which contain $-\be$ as 
\[
G_{3,2,4}':=\langle -\be,-\varphi_2\rangle\in [G_{3,2,4}], \quad 
G_{3,2,5}':=\langle -\be,-\varphi_3\rangle\in [G_{3,2,5}]
\] 
with $-\be=R_1^{-1}\ta_2R_1=R_2^{-1}\ta_3R_2$ where 
\[
R_1:=\left[\begin{array}{ccc} 0&1&1\\1&0&-1\\-1&0&0 \end{array}\right],\quad 
R_2:={}^t(R_1^{-1})=\left[\begin{array}{ccc} 0&1&0\\0&1&-1\\-1&1&-1 \end{array}\right]
\] 
and 
\begin{align*} 
\varphi_2 &:= R_1^{-1}\la_2\ta_2R_1= 
\left[ \begin{array}{ccc} -1&0&1\\0&-1&-1\\0&0&1 \end{array} \right],\quad 
\varphi_3 := R_2^{-1}\la_3\ta_3R_2= 
\left[ \begin{array}{ccc} -1&0&0\\0&-1&0\\1&-1&1 \end{array} \right]. 
\end{align*}
The actions of $-\be$, $-\varphi_2$ and $-\varphi_3$ on $K(x,y,z)$ are given  by 
\begin{align*} 
-\be &: x\ \mapsto\ ay,\ y\ \mapsto\ bx,\ z\ \mapsto\ \frac{c}{z}, \\ 
-\varphi_2 &: x\ \mapsto\ dx,\ y\ \mapsto\ ey,\ z\ \mapsto\ \frac{fy}{xz}, & 
-\varphi_3 &: x\ \mapsto\ \frac{gx}{z},\ y\ \mapsto\ hyz,\ z\ \mapsto\ \frac{i}{z}. 
\end{align*} 
It follows from the equalities $(-\be)^2=(-\varphi_2)^2=I_3$ that $ab=d^2=1$ and $e=d$. 
Hence we may assume that the actions of 
$-\be$ and $-\varphi_2$ on $K(x,y,z)$ are given as 
\begin{align*} 
-\be &: x\ \mapsto\ ay,\ y\ \mapsto\ \frac{x}{a},\ z\ \mapsto\ \frac{c}{z}, & 
-\varphi_2 &: x\ \mapsto\ \ep x,\ y\ \mapsto\ \ep y,\ z\ \mapsto\ \frac{fy}{xz}
\end{align*}
where $\ep=\pm 1$. 
By putting $x':=x/y$, we have 
\begin{align*} 
-\be &: x'\ \mapsto\ \frac{a^2}{x'},\ y\ \mapsto\ \frac{x'y}{a}, z\ \mapsto\ \frac{c}{z},\\ 
-\varphi_2 &: x'\ \mapsto\ x',\ y\ \mapsto\ \ep y, z\ \mapsto\ \frac{f}{x'z},& 
-\varphi_3 &: x'\ \mapsto\ \frac{gx'}{hz^2},\ y\ \mapsto\ hyz,\ z\ \mapsto\ \frac{i}{z}. 
\end{align*} 
By Theorem \ref{thAHK}, the rationality problem of 
$G_{3,2,4}'=\langle -\be,-\varphi_2\rangle$ and 
$G_{3,2,5}'=\langle -\be,-\varphi_3\rangle$ may be reduced to the $2$-dimensional case
of $K(x',z)^{G_{3,2,4}'}$ and $K(x',z)^{G_{3,2,5}'}$ respectively. 
Hence $K(x,y,z)^{G_{3,2,4}}$ and $K(x,y,z)^{G_{3,2,5}}$ are rational over $K$. 

In Subsection \ref{subse4BM}, we will also give an explicit transcendental basis 
of $K(x,y,z)^{G_{3,2,5}}$ over $K$ in order to show the rationality of 
$K(x,y,z)^{G_{4,6,3}}$ over $K$. 

%
\subsection{The cases of $G_{3,1,2}$ and $G_{3,3,2}$} 
%

We treat the cases of 
\begin{align*}
G_{3,1,2}=\langle \ta_1, -\al\rangle ,\quad 
G_{3,3,2}&=\langle \ta_1,-\al,-I_3\rangle.
\end{align*} 
We first see 
\begin{align*}
G_{3,1,2}=\langle \ta_1, -\be\rangle ,\quad 
G_{3,3,2}&=\langle \ta_1,-\be, \al\rangle,
\end{align*}
because $\ta_1(-\al)=-\be$ and $(-\al)(-I_3)=\al$. 
The actions of $\ta_1$, $-\be$ and $\al$ on $K(x,y,z)$ are given by 
\begin{align*} 
\ta_1 &: x\ \mapsto\ \frac{a}{x},\ y\ \mapsto\ \frac{b}{y},\ z\ \mapsto\ cz,&  
-\be &: x\ \mapsto\ dy,\ y\ \mapsto\ ex,\ z\ \mapsto\ \frac{f}{z},\\
\al &: x\ \mapsto\ gy,\ y\ \mapsto\ hx,\ z\ \mapsto\ iz. 
\end{align*} 
We may assume that $d=e=1$ by replacing $dy$ by $y$ and the other coefficients. 

By the equalities $\ta_1^2=\al^2=[\ta_1,-\be]=[\ta_1,-I_3]=[-\be,-I_3]=I_3$, 
we have $a=b$, $gh=1$ and $c^2=g^2=i^2=1$. 
Hence the problem may be reduced to the following cases: 
\begin{align*} 
\ta_1 &: x\ \mapsto\ \frac{a}{x},\ y\ \mapsto\ \frac{a}{y},\ z\ \mapsto\ \ep_1 z,&  
-\be &: x\ \mapsto\ y,\ y\ \mapsto\ x,\ z\ \mapsto\ \frac{f}{z},\\
\al &: x\ \mapsto\ \ep_2 y,\ y\ \mapsto\ \ep_2 x,\ z\ \mapsto\ \ep_3 z 
\end{align*} 
where $a,f\in K^\times$ and $\ep_1,\ep_2,\ep_3=\pm 1$. 
Now we put 
\begin{align*}
t_1:=\frac{x-y}{x+y},\quad t_2:=\frac{2}{x+y},\quad t_3:=z.
\end{align*}
Then $K(x,y,z)=K(t_1,t_2,t_3)$ and the actions of $\ta_1$, $-\be$ and $\al$ on $K(t_1,t_2,t_3)$ 
are given by 
\begin{align*} 
\ta_1 &: t_1\ \mapsto\ -t_1,\ t_2\ \mapsto\ -\frac{t_1^2-1}{a t_2},\ 
t_3\ \mapsto\ \ep_1 t_3,&  
-\be &: t_1\ \mapsto\ -t_1,\ t_2\ \mapsto\ t_2,\ t_3\ \mapsto\ \frac{f}{t_3},\\
\al &: t_1\ \mapsto\ -t_1,\ t_2\ \mapsto\ \ep_2 t_2,\ t_3\ \mapsto\ \ep_3 t_3. 
\end{align*} 
By Theorem \ref{thInv} (I), we have $K(x,y,z)^{\langle -\be\rangle}
=K(t_1,t_2,t_3)^{\langle -\be\rangle}=K(u_1,u_2,u_3')$ where 
\begin{align*}
u_1:=t_3+\frac{f}{t_3},\quad u_2:=\Bigl(t_3-\frac{f}{t_3}\Bigr)\Big{/}t_1,\quad u_3':=t_2. 
\end{align*} 
We also put $u_3:=u_3'u_2$ then $K(x,y,z)^{\langle -\be\rangle}=K(u_1,u_2,u_3)$ 
and the actions of $\ta_1$ and $\al$ on $K(u_1,u_2,u_3)$ 
are given by 
\begin{align*} 
\ta_1 &: u_1\ \mapsto\ \ep_1 u_1,\ u_2\ \mapsto\ -\ep_1 u_2,\ 
u_3\ \mapsto\ \ep_1 \frac{u_1^2-u_2^2-4f}{au_3},\\
\al &: u_1\ \mapsto\ \ep_3 u_1,\ u_2\ \mapsto\ -\ep_3 u_2,\ u_3\ \mapsto\ -\ep_2\ep_3 u_3. 
\end{align*} 
Using Theorem \ref{thInv} (I), we have 
\begin{align*} 
K(x,y,z)^{G_{3,1,2}}=K(u_1,u_2,u_3)^{\langle\ta_1\rangle}=K(v_1,v_2,v_3) 
\end{align*} 
where 
\begin{align*} 
\begin{cases}
\displaystyle{v_1:=u_3+\frac{u_1^2-u_2^2-4f}{au_3},\ 
v_2:=\Bigl(u_3-\frac{u_1^2-u_2^2-4f}{au_3}\Bigr)\Big/u_2,\ v_3:=u_1},\ \mathrm{if}\ \ep_1=1,\\
\displaystyle{v_1:=u_3-\frac{u_1^2-u_2^2-4f}{au_3},\ 
v_2:=\Bigl(u_3+\frac{u_1^2-u_2^2-4f}{au_3}\Bigr)\Big/u_1,\ v_3:=u_2},\ \mathrm{if}\ \ep_1=-1. 
\end{cases}
\end{align*} 
Hence $K(x,y,z)^{G_{3,1,2}}=K(x,y,z)^{\langle\ta_1,-\be\rangle}=K(v_1,v_2,v_3)$ 
is rational over $K$. 
The action of $\al$ on $K(x,y,z)^{G_{3,1,2}}=K(v_1,v_2,v_3)$ is given by  
\begin{align*}
\al &: \begin{cases}
v_1\ \mapsto\ -\ep_2\ep_3 v_1,\ v_2\ \mapsto\ \ep_2 v_2,\ v_3\ \mapsto\ \ep_3 v_3,\ 
\mathrm{if}\ \ep_1=1,\\
v_1\ \mapsto\ -\ep_2\ep_3 v_1,\ v_2\ \mapsto\ -\ep_2v_2,\ v_3\ \mapsto\ -\ep_3 v_3,\ 
\mathrm{if}\ \ep_1=-1.
\end{cases}
\end{align*} 
Therefore $K(x,y,z)^{G_{3,3,2}}=K(v_1,v_2,v_3)^{\langle\al\rangle}$ is rational over $K$. 

%
\subsection{The cases of $G_{3,1,3}$ and $G_{3,3,3}$}\label{subse313}
%

We treat the cases of 
\begin{align*}
G_{3,1,3}=\langle \ta_2,\la_2\rangle ,\quad G_{3,3,3}&=\langle \ta_2,\la_2,-I_3\rangle .
\end{align*} 
We will use the following representatives: 
\begin{align*}
G_{3,1,3}'=\langle -\be,\varphi_2\rangle\in [G_{3,1,3}],\quad 
G_{3,3,3}'=\langle -\be,\varphi_2,\be\varphi_2\rangle\in [G_{3,3,3}]
\end{align*}
where $\varphi_2$ is given as in subsection \ref{subse324}. 

The monomial actions of $-\be$, $\varphi_2$ and $\be\varphi_2$ on $K(x,y,z)$ are given by 
\begin{align*} 
-\be &: x\ \mapsto\ ay,\ y\ \mapsto\  bx,\ z\ \mapsto\  \frac{c}{z},& 
\varphi_2 &: x\ \mapsto\ \frac{d}{x},\ y\ \mapsto\ \frac{e}{y},\ z\ \mapsto\  \frac{fxz}{y},\\ 
\be\varphi_2 &: x\ \mapsto\ gy,\ y\ \mapsto\ hx,\ z\ \mapsto\  \frac{ixz}{y}. 
\end{align*} 
We may assume that $a=b=1$ by replacing $ay$ by $y$ and the other coefficients. 
It follows from the equalities $[-\be,\varphi_2]=[-\be,\be\varphi_2]=[\varphi_2,\be\varphi_2]=I_3$ 
that $d=e$, $g=h$ and $f^2=h^2=i^2=1$. 
Hence the actions of $-\be$, $\varphi_2$ and $\be\varphi_2$ on $K(x,y,z)$ may be reduced to 
the case 
\begin{align*} 
-\be &: x\ \mapsto\ y,\ y\ \mapsto\  x,\ z\ \mapsto\ \frac{c}{z},& 
\varphi_2 &: x\ \mapsto\ \frac{d}{x},\ y\ \mapsto\ \frac{d}{y},\ z\ \mapsto\ \frac{\ep_1 xz}{y},\\
\be\varphi_2 &: x\ \mapsto\ \ep_2 y,\ y\ \mapsto\ \ep_2 x,\ z\ \mapsto\ \frac{\ep_3 xz}{y}
\end{align*} 
where $c,d\in K^\times$ and $\ep_1,\ep_2,\ep_3=\pm 1$. 

We first consider the fixed field $K(x,y,z)^{\langle \varphi_2\rangle}$. 
By applying Lemma \ref{lemaa}, we obtain 
\begin{align*} 
K(x,y,z)^{\langle \varphi_2\rangle}=K(t_1,t_2,t_3)
\end{align*} 
where 
\begin{align*} 
t_1:=\frac{xy+\ep_1 d}{x+\ep_1 y},\quad 
t_2:=\frac{xy-\ep_1 d}{x-\ep_1 y},\quad 
t_3:=\frac{t_1+\ep_1 t_2}{2}\Bigl(1+\ep_1 \frac{x}{y}\Bigr)z. 
\end{align*} 
Then the actions of $-\be$ and $\be\varphi_2$ on $K(t_1,t_2,t_3)$ are given by 
\begin{align*}
-\be &: t_1\ \mapsto\ \ep_1 t_1,\ t_2\ \mapsto\ -\ep_1 t_2,\ 
t_3\ \mapsto\ -\ep_1 \frac{c(t_2^2-d)}{t_3},\\
\be\varphi_2& : t_1\ \mapsto\ \ep_1\ep_2 t_1,\ t_2\ \mapsto\ -\ep_1\ep_2 t_2,\ t_3\mapsto\ 
\ep_2\ep_3\frac{t_1-\ep_1t_2}{t_1+\ep_1 t_2}t_3.
\end{align*} 
It follows from Theorem \ref{thInv} (I) that 
$K(x,y,z)^{G_{3,1,3}}=K(t_1,t_2,t_3)^{\langle -\be\rangle}$ is rational over $K$. 
Indeed we get $K(x,y,z)^{G_{3,1,3}}=K(u_1,u_2,u_3)$ where 
\begin{align*}
\begin{cases}
\ \displaystyle{
u_1:=t_3-\frac{c(t_2^2-d)}{t_3},\ u_2:=\Bigl(t_3+\frac{c(t_2^2-d)}{t_3}\Bigr)\Big{/}t_2,\ 
u_3:=t_1,\ \mathrm{if}\ \ep_1=1},\\
\ \displaystyle{
u_1:=t_3+\frac{c(t_2^2-d)}{t_3},\ u_2:=\Bigl(t_3-\frac{c(t_2^2-d)}{t_3}\Bigr)\Big{/}t_1,\ 
u_3:=t_2,\ \mathrm{if}\ \ep_1=-1}.
\end{cases}
\end{align*}

In order to get $K(x,y,z)^{G_{3,3,3}}=K(u_1,u_2,u_3)^{\langle\be\varphi_2\rangle}$, we take 
\begin{align*}
v_1:=u_1-\ep_1u_2u_3,\ v_2:=u_1^2-(u_2^2-\ep_14c)u_3^2-\ep_14cd,\ v_3:=u_3.
\end{align*}
Then we have $K(x,y,z)^{G_{3,1,3}}=K(u_1,u_2,u_3)=K(v_1,v_2,v_3)$ because 
\[
u_1=\frac{v_1^2+v_2-\ep_14c(v_3^2-d)}{2v_1},\quad 
u_2=\frac{-\ep_1(v_1^2-v_2)-4c(v_3^2-d)}{2v_1v_3},\quad 
u_3=v_3.
\]
The action of $\be\varphi_2$ on $K(x,y,z)^{G_{3,1,3}}=K(v_1,v_2,v_3)$ may be evaluated as 
\begin{align*}
\be\varphi_2 : 
\begin{cases}
\ \displaystyle{v_1\ \mapsto\ -\ep_2\ep_3 v_1,\ 
v_2\ \mapsto\ \frac{v_1^4-8c(v_3^2+d)v_1^2+16c^2(v_3^2-d)^2}{v_2},\ 
v_3\mapsto\ \ep_2 v_3,\ \mathrm{if}\ \ep_1=1},\vspace*{1mm}\\
\ \displaystyle{v_1\ \mapsto\ \ep_2\ep_3 v_1,\ v_2\ \mapsto\ 
\frac{(v_1^2-4c(v_3^2-d))^2}{v_2},\ v_3\mapsto\ \ep_2 v_3,\ \mathrm{if}\ \ep_1=-1}.
\end{cases}
\end{align*}

When $\ep_1=-1$, if we put 
\[
v_2':=\frac{v_1^2-4c(v_3^2-d)+v_2}{v_1^2-4c(v_3^2-d)-v_2}
\] 
then the action of $\be\varphi_2$ on $K(x,y,z)^{G_{3,1,3}}=K(v_1,v_2',v_3)$ is given by 
\[
\be\varphi_2\,:\, v_1\ \mapsto\ \ep_2\ep_3 v_1,\ v_2'\ \mapsto\ -v_2',\ v_3\mapsto\ \ep_2 v_3.
\] 
Hence $K(x,y,z)^{G_{3,3,3}}=K(v_1,v_2',v_3)^{\langle\be\varphi_2\rangle}$ is rational over $K$. 

When $\ep_1=1$, we note that the numerator of $\be\varphi_2(v_2)$ is a monic quartic polynomial 
in $v_1$ and is also a quartic polynomial in $v_3$ with square constant term:  
\[
v_1^4-8c(v_3^2+d)v_1^2+16c^2(v_3^2-d)^2=16c^2v_3^4-8c(v_1^2+4cd)v_3^2+(v_1^2-4cd)^2.
\]
By applying Theorem \ref{thInv} (II), we conclude that 
$K(x,y,z)^{G_{3,3,3}}=K(v_1,v_2,v_3)^{\langle\be\varphi_2\rangle}$ is rational over $K$. 

%
\subsection{The cases of $G_{3,1,4}$ and $G_{3,3,4}$}\label{subse314}
%

We treat the cases of 
\begin{align*}
G_{3,1,4}=\langle \la_3,\ta_3\rangle,\quad G_{3,3,4}&=\langle \la_3,\ta_3,-I_3\rangle.
\end{align*} 
The monomial actions of $\la_3,\ta_3$ and $-I_3$ on $K(x,y,z)$ are given by 
\begin{align*} 
\la_3 &: x\ \mapsto\ az,\ y\ \mapsto\ \frac{b}{xyz},\ z\ \mapsto\  cx, &
\ta_3 &: x\ \mapsto\ dy,\ y\ \mapsto\ ex,\ z\ \mapsto\  \frac{f}{xyz}, \\ 
-I_3 &: x\ \mapsto\  \frac{g}{x},\ y\ \mapsto\  \frac{h}{y},\ z\ \mapsto\  \frac{i}{z}.
\end{align*} 
We may assume that $a=d=1$ by replacing $(dy,az)$ by $(y,z)$ and the other coefficients. 
By the equality $\la_3^2=\ta_3^2=I_3$ (resp. $[\la_3,\ta_3]=I_3)$, 
we see $c=e=1$ (resp. $b=f$). 
For $G_{3,3,4}$, it follows from $[\la_3,-I_3]=[\ta_3,-I_3]=I_3$ that 
$g=h=i$ and $b^2=g^4$. 
So the actions of $\la_3,\ta_3$ and $-I_3$ on $K(x,y,z)$ may be reduced to the following cases: 
\begin{align*} 
\la_3 &: x\ \mapsto\ z,\ y\ \mapsto\ \frac{b}{xyz},\ z\ \mapsto\ x, &
\ta_3 &: x\ \mapsto\ y,\ y\ \mapsto\ x,\ z\ \mapsto\ \frac{b}{xyz},\\ 
-I_3 &: x\ \mapsto\ \frac{g}{x},\ y\ \mapsto\ \frac{g}{y},\ z\ \mapsto\ \frac{g}{z}, 
\end{align*} 
with $b=\ep\cdot g^2\in K^\times$ and $\ep=\pm 1$ for the case of $G_{3,3,4}$. 
Put 
\begin{align*}
w:=\frac{b}{xyz}=\frac{\ep\cdot g^2}{xyz}.
\end{align*}
Then $K(x,y,z)=K(x,y,z,w)$ and the actions of $\ta_3$, $\la_3$ and $-I_3$ on $K(x,y,z,w)$ are: 
\begin{align*}
\ta_3 &: x\ \mapsto\ y,\ y\ \mapsto\ x,\ z\ \mapsto\ w,\ w\ \mapsto\ z,\\
\la_3 &: x\ \mapsto\ z,\ y\ \mapsto\ w,\ z\ \mapsto\ x,\ w\ \mapsto\ y,\\
-I_3 &: x\ \mapsto\ \frac{g}{x},\ y\ \mapsto\ \frac{g}{y},\ z\ 
\mapsto\ \frac{g}{z},\ w\ \mapsto\ \frac{g}{w}.
\end{align*}
By Lemma \ref{lemV42}, we have $K(x,y,z)^{G_{3,1,4}}
=K(x,y,z,w)^{\langle \ta_3,\la_3\rangle}=K(v_1,v_2,v_3)$ where
\begin{align*}
v_1\ :=\ \frac{x+y-z-w}{xy-zw},\quad
v_2\ :=\ \frac{x-y-z+w}{xw-yz},\quad
v_3\ :=\ \frac{x-y+z-w}{xz-yw}
\end{align*}
with $w=b/(xyz)$. 
Thus $K(x,y,z)^{G_{3,1,4}}$ is rational over $K$. 
The action of $-I_3$ on $K(x,y,z)^{\langle \ta_3,\la_3\rangle}=K(v_1,v_2,v_3)$ is given by 
\begin{align*}
-I_3 &: v_1\ \mapsto\ \frac{-v_1+v_2+v_3}{g\, v_2v_3},\ v_2\ \mapsto\ 
\frac{v_1-v_2+v_3}{g\, v_1v_3},\ v_3\ \mapsto\ \frac{v_1+v_2-v_3}{g\, v_1v_2}.
\end{align*}
We see that $K(x,y,z)^{G_{3,3,4}}=K(x,y,z)^{\langle\ta_3,\la_3,-I_3\rangle}$ 
is rational over $K$ by the following lemma: 
\begin{lemma}\label{lemtlm}
We have $K(x,y,z)^{\langle\ta_3,\la_3,-I_3\rangle}=K(t_1,t_2,t_3)$ where 
\begin{align*}
t_1&:=v_1+(-I_3)(v_1)=\frac{-v_1+v_2+v_3+g\,v_1v_2v_3}{g\,v_2v_3}\\
&=\frac{g(x+y-z-w)+xy(z+w)-zw(x+y)}{g(xy-zw)},\\
t_2&:=v_2+(-I_3)(v_2),\ t_3:=v_3+(-I_3)(v_3)\ \ \mathit{with}\ \ w=b/(xyz).
\end{align*}
\end{lemma}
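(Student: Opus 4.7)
The plan is an Artin-style degree comparison. Let $\sigma := -I_3$ act on $K(v_1,v_2,v_3) = K(x,y,z)^{\langle \ta_3,\la_3\rangle}$; from the displayed formula for $-I_3$ one sees $\sigma$ is an order-$2$ $K$-automorphism, in particular nontrivial, so
\[
[K(v_1,v_2,v_3) : K(v_1,v_2,v_3)^{\langle\sigma\rangle}] = 2.
\]
Each $t_i = v_i + \sigma(v_i)$ is tautologically $\sigma$-fixed, giving the inclusion $K(t_1,t_2,t_3) \subseteq K(v_1,v_2,v_3)^{\langle\sigma\rangle}$. To finish, I would show $[K(v_1,v_2,v_3) : K(t_1,t_2,t_3)] \leq 2$, which together with the tower
\[
K(t_1,t_2,t_3) \subseteq K(v_1,v_2,v_3)^{\langle\sigma\rangle} \subseteq K(v_1,v_2,v_3)
\]
forces equality throughout.

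Equating $\sigma(v_i) = t_i - v_i$ with the explicit formulas for $\sigma(v_i)$ yields three polynomial identities
\begin{align*}
g(t_1-v_1)v_2v_3 &= -v_1+v_2+v_3,\\
g(t_2-v_2)v_1v_3 &= v_1-v_2+v_3,\\
g(t_3-v_3)v_1v_2 &= v_1+v_2-v_3.
\end{align*}
Subtracting the last two eliminates the cubic term $gv_1v_2v_3$ and gives the proportionality $v_2(2-gt_3v_1) = v_3(2-gt_2v_1)$. Using it to remove $v_2$ from the first and third relations and then eliminating $v_3^2$ between the resulting two quadratics produces a linear equation for $v_3$ over $K(t_1,t_2,t_3,v_1)$; solving yields $v_3$, and hence $v_2$, as explicit rational functions of $v_1$ and $t_1,t_2,t_3$. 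In particular $K(v_1,v_2,v_3) = K(t_1,t_2,t_3,v_1)$.

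It remains to check that $v_1$ has degree at most $2$ over $K(t_1,t_2,t_3)$, or equivalently that the norm $N_1 := v_1\sigma(v_1) = v_1(t_1-v_1)$ lies in $K(t_1,t_2,t_3)$. The derived identity has the shape $v_3 D = -2t_1 + (4-gt_1t_3)v_1 + 2gt_3 N_1$ with $D = 4 - gt_1(t_2+t_3) + g^2t_2t_3 N_1$; applying $\sigma$ (which fixes $t_i$ and $N_1$) and adding to the original collapses, using $v_3+\sigma(v_3)=t_3$, to $t_3 D = gt_3(4N_1 - t_1^2)$, forcing
\[
N_1 \ =\ \frac{4 + gt_1(t_1-t_2-t_3)}{g(4 - gt_2t_3)} \ \in\ K(t_1,t_2,t_3).
\]
Hence $v_1$ satisfies $v_1^2 - t_1v_1 + N_1 = 0$ over $K(t_1,t_2,t_3)$, so $[K(t_1,t_2,t_3,v_1):K(t_1,t_2,t_3)] \leq 2$ and the desired degree bound follows. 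The main obstacle is the bookkeeping in the elimination step: verifying that the denominators $2-gt_iv_1$ and $4-gt_2t_3$ and the coefficient that appears after the $v_3^2$-elimination are not identically zero, and that no spurious factor inflates the minimal polynomial of $v_1$ to degree greater than $2$.
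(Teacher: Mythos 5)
Your proposal is correct and follows essentially the same route as the paper: both establish the inclusion $K(t_1,t_2,t_3)\subseteq K(v_1,v_2,v_3)^{\langle -I_3\rangle}$ and then bound $[K(v_1,v_2,v_3):K(t_1,t_2,t_3)]$ by $2$ via explicit rational expressions for two of the $v_i$ together with a quadratic relation for the remaining one (the paper works with $v_3$ where you work with $v_1$, which is immaterial by the symmetry of the action of $-I_3$). Your norm formula $N_1=\frac{4+g\,t_1(t_1-t_2-t_3)}{g(4-g\,t_2t_3)}$ checks out numerically, and in fact the quadratic printed in the paper, $g(g\,t_1t_2-4)(v_3^2-t_3v_3)-4g\,t_3(t_1+t_2-t_3)=0$, appears to contain a typo --- it should read $g(g\,t_1t_2-4)(v_3^2-t_3v_3)-4+g\,t_3(t_1+t_2-t_3)=0$ --- so your version of the computation is the correct one.
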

\begin{proof}
We omit to explain how to get suitable generators $t_1$, $t_2$ and $t_3$, 
and only verify the assertion. 
From the definition of the $t_i$'s, we see
$K(x,y,z)^{\langle\ta_3,\la_3,-I_3\rangle}\supset K(t_1,t_2,t_3)$. 
We also get 
\begin{align*}
&g(g\,t_1t_2-4)(v_3^2-t_3v_3)-4g\,t_3(t_1+t_2-t_3)=0,\\
&v_2=\frac{v_3(g\,t_1t_2-4)-2(t_2-t_3)}{g\,t_1t_3-4},\quad 
v_1=\frac{v_3^2(g\,t_1t_2-2)+v_3(t_3-g\,t_1t_3v_2)+t_3v_2}{t_3-2v_3}.
\end{align*}
The assertion follows from $K(v_1,v_2,v_3)=K(t_1,t_2,t_3)(v_3)$ and 
$[K(v_1,v_2,v_3):K(t_1,t_2,t_3)]\leq 2$.
\end{proof}
%


%
%
\section{The case (4A): $\pm \caa\in G_{4,j,k}$}\label{se4A}

We treat the following eight groups $G=G_{4,j,1}$\ ($1\le j\le 7$) and $G=G_{4,6,2}$ of 
the 4th crystal system in dimension 3 which contain $\caa$ or $-\caa$:  
\begin{align*}
G_{4,1,1}&=\langle \caa\rangle\cong\mathcal{C}_4,& 
\hspace*{-5mm}\mathcal{N}\ni
G_{4,2,1}&=\langle -\caa\rangle\cong\mathcal{C}_4,\\
\mathcal{N}\ni
G_{4,3,1}&=\langle \caa,-I_3\rangle\cong\mathcal{C}_4\times \mathcal{C}_2,\\
\mathcal{N}\ni
G_{4,4,1}&=\langle \caa,\la_1\rangle\cong\mathcal{D}_4,&
G_{4,5,1}&=\langle \caa,-\la_1\rangle\cong\mathcal{D}_4,\\
G_{4,6,1}&=\langle -\caa,\la_1\rangle\cong\mathcal{D}_4, &  
G_{4,6,2}&=\langle -\caa,-\la_1\rangle\cong\mathcal{D}_4, \\  
G_{4,7,1}&=\langle \caa,\la_1,-I_3\rangle\cong\mathcal{D}_4\times \mathcal{C}_2.
\end{align*} 

%
\subsection{The case (4A$^+$): $\caa\in G_{4,j,k}$} 
%

We treat the cases of 
\begin{align*}
G_{4,1,1}=\langle \caa\rangle,\quad G_{4,5,1}=\langle \caa,-\la_1\rangle
\end{align*}
which have a normal subgroup $\langle\caa\rangle$ of order $4$. 
We also see 
\[
G_{4,5,1}=\langle \caa,\al\rangle
\]
where $\al=\caa(-\la_1)$ is given as in the previous section. 
By the equalities $\caa^4=\al^2=I_3$ and $[\caa,\al]=\caa^2$, 
we may assume that the actions of $\caa$ and of $\al$ on $K(x,y,z)$ are given by 
\begin{align*}
\caa &: x\ \mapsto\ y,\ y\ \mapsto\ \frac{b}{x},\ z\ \mapsto\ cz,& 
\al &: x\ \mapsto\ \ep_2y,\ y\ \mapsto\ \ep_2x,\ z\ \mapsto\ \ep_3z 
\end{align*}
where $b,c\in K^\times$ with $c^4=1$ and $\ep_2,\ep_3=\pm 1$. 
For $G_{4,5,1}$, we also see $c=\ep_1=\pm 1$. 
By Theorem \ref{thAHK}, the rationality problems for $G=G_{4,1,1}$, $G_{4,5,1}$ 
may be reduced to the $2$-dimensional monomial case $K(x,y)^G(\Theta)$ 
and hence $K(x,y,z)^G$ is rational over $K$. 
Indeed we may obtain a $G_{4,5,1}$-invariant $\Theta$ which satisfies $K(x,y,z)=K(x,y,\Theta)$. 
For example, if $(c,\ep_2,\ep_3)=(-1,-1,-1)$ then we obtain
\[
\Theta=\frac{(x+y)(xy+b)z}{(x^2-b)(y^2-b)},
\]
and if $(c,\ep_2,\ep_3)=(-1,1,-1)$ then we get 
\[
\Theta=\mathrm{Tr}_{G_{4,1,1}}(xz)=xz-yz+\frac{bz}{x}-\frac{bz}{y}=-\frac{(x-y)(b-xy)z}{xy}
\]
where $\mathrm{Tr}_{G_{4,1,1}}$ is the trace map under the action of $G_{4,1,1}$. 

For the convenience of the readers, we shall give the generators of $K(x,y)^G$. 
The action of $\caa^2=\ta_1$ on $K(x,y)$ is 
\begin{align*}
\ta_1 &: x\ \mapsto\ \frac{b}{x},\ y\ \mapsto\ \frac{b}{y},
\end{align*}
so that by Lemma \ref{lemaa} we have $K(x,y)^{\langle\ta_1\rangle}=K(t_1,t_2)$ where 
\begin{align*}
t_1:=\frac{xy+b}{x+y},\quad t_2:=\frac{xy-b}{x-y}. 
\end{align*}
The actions of $\caa$ and $\al$ on $K(x,y)^{\langle\ta_1\rangle}=K(t_1,t_2)$ are given by 
\begin{align*}
\caa : t_1\ \mapsto\ \frac{b}{t_1},\ t_2\ \mapsto\ -\frac{b}{t_2},\quad 
\al : t_1\ \mapsto\ \ep_2 t_1,\ t_2\ \mapsto\ -\ep_2 t_2. 
\end{align*}
Thus, by Theorem \ref{thab}, we get $K(x,y)^{G_{4,1,1}}=K(t_1,t_2)^{\langle\caa\rangle}
=K(u_1,u_2)$ where 
\begin{align*}
u_1:=\frac{t_2(t_1^2-b)}{t_1^2t_2^2+b^2},\quad 
u_2:=\frac{t_2(t_1^2+b)}{t_1^2t_2^2+b^2}.
\end{align*}
Because the action of $\al$ on $K(u_1,u_2)$ is given by 
\[
\al\,:\,u_1\ \mapsto\ -\ep_2u_1,\ u_2\ \mapsto\ \ep_2u_2,
\]
we get $K(x,y)^{G_{4,5,1}}=K(u_1,u_2)^{\langle\al\rangle}=K(u_1^2,u_2)$ (resp. $K(u_1,u_2^2)$) 
when $\ep_2=1$ (resp. $\ep_2=-1$). 

%
\subsection{The case (4A$^-$): $-\caa\in G_{4,j,k}$}
%

We treat the cases of 
\begin{align*}
G_{4,6,1} = \langle -\caa,\la_1\rangle,\quad 
G_{4,6,2} = \langle -\caa,-\la_1\rangle,\quad 
G_{4,7,1} = \langle \caa,\la_1,-I_3\rangle
\end{align*}
which have a normal subgroup $\langle-\caa\rangle$ of order $4$. 
We change the generators of the groups by 
\[
G_{4,6,1} = \langle -\caa,\al\rangle,\quad 
G_{4,6,2} = \langle -\caa,-\be\rangle,\quad 
G_{4,7,1} = \langle -\caa,\al,-\be\rangle
\]
where $\al$ and $-\be$ are given as in the previous section which satisfy
\begin{align*}
\al&=(-\caa)\la_1
=\left[\begin{array}{ccc} 0 & 1 & 0\\ 1 & 0 & 0\\ 0 & 0 & 1\end{array}\right], &
-\be&=(-\la_1)(-\caa)
=\left[\begin{array}{ccc} 0 & 1 & 0\\ 1 & 0 & 0\\ 0 & 0 & -1\end{array}\right].
\end{align*}
The monomial actions of $-\caa$, $\al$ and $-\be$ on $K(x,y,z)$ are given by 
\begin{align*} 
-\caa &: x\ \mapsto\ \frac{a}{y},\ y\ \mapsto\ bx,\ z\ \mapsto\ \frac{c}{z},\\
\al &: x\ \mapsto\ dy,\ y\ \mapsto\ ex,\ z\ \mapsto\ fz,& 
-\be &: x \mapsto\ gy,\ y\ \mapsto\ hx,\ z\ \mapsto\ \frac{i}{z}. 
\end{align*} 
We may assume that $b=1$ by replacing $bx$ by $x$ and the other coefficients. 
By the equality $\al^2=(-\be)^2=I_3$, we get $de=f^2=gh=1$. 
It also follows from $[-\caa,\al]=[-\caa,-\be]=\caa^2$ and $[\al,-\be]=I_3$ that 
the actions of $-\caa$, $\al$ and $-\be$ may be reduced to the following case: 
\begin{align*} 
-\caa &: x\ \mapsto\ \frac{a}{y},\ y\ \mapsto\ x,\ z\ \mapsto\ \frac{c}{z},\\
\al &: x\ \mapsto \ep_1 y,\ y\ \mapsto\ \ep_1x,\ z\ \mapsto\ \ep_2 z,& 
-\be &: x\ \mapsto\ \ep_3 y,\ y\ \mapsto\ \ep_3 x,\ z\ \mapsto\ \frac{\ep_4 c}{z}
\end{align*} 
where $a,c\in K^{\times}$ and $\ep_1,\ep_2,\ep_3,\ep_4=\pm 1$. 

Note that $(-\caa)^2=\ta_1$ where $\ta_1$ is given as in the previous section. 
The action of $\ta_1$ on $K(x,y,z)$ is given by 
\begin{align*}
\ta_1 &: x\ \mapsto\ \frac{a}{x},\ y\ \mapsto\ \frac{a}{y},\ z\ \mapsto\ z. 
\end{align*}
Although the groups $G_{4,6,1}$, $G_{4,6,2}$ and $G_{4,7,1}$ have a normal subgroup 
$\langle-\caa\rangle$, we do not consider $K(x,y,z)^{\langle -\caa\rangle}$ but 
$K(x,y,z)^{\langle (-\caa)^2\rangle}=K(x,y,z)^{\langle \ta_1\rangle}$ because 
$\langle-\caa\rangle=G_{4,2,1}$ is in $\mathcal{N}$. 
By Lemma \ref{lemaa}, we have 
\begin{align*} 
K(x,y,z)^{\langle \ta_1\rangle }=K(u_1,u_2,u_3)
\end{align*} 
where 
\begin{align*}
u_1:=\frac{xy+a}{x+y},\quad  u_2:=\frac{xy-a}{x-y},\quad  u_3:=z. 
\end{align*} 
The actions of $-\caa$, $\al$ and $-\be$ on $K(u_1,u_2,u_3)$ are 
\begin{align*} 
-\caa &: u_1\ \mapsto\ \frac{a}{u_1},\ u_2\ \mapsto\ -\frac{a}{u_2},\ 
u_3\ \mapsto\ \frac{c}{u_3},\\
\al &: u_1\ \mapsto\ \ep_1 u_1,\ u_2\ \mapsto\ -\ep_1 u_2,\ u_3\ \mapsto\ \ep_2 u_3,& 
-\be &: u_1\ \mapsto\ \ep_3 u_1,\ u_2\ \mapsto\ -\ep_3 u_2,\ u_3\ \mapsto\ \frac{\ep_4 c}{u_3}. 
\end{align*} 

For $G_{4,6,1}=\langle -\caa,\al\rangle$ and  $G_{4,7,1} = \langle -\caa,\al,-\be\rangle$, 
we consider the fixed field 
\[
K(x,y,z)^{\langle \ta_1,\al\rangle}
=K(u_1,u_2,u_3)^{\langle \al\rangle }=K(v_1,v_2,v_3)
\]
where $v_1,v_2,v_3$ are given by the following table: 

\begin{center} 
\begin{tabular}{c|ccc} 
$(\ep_1,\ep_2)$ & $v_1$ & $v_2$ & $v_3$\\ \hline 
$(1,1)$ & $u_2^2/a$ & $u_1$  & $u_3$\\ 
$(-1,1)$ & $u_1^2/a$ & $u_2$ & $u_3$\\ 
$(1,-1)$ & $cu_2/u_3$ & $u_1$ & $u_2u_3$\\ 
$(-1,-1)$ & $cu_1/u_3$ & $u_2$ & $u_1u_3$
\end{tabular} 
\end{center}

Thus we have $K(x,y,z)^{G_{4,6,1}}=K(v_1,v_2,v_3)^{\langle-\caa\rangle}$ and 
$K(x,y,z)^{G_{4,7,1}}=K(v_1,v_2,v_3)^{\langle-\caa,-\be\rangle}$. 
The actions of $-\caa$ and of $-\be$ on 
$K(x,y,z)^{\langle\ta_1,\al\rangle}=K(v_1,v_2,v_3)$ are given as follows: 

\begin{center} 
\begin{tabular}{c|ccc|ccc} 
$(\ep_1,\ep_2)$ & $-\caa(v_1)$ & $-\caa(v_2)$ & $-\caa(v_3)$ 
& $-\be(v_1)$ & $-\be(v_2)$ & $-\be(v_3)$ \\ \hline 
$(1,1)$ & $1/v_1$ & $a/v_2$ & $c/v_3$ & $v_1$ & $\ep_3 v_2$ & $\ep_4 c/v_3$ \\ 
$(-1,1)$ & $1/v_1$ & $-a/v_2$ & $c/v_3$& $v_1$ & $-\ep_3 v_2$ & $\ep_4 c/v_3$ \\ 
$(1,-1)$ & $-ac/v_1$ & $a/v_2$ & $-ac/v_3$ & $-\ep_3\ep_4v_3$ & $\ep_3 v_2$ & $-\ep_3\ep_4v_1$ \\ 
$(-1,-1)$ & $ac/v_1$ & $-a/v_2$ & $ac/v_3$ & $\ep_3\ep_4v_3$ & $-\ep_3v_2$ & $\ep_3\ep_4v_1$ 
\end{tabular} 
\end{center} 

When $\ep_2=1$, we put 
\[
v_1':=\frac{v_1+1}{v_1-1}
\]
then $K(v_1,v_2,v_3)=K(v_1',v_2,v_3)$ and $-\caa(v_1')=-v_1'$ and $-\be(v_1')=v_1'$. 
Hence, by Theorem \ref{thAHK}, the rationality problem of $G_{4,6,1}$ and of $G_{4,7,1}$ 
may be reduced to the $2$-dimensional cases $K(v_2,v_3)^{G_{4,6,1}}$ and 
$K(v_2,v_3)^{G_{4,7,1}}$ under monomial actions respectively. 

When $\ep_2=-1$, the action of $\langle-\caa\rangle$ (resp. $\langle-\caa,-\be\rangle$) 
on $K(v_1,v_2,v_3)$ is given as an affirmative case of $G_{1,2,1}=\langle-I_3\rangle$ 
(resp. a case of $G_{2,3,2}=\langle-\al,-I_3\rangle=\langle\al,-I_3\rangle$). 

We conclude that, in both cases $\ep_2=\pm 1$, the fixed fields 
$K(x,y,z)^{G_{4,6,1}}=K(v_1,v_2,v_3)^{\langle-\caa\rangle}$ 
and $K(x,y,z)^{G_{4,7,1}}=K(v_1,v_2,v_3)^{\langle-\caa,-\be\rangle}$ are rational over $K$. 
We may also get explicit transcendental bases of the fixed fields by using Lemma \ref{lemaa}. 

\begin{remark}
(i) When $\ep_2=-1$, the action of $G_{4,6,1}$ on $K(v_1,v_2,v_3)$ is a special case of 
monomial actions of $G_{1,2,1}\in\mathcal{N}$, 
which is an affirmative case of Saltman's result (cf. Theorem \ref{thSalt}).\\
(ii) When $\ep_2=1$, the action of $G_{4,7,1}$ on $K(v_1,v_2,v_3)$ 
is the special case of $G_{2,3,1}\in\mathcal{N}$, which is an affirmative case as seen 
in \cite[Theorem 6]{Yam} (see also Theorem \ref{th231}), 
although in the case of $\ep_2=-1$, the action of $G_{4,7,1}$ on $K(v_1,v_2,v_3)$ is 
equivalent to a monomial action of $G_{2,3,2}=\langle-\al,-I_3\rangle$. 
\end{remark}

For $G_{4,6,2}=\langle -\caa,-\be\rangle$, we first note that 
\[
G_{4,6,2}=\langle -\be,-\la_1\rangle
\]
because $(-\caa)(-\be)=(-\la_1)$. 
The actions of $-\be$ and of $-\la_1$ on $K(x,y,z)^{\langle\ta_1\rangle}=K(u_1,u_2,u_3)$ 
are given by 
\begin{align*} 
-\be &: u_1\ \mapsto\ \ep_3 u_1,\ u_2\ \mapsto\ -\ep_3 u_2,\ u_3\ \mapsto\ \frac{\ep_4 c}{u_3},& 
-\la_1 &: u_1\ \mapsto\ \frac{\ep_3 a}{u_1},\ u_2\ \mapsto\ \frac{\ep_3 a}{u_2},\ 
u_3\ \mapsto\ \ep_4 u_3. 
\end{align*} 
Hence the rationality problem of $G_{4,6,2}$ may be reduced to the problem of 
$\langle-\ta_1,\ta_1\rangle\in\mathcal{N}$ which is conjugate to 
$G_{2,3,1}=\langle\la_1,-I_3\rangle$ in $\mathrm{GL}(3,\bZ)$. 

However this is an affirmative case of $G_{2,3,1}$. 
We shall give the transcendental basis over $K$ as follows. 
When $\ep_4=1$, by Theorem \ref{th231}, 
$K(x,y,z)^{G_{4,6,2}}$ is rational over $K$ and we get an explicit transcendental basis over $K$. 

When $\ep_4=-1$, we also get an explicit transcendental basis of $K(x,y,z)^{G_{4,6,2}}$ 
over $K$ as follows: 
First we may assume that $\ep_3=1$ by interchanging $u_1$, $u_2$ and $a$, $-a$. 
Put 
\[
u_1':=u_3,\quad u_2':=\frac{u_1+u_2}{u_1-u_2},\quad u_3':=u_1,\quad 
\Bigl(u_2=\frac{(u_2'-1)u_3'}{u_2'+1}\Bigr).
\]
Then $K(x,y,z)^{\langle\ta_1\rangle}=K(u_1,u_2,u_3)=K(u_1',u_2',u_3')$ and 
the actions of $-\la_1$ and of $-\be$ on $K(u_1',u_2',u_3')$ are given by 
\begin{align*}
-\la_1 &: u_1'\ \mapsto\ -u_1',\ u_2'\ \mapsto\ -u_2',\ u_3'\ \mapsto\ \frac{a}{u_3'},& 
-\be &: u_1'\ \mapsto\ -\frac{c}{u_1'},\ u_2'\ \mapsto\ \frac{1}{u_2'},\ u_3'\ \mapsto\ u_3'.
\end{align*}
Therefore we can also apply Theorem \ref{th231} to the case $\ep_4=-1$.  


%
\section{The case (4B): $\pm\cbb\in G_{4,j,k}$}\label{se4B}

We treat the following eight groups $G=G_{4,j,2}$\ ($1\le j\le 5$, $j=7$) and 
$G=G_{4,6,k}$\ ($3\le k\le 4$) of the 4th crystal system in dimension 3 which 
contain $\cbb$ or $-\cbb$:  
\begin{align*}
G_{4,1,2}&=\langle \cbb\rangle\cong\mathcal{C}_4,&
\mathcal{N}\ni G_{4,2,2}&=\langle -\cbb\rangle\cong\mathcal{C}_4,\\
G_{4,3,2}&=\langle \cbb,-I_3\rangle\cong\mathcal{C}_4\times \mathcal{C}_2,\\  
G_{4,4,2}&=\langle \cbb,\la_3\rangle\cong\mathcal{D}_4,& 
G_{4,5,2}&=\langle \cbb,-\la_3\rangle\cong\mathcal{D}_4, \\   
G_{4,6,3}&=\langle -\cbb,-\la_3\rangle\cong\mathcal{D}_4, & 
G_{4,6,4}&=\langle -\cbb,\la_3\rangle\cong\mathcal{D}_4, \\
G_{4,7,2}&=\langle \cbb,\la_3,-I_3\rangle\cong\mathcal{D}_4\times \mathcal{C}_2.
\end{align*} 

%
\subsection{The case (4B$^+$): $\cbb\in G_{4,j,k}$}\label{subse4p}
%

We treat the cases of 
\begin{align*}
G_{4,1,2}&=\langle \cbb\rangle\cong\mathcal{C}_4,&
G_{4,3,2}&=\langle \cbb,-I_3\rangle\cong\mathcal{C}_4\times \mathcal{C}_2,& 
G_{4,4,2}&=\langle \cbb,\la_3\rangle\cong\mathcal{D}_4,\\
G_{4,5,2}&=\langle \cbb,-\la_3\rangle\cong\mathcal{D}_4, &  
G_{4,7,2}&=\langle \cbb,\la_3,-I_3\rangle\cong\mathcal{D}_4\times \mathcal{C}_2
\end{align*} 
which have a normal subgroup $\langle\cbb\rangle$. 
The actions of $\cbb$, $-I_3$, $\la_3$ and $-\la_3$ on $K(x,y,z)$ are given by 
\begin{align*}
\cbb &: x\ \mapsto\ \frac{a}{z},\ y\ \mapsto\ bxyz,\ z\ \mapsto\ \frac{c}{y},&
-I_3 &: x\ \mapsto\ \frac{d}{x},\ y\ \mapsto\ \frac{e}{y},\ z\ \mapsto\ \frac{f}{z},\\
\la_3 &: x\ \mapsto\ gz,\ y\ \mapsto\ \frac{h}{xyz},\ z\ \mapsto\ ix,& 
-\la_3 &: x\ \mapsto\ \frac{j}{z},\ y\ \mapsto\ kxyz,\ z\ \mapsto\ \frac{l}{x}.
\end{align*}
By replacing $ay/c$ and $z/a$ by $y$ and $z$ respectively and the other coefficients, 
the action of $\cbb$ on $K(x,y,z)$ is given by 
$\cbb : x\ \mapsto\ 1/z,\ y\ \mapsto\ abxyz,\ z\ \mapsto\ 1/y$. 
Also by replacing $ab$ by $b$, we may assume that the action of $\cbb$ on 
$K(x,y,z)$ is given by 
\[
\cbb : x\ \mapsto\ \frac{1}{z},\ y\ \mapsto\ bxyz,\ z\ \mapsto\ \frac{1}{y},
\]
i.e. $a=c=1$. 
By the equalities $\cbb^4=(\pm \la_3)^2=I_3$, we have $b^2=gi=j^2k^2=1$ and $j=l$. 

By the relations of the generators of the $G$'s as in (\ref{eqc4g}), 
the problem reduces to the following cases: 
\begin{align*}
\cbb &: x\ \mapsto\ \frac{1}{z},\ y\ \mapsto\ \ep_1 xyz,\ z\ \mapsto\ \frac{1}{y},&
-I_3 &: x\ \mapsto\ \frac{1}{fx},\ y\ \mapsto\ \frac{1}{fy},\ z\ \mapsto\ \frac{f}{z},\\
\la_3 &: x\ \mapsto\ gz,\ y\ \mapsto\ \frac{\ep_1 g}{xyz},\ z\ \mapsto\ \frac{x}{g},& 
-\la_3 &: x\ \mapsto\ \frac{\ep_2}{z},\ y\ \mapsto\ \ep_1\ep_2 xyz,\ z\ \mapsto\ \frac{\ep_2}{x} 
\end{align*}
with $f,g\in K^\times$ and $\ep_1,\ep_2=\pm 1$. 
Note that $fg=\ep_2=\pm 1$ for $G_{4,7,2}$. 
Put 
\[
X:=\frac{1}{yz},\quad Y:=xz,\quad Z:=\frac{1}{x}.
\]
Then $K(x,y,z)=K(X,Y,Z)$ and the actions of 
$\cbb$, $-I_3$, $\la_3$ and $-\la_3$ on $K(X,Y,Z)$ are given by 
\begin{align*}
\cbb &: X\ \mapsto\ \frac{\ep_1}{Y},\ Y\ \mapsto\ X,\ Z\ \mapsto\ YZ,&
-I_3 &: X\ \mapsto\ \frac{1}{X},\ Y\ \mapsto\ \frac{1}{Y},\ Z\ \mapsto\ \frac{f}{Z},\\
\la_3 &: X\ \mapsto\ \frac{\ep_1}{X},\ Y\ \mapsto\ Y,\ Z\ \mapsto\ \frac{1}{gYZ},& 
-\la_3 &: X\ \mapsto\ \ep_1 X,\ Y\ \mapsto\ \frac{1}{Y},\ Z\ \mapsto\ \ep_2 YZ. 
\end{align*}
It follows from Theorem \ref{thAHK} that $K(x,y,z)^{G_{4,1,2}}$ and $K(x,y,z)^{G_{4,5,2}}$ 
are rational over $K$. 
Indeed we get explicit transcendental bases of $K(X,Y,Z)^{G_{4,1,2}}$ and of 
$K(X,Y,Z)^{G_{4,5,2}}$ over $K$ respectively as follows: 
First we take a $G_{4,1,2}$-invariant $\Theta$ as 
\begin{align*}
\Theta:=\mathrm{Tr}_{\langle\cbb\rangle}(\ep_1 XZ)=(\ep_1 XZ+Z+YZ+XYZ)=Z(XY+\ep_1 X+Y+1)
\end{align*}
where $\mathrm{Tr}_{\langle\cbb\rangle}$ is the trace map under the action of $\langle\cbb\rangle$. 
Then we have $K(X,Y,Z)=K(X,Y,\Theta)$ and the action of $\ta_3=\cbb^2$ on $K(X,Y,\Theta)$ 
is given by 
\begin{align*}
\ta_3=\cbb^2\ :\ X\ \mapsto\ \frac{\ep_1}{X},\ Y\ \mapsto\ \frac{\ep_1}{Y},\ 
\Theta\ \mapsto\ \Theta. 
\end{align*}
By using Lemma \ref{lemaa}, we get $K(X,Y,Z)^{\langle\ta_3\rangle}=K(t_1,t_2,t_3)$ where 
\begin{align*}
t_1:=\frac{XY+1}{X+\ep_1 Y},\quad t_2:=\frac{XY-1}{X-\ep_1 Y},\quad t_3:=\Theta.
\end{align*}
Then the actions of $\cbb$, $-I_3$, $\la_3$ and $-\la_3$ on 
$K(x,y,z)^{\langle\ta_3\rangle}=K(t_1,t_2,t_3)$ are given by 
\begin{align*}
\cbb &: t_1\ \mapsto\ \frac{1}{t_1},\ t_2\ \mapsto\ \frac{-1}{t_2},\ t_3\ \mapsto\ t_3,\\
-I_3 &: t_1\ \mapsto\ \ep_1 t_1,\ t_2\ \mapsto\ \ep_1 t_2,\ t_3\ \mapsto\ 
-\ep_1\frac{4f(t_1+1)(t_1+\ep_1)(t_2^2-\ep_1)}{(t_1^2-t_2^2)t_3},\\
\la_3 &: t_1\ \mapsto\ \frac{\ep_1}{t_1},\ t_2\ \mapsto\ \frac{\ep_1}{t_2},\ 
t_3\ \mapsto\ -\ep_1\frac{4(t_1+1)(t_1+\ep_1)(t_2^2-\ep_1)}{g(t_1^2-t_2^2)t_3},\\
-\la_3 &: t_1\ \mapsto\ \frac{1}{t_1},\ t_2\ \mapsto\ \frac{1}{t_2},\ t_3\ \mapsto\ \ep_2 t_3. 
\end{align*}
Now we put $t_1':=t_1/t_2$ then the action of $\cbb$ on $K(t_1,t_2,t_3)=K(t_1',t_2,t_3)$ is 
given by 
\begin{align*}
\cbb &: t_1'\ \mapsto\ \frac{-1}{t_1'},\ t_2\ \mapsto\ \frac{-1}{t_2},\ t_3\ \mapsto\ t_3.
\end{align*}
Applying Lemma \ref{lemaa} again, we have 
$K(x,y,z)^{\langle\cbb\rangle}=K(t_1',t_2,t_3)^{\langle\cbb\rangle}=K(u_1,u_2,u_3)$ where 
\begin{align*}
u_1:=\frac{t_1't_2-1}{t_1'+t_2}=\frac{t_1-1}{(t_1/t_2)+t_2},
\quad u_2:=\frac{t_1't_2+1}{t_1'-t_2}=\frac{t_1+1}{(t_1/t_2)-t_2},\quad u_3:=t_3.
\end{align*}
Thus the set $\{u_1,u_2,u_3\}$ becomes a transcendental basis of $K(x,y,z)^{G_{4,1,2}}$ over $K$ 
and hence $K(x,y,z)^{G_{4,1,2}}=K(x,y,z)^{\langle\cbb\rangle}$ is rational over $K$.

The actions of $-I_3$, $\la_3$ and $-\la_3$ on 
$K(x,y,z)^{\langle\cbb\rangle}=K(u_1,u_2,u_3)$ are given by 
\begin{align*}
-I_3 &: \begin{cases}
\ \displaystyle{u_1\ \mapsto\ u_1,\ u_2\ \mapsto\ u_2,\ 
u_3\ \mapsto\ -\frac{16fu_2^2(u_1^2+1)(u_1u_2-1)}{(u_1+u_2)^2(u_1u_2+1)u_3}},\ 
\mathrm{if}\ \ep_1=1,\vspace*{1mm}\\
\ \displaystyle{u_1\ \mapsto\ -u_2,\ u_2\ \mapsto\ -u_1,\ 
u_3\ \mapsto\ \frac{16fu_1u_2(u_1^2+1)(u_2^2+1)}{(u_1+u_2)^2(u_1u_2+1)u_3}},\ 
\mathrm{if}\ \ep_1=-1,
\end{cases}\\
\la_3 &: 
\begin{cases}
\ \displaystyle{u_1\ \mapsto\ -u_1,\ u_2\ \mapsto\ -u_2,\ 
u_3\ \mapsto\ -\frac{16u_2^2(u_1^2+1)(u_1u_2-1)}{g(u_1+u_2)^2(u_1u_2+1)u_3}},\ 
\mathrm{if}\ \ep_1=1,\vspace*{1mm}\\
\ \displaystyle{u_1\ \mapsto\ u_2,\ u_2\ \mapsto\ u_1,\ 
u_3\ \mapsto\ \frac{16u_1u_2(u_1^2+1)(u_2^2+1)}{g(u_1+u_2)^2(u_1u_2+1)u_3}},\ 
\mathrm{if}\ \ep_1=-1,
\end{cases}\\
-\la_3 &:\hspace*{5.3mm} u_1\ \mapsto\ -u_1,\ u_2\ \mapsto\ -u_2,\ u_3\ \mapsto\ \ep_2 u_3
\end{align*}
where $fg=\ep_2=\pm 1$. 
Hence $K(x,y,z)^{G_{4,5,2}}=K(u_1,u_2,u_3)^{\langle-\la_3\rangle}$ is rational over $K$ 
and we get an explicit transcendental basis of $K(x,y,z)^{G_{4,5,2}}$ over $K$. 

Next we consider the remaining three cases $G_{4,3,2}=\langle \cbb,-I_3\rangle$, 
$G_{4,4,2}=\langle \cbb,\la_3\rangle$ and $G_{4,7,2}=\langle \cbb,\la_3,-\la_3\rangle$. 
We put 
\begin{align*}
\begin{cases}
\ \displaystyle{v_1:=u_1u_2,\quad v_2:=u_1,\quad v_3:=\frac{(u_1+u_2)(u_1u_2+1)u_3}{4u_2}},\ 
\mathrm{if}\ \ep_1=1,\vspace*{1mm}\\
\ \displaystyle{v_1:=u_1+u_2,\quad v_2:=u_1-u_2,\quad v_3:=\frac{8u_1(u_1^2+1)}{(u_1+u_2)u_3}},\ 
\mathrm{if}\ \ep_1=-1.
\end{cases}
\end{align*}
Then $K(x,y,z)^{\langle\cbb\rangle}=K(u_1,u_2,u_3)=K(v_1,v_2,v_3)$ and the actions of 
$-I_3$, $\la_3$ and $-\la_3$ on $K(v_1,v_2,v_3)$ are given by 
\begin{align*}
-I_3 &: \begin{cases}
\ \displaystyle{v_1\ \mapsto\ v_1,\ v_2\ \mapsto\ v_2,\ 
v_3\ \mapsto\ -\frac{f(v_1^2-1)(v_2^2+1)}{v_3}},\ 
\mathrm{if}\ \ep_1=1,\vspace*{1mm}\\
\ \displaystyle{v_1\ \mapsto\ -v_1,\ v_2\ \mapsto\ v_2,\ 
v_3\ \mapsto\ \frac{v_1^2-v_2^2+4}{fv_3}},\ 
\mathrm{if}\ \ep_1=-1,
\end{cases}\\
\la_3 &: 
\begin{cases}
\ \displaystyle{v_1\ \mapsto\ v_1,\ v_2\ \mapsto\ -v_2,\ 
v_3\ \mapsto\ -\frac{(v_1^2-1)(v_2^2+1)}{gv_3}},\ 
\mathrm{if}\ \ep_1=1,\vspace*{1mm}\\
\ \displaystyle{v_1\ \mapsto\ v_1,\ v_2\ \mapsto\ -v_2,\ 
v_3\ \mapsto\ \frac{g(v_1^2-v_2^2+4)}{v_3}},\ 
\mathrm{if}\ \ep_1=-1,
\end{cases}\\
-\la_3 &:\hspace*{5.3mm} 
v_1\ \mapsto\ \ep_1 v_1,\ v_2\ \mapsto\ -v_2,\ v_3\ \mapsto\ \ep_2 v_3
\end{align*}
where $fg=\ep_2=\pm 1$. 
It follows from Theorem \ref{thInv} (I) that 
$K(x,y,z)^{G_{4,3,2}}=K(v_1,v_2,v_3)^{\langle -I_3\rangle}$ and 
$K(x,y,z)^{G_{4,4,2}}=K(v_1,v_2,v_3)^{\langle \la_3\rangle}$ are rational over $K$. 

Indeed we have 
$K(x,y,z)^{G_{4,4,2}}=K(v_1,v_2,v_3)^{\langle \la_3\rangle}=K(w_1,w_2,w_3)$ where 
\begin{align*}
\begin{cases}
\ \displaystyle{w_1:=v_1,\ w_2:=v_3-\frac{(v_1^2-1)(v_2^2+1)}{gv_3},\ w_3:=
\Bigl(v_3+\frac{(v_1^2-1)(v_2^2+1)}{gv_3}\Bigr)\Big/v_2},\ \mathrm{if}\ \ep_1=1,\vspace*{1mm}\\
\ \displaystyle{w_1:=v_1,\ w_2:=v_3+\frac{g(v_1^2-v_2^2+4)}{v_3},\ w_3:=
\Bigl(v_3-\frac{g(v_1^2-v_2^2+4)}{v_3}\Bigr)\Big/v_2},\ \mathrm{if}\ \ep_1=-1.
\end{cases}
\end{align*}
The action of $-\la_3$ on $K(w_1,w_2,w_3)$ is given by 
\begin{align*}
-\la_3 :  w_1\ \mapsto\ \ep_1 w_1,\ w_2\ \mapsto\ \ep_2 w_2,\ w_3\ \mapsto\ -\ep_2 w_3.
\end{align*}
Hence $K(x,y,z)^{G_{4,7,2}}=K(w_1,w_2,w_3)^{\langle-\la_3\rangle}$ is rational over $K$. 

%
\subsection{The case (4B$^-$) (i): $-\cbb\in G_{4,6,3}$}\label{subse4BM}
%

We treat the case of 
\begin{align*} 
G_{4,6,3} = \langle -\cbb,-\la_3\rangle.
\end{align*} 
The monomial actions of $-\cbb$ and of $-\la_3$ are given by 
\begin{align*} 
-\cbb &: x\ \mapsto az,\ y\ \mapsto\ \frac{b}{xyz},\ z\ \mapsto\ cy,& 
-\la_3 &: x\ \mapsto\ \frac{d}{z},\ y\ \mapsto\ exyz,\ z\ \mapsto\  \frac{f}{x}.
\end{align*}
By replacing $(acy,az)$ by $(y,z)$, we may assume that $a=c=1$. 
From $(-\la_3)^2=I_3$, we see $d=f$ and $d^2e^2=1$. 
By the equality $[-\cbb,-\la]=(-\cbb)^2$, the actions of $-\cbb$ 
and of $-\la_3$ on $K(x,y,z)$ may be reduced to the following cases:
\begin{align*}
-\cbb &: x\ \mapsto z,\ y\ \mapsto\ \frac{b}{xyz},\ z\ \mapsto\ y,& 
-\la_3 &: x\ \mapsto\ \frac{d}{z},\ y\ \mapsto\ \frac{xyz}{\ep_1 d},\ z\ \mapsto\ \frac{d}{x}
\end{align*}
where $b=\ep_1 d^2\in K^\times$ and $\ep_1=\pm 1$. 
The action of $\ta_3=(-\cbb)^2$ on $K(x,y,z)$ is given by 
\begin{align*}
\ta_3 &: x\ \mapsto y,\ y\ \mapsto\ x,\ z\ \mapsto\ \frac{b}{xyz}.
\end{align*}

By Lemma \ref{lemc2t}, we get $K(x,y,z)^{\langle\ta_3\rangle}=K(t_1,t_2,t_3)$ where 
\begin{align*}
t_1:=\frac{xy}{d(x+y)},\quad 
t_2:=\frac{xyz+\frac{b}{z}}{d(x+y)},\quad 
t_3:=\frac{xyz-\frac{b}{z}}{d(x-y)}.
\end{align*}
The actions of $-\cbb$ and of $-\la_3$ on $K(x,y,z)^{\langle\ta_3\rangle}=K(t_1,t_2,t_3)$ are 
given by 
\begin{align*}
-\cbb &: t_1\ \mapsto -\frac{\ep_1 (t_2^2-t_3^2)}{4dt_1t_2(t_3^2-\ep_1)},\ 
t_2\ \mapsto\ \frac{\ep_1}{t_2},\ t_3\ \mapsto\ -\frac{\ep_1}{t_3},& 
-\la_3 &: t_1\ \mapsto\ \frac{t_1}{t_2},\ t_2\ \mapsto\ \frac{1}{t_2},\ 
t_3\ \mapsto\  \frac{1}{t_3}
\end{align*}
where $\ep_1=\pm 1$. 
We put 
\begin{align*}
u_1:=\frac{t_2+t_3}{2t_1(t_3-1)},\quad u_2:=\frac{t_2+1}{t_2-1},\quad u_3:=\frac{t_3+1}{t_3-1}.
\end{align*}
Then $K(x,y,z)^{\langle\ta_3\rangle}=K(t_1,t_2,t_3)=K(u_1,u_2,u_3)$ and 
the actions of $-\cbb$ and of $-\la_3$ on $K(u_1,u_2,u_3)$ are given by 
\begin{align*}
-\cbb &: 
\begin{cases}
\ \displaystyle{u_1\ \mapsto -\frac{d}{u_1},\ u_2\ \mapsto\ -u_2,\ 
u_3\ \mapsto\ -\frac{1}{u_3}},\ \mathrm{if}\ \ep_1=1,\vspace*{1mm}\\
\ \displaystyle{u_1\ \mapsto -\frac{d(u_3^2+1)}{2u_1},\ u_2\ \mapsto\ -\frac{1}{u_2},\ 
u_3\ \mapsto\ -u_3},\ \mathrm{if}\ \ep_1=-1,
\end{cases}\\
-\la_3 &: u_1\ \mapsto\ -u_1,\ u_2\ \mapsto\ -u_2,\ u_3\ \mapsto\ -u_3.
\end{align*}
Because $G_{4,2,2}=\langle-\cbb\rangle\in\mathcal{N}$, 
we consider the fixed field $K(x,y,z)^{\langle\ta_3,-\la_3\rangle}=K(v_1,v_2,v_3)$ where 
\begin{align*}
v_1:=u_1u_3,\quad v_2:=u_2u_3,\quad v_3:=u_3^2.
\end{align*}
Then the action of $-\cbb$ on $K(x,y,z)^{\langle\ta_3,-\la_3\rangle}=K(v_1,v_2,v_3)$ 
is given by 
\begin{align*}
-\cbb &: 
\begin{cases}
\ \displaystyle{v_1\ \mapsto \frac{d}{v_1},\ v_2\ \mapsto\ \frac{v_2}{v_3},\ 
v_3\ \mapsto\ \frac{1}{v_3}},\ \mathrm{if}\ \ep_1=1,\vspace*{1mm}\\
\ \displaystyle{v_1\ \mapsto \frac{dv_3(v_3+1)}{2v_1},\ v_2\ \mapsto\ \frac{v_3}{v_2},\ 
v_3\ \mapsto\ v_3},\ \mathrm{if}\ \ep_1=-1.
\end{cases}
\end{align*}
By Theorem \ref{thAHK}, the rationality problem reduces to the case of 
$2$-dimensional monomial actions and hence 
$K(x,y,z)^{G_{4,6,3}}=K(v_1,v_2,v_3)^{\langle-\cbb\rangle}$ is rational over $K$. 
Indeed we get an explicit transcendental basis of $K(x,y,z)^{G_{4,6,3}}$ over 
$K$ by using Theorem \ref{thab}. 

%
\subsection{The case (4B$^-$) (ii): $-\cbb\in G_{4,6,4}$} 
%

We treat the cases of 
\begin{align*} 
G_{4,4,2}=\langle \cbb,\la_3\rangle,\quad 
G_{4,6,4}=\langle -\cbb,\la_3\rangle,\quad
G_{4,7,2}=\langle \cbb,\la_3,-I_3\rangle
\end{align*} 
which have a normal subgroup $G_{3,1,4}=\langle\ta_3,\la_3\rangle$ where $\ta_3=(\pm \cbb)^2$. 

Although we treated the groups $G_{4,4,2}$ and $G_{4,7,2}$ in Subsection \ref{subse4p}, 
we will give an another proof for these two groups in order to describe the difference 
between three groups $G_{4,4,2}$, $G_{4,6,4}$ and $G_{4,7,2}$. 

The actions of $\cbb$, $-\cbb$, $\la_3$ and $-I_3$ on $K(x,y,z)$ are given by 
\begin{align*}
\cbb &: x\ \mapsto\ \frac{a}{z},\ y\ \mapsto\ bxyz,\ z\ \mapsto\ \frac{c}{y},&
-\cbb &: x\ \mapsto\ a'z,\ y\ \mapsto\ \frac{b'}{xyz},\ z\ \mapsto\ c'y,\\
\la_3 &: x\ \mapsto\ dz,\ y\ \mapsto\ \frac{e}{xyz},\ z\ \mapsto\ fx,& 
-I_3 &: x\ \mapsto\ \frac{g}{x},\ y\ \mapsto\ \frac{h}{y},\ z\ \mapsto\ \frac{i}{z}.
\end{align*}
For $G_{4,4,2}$ and $G_{4,7,2}$, by replacing $(ay/c,dz)$ by $(y,z)$ and the other coefficients, 
we may assume that $a=c$ and $d=f=1$ without loss of generality. 
For $G_{4,6,4}$, by replacing $(a'c'y,dz)$ by $(y,z)$ and the other coefficients, 
we may assume that $a'c'=d=f=1$. 
By $\cbb^4=I_3$, we see $a^2b^2=1$. 
From the relations of the generators of the $G$'s as in (\ref{eqc4g}), 
the problem may be reduced to the following case: 
\begin{align*}
\cbb &: x\ \mapsto\ \frac{a}{z},\ y\ \mapsto\ \frac{xyz}{\ep_1 a},\ 
z\ \mapsto\ \frac{a}{y},&
-\cbb &: x\ \mapsto\ \ep_2 z,\ y\ \mapsto\ \frac{\ep_2 e}{xyz},\ z\ \mapsto\ \ep_2 y,\\
\la_3 &: x\ \mapsto\ z,\ y\ \mapsto\ \frac{e}{xyz},\ z\ \mapsto\ x,& 
-I_3 &: x\ \mapsto\ \frac{\ep_2 a}{x},\ y\ \mapsto\ \frac{\ep_2 a}{y},\ 
z\ \mapsto\ \frac{\ep_2 a}{z}
\end{align*}
where $a,e\in K^\times$ and $\ep_1,\ep_2=\pm 1$ with 
$e=\ep_1 a^2$ for $G_{4,4,2}$ and $G_{4,7,2}$. 
Note that the action of $\ta_3=(\pm \cbb)^2$ on $K(x,y,z)$ is given by 
\begin{align*}
\ta_3=(\pm \cbb)^2 : x\ \mapsto\ y,\ y\ \mapsto\ x,\ z\ \mapsto\ \frac{e}{xyz}
\end{align*}
where $e=\ep_1 a^2\in K^\times$ for $G_{4,4,2}$ and $G_{4,7,2}$. 
Now we put 
\begin{align*}
w:=\frac{e}{xyz}\ \Bigl(=\frac{\ep_1 a^2}{xyz}\ \text{for}\ 
G_{4,4,2}\ \text{and}\ G_{4,7,2}\Bigr)
\end{align*}
as in Lemma \ref{lemV42} (cf. also Subsection \ref{subse314}). 
Then $K(x,y,z)=K(x,y,z,w)$ and the actions of $\cbb$, $-\cbb$, $\ta_3$, $\la_3$ and 
$-I_3$ on $K(x,y,z,w)$ are given by 
\begin{align*}
\cbb &: x\ \mapsto\ \frac{a}{z},\ y\ \mapsto\ \frac{a}{w},\ 
z\ \mapsto\ \frac{a}{y},\ w\ \mapsto\ \frac{a}{x},\\
-\cbb &: x\ \mapsto\ \ep_2 z,\ y\ \mapsto\ \ep_2 w,\ z\ \mapsto\ \ep_2 y,\ 
w\ \mapsto \ep_2 x,\\
\la_3 &: x\ \mapsto\ z,\ y\ \mapsto\ w,\ z\ \mapsto\ x,\ w\ \mapsto\ y,\\
\ta_3 &: x\ \mapsto\ y,\ y\ \mapsto\ x,\ z\ \mapsto\ w,\ w\ \mapsto\ z,\\
-I_3 &: x\ \mapsto\ \frac{\ep_2 a}{x},\ y\ \mapsto\ \frac{\ep_2 a}{y},\ 
z\ \mapsto\ \frac{\ep_2 a}{z},\ w\ \mapsto\ \frac{\ep_2 a}{w}. 
\end{align*}
By Lemma \ref{lemV42}, we have $K(x,y,z)^{\langle \ta_3,\la_3\rangle}
=K(x,y,z,w)^{\langle \ta_3,\la_3\rangle}=K(v_1,v_2,v_3)$ where
\begin{align*}
v_1\ :=\ \frac{x+y-z-w}{xy-zw},\quad
v_2\ :=\ \frac{x-y-z+w}{xw-yz},\quad
v_3\ :=\ \frac{x-y+z-w}{xz-yw}
\end{align*}
with $w=e/(xyz)$. 
Note that $G_{3,1,4}=\langle\ta_3,\la_3\rangle$ as we treated in Subsection \ref{subse314}. 
The actions of $\cbb$, $-\cbb$ and $-I_3$ on 
$K(x,y,z)^{\langle \ta_3,\la_3\rangle}=K(v_1,v_2,v_3)$ are given by 
\begin{align*}
\cbb &: v_1\ \mapsto\ \frac{-v_1+v_2+v_3}{a v_2v_3},\ v_2\ \mapsto\ 
\frac{v_1+v_2-v_3}{a v_1v_2},\ v_3\ \mapsto\ \frac{v_1-v_2+v_3}{a v_1v_3},\\
-\cbb &: v_1\ \mapsto\ \ep_2 v_1,\ v_2\ \mapsto\ \ep_2 v_3,\ v_3\ \mapsto\ \ep_2 v_2,\\
-I_3 &: v_1\ \mapsto\ \ep_2\frac{-v_1+v_2+v_3}{a v_2v_3},\ v_2\ \mapsto\ 
\ep_2\frac{v_1-v_2+v_3}{a v_1v_3},\ v_3\ \mapsto\ \ep_2\frac{v_1+v_2-v_3}{a v_1v_2}.
\end{align*}
Hence $K(x,y,z)^{G_{4,6,4}}=K(x,y,z)^{\langle-\cbb,\la_3\rangle}
=K(v_1,v_2,v_3)^{\langle-\cbb\rangle}$ is rational over $K$. 
We also see that $K(x,y,z)^{G_{4,4,2}}=K(x,y,z)^{\langle\cbb,\la_3\rangle}$ 
is rational over $K$ by the following lemma: 
\begin{lemma}\label{lemuc2}
We have $K(x,y,z)^{G_{4,4,2}}=K(v_1,v_2,v_3)^{\langle\cbb\rangle}=K(t_1,t_2,t_3)$ where 
\begin{align*}
t_1&:=\frac{v_2-v_3}{v_1},&
t_2&:=\frac{av_1v_3(v_1+v_2-v_3-av_1v_2^2)}{v_1^2(1-a^2v_2^2v_3^2)-(v_2-v_3)^2},&
t_3&:=\frac{av_1v_2(v_1-v_2+v_3-av_1v_3^2)}{v_1^2(1-a^2v_2^2v_3^2)-(v_2-v_3)^2}. 
\end{align*}
\end{lemma}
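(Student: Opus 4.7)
The plan is to exploit the crucial observation that $\cbb^2 = \ta_3$ already acts trivially on $K(v_1,v_2,v_3)$, since by construction $v_1,v_2,v_3$ are $\ta_3$-invariant (they were built in Lemma \ref{lemV42} as invariants of $\langle\ta_3,\la_3\rangle$, and the swaps $x\leftrightarrow y$, $z\leftrightarrow w$ leave the numerator of $v_1$ fixed and negate both numerator and denominator of $v_2,v_3$). As $\cbb$ is non-trivial on $K(v_1,v_2,v_3)$ (check $\cbb(v_1)=(-v_1+v_2+v_3)/(av_2v_3)$), the induced order is exactly two, so the extension $K(v_1,v_2,v_3)/K(v_1,v_2,v_3)^{\langle\cbb\rangle}$ is quadratic.

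Next I would verify directly that each $t_i$ is $\cbb$-invariant. For $t_1=(v_2-v_3)/v_1$, one computes
\[
\cbb(v_2)-\cbb(v_3)=\frac{(v_2-v_3)(v_2+v_3-v_1)}{av_1v_2v_3}, \qquad \cbb(v_1)=\frac{v_2+v_3-v_1}{av_2v_3},
\]
and the quotient is $t_1$. For $t_2$ and $t_3$ the computation is heavier but mechanical, and is streamlined by first using $(v_2-v_3)^2=t_1^2v_1^2$, $v_1+v_2-v_3=v_1(1+t_1)$, and $v_1-v_2+v_3=v_1(1-t_1)$ to rewrite
\[
t_2=\frac{av_3(1+t_1-av_2^2)}{1-t_1^2-a^2v_2^2v_3^2},\qquad t_3=\frac{av_2(1-t_1-av_3^2)}{1-t_1^2-a^2v_2^2v_3^2},
\]
and then substituting the explicit action of $\cbb$ and clearing denominators. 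This yields $K(t_1,t_2,t_3)\subset K(v_1,v_2,v_3)^{\langle\cbb\rangle}$.

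Finally, the degree estimate $[K(v_1,v_2,v_3):K(t_1,t_2,t_3)]\leq 2$ is obtained as follows. The relation $v_3=v_2-t_1v_1$ eliminates $v_3$, giving $K(v_1,v_2,v_3)=K(t_1,v_1,v_2)$. Substituting this into the simplified expressions above produces two polynomial equations in $v_1,v_2$ over $K(t_1,t_2,t_3)$; it is natural to rewrite them in terms of the $(v_2\leftrightarrow v_3,\ t_1\leftrightarrow -t_1)$-symmetric and antisymmetric combinations $t_2\pm t_3$, which should decouple into a linear equation involving $v_2+v_3$ (or $v_2 v_3$) together with one quadratic relation, eliminating $v_2$ to yield a polynomial of degree $\leq 2$ satisfied by $v_1$. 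Combined with the previous inclusion, the tower $K(t_1,t_2,t_3)\subset K(v_1,v_2,v_3)^{\langle\cbb\rangle}\subset K(v_1,v_2,v_3)$ with total degree $2$ forces equality of the first two fields. The main obstacle is the bookkeeping in the last step: extracting the correct degree-$2$ minimal polynomial requires carefully exploiting the $v_2\leftrightarrow v_3$ symmetry so that the elimination does not artificially inflate the degree.
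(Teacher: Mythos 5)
Your argument follows essentially the same route as the paper: the key reduction is identical, namely that $t_1=(v_2-v_3)/v_1$ is $\cbb$-invariant, that $K(v_1,v_2,v_3)=K(t_1,v_2,v_3)$, and that over $K(t_1)$ the involution acts by $v_2\mapsto (1+t_1)/(av_2)$, $v_3\mapsto (1-t_1)/(av_3)$. The only difference is how you finish. The paper simply invokes Theorem \ref{thab} with $(A,B)=\bigl((1+t_1)/a,(1-t_1)/a\bigr)$ over the base field $K(t_1)$; your simplified expressions for $t_2,t_3$ are \emph{exactly} the generators $\frac{v_3(v_2^2-A)}{v_2^2v_3^2-AB}$, $\frac{v_2(v_3^2-B)}{v_2^2v_3^2-AB}$ produced by that theorem (here $B$ is a nonzero constant of $K(t_1)$, so the hypothesis $b=c(x+a/x)+d$ holds with $c=0$), so both the invariance check and the degree bound come for free and your whole last paragraph can be replaced by one citation. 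If you do insist on the by-hand elimination, be aware that it does not decouple quite as you describe (the symmetry swaps $t_1\leftrightarrow -t_1$ as well as $v_2\leftrightarrow v_3$, so $t_2\pm t_3$ are not symmetric functions of $v_2,v_3$, and there is no natural quadratic for $v_1$); the clean way is to set $P=v_2v_3$ and read the two relations
\begin{align*}
t_2(1-t_1^2-a^2P^2)&=a(1+t_1)v_3-a^2Pv_2, & t_3(1-t_1^2-a^2P^2)&=a(1-t_1)v_2-a^2Pv_3
\end{align*}
as a linear system in $v_2,v_3$, which solves to $v_2=(aPt_2+(1+t_1)t_3)/a$ and $v_3=(aPt_3+(1-t_1)t_2)/a$; substituting back into $v_2v_3=P$ gives a quadratic for $P$ over $K(t_1,t_2,t_3)$, whence $[K(v_1,v_2,v_3):K(t_1,t_2,t_3)]\le 2$ as required.
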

\begin{proof}
We see $K(v_1,v_2,v_3)=K(t_1,v_2,v_3)$ and the action of $\cbb$ on $K(t_1,v_2,v_3)$ is given by 
\begin{align*}
\cbb :  t_1\ \mapsto\ t_1,\ v_2\ \mapsto\ \frac{1+t_1}{a v_2},\ 
v_3\ \mapsto\ \frac{1-t_1}{a v_3}. 
\end{align*}
By Theorem \ref{thab}, we have
\begin{align*}
K(t_1,v_2,v_3)^{\langle\cbb\rangle}=K\Bigl(t_1,\frac{v_3(v_2^2-A)}{v_2^2v_3^2-AB},
\frac{v_2(v_3^2-B)}{v_2^2v_3^2-AB}\Bigr)\quad \mathrm{with}\quad 
(A,B)=\Bigl(\frac{1+t_1}{a},\frac{1-t_1}{a}\Bigr). 
\end{align*}
Hence the assertion follows by direct calculation.
\end{proof}

For $G_{4,7,2}=\langle\cbb,\la_3,-I_3\rangle$, 
the action of $-I_3$ on $K(x,y,z)^{G_{4,4,2}}=K(t_1,t_2,t_3)$ is given by 
\begin{align*}
-I_3 :  t_1\ \mapsto\ -t_1,\ t_2\ \mapsto\ \ep_2 t_3,\ t_3\ \mapsto\ \ep_2 t_2. 
\end{align*}
Therefore we conclude that 
$K(x,y,z)^{G_{4,7,2}}=K(t_1,t_2,t_3)^{\langle-I_3\rangle}$ is rational over $K$. 
We can also check the rationality of $K(x,y,z)^{G_{4,7,2}}$ by Lemma \ref{lemtlm} in Subsection 
\ref{subse314} because $K(x,y,z)^{G_{4,7,2}}=(K(x,y,z)^{G_{3,3,4}})^{\langle\cbb\rangle}
=(K(x,y,z)^{\langle\ta_3,\la_3,-I_3\rangle})^{\langle\cbb\rangle}$. 

\begin{remark}
For the case $(4B^-)$, i.e. two groups $G_{4,6,3}=\langle-\cbb,-\la_3\rangle$ and 
$G_{4,6,4}=\langle-\cbb,\la_3\rangle$, 
to get suitable generators of $K(x,y,z)^{\langle\ta_3\rangle}$ over $K$ is essential, 
because $G_{4,2,2}=\langle-\cbb\rangle\in\mathcal{N}$. 
Although we can show the rationality of $K(x,y,z)^{G_{4,6,4}}$ by using 
$K(x,y,z)^{\langle\ta_3\rangle}=K(t_1,t_2,t_3)$ via Lemma \ref{lemc2t}, 
we choose a method as in this subsection in order to compare the result with some cases 
$G\supset G_{3,1,4}=\langle\ta_3,\la_3\rangle$ (cf. Subsection \ref{subse314} and 
Section \ref{se73}).
\end{remark}


%
%
\section{The case (5A): $\ca\in G_{5,j,k}$}\label{se5A}

In this section, we consider the following eight groups $G=G_{5,j,k}$, $2\leq k\leq 3$, 
which have a normal subgroup $\langle\ca\rangle$: 
\begin{align*}
G_{5,1,2}&=\langle \ca\rangle\cong\mathcal{C}_3,& 
G_{5,2,2}&=\langle \ca,-I_3\rangle\cong\mathcal{C}_6,\\
G_{5,3,2}&=\langle \ca,-\al\rangle\cong\mathcal{S}_3,&
G_{5,3,3}&=\langle \ca,-\be\rangle\cong\mathcal{S}_3,\\
G_{5,4,2}&=\langle \ca,\be\rangle\cong\mathcal{S}_3,& 
G_{5,4,3}&=\langle \ca,\al\rangle\cong\mathcal{S}_3,\\
G_{5,5,2}&=\langle \ca,\al,-I_3\rangle\cong\mathcal{D}_6,& 
G_{5,5,3}&=\langle \ca,\be,-I_3\rangle\cong\mathcal{D}_6.
\end{align*}
Note that we changed the generator $-\al$ (resp. $-\be$) of $G_{5,5,2}$ (resp. $G_{5,5,3}$) 
by $\al=(-\al)(-I_3)$ (resp. $\be=(-\be)(-I_3)$). 
The actions of $\ca$, $-I_3$, $\al$, $-\al$, $\be$ and $-\be$ on $K(x,y,z)$ are given by
\begin{align*}
\ca &: x\ \mapsto\ ay,\ y\ \mapsto\ \frac{b}{xy},\ z\ \mapsto\ cz,& 
-I_3 &: x\ \mapsto\ \frac{d}{x},\ y\ \mapsto\ \frac{e}{y},\ z\ \mapsto\ \frac{f}{z},\\
\al &: x\ \mapsto\ gy,\ y\ \mapsto\  hx,\ z\ \mapsto\ iz,& 
-\al &: x\ \mapsto\ \frac{j}{y},\ y\ \mapsto\  \frac{k}{x},\ z\ \mapsto\ \frac{l}{z},\\
\be &: x\ \mapsto\ \frac{m}{y},\ y\ \mapsto\  \frac{n}{x},\ z\ \mapsto\ oz,& 
-\be &: x\ \mapsto\ py,\ y\ \mapsto\ qx,\ z\ \mapsto\ \frac{r}{z}.
\end{align*}

We may assume that $a=1$ by replacing $ay$ by $y$ and the other coefficients. 
By the equalities $\ca^3=(-I_3)^2=\al^2=\be^2=I_3$, 
we have $c^3=gh=i^2=o^2=pq=1$, $m=n$, $j=k$. 

By the relations of the generators of the $G$'s as in (\ref{relmat5}), 
we have the following lemma: 
\begin{lemma}
{\rm (i)} If $\ca,-I_3\in G$ then $c=1$, $b^2=d^3$, $d=e\in K^{\times 2}$;\\
{\rm (ii)} If $\ca,\al\in G$ then $c=1$, $g=h=1$;\\
{\rm (iii)} If $\ca,-\al\in G$ then $b^2=j^3$, hence $j\in K^{\times 2}$;\\
{\rm (iv)} If $\ca,\be\in G$ then $c=1$, $b^2=m^3$, hence $m\in K^{\times 2}$;\\
{\rm (v)} If $\ca,-\be\in G$ then $p=q=1$.
\end{lemma}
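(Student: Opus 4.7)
The plan is to translate each relation in (\ref{relmat5}) into an identity of monomial $K$-automorphisms of $K(x,y,z)$ and then extract the constraints on the coefficients by comparing the images of $x$, $y$ and $z$ individually. With the normalization $a=1$ already in place, a direct calculation yields
\[
\ca^2\,:\, x\ \mapsto\ b/(xy),\ y\ \mapsto\ x,\ z\ \mapsto\ c^2 z,
\]
so each conjugation relation $[\ca,\rho]=\ca$ for $\rho\in\{\pm\al,\pm\be\}$, rewritten as $\ca\rho=\rho\ca^2$, reduces to three scalar equations obtained by reading off the $x$-, $y$- and $z$-components.

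For (i) one uses $[\ca,-I_3]=I_3$, that is $\ca\circ(-I_3)=(-I_3)\circ\ca$. Comparison on $x$ yields $d/y=e/y$, forcing $d=e$; on $y$ it yields $exy/b=bxy/(de)$, forcing $b^2=d^3$ (whence $d=(b/d)^2\in K^{\times 2}$); and on $z$ it yields $f/(cz)=cf/z$, forcing $c^2=1$, which together with $c^3=1$ gives $c=1$.

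For (ii)--(v) the involution $\rho^2=I_3$ supplies the incidental identities $gh=1$, $j=k$, $m=n$, $pq=1$, $i^2=o^2=1$ that were already recorded before the lemma; then $\ca\rho=\rho\ca^2$ is checked componentwise. In (ii), the $x$-coordinate produces $g^2h=1$ (hence $g=h=1$ using $gh=1$) and the $z$-coordinate produces $c=c^2$, forcing $c=1$. In (iii), the $x$-coordinate yields $j^2k=b^2$, i.e. $j^3=b^2$, whence $j=(b/j)^2\in K^{\times 2}$, while the $z$-coordinate is automatic via $c^3=1$. Cases (iv) and (v) proceed identically, giving $m^3=b^2$ (hence $m\in K^{\times 2}$) together with $c=1$ in (iv), and $p=q=1$ with no condition on $c$ in (v).

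No step here is deep; the only real obstacle is the bookkeeping. One must consistently use the convention $(\sigma\tau)(x)=\sigma(\tau(x))$ and carefully separate the identities coming from $\rho^2=I_3$ from those coming from the conjugation relation. These coefficient reductions will then be invoked freely in the subsequent subsections to cut each $G\supset\langle\ca\rangle$ down to a small finite list of essentially distinct parameter choices.
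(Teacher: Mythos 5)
Your proposal is correct and is exactly the computation the paper leaves implicit (it states the lemma as an immediate consequence of the relations (\ref{relmat5}) without writing out the verification). The componentwise comparisons of $\ca\circ(-I_3)=(-I_3)\circ\ca$ and $\ca\circ\rho=\rho\circ\ca^2$, together with the identity $\ca^2: x\mapsto b/(xy),\ y\mapsto x,\ z\mapsto c^2z$, all check out.
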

Thus we may reduce the monomial actions to the following form: 
\begin{align}
\ca^{(b,c)}:=\ca &: x\ \mapsto\ y,\ y\ \mapsto\ \frac{b}{xy},\ z\ \mapsto\ cz,& 
-I_3 &: x\ \mapsto\ \frac{1}{x},\ y\ \mapsto\ \frac{1}{y},\ z\ \mapsto\ \frac{f}{z},
\label{acts3abc}\\
\al &: x\ \mapsto\ y,\ y\ \mapsto\ x,\ z\ \mapsto\ \ep z,& 
-\al &: x\ \mapsto\ \frac{1}{y},\ y\ \mapsto\ \frac{1}{x},\ z\ \mapsto\ \frac{l}{z},\nonumber\\
\be &: x\ \mapsto\ \frac{1}{y},\ y\ \mapsto\ \frac{1}{x},\ z\ \mapsto\ \ep z,& 
-\be &: x\ \mapsto\ y,\ y\ \mapsto\ x,\ z\ \mapsto\ \frac{r}{z}\nonumber
\end{align}
where $b,c,f,l,r\in K^\times$, $c^3=1$ and $\ep=\pm 1$. 
We have $c=1$ except for the groups $G_{5,1,2}$, $G_{5,3,2}$ and $G_{5,3,3}$, and 
$b=1$ except for the groups $G_{5,1,2}$, $G_{5,3,3}$ and $G_{5,4,3}$. 
For $-\al\in G_{5,5,2}$ and $-\be\in G_{5,5,3}$, we have $l=\ep f$ and $r=\ep f$ 
since $-\al=\al(-I_3)$ and $-\be=\be(-I_3)$, respectively. 

\subsection{The cases of $G_{5,1,2}$, $G_{5,4,2}$, $G_{5,4,3}$}

We treat the cases of 
\begin{align*}
G_{5,1,2}=\langle \ca^{(b,c)}\rangle,\quad 
G_{5,4,2}=\langle \ca^{(1,1)},\be\rangle,\quad 
G_{5,4,3}=\langle \ca^{(b,1)},\al\rangle
\end{align*}
where $\ca^{(b,c)}$ is given as in (\ref{acts3abc}).

By applying Theorem \ref{thAHK} to $L=K(x,y)$ and $M=K$, 
the rationality problems may be reduced to the $2$-dimensional case of $K(x,y)^{G_{5,1,2}}$, 
$K(x,y)^{G_{5,4,2}}$ and $K(x,y)^{G_{5,4,3}}$ under monomial actions.  
Hence $K(x,y,z)^{G_{5,1,2}}$, $K(x,y,z)^{G_{5,4,2}}$ and $K(x,y,z)^{G_{5,4,3}}$ 
are rational over $K$.

\subsection{The cases of $G_{5,2,2}$, $G_{5,5,2}$, $G_{5,5,3}$}

We treat the cases of 
\begin{align*}
G_{5,2,2}=\langle \ca^{(1,1)},-I_3\rangle,\quad 
G_{5,5,2}=\langle \ca^{(1,1)},\al,-I_3\rangle,\quad 
G_{5,5,3}=\langle \ca^{(1,1)},\be,-I_3\rangle.
\end{align*}
In these cases, the groups $G$ have a non-trivial center which includes $\langle-I_3\rangle$ 
respectively. 
We first consider the fixed field $K(x,y,z)^{G_{5,2,2}}
=K(x,y,z)^{\langle \ca^{(1,1)}, -I_3\rangle}$. 

By Lemma \ref{lemMas2}, we have $K(x,y,z)^{\langle \ca^{(1,1)}\rangle}=K(\tu(1),\tv(1),z)$. 
We put 
\[
t_1:=\tu(1),\quad t_2:=\tv(1),\quad t_3:=z.
\]
Then the actions of $-I_3$, $\al$ and $\be$ on $K(x,y,z)^{\langle \ca^{(1,1)}\rangle}
=K(t_1,t_2,t_3)$ are given by 
\begin{align*}
-I_3\,&:\, t_1\ \mapsto\ \frac{t_2}{t_1^2-t_1t_2+t_2^2},\ 
t_2\ \mapsto\ \frac{t_1}{t_1^2-t_1t_2+t_2^2},\ t_3\ \mapsto\ \frac{f}{t_3},\\
\al\,&:\, t_1\ \mapsto\ t_2\ \mapsto t_1,\ t_3\ \mapsto\ \ep t_3,\\
\be\,&:\, t_1\ \mapsto\ \frac{t_1}{t_1^2-t_1t_2+t_2^2},\ 
t_2\ \mapsto\ \frac{t_2}{t_1^2-t_1t_2+t_2^2},\ t_3\ \mapsto\ \ep t_3
\end{align*}
where $\ep=\pm 1$. 
We put 
\[
u_1:=\frac{t_1-t_2}{t_1+t_2},\quad u_2:=\frac{2}{t_1+t_2},\quad u_3:=t_3.
\]
Then we see $K(x,y,z)^{\langle \ca^{(1,1)}\rangle}=K(t_1,t_2,t_3)=K(u_1,u_2,u_3)$ and 
the actions of $-I_3$, $\al$ and $\be$ on $K(u_1,u_2,u_3)$ are given by 
\begin{align*}
-I_3\,&:\, u_1\ \mapsto -u_1,\ u_2\ \mapsto\ \frac{3u_1^2+1}{u_2},\ 
u_3\ \mapsto\ \frac{f}{u_3},\\
\al\,&:\, u_1\ \mapsto -u_1,\ u_2\ \mapsto\ u_2,\ u_3\ \mapsto\ \ep u_3,\\
\be\,&:\, u_1\ \mapsto\ u_1,\ 
u_2\ \mapsto\ \frac{3u_1^2+1}{u_2},\ u_3\ \mapsto\ \ep u_3.
\end{align*}
We get an explicit transcendental basis of $K(x,y,z)^{G_{5,2,2}}$ over $K$ as follows: 
\begin{lemma}\label{lem522}
We have $K(x,y,z)^{G_{5,2,2}}=K(u_1,u_2,u_3)^{\langle -I_3\rangle}=K(r_1,r_2,r_3)$ where 
\begin{align*}
r_1=\frac{u_3^2+f}{u_3},\quad 
r_2=\frac{u_1u_2}{3 u_1^2-u_2^2+1},\quad 
r_3=\frac{(3u_1^2+u_2^2+2 u_2+1)(u_3^2-f)}{(3u_1^2-u_2^2+1)u_3}.
\end{align*}
\end{lemma}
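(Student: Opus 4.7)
The plan is to verify the asserted equality by (a) showing that each $r_i$ is $-I_3$-invariant, and (b) showing $[K(u_1,u_2,u_3):K(r_1,r_2,r_3)]\leq 2$; since we already know $[K(u_1,u_2,u_3):K(u_1,u_2,u_3)^{\langle-I_3\rangle}]=2$ (because $-I_3$ acts nontrivially, e.g., $-I_3(u_1)=-u_1$), this forces $K(r_1,r_2,r_3)=K(u_1,u_2,u_3)^{\langle-I_3\rangle}$.

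For (a), the invariance of $r_1=u_3+f/u_3$ is immediate from $u_3\mapsto f/u_3$. For $r_2$ and $r_3$, the key algebraic identity is that under $-I_3$ the quantity $A:=3u_1^2-u_2^2+1$ transforms as $A\mapsto -(3u_1^2+1)\,A/u_2^2$, while $B:=3u_1^2+(u_2+1)^2=3u_1^2+u_2^2+2u_2+1$ transforms as $B\mapsto (3u_1^2+1)\,B/u_2^2$. The common factor $(3u_1^2+1)/u_2^2$ then cancels in $B/A$ and also combines correctly with the numerator transformations of $u_1u_2$ and $u_3^2-f$ to give $r_2\mapsto r_2$ and $r_3\mapsto r_3$.

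For (b), the main task is to show that $u_1,u_2,u_3\in K(r_1,r_2,r_3,u_3)$, since $u_3$ is already quadratic over $K(r_1)$ via $u_3^2-r_1u_3+f=0$. Setting $s:=u_3-f/u_3=(u_3^2-f)/u_3\in K(r_1,u_3)$, one sees that $r_3/s=B/A$ lies in this extension. Writing $P:=B/A$ and $Q:=r_2=u_1u_2/A$, the plan is to solve the system
\[
3(P-1)u_1^2=(P+1)u_2^2+2u_2-(P-1),\qquad 3Qu_1^2=u_1u_2+Q(u_2^2-1)
\]
for $u_1,u_2$ as rational functions of $P$ and $Q$. Eliminating $u_1^2$ yields the linear relation $u_1=2Q(u_2+1)/(P-1)$, and substituting back produces a linear equation determining $u_2$ rationally in $P,Q$ (with explicit denominator $(P-1)(P+1)-12Q^2$). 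Both $u_1$ and $u_2$ thereby lie in $K(P,Q)\subseteq K(r_1,r_2,r_3,u_3)$, proving the degree bound.

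The main obstacle is purely computational: carrying out the elimination cleanly and checking that the denominators appearing (such as $P-1$ and $(P-1)(P+1)-12Q^2$) are nonzero as elements of $K(u_1,u_2)$, which is verified by direct substitution. Once the formulas for $u_1,u_2$ in terms of $P,Q$ are in hand, everything else is automatic, and the explicit form of the generators $r_1,r_2,r_3$ given in the statement will drop out as the correct simultaneous invariants.
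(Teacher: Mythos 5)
Your proposal is correct, and it reaches the lemma by a genuinely different, verification-style route. The paper \emph{derives} the generators rather than verifying them: it forms the four standard invariants $w_1=u_3+f/u_3$, $w_2=u_1/(u_3-f/u_3)$, $w_3=u_2+(3u_1^2+1)/u_2$, $w_4=u_1/(u_2-(3u_1^2+1)/u_2)$, shows $K(u_1,u_2,u_3)^{\langle -I_3\rangle}=K(w_1,w_2,w_3,w_4)$ by an index count, and then exploits the relation $u_1^2=(w_1^2-4f)w_2^2=w_4^2(w_3^2-4)/(12w_4^2+1)$ to replace $w_2,w_3$ by $w_2'=w_4(w_3+2)/w_2$, $w_3'=w_4(w_3-2)/w_2$ with $w_2'w_3'=(w_1^2-4f)(12w_4^2+1)$, so one generator can be dropped; the stated $r_i$ are then $w_1$, $-w_4$, $-w_2'$. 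You instead take the $r_i$ as given, verify invariance via the (correct) transformation laws $A\mapsto -(3u_1^2+1)A/u_2^2$ and $B\mapsto (3u_1^2+1)B/u_2^2$, and get the index bound by explicitly inverting: $u_1,u_2\in K(P,Q)$ with $P=r_3/(2u_3-r_1)$, $Q=r_2$, and $u_3$ quadratic over $K(r_1)$. Both arguments bottom out in the same index-$2$ count, so your route is a legitimate alternative; what the paper's version buys is an explanation of where the $r_i$ come from. One small caution in your step (b): substituting $u_1=2Q(u_2+1)/(P-1)$ back into the first displayed equation yields a \emph{quadratic} in $u_2$, not a linear equation; it has the spurious root $u_2=-1$, which must be factored out (equivalently, substitute into the second equation and cancel the common factor $Q(u_2+1)$ first) to arrive at $u_2=((P-1)^2+12Q^2)/((P-1)(P+1)-12Q^2)$. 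The nonvanishing of the denominator is confirmed by $(P^2-1)-12Q^2=4u_2B/A^2\neq 0$, and $P-1=2u_2(u_2+1)/A\neq 0$, so all divisions are licit.
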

\begin{proof}
We put 
\[
w_1:=u_3+\frac{f}{u_3},\ 
w_2:=u_1\Big/\Bigl(u_3-\frac{f}{u_3}\Bigr),\ 
w_3:=u_2+\frac{3u_1^2+1}{u_2},\ 
w_4:=u_1\Big/\Bigl(u_2-\frac{3u_1^2+1}{u_2}\Bigr).
\]
Then we have $K(u_1,u_2,u_3)^{\langle -I_3\rangle}=K(w_1,w_2,w_3,w_4)$ 
because $[K(u_1,u_2,u_3) : K(w_1,w_3,w_4)]=4$ and $w_2\not\in K(w_1,w_3,w_4)$. 
Since $u_1^2=(w_1^2-4f)w_2^2=w_4^2(w_3^2-4)/(12w_4^2+1)$, putting 
$w_2':=w_4(w_3+2)/w_2$, $w_3':=w_4(w_3-2)/w_2$, 
we have $w_2'w_3'=(w_1^2-4f)(12w_4^2+1)$, 
so that $K(w_1,w_2,w_3,w_4)=K(w_1,w_2',w_3',w_4)=K(w_1,w_2',w_4)$. 
It can be checked by the definition that $r_1=w_1$, $r_2=-w_4$, $r_3=-w_2'$. 
\end{proof}
\begin{remark}
The rationality of $K(u_1,u_2,u_3)^{\langle-I_3\rangle}$ is a result of $R(a,b,c)$ 
in \cite[Lemma 2]{Yam}.
\end{remark}
In particular, $K(x,y,z)^{G_{5,2,2}}$ is rational over $K$. 
The actions of $\al$ and $\be$ on $K(x,y,z)^{G_{5,2,2}}=K(r_1,r_2,r_3)$ are given by 
\begin{align*}
\al\,&:\, r_1\ \mapsto \ep r_1,\ r_2\ \mapsto\ -r_2,\ r_3\ \mapsto\ \ep r_3,\\
\be\,&:\, r_1\ \mapsto\ \ep r_1,\ r_2\ \mapsto\ -r_2,\ r_3\ \mapsto\ -\ep r_3.
\end{align*}
Thus we conclude that $K(x,y,z)^{G_{5,5,2}}=K(r_1,r_2,r_3)^{\langle\al\rangle}$ 
and $K(x,y,z)^{G_{5,5,3}}=K(r_1,r_2,r_3)^{\langle\be\rangle}$ are rational over $K$. 

\subsection{The cases of $G_{5,3,2}$, $G_{5,3,3}$}\label{subse532}
In this subsection, we assume that $c=1$. 
The cases of $G_{5,3,2}$ and $G_{5,3,3}$ with $c\neq 1$ will be discussed 
in Section \ref{secnot1}.

We treat the cases of 
\begin{align*}
G_{5,3,2}=\langle \ca^{(1,c)},-\al\rangle,\quad 
G_{5,3,3}=\langle \ca^{(b,c)},-\be\rangle. 
\end{align*}
Recall that the actions of $\ca^{(b,c)}$, $-\al$ and $-\be$ on $K(x,y,z)$ are given as 
\begin{align*}
\ca^{(b,c)} &: x\ \mapsto\ y,\ y\ \mapsto\ \frac{b}{xy},\ z\ \mapsto\ cz,\\
-\al &: x\ \mapsto\ \frac{1}{y},\ y\ \mapsto\ \frac{1}{x},\ z\ \mapsto\ \frac{l}{z},\quad 
-\be : x\ \mapsto\ y,\ y\ \mapsto\ x,\ z\ \mapsto\ \frac{r}{z}
\end{align*}
where $b,c,l,r\in K^\times$ and $c^3=1$. 
We assume that $c=1$ in this subsection. 

By the same way as in the previous subsection, 
we get $K(x,y,z)^{\langle \ca^{(b,1)}\rangle}=K(t_1,t_2,t_3)$ where $t_1=\tu(b)$, 
$t_2=\tv(b)$, $t_3=z$ and $\tu(b)$, $\tv(b)$ are given in Lemma \ref{lemMas2}, and we put 
\begin{align}
u_1:=\frac{t_1-t_2}{t_1+t_2},\quad u_2:=\frac{2}{t_1+t_2},\quad u_3:=t_3.\label{defu123}
\end{align}
Then the actions of $-\al$ and of $-\be$ on $K(x,y,z)^{\langle \ca^{(b,1)}\rangle}
=K(u_1,u_2,u_3)$ are given by 
\begin{align*}
-\al\,&:\, u_1\ \mapsto u_1,\ u_2\ \mapsto\ \frac{3u_1^2+1}{u_2},\ 
u_3\ \mapsto\ \frac{l}{u_3},\\
-\be\,&:\, u_1\ \mapsto\ -u_1,\ u_2\ \mapsto\ u_2,\ u_3\ \mapsto\ \frac{r}{u_3}
\end{align*}
where we adopt $b=1$ in the case of $G_{5,3,2}=\langle \ca^{(1,1)},-\al\rangle$. 

Hence the problems reduce to the case of $2$-dimensional monomial actions. 
It follows by Theorem \ref{thHaj} that 
$K(x,y,z)^{G_{5,3,3}}=K(u_1,u_2,u_3)^{\langle-\be\rangle}$ and 
$K(x,y,z)^{G_{5,3,2}}=K(u_1,u_2,u_3)^{\langle-\al\rangle}$are rational over $K$.  


%
%
\section{The case (5B): $\cb\in G_{5,j,1}$}\label{se5B}

In this section, we consider the following five groups $G=G_{5,j,1}$, $1\leq j\leq 5$, 
which have a normal subgroup $\langle\cb\rangle$: 
\begin{align*}
G_{5,1,1}&=\langle \cb\rangle\cong\mathcal{C}_3,&
G_{5,2,1}&=\langle \cb,-I_3\rangle\cong\mathcal{C}_6,\\
G_{5,3,1}&=\langle \cb,-\al\rangle\cong\mathcal{S}_3,&
G_{5,4,1}&=\langle \cb,\al\rangle\cong\mathcal{S}_3,&
G_{5,5,1}&=\langle \cb,\al,-I_3\rangle\cong\mathcal{D}_6.
\end{align*}
Note that we changed the generator $-\al$ of $G_{5,5,1}$ by $\al=(-\al)(-I_3)$. 
The actions of $\cb$, $-I_3$, $\al$ and $-\al$ on $K(x,y,z)$ are given by
\begin{align*}
\cb &: x\ \mapsto\ ay,\ y\ \mapsto\ bz,\ z\ \mapsto\  cx,& 
-I_3 &: x\ \mapsto\ \frac{d}{x},\ y\ \mapsto\ \frac{e}{y},\ z\ \mapsto\ \frac{f}{z},\\
\al &: x\ \mapsto\ gy,\ y\ \mapsto\  hx,\ z\ \mapsto\ iz,& 
-\al &: x\ \mapsto\ \frac{j}{y},\ y\ \mapsto\  \frac{k}{x},\ z\ \mapsto\ \frac{l}{z}.
\end{align*}

We may assume that $a=b=c=1$ by replacing $(ay,abz)$ by $(y,z)$ and the other coefficients. 
By the equalities $\cb^3=\al^2=(-\al)^2=(-I_3)^2=I_3$, we see $gh=i^2=1$ and $j=k$. 

From the relations of the generators of the $G$'s as in (\ref{relmat5}), 
we also see $d=e=f$, $g=h=i=\pm 1$, $j=k=l$.  
Hence we have 
\begin{align*}
\cb &: x\ \mapsto\ y,\ y\ \mapsto\ z,\ z\ \mapsto\  x,& 
-I_3 &: x\ \mapsto\ \frac{d}{x},\ y\ \mapsto\ \frac{d}{y},\ z\ \mapsto\ \frac{d}{z},\\
\al &: x\ \mapsto\ \ep y,\ y\ \mapsto\ \ep x,\ z\ \mapsto\ \ep z,& 
-\al &: x\ \mapsto\ \frac{j}{y},\ y\ \mapsto\ \frac{j}{x},\ z\ \mapsto\ \frac{j}{z}
\end{align*}
where $d,j\in K^\times$ and $\ep=\pm 1$. 
For $G_{5,5,1}=\langle\cb,\al,-I_3\rangle$, we have $j=\ep d$ because $-\al=\al(-I_3)$. 

\subsection{The cases of $G_{5,1,1}$, $G_{5,3,1}$, $G_{5,4,1}$}

We treat the cases of 
\begin{align*}
G_{5,1,1}=\langle \cb\rangle,\quad 
G_{5,3,1}=\langle \cb,-\al\rangle,\quad 
G_{5,4,1}=\langle \cb,\al\rangle.
\end{align*}

It follows from Lemma \ref{lemMas} that $K(x,y,z)^{G_{5,1,1}}$ $=$ 
$K(x,y,z)^{\langle \cb\rangle}$ $=$ $K(s_1,u,v)$ is rational over $K$, where 
$s_1$, $u$, $v\in K(x,y,z)$ are as in Lemma \ref{lemMas}. 

The action of $G_{5,3,1}=\langle\cb,-\al\rangle$ on $K(x,y,z)$ 
is the twisted action of $\mathcal{S}_3$ as in Theorem \ref{thHK1}. 
Hence it follows from Theorem \ref{thHK1} that $K(x,y,z)^{G_{5,3,1}}$ is rational over $K$. 
We can also get an explicit transcendental basis of $K(x,y,z)^{G_{5,3,1}}$ over $K$ 
by Theorem \ref{thHK1}.

The fixed field $K(x,y,z)^{G_{5,4,1}}=K(s_1,u,v)^{\langle\al\rangle}$ is rational over $K$ 
because the action of $\al$ on $K(x,y,z)^{\langle\cb\rangle}=K(s_1,u,v)$ is given by 
\begin{align*}
\al &: s_1\ \mapsto\ \ep s_1,\ u\ \mapsto\ \ep v,\ v\ \mapsto\ \ep u.
\end{align*}

\subsection{The cases of $G_{5,2,1}$, $G_{5,5,1}$}

We treat the cases of 
\begin{align*}
G_{5,2,1}=\langle \cb,-I_3\rangle,\quad 
G_{5,5,1}=\langle \cb,\al,-I_3\rangle. 
\end{align*}

Since the center of the groups $G_{5,2,1}$ and $G_{5,5,1}$ includes $\langle -I_3\rangle$ 
respectively, we first consider the field $K(x,y,z)^{\langle-I_3\rangle}$. 
By Theorem \ref{thaaa}, we have $K(x,y,z)^{\langle-I_3\rangle}=K(k_1,k_2,k_3)$ 
where 
\begin{align*}
k_1:=\frac{xy+d}{x+y},\quad 
k_2:=\frac{yz+d}{y+z},\quad 
k_3:=\frac{xz+d}{x+z}
\end{align*}
and the actions of $\cb$ and of $\al$ on $K(k_1,k_2,k_3)$ are given by 
\begin{align*}
\cb &: k_1\mapsto\ k_2\ \mapsto\ k_3\ \mapsto\ k_1,\\
\al &: k_1\ \mapsto\ \ep k_1,\ k_2\ \mapsto\ \ep k_3,\ k_3\ \mapsto\ \ep k_2.
\end{align*}
Hence $K(x,y,z)^{G_{5,2,1}}=K(k_1,k_2,k_3)^{\langle\cb\rangle}$ is rational over $K$ 
by Lemma \ref{lemMas}. 
When $\ep=1$, the action of $\langle\cb,\al\rangle$ on 
$K(k_1,k_2,k_3)$ is the permutation of $\mathcal{S}_3$ 
and hence $K(k_1,k_2,k_3)^{\langle\cb,\al\rangle}$ is rational over $K$. 
When $\ep=-1$, it follows from Theorem \ref{thHK97} that 
$K(k_1,k_2,k_3)^{\langle\cb,\al\rangle}$ is rational over $K$. 
Therefore we conclude that $K(x,y,z)^{G_{5,5,1}}
=K(k_1,k_2,k_3)^{\langle\cb,\alpha\rangle}$ is rational over $K$. 


%
%
\section{The case of $G_{6,j,k}$}\label{se6}

We treat the following eight groups $G=G_{6,j,1}$, $1\leq j\leq 7$, and $G=G_{6,6,2}$ of 
the $6$th crystal system in dimension $3$ which have a normal subgroup $\langle\ca\rangle$: 
\begin{align*}
G_{6,1,1}&=\langle \ca,\ta_1\rangle\cong\mathcal{C}_6,&
G_{6,2,1}&=\langle \ca,-\ta_1\rangle\cong\mathcal{C}_6,\\
G_{6,3,1}&=\langle \ca,\ta_1,-I_3\rangle\cong\mathcal{C}_6\times\mathcal{C}_2,\\
G_{6,4,1}&=\langle \ca,\ta_1,-\be\rangle\cong\mathcal{D}_6,&
G_{6,5,1}&=\langle \ca,\ta_1,\be\rangle\cong\mathcal{D}_6,\\
G_{6,6,1}&=\langle \ca,-\ta_1,\be\rangle\cong\mathcal{D}_6,&
G_{6,6,2}&=\langle \ca,-\ta_1,-\be\rangle\cong\mathcal{D}_6,\\
G_{6,7,1}&=\langle \ca,\ta_1,\be,-I_3\rangle\cong\mathcal{D}_6\times\mathcal{C}_2.
\end{align*}
Note that we changed the generator $-\be$ of $G_{6,7,1}$ by $\be=(-\be)(-I_3)$. 
The actions of $\ca$, $-I_3$, $\ta_1$, $-\ta_1$, $\be$ and $-\be$ on $K(x,y,z)$ are given by
\begin{align*}
\ca &: x\ \mapsto\ ay,\ y\ \mapsto\ \frac{b}{xy},\ z\ \mapsto\ cz,& 
-I_3 &: x\ \mapsto\ \frac{d}{x},\ y\ \mapsto\ \frac{e}{y},\ z\ \mapsto\ \frac{f}{z},\\
\ta_1 &: x\ \mapsto\ \frac{g}{x},\ y\ \mapsto\  \frac{h}{y},\ z\ \mapsto\ iz,& 
-\ta_1 &: x\ \mapsto\ jx,\ y\ \mapsto\ ky,\ z\ \mapsto\ \frac{l}{z},\\
\be &: x\ \mapsto\ \frac{m}{y},\ y\ \mapsto\  \frac{n}{x},\ z\ \mapsto\ oz,& 
-\be &: x\ \mapsto\ py,\ y\ \mapsto\ qx,\ z\ \mapsto\ \frac{r}{z}.
\end{align*}

We may assume that $a=1$ by replacing $ay$ by $y$ and the other coefficients. 
By the equalities $\ca^3=(-I_3)^2=\ta_1^2=\be^2=I_3$, we have $c^3=i^2=j^2=k^2=o^2=pq=1$, $m=n$. 

By the relations of the generators of the $G$'s as in (\ref{relmat6}), we see the following lemma: 
\begin{lemma}
{\rm (i)} If $\ca,-I_3\in G$ then $c=1$, $b^2=d^3$, $d=e\in K^{\times 2}$;\\
{\rm (ii)} If $\ca,\ta_1\in G$ then $b^2=g^3$, $g=h\in K^{\times 2}$;\\
{\rm (iii)} If $\ca,-\ta_1\in G$ then $c=1$, $j=k=1$;\\
{\rm (iv)} If $\ca,\be\in G$ then $c=1$, $b^2=m^3$, $m=n\in K^{\times 2}$;\\
{\rm (v)} If $\ca,-\be\in G$ then $p=q=1$.
\end{lemma}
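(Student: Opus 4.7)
The plan is straightforward: each assertion follows by expanding the relevant relation from (\ref{relmat6}) as an equality of monomial $K$-automorphisms and matching the images of $x$, $y$, $z$ on both sides. The order relations $\ca^3=(\pm\ta_1)^2=(\pm\be)^2=(-I_3)^2=I_3$ have already been used in the paragraph preceding the lemma to extract $c^3=i^2=j^2=k^2=o^2=pq=1$ and $m=n$, so only the commutators are left to be imposed.

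For (i)--(iii), the relation $[\ca,-I_3]=I_3$, respectively $[\ca,\pm\ta_1]=I_3$, just says the two generators commute, so I would compute both $\ca\circ\rho$ and $\rho\circ\ca$ on each of $x,y,z$, where $\rho$ is $-I_3$, $\ta_1$, or $-\ta_1$. The $x$-equation immediately forces $d=e$, $g=h$, $j=k$ respectively. The $y$-equation gives $b^2=de^2=d^3$ in (i), $b^2=gh^2=g^3$ in (ii), and $jk^2=1$ in (iii); in the last case this combines with $j^2=1$ to force $j^3=1$, hence $j=k=1$. The $z$-equation gives $c^2=1$ in (i) and (iii), which together with $c^3=1$ forces $c=1$; in (ii) it is automatic because both sides equal $icz$. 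The conclusions $d\in K^{\times 2}$ and $g\in K^{\times 2}$ then follow from the identities $d=(b/d)^2$ and $g=(b/g)^2$ read off from $b^2=d^3$ and $b^2=g^3$.

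For (iv) and (v), the relation $[\ca,\pm\be]=\ca$ rewrites as $(\pm\be)^{-1}\ca(\pm\be)=\ca^{2}$, and since $(\pm\be)^{-1}=\pm\be$, I would first record $\ca^2\colon x\mapsto b/(xy)$, $y\mapsto x$, $z\mapsto c^2 z$, and then apply $\be\ca\be$ (respectively $(-\be)\ca(-\be)$) to each variable. In (iv) the $x$-comparison yields $m^2n=b^2$; combined with $m=n$ this gives $m^3=b^2$, hence $m=(b/m)^2\in K^{\times 2}$, and the $z$-comparison gives $c=c^2$, so $c=1$. In (v) the $x$-comparison yields $p^2q=1$, which with $pq=1$ forces $p=1$ and hence $q=1$. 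The remaining coordinate comparisons are either automatic or give no new information. The only small hazard is that, since $\ca$ has order three, $[\ca,\pm\be]=\ca$ translates as conjugation sending $\ca$ to $\ca^2$ rather than to any ``inverse'' of $\ca$ in an order-two sense; once this is kept straight, the proof is entirely mechanical bookkeeping and poses no real obstacle.
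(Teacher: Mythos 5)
Your proof is correct and is essentially the paper's own (unwritten) argument: the paper states this lemma as an immediate consequence of the generator relations in (\ref{relmat6}) without giving any computation, and your coordinate-by-coordinate comparison of both sides of each commutator relation is exactly the intended verification. One trivial quibble: in (v) the $x$-comparison of $(-\be)\,\ca\,(-\be)$ with $\ca^2$ gives $q=1$ directly rather than $p^2q=1$, but either equation combined with $pq=1$ forces $p=q=1$, so the conclusion is unaffected.
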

Thus, the monomial actions may be reduced to the following cases: 
\begin{align*}
\ca=\ca^{(b,c)} &: x\ \mapsto\ y,\ y\ \mapsto\ \frac{b}{xy},\ z\ \mapsto\ cz,& 
-I_3 &: x\ \mapsto\ \frac{1}{x},\ y\ \mapsto\ \frac{1}{y},\ z\ \mapsto\ \frac{f}{z},\\
\ta_1 &: x\ \mapsto\ \frac{1}{x},\ y\ \mapsto\  \frac{1}{y},\ z\ \mapsto\ \ep_1 z,& 
-\ta_1 &: x\ \mapsto\ x,\ y\ \mapsto\ y,\ z\ \mapsto\ \frac{l}{z},\\
\be &: x\ \mapsto\ \frac{1}{y},\ y\ \mapsto\  \frac{1}{x},\ z\ \mapsto\ \ep_2 z,& 
-\be &: x\ \mapsto\ y,\ y\ \mapsto\ x,\ z\ \mapsto\ \frac{r}{z}
\end{align*}
where $b,c,f,l,r\in K^\times$, $c^3=1$ and $\ep_1,\ep_2=\pm 1$. 
We have $c=1$ except for the groups $G_{6,1,1}$ and $G_{6,4,1}$, and 
$b=1$ except for the groups $G_{6,2,1}$ and $G_{6,6,2}$. 
Furthermore, $l=\ep_1f$ for the groups $G_{6,3,1}$ and $G_{6,7,1}$, 
$r=\ep_3l$ where $\ep_3=\pm 1$ for the groups $G_{6,6,2}$ and $G_{6,7,1}$ 
with $\ep_3=\ep_1\ep_2$ for $G_{6,7,1}$. 

\subsection{The cases of $G_{6,1,1}$, $G_{6,5,1}$}

We treat the cases of 
\begin{align*}
G_{6,1,1}=\langle \ca^{(1,c)},\ta_1\rangle,\quad 
G_{6,5,1}=\langle \ca^{(1,1)},\ta_1,\be\rangle.
\end{align*}

By applying Theorem \ref{thAHK} to $L=K(x,y)$ and $M=K$, the rationality problems 
may be reduced to the $2$-dimensional cases of $L=K(x,y)$. 
Hence $K(x,y,z)^{G_{6,1,1}}$ and $K(x,y,z)^{G_{6,5,1}}$ are rational over $K$.

\subsection{The cases of $G_{6,3,1}$, $G_{6,7,1}$}

We treat the cases of 
\begin{align*}
G_{6,3,1}=\langle \ca^{(1,1)},\ta_1,-I_3\rangle,\quad 
G_{6,7,1}=\langle \ca^{(1,1)},\ta_1,\be,-I_3\rangle.
\end{align*}
In these cases, the groups $G$ have non-trivial centers which include $\langle-I_3\rangle$ 
respectively. 
Hence we first consider $K(x,y,z)^{G_{5,2,2}}=K(x,y,z)^{\langle \ca^{(1,1)},-I_3\rangle}$.

As in the previous section, we put $t_1:=\tu(1)$, $t_2:=\tv(1)$, $t_3:=z$ where 
$\tu(b)$, $\tv(b)$ are given in Lemma \ref{lemMas2} and also put 
$u_1:=(t_1-t_2)/(t_1+t_2)$, $u_2:=2/(t_1+t_2)$, $u_3:=t_3$. 

Then, by Lemma \ref{lem522}, we have 
$K(x,y,z)^{G_{5,2,2}}=K(u_1,u_2,u_3)^{\langle -I_3\rangle}=K(r_1,r_2,r_3)$ where 
\begin{align*}
r_1=\frac{u_3^2+f}{u_3},\quad 
r_2=\frac{u_1u_2}{3 u_1^2-u_2^2+1},\quad 
r_3=\frac{(3u_1^2+u_2^2+2 u_2+1)(u_3^2-f)}{(3u_1^2-u_2^2+1)u_3}.
\end{align*}
The actions of $\ta_1$ and $\be$ on $K(u_1,u_2,u_3)^{\langle -I_3\rangle}=K(r_1,r_2,r_3)$ 
are given by 
\begin{align*}
\ta_1\,&:\, r_1\ \mapsto \ep_1 r_1,\ r_2\ \mapsto\ r_2,\ r_3\ \mapsto\ -\ep_1 r_3,\\
\be\,&:\, r_1\ \mapsto\ \ep_2 r_1,\ r_2\ \mapsto\ -r_2,\ r_3\ \mapsto\ -\ep_2 r_3.
\end{align*}
Hence $K(x,y,z)^{G_{6,3,1}}=K(r_1,r_2,r_3)^{\langle\ta_1\rangle}$ and 
$K(x,y,z)^{G_{6,7,1}}=K(r_1,r_2,r_3)^{\langle\ta_1,\be\rangle}$ are rational over $K$. 

\subsection{The cases of $G_{6,2,1}$, $G_{6,6,1}$, $G_{6,6,2}$}

We treat the cases of 
\begin{align*}
G_{6,2,1}&=\langle \ca^{(b,1)},-\ta_1\rangle,\quad 
G_{6,6,1}=\langle \ca^{(1,1)},-\ta_1,\be\rangle,\\
G_{6,6,2}&=\langle \ca^{(b,1)},-\ta_1,-\be\rangle
=\langle \ca^{(b,1)},-\ta_1,\al\rangle.
\end{align*}
Recall that the actions of $\ca^{(b,1)}$, $-\ta_1$, $\al$, $\be$ on $K(x,y,z)$ 
are given as 
\begin{align*}
\ca=\ca^{(b,1)} &: x\ \mapsto\ y,\ y\ \mapsto\ \frac{b}{xy},\ z\ \mapsto\ z,& 
-\ta_1 &: x\ \mapsto\ x,\ y\ \mapsto\ y,\ z\ \mapsto\ \frac{l}{z},\\
\al=(-\ta_1)(-\be) &: x\mapsto y,\ y\ \mapsto x,\, z\mapsto \ep_3 z,& 
\be &: x\ \mapsto\ \frac{1}{y},\ y\ \mapsto\  \frac{1}{x},\ z\ \mapsto\ \ep_2 z. 
\end{align*}

As in the previous section, we put $t_1:=\tu(b)$, $t_2:=\tv(b)$, $t_3:=z$ where 
$\tu(b)$, $\tv(b)$ are given in Lemma \ref{lemMas2} and also put 
$u_1:=(t_1-t_2)/(t_1+t_2)$, $u_2:=2/(t_1+t_2)$, $u_3:=t_3$. 

Then the actions of $-\ta_1$, $\be$ and $\al$ on $K(x,y,z)^{\langle \ca^{(b,1)}\rangle}
=K(u_1,u_2,u_3)$ are given by 
\begin{align*}
-\ta_1\,&:\, u_1\ \mapsto u_1,\ u_2\ \mapsto\ u_2,\ u_3\ \mapsto\ \frac{l}{u_3},\\
\be\,&:\, u_1\ \mapsto\ u_1,\ u_2\ \mapsto\ \frac{3u_1^2+1}{u_2},\ u_3\ \mapsto\ \ep_2 u_3,& 
\al\,&:\, u_1\ \mapsto\ -u_1,\ u_2\ \mapsto\ u_2,\ u_3\ \mapsto\ \ep_3 u_3
\end{align*}
where we adopt $b=1$ in the case of $G_{6,6,1}=\langle \ca^{(1,1)},-\ta_1,\be\rangle$. 
Therefore $K(x,y,z)^{G_{6,2,1}}=K(u_1,u_2,u_3)^{\langle-\ta_1\rangle}=K(v_1,v_2,v_3)$ 
is rational over $K$ where 
\begin{align*}
v_1:=u_1,\quad v_2:=u_2,\quad v_3:=u_3+\frac{l}{u_3}. 
\end{align*}
The actions of $\be$ and $\al$ on $K(x,y,z)^{G_{6,2,1}}=K(v_1,v_2,v_3)$ are given by 
\begin{align*}
\be\,&:\, v_1\ \mapsto\ v_1,\ v_2\ \mapsto\ \frac{3v_1^2+1}{v_2},\ v_3\ \mapsto\ \ep_2 v_3,& 
\al\,&:\, v_1\ \mapsto\ -v_1,\ v_2\ \mapsto\ v_2,\ v_3\ \mapsto\ \ep_3 v_3.
\end{align*}
Hence both $K(x,y,z)^{G_{6,6,1}}=K(v_1,v_2,v_3)^{\langle\be\rangle}$ and 
$K(x,y,z)^{G_{6,6,2}}=K(v_1,v_2,v_3)^{\langle\al\rangle}$ are rational over $K$. 

\subsection{The case of $G_{6,4,1}$}\label{subse641}
In this subsection, we assume that $c=1$. 
The case of $G_{6,4,1}$ with $c\neq 1$ will be discussed in Section \ref{secnot1}.

We treat the cases of 
\begin{align*}
G_{6,4,1}=\langle \ca^{(1,c)},\ta_1,-\be\rangle. 
\end{align*}
Recall that the actions of $\ca^{(1,c)}$, $\ta_1$ and $-\be$ on $K(x,y,z)$ are given as 
\begin{align*}
\ca^{(1,c)} &: x\ \mapsto\ y,\ y\ \mapsto\ \frac{1}{xy},\ z\ \mapsto\ cz,\\
\ta_1 &: x\ \mapsto\ \frac{1}{x},\ y\ \mapsto\  \frac{1}{y},\ z\ \mapsto\ \ep_1 z,\quad 
-\be : x\ \mapsto\ y,\ y\ \mapsto\ x,\ z\ \mapsto\ \frac{r}{z}. 
\end{align*}

We assume that $c=1$ in this subsection.

As in the previous subsection, we have 
$K(x,y,z)^{\langle \ca^{(1,1)}\rangle}=K(u_1,u_2,u_3)$ 
and the actions of $\ta_1$ and of $-\be$ on $K(u_1,u_2,u_3)$ are given by 
\begin{align*}
\ta_1\,&:\, u_1\ \mapsto -u_1,\ u_2\ \mapsto\ \frac{3u_1^2+1}{u_2},\ 
u_3\ \mapsto\ \ep_1 u_3,\\
-\be\,&:\, u_1\ \mapsto\ -u_1,\ u_2\ \mapsto\ u_2,\ u_3\ \mapsto\ \frac{r}{u_3}.
\end{align*}
\begin{lemma}\label{lemqq}
We have $K(x,y,z)^{G_{6,1,1}}=K(u_1,u_2,u_3)^{\langle \ta_1\rangle}=K(q_1,q_2,q_3)$ 
where 
\begin{align*}
q_1&:=u_2+\ta_1(u_2)=\frac{3u_1^2+u_2^2+1}{u_2},\\ 
q_2&:=\frac{u_2-\ta_1(u_2)}{u_1}=-\frac{3u_1^2-u_2^2+1}{u_1u_2},\quad 
q_3:=\begin{cases}
u_3,\ \mathrm{if}\ \ep_1=1,\vspace*{1mm}\\
\displaystyle{\frac{u_3}{u_1}},\ \mathrm{if}\ \ep_1=-1.
\end{cases}
\end{align*}
\end{lemma}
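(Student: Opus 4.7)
My plan is to follow the standard two-step recipe used in several earlier lemmas of the paper (e.g.\ Lemmas \ref{lemtlm}, \ref{lemuc2}): first show $K(q_1,q_2,q_3)\subset K(u_1,u_2,u_3)^{\langle\ta_1\rangle}$ by checking $\ta_1$-invariance of each $q_i$, and then bound the degree $[K(u_1,u_2,u_3):K(q_1,q_2,q_3)]\leq 2$, which combined with $|\langle\ta_1\rangle|=2$ forces equality.

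For the first step, invariance of $q_1=u_2+\ta_1(u_2)$ and of $q_2=(u_2-\ta_1(u_2))/u_1$ follows directly because $\ta_1(u_1)=-u_1$ and $\ta_1^2=\mathrm{id}$. For $q_3$, in the case $\ep_1=1$ we have $\ta_1(u_3)=u_3$, and when $\ep_1=-1$ the ratio $u_3/u_1$ is fixed since both numerator and denominator flip sign. For the second step, the identities
\begin{align*}
q_1+q_2u_1\,=\,2u_2,\qquad q_1-q_2u_1\,=\,\frac{2(3u_1^2+1)}{u_2}
\end{align*}
let me solve $u_1=(2u_2-q_1)/q_2$, so $K(q_1,q_2,u_2)=K(u_1,u_2)$. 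Substituting this expression for $u_1$ into $q_1u_2=u_2^2+3u_1^2+1$ yields the quadratic relation
\begin{align*}
(q_2^2+12)u_2^2-q_1(q_2^2+12)u_2+3q_1^2+q_2^2\,=\,0
\end{align*}
over $K(q_1,q_2)$. Hence $[K(u_1,u_2):K(q_1,q_2)]\leq 2$, and adjoining $u_3$ (which equals $q_3$ when $\ep_1=1$ and $q_3u_1$ when $\ep_1=-1$) gives $K(u_1,u_2,u_3)=K(q_1,q_2,q_3)(u_2)$ with $[K(u_1,u_2,u_3):K(q_1,q_2,q_3)]\leq 2$.

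The only mildly delicate point is the $\ep_1=-1$ case, where one must be careful that $q_3=u_3/u_1$ together with the quadratic for $u_2$ over $K(q_1,q_2)$ still generates all of $K(u_1,u_2,u_3)$ over $K(q_1,q_2,q_3)$. This is handled by observing $u_3=q_3u_1=q_3(2u_2-q_1)/q_2$, so $u_3$ already lies in $K(q_1,q_2,q_3,u_2)$ and no extra degree is needed. I do not expect any genuine obstacle here; once the formulas for $q_1,q_2$ and the expression of $u_1$ in terms of $u_2$ are in hand, the rest is bookkeeping of the degree bound.
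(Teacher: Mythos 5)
Your proof is correct and follows essentially the same route as the paper: the paper invokes Theorem \ref{thInv} (I) and then checks directly via the identity $q_1^2-u_1^2q_2^2=4(3u_1^2+1)$ and a degree count, which is exactly the content of your linear identities $q_1\pm q_2u_1$ and the resulting quadratic for $u_2$ over $K(q_1,q_2)$. The only cosmetic difference is that the paper passes through the intermediate field $K(u_1^2,q_1,q_2,q_3)$ with a degree-$4$ count, while you bound $[K(u_1,u_2,u_3):K(q_1,q_2,q_3)]\leq 2$ directly.
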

\begin{proof}
The assertion follows from Theorem \ref{thInv} (I). 
Indeed we can check it directly as follows: 
First we see $K(u_1,u_2,u_3)^{\langle\ta_1\rangle}=K(u_1^2,q_1,q_2,q_3)$ because 
$[K(u_1,u_2,u_3) : K(u_1^2,q_1,q_3)]=4$ and $q_2\not\in K(u_1^2,q_1,q_3)$. 
Since $u_1^2=(q_1^2-4(3u_1^2+1))/q_2^2$, we have $u_1^2\in K(q_1,q_2)$ so that 
$K(u_1^2,q_1,q_2,q_3)=K(q_1,q_2,q_3)$. 
\end{proof}

The action of $-\be$ on $K(q_1,q_2,q_3)$ is given by 
\begin{align*}
-\be\,&:\, q_1\mapsto q_1,\ q_2\mapsto -q_2,\ 
\begin{cases}
\displaystyle{q_3\mapsto \frac{r}{q_3},\ \mathrm{if}\ \ep_1=1},\vspace*{1mm}\\
\displaystyle{q_3\mapsto -\frac{r(q_2^2+12)}{(q_1^2-4)q_3},\ \mathrm{if}\ \ep_1=-1}.
\end{cases}
\end{align*}

When $\ep_1=1$, by Theorem \ref{thInv} (I), 
$K(q_1,q_2,q_3)^{\langle-\be\rangle}=K(q_1,q_3+r/q_3,(q_3-r/q_3)/q_2)$ 
is rational over $K$. 
When $\ep_1=-1$, by Theorem \ref{thInv} (I) again, 
$K(q_1,q_2,q_3)^{\langle-\be\rangle}$ is rational over $K$. 
Thus we conclude that $K(x,y,z)^{G_{6,4,1}}=K(q_1,q_2,q_3)^{\langle-\be\rangle}$ 
is rational over $K$. 

%
%
\section{The case where $c\neq 1$}\label{secnot1}

There are $16$ groups $G\subset \mathrm{GL}(3,\bZ)$ 
which contain $\ca$ as in Section \ref{se5A} and Section \ref{se6}. 
The action of $\ca$ on $K(x,y,z)$ is given by 
\[
\ca : x\ \mapsto\ y,\ y\ \mapsto\ \frac{b}{xy},\ z\ \mapsto\ cz
\]
where $b,c\in K^\times$ and $c^3=1$. 
By results in Section \ref{se5A} and Section \ref{se6}, we have $c=1$ except for 
$G=G_{5,1,2}$, $G_{5,3,2}$, $G_{5,3,3}$, $G_{6,1,1}$, $G_{6,4,1}$. 
Note that $K(x,y,z)^{G_{5,1,2}}$ and $K(x,y,z)^{G_{6,1,1}}$ are rational over $K$ 
by Theorem \ref{thAHK}. 

In this section, we consider the case $c\neq 1$ for $G=G_{5,3,2}$, $G_{5,3,3}$, $G_{6,4,1}$. 
Namely we assume that char $K\neq 2$ and 
$K$ contains a primitive cube root of unity $\om$ and $c=\om$. 
We note that $\om^2+\om+1=0$ and $\sqrt{-3}\in K$. 

From results in Section \ref{se5A} and Section \ref{se6}, we have
\begin{align*}
G_{5,3,2}=\langle \ca^{(1,\om)},-\al\rangle,\quad  
G_{5,3,3}=\langle \ca^{(b,\om)},-\be\rangle,\quad 
G_{6,4,1}=\langle \ca^{(1,\om)},\ta_1,-\be\rangle
\end{align*}
where 
\begin{align*}
\ca=\ca^{(b,\om)} &: x\ \mapsto\ y,\ y\ \mapsto\ \frac{b}{xy},\ z\ \mapsto\ \om z,& 
\ta_1 &: x\ \mapsto\ \frac{1}{x},\ y\ \mapsto\  \frac{1}{y},\ z\ \mapsto\ \ep z,\\
-\al &: x\ \mapsto\ \frac{1}{y},\ y\ \mapsto\ \frac{1}{x},\ z\ \mapsto\ \frac{l}{z},& 
-\be &: x\ \mapsto\ y,\ y\ \mapsto\ x,\ z\ \mapsto\ \frac{r}{z}
\end{align*}
with $b,l,r\in K^\times$ and $\ep=\pm 1$. 

We take the Lagrange resolvent 
\[
\Theta:=x+\om\cdot y+\om^2\cdot\frac{b}{xy}=\frac{\om^2b+x^2y+\om xy^2}{xy}
\]
with respect to the $\ca$-orbit of $x$ then we have 
\[
\ca^{(b,\om)}(\Theta)=\om^2 \Theta.
\]
Thus $\Theta z$ is an invariant under the action of $\ca^{(b,\om)}$. 

By results in Subsection \ref{subse532}, we have 
$K(x,y)^{\langle \ca^{(b,\om)}\rangle}=K(u_1,u_2)$ where $u_1$, $u_2$ are given 
as in (\ref{defu123}), and the actions of $\ta_1$, $-\al$, $-\be$ on $K(u_1,u_2)$ are 
\begin{align*}
\ta_1\,&:\, u_1\ \mapsto -u_1,\ u_2\ \mapsto\ \frac{3u_1^2+1}{u_2},\\ 
-\al\,&:\, u_1\ \mapsto\ u_1,\ u_2\ \mapsto\ \frac{3u_1^2+1}{u_2},\quad 
-\be\,:\, u_1\ \mapsto -u_1,\ u_2\ \mapsto\ u_2. 
\end{align*}

Putting
\[
t_3:=\Theta z=\frac{\om^2b+x^2y+\om xy^2}{xy}z,
\]
we have $K(x,y,z)^{\langle \ca^{(b,\om)}\rangle}=K(u_1,u_2,t_3)$ and the actions of 
$\ta_1$, $-\al$, $-\be$ on $K(t_3)$ are as follows: 
\[
\ta_1\,:\, t_3\ \mapsto\ \ep\frac{\ta_1(\Theta)}{\Theta}t_3,\quad 
-\al\,:\, t_3\ \mapsto\ \frac{l}{t_3}{\rm N}_{-\al}(\Theta),\quad 
-\be\,:\, t_3\ \mapsto\ \frac{r}{t_3}{\rm N}_{-\be}(\Theta)
\]
where {\rm N} means the norm operator, so that ${\rm N}_\tau(\Theta)=\tau(\Theta)\Theta$ 
when $\tau$ is of order $2$. 

We consider the case of $G_{5,3,2}=\langle\ca^{(b,\om)},-\al\rangle$ with $b=1$. 
Although $\Theta$ is not $\ca^{(b,\om)}$-invariant, the relations 
$\ca^{(b,\om)}(\Theta)=\om^2\Theta$ and $[\ca^{(b,\om)},-\al]=\ca^{(b,\om)}$ imply that 
$\ca^{(b,\om)}\bigl((-\al)(\Theta)\bigr)=\om(-\al)(\Theta)$. 
Hence
\[
F:=\frac{\Theta^2}{(-\al)(\Theta)}=\frac{(\om^2b+x^2y+\om xy^2)^2}{xy(x+\om y+\om^2bx^2y^2)}
\]
is $\ca^{(b,\om)}$-invariant, so $F\in K(u_1,u_2)$ and 
${\rm N}_{-\al}(F)={\rm N}_{-\al}(\Theta)$. 
Putting $u_3:=t_3/F$, we have $K(x,y,z)^{\langle\ca^{(b,\om)}\rangle}=K(u_1,u_2,u_3)$ 
and 
\[
-\al\,:\, u_1\ \mapsto\ u_1,\ u_2\ \mapsto\ \frac{3u_1^2+1}{u_2},\ u_3\ \mapsto 
\frac{l}{u_3}. 
\]
Hence $K(x,y,z)^{G_{5,3,2}}=K(u_1,u_2,u_3)^{\langle-\al\rangle}$ is 
rational over $K$ (cf. Subsection \ref{subse532}). 

For $G_{5,3,3}=\langle\ca^{(b,\om)},-\be\rangle$, 
by replacing $-\al$ with $-\be$, we can show that 
$K(x,y,z)^{\langle\ca^{(b,\om)}\rangle}=K(u_1,u_2,u_3')$ and 
\[
-\be\,:\, u_1\ \mapsto -u_1,\ u_2\ \mapsto\ u_2,\ u_3'\ \mapsto \frac{r}{u_3'}
\]
where $u_3'=t_3/F'$ and $F'=\Theta^2/\bigl((-\be)(\Theta)\bigr)$. 
Thus $K(x,y,z)^{G_{5,3,3}}=K(u_1,u_2,u_3')^{\langle-\be\rangle}$ is rational over $K$ 
(cf. Subsection \ref{subse532}). 

For $G_{6,4,1}=\langle\ca^{(b,\om)},\ta_1,-\be\rangle$ with $b=1$, we put
\begin{align*}
u_3''&:=\Bigl(\frac{\tau_1(\Theta)}{\Theta}+1\Bigr)t_3
=\Bigl(\frac{\om x+y+b\om ^2x^2y^2}{b\om ^2+x^2y+\om xy^2}+1\Bigr)t_3\\
&\ =\frac{(b\om ^2+\om x+y+x^2y+\om xy^2+b\om ^2x^2y^2)z}{xy},
\end{align*}
then we have $\ta_1 : u_3''\mapsto \ep u_3''$. 
Note that $\ta_1(\Theta)/\Theta$ is $\ca^{(b,\om)}$-invariant, so that 
$\ta_1(\Theta)/\Theta\in K(u_1,u_2)$ and 
$K(x,y,z)^{\langle\ca^{(b,\om)}\rangle}=K(u_1,u_2,u_3'')$. 
By Lemma \ref{lemqq}, we get 
\[
K(x,y,z)^{G_{6,1,1}}=K(u_1,u_2,u_3'')^{\langle \ta_1\rangle}=K(q_1,q_2,q_3)
\]
where 
\begin{align*}
q_1&:=u_2+\ta_1(u_2)=\frac{3u_1^2+u_2^2+1}{u_2},\\ 
q_2&:=\frac{u_2-\ta_1(u_2)}{u_1}=-\frac{3u_1^2-u_2^2+1}{u_1u_2},\quad 
q_3:=\begin{cases}
u_3'',\ \mathrm{if}\ \ep=1,\vspace*{1mm}\\
\displaystyle{\frac{u_3''}{u_1}},\ \mathrm{if}\ \ep=-1.
\end{cases}
\end{align*}
The action of $-\be$ on $K(q_1,q_2,q_3)$ is given by 
\begin{align*}
-\be\,&:\, q_1\ \mapsto\ q_1,\ q_2\ \mapsto\ -q_2,\ 
\begin{cases}
\displaystyle{q_3\ \mapsto\ \frac{r}{q_3}{\rm N}_{-\be}\Bigl(\Theta+\ta_1(\Theta)\Bigr),\ 
\mathrm{if}\ \ep=1},\vspace*{1mm}\\
\displaystyle{q_3\ \mapsto\ 
-\frac{r(q_2^2+12)}{(q_1^2-4)q_3}{\rm N}_{-\be}\Bigl(\Theta+\ta_1(\Theta)\Bigr),\ 
\mathrm{if}\ \ep=-1}.
\end{cases}
\end{align*}
Since 
\[
F'':=\frac{(\Theta+\ta_1(\Theta))^2}{(-\be)(\Theta+\ta_1(\Theta))}
=\frac{(b\om ^2+\om x+y+x^2y+\om xy^2+b\om ^2x^2y^2)^2}{xy(b\om ^2+x+\om y+\om x^2y+xy^2+b\om ^2x^2y^2)}
\]
is $G_{6,1,1}$-invariant, we have $F''\in K(q_1,q_2)$ and 
${\rm N}_{-\be}(\Theta+\ta_1(\Theta))={\rm N}_{-\be}(F'')$. 

So putting $q_3':=q_3/F''$, we have $K(x,y,z)^{G_{6,1,1}}=K(q_1,q_2,q_3')$ and 
\begin{align*}
-\be\,&:\, q_1\ \mapsto\ q_1,\ q_2\ \mapsto\ -q_2,\ 
\begin{cases}
\displaystyle{q_3'\ \mapsto\ \frac{r}{q_3'},\ \mathrm{if}\ \ep=1},\vspace*{1mm}\\
\displaystyle{q_3'\ \mapsto\ 
-\frac{r(q_2^2+12)}{(q_1^2-4)q_3'},\ \mathrm{if}\ \ep=-1}.
\end{cases}
\end{align*}
By Theorem \ref{thInv} (I), 
$K(x,y,z)^{G_{6,4,1}}=K(q_1,q_2,q_3')^{\langle-\be\rangle}$ is rational over $K$ 
(cf. Subsection \ref{subse641}). 
 

%
%
\section{The case of $G_{7,j,1}$}\label{se71}

In this section, we consider the following five groups $G=G_{7,j,1}$, $1\leq j\leq 5$, 
which have a normal subgroup $\langle\ta_1,\la_1\rangle$: 
\begin{align*}
\hspace*{1.5cm}
G_{7,1,1}&=\langle \ta_1,\la_1,\cb\rangle&\hspace{-1cm} 
&\cong \mathcal{A}_4&\hspace{-1cm} 
&\cong (\mathcal{C}_2\times \mathcal{C}_2)\rtimes \mathcal{C}_3, \\
G_{7,2,1}&=\langle \ta_1,\la_1,\cb,-I_3\rangle&\hspace{-1cm} 
&\cong \mathcal{A}_4\times \mathcal{C}_2&\hspace{-1cm} 
&\cong (\mathcal{C}_2\times \mathcal{C}_2\times \mathcal{C}_2)\rtimes \mathcal{C}_3,\\
G_{7,3,1}&=\langle \ta_1,\la_1,\cb,-\be_1\rangle&\hspace{-1cm} 
&\cong \mathcal{S}_4&\hspace{-1cm} 
&\cong (\mathcal{C}_2\times \mathcal{C}_2)\rtimes \mathcal{S}_3, \\
G_{7,4,1}&=\langle \ta_1,\la_1,\cb,\be_1\rangle&\hspace{-1cm} 
&\cong \mathcal{S}_4&\hspace{-1cm} 
&\cong (\mathcal{C}_2\times \mathcal{C}_2)\rtimes \mathcal{S}_3, \\
G_{7,5,1}&=\langle \ta_1,\la_1,\cb,\be_1,-I_3\rangle&\hspace{-1cm} 
&\cong \mathcal{S}_4\times \mathcal{C}_2&\hspace{-1cm} 
&\cong (\mathcal{C}_2\times \mathcal{C}_2\times \mathcal{C}_2)\rtimes \mathcal{S}_3.
\end{align*}

Note that $\ta_1=\ta$ and $\be_1=\be$ (cf. Sections \ref{se5A}, \ref{se5B}, \ref{se6}). 

The actions of $\ta_1$, $\la_1$, $\cb$, $-\be_1$, $\be_1$ and $-I_3$ on $K(x,y,z)$ are given by
\begin{align*}
\ta_1 &: x\ \mapsto\ \frac{a}{x},\ y\ \mapsto\ \frac{b}{y},\ z\ \mapsto\  cz,& 
\la_1 &: x\ \mapsto\ \frac{d}{x},\ y\ \mapsto\ ey,\ z\ \mapsto\ \frac{f}{z},\\
\cb &: x\ \mapsto\ gy,\ y\ \mapsto\  hz,\ z\ \mapsto\ ix,& 
-\be_1 &: x\ \mapsto\ jy,\ y\ \mapsto\  kx,\ z\ \mapsto\ \frac{l}{z},\\
\be_1 &: x\ \mapsto\ \frac{m}{y},\ y\ \mapsto\ \frac{n}{x},\ z\ \mapsto\  oz,& 
-I_3 &: x\ \mapsto\ \frac{p}{x},\ y\ \mapsto\ \frac{q}{y},\ z\ \mapsto\ \frac{r}{z}.
\end{align*}

We may assume that $g=h=i=1$ by replacing $(gy,ghz)$ by $(y,z)$ and the other coefficients.

By the equalities $\ta_1^2=\la_1^2=\cb^3=(-\be_1)^2=\be_1^2=(-I_3)^2=I_3$, 
we have $c^2=e^2=jk=o^2=1$ and $m=n$. 

By the relations of the generators of $G_{7,1,1}$ as in (\ref{relmat7}), 
the monomial action of $G_{7,1,1}$ is written as 
\begin{align}
\ta_1 &: x\ \mapsto\ \frac{a}{x},\ y\ \mapsto\ \frac{\ep_1 a}{y},\ z\ \mapsto\ \ep_1 z,&
\la_1 &: x\ \mapsto\ \frac{\ep_1 a}{x},\ y\ \mapsto\ \ep_1 y,\ z\ \mapsto\ \frac{a}{z},
\label{act711}\\
\cb &: x\ \mapsto\  y,\ y\ \mapsto\  z,\ z\ \mapsto\ x,\nonumber
\end{align}
where $a\in K^\times$, $\ep_1=\pm 1$. 

By the relations of $-\be_1$, $\be_1$, $-I_3$ with the generators of $G_{7,1,1}$, 
the monomial actions of $-\be_1$, $\be_1$, $-I_3$ are written as 
\begin{align}
-\be_1 &: x\ \mapsto\ jy,\ y\ \mapsto\ \frac{x}{j},\ z\ \mapsto\ \frac{ja}{z},& 
\be_1 &: x\ \mapsto\ \frac{\ep_3 a}{y},\ y\ \mapsto\ \frac{\ep_3 a}{x},\ 
z\ \mapsto\ \ep_3 z,& \label{act712}\\
-I_3 &: x\ \mapsto\ \frac{\ep_4 a}{x},\ y\ \mapsto\ \frac{\ep_4 a}{y},\ z\ \mapsto\ 
\frac{\ep_4 a}{z}\nonumber
\end{align}
where 
\[
j=\begin{cases}\ep_2,\hspace*{10.3mm} \mathrm{if}\ \ep_1=1,\\
\ep_2\sqrt{-1},\ \mathrm{if}\ \ep_1=-1\end{cases}
\]
and $\ep_2,\ep_3,\ep_4=\pm 1$. 

The action of $\be_1$ is possible only when $\ep_1=1$, so that the case $\ep_1=-1$ does 
not appear for $G=G_{7,4,1}$ and $G_{7,5,1}$, and also for $G_{7,3,1}$ if 
$\sqrt{-1}\not\in K$. 
For the group $G_{7,5,1}$ with $\ep_1=1$, we have $\ep_2=\ep_3\ep_4$. 
When $\ep_1=-1$, we may assume that $\ep_2=1$ by replacing $-\sqrt{-1}$ by $\sqrt{-1}$. 

\subsection{The case of $\ep_1=1$}\label{subse71p}

We treat the case where $\ep_1=1$ in this subsection. 
Although $G_{3,1,1}=\langle\ta_1,\la_1\rangle\in\mathcal{N}$, by Theorem \ref{thmex}, we have 
\begin{align*}
K(x,y,z)^{\langle \ta_1,\la_1\rangle}=K(v_1,v_2,v_3)
\end{align*}
where
\begin{align}
v_1:=\frac{a(-x+y+z)-xyz}{a-xy-xz+yz},\ \ 
v_2:=\frac{a(x-y+z)-xyz}{a-xy+xz-yz},\ \ 
v_3:=\frac{a(x+y-z)-xyz}{a+xy-xz-yz}.\label{defvv}
\end{align}

The actions of $\cb$, $-\be_1$, $\be_1$ and $-I_3$ on 
$K(v_1,v_2,v_3)$ are given by 
\begin{align*}
\cb &: v_1\ \mapsto\ v_2,\ v_2\ \mapsto\  v_3,\ v_3\ \mapsto\ v_1,& 
-\be_1 &: v_1\ \mapsto\ \frac{\ep_2 a}{v_2},\ v_2\ \mapsto\ \frac{\ep_2 a}{v_1},\ 
v_3\ \mapsto\ \frac{\ep_2 a}{v_3},\\
\be_1 &: v_1\ \mapsto\ \ep_3 v_2,\ v_2\ \mapsto\ \ep_3 v_1,\ v_3\ \mapsto\ \ep_3 v_3,& 
-I_3 &: v_1\ \mapsto\ \frac{\ep_4 a}{v_1},\ v_2\ \mapsto\ \frac{\ep_4 a}{v_2},\ 
v_3\ \mapsto\ \frac{\ep_4 a}{v_3}.
\end{align*}

Hence the groups $G_{7,j,1}$, $1\leq j\leq 5$, act on 
$K(x,y,z)^{\langle\ta_1,\la_1\rangle}$ by monomial actions as $\mathcal{C}_3$, 
$\mathcal{C}_6$, $\mathcal{S}_3$, $\mathcal{S}_3$, $\mathcal{D}_6$ respectively. 
We already treated these cases as in Section \ref{se5B}. 
Therefore the fixed fields $K(x,y,z)^{G_{7,j,1}}$, $1\leq j\leq 5$, 
are rational over $K$. 

We can also get explicit transcendental bases of the fixed fields over $K$ 
by results of Section \ref{se5B}. 
For example, by Theorem \ref{thaaa}, we see 
\begin{align*}
K(x,y,z)^{\langle \ta_1,\la_1,-I_3\rangle}&=K(v_1,v_2,v_3)^{\langle -I_3\rangle}
=K(k_1,k_2,k_3)
\end{align*}
where
\begin{align*}
k_1:=\frac{2(v_2v_3+\ep_4 a)}{v_2+v_3},\quad 
k_2:=\frac{2(v_1v_3+\ep_4 a)}{v_1+v_3},\quad 
k_3:=\frac{2(v_1v_2+\ep_4 a)}{v_1+v_2}. 
\end{align*}

By the definition of $v_1$, $v_2$ and $v_3$, we see 
\begin{align*}
\begin{cases}
\displaystyle{k_1=\frac{x^2+a}{x},\quad k_2=\frac{y^2+a}{y},\quad k_3=\frac{z^2+a}{z},\quad 
\mathrm{if}\ \ep_4=1},\vspace*{1mm}\\
\displaystyle{k_1=\frac{(x^2-a)(a^2+ay^2+az^2-4ayz+y^2z^2)}{x(y^2-a)(z^2-a)},\quad 
k_2=\cb(k_1),\quad k_3=\cb^2(k_1),\quad \mathrm{if}\ \ep_4=-1}.
\end{cases}
\end{align*}
Because the action of $\cb$ on $K(k_1,k_2,k_3)$ is given by 
\begin{align*}
\cb &: k_1\ \mapsto\ k_2,\ k_2\ \mapsto\  k_3,\ k_3\ \mapsto\ k_1,
\end{align*}
we can get an explicit transcendental basis of 
$K(x,y,z)^{G_{7,2,1}}=K(k_1,k_2,k_3)^{\langle \cb\rangle}$ 
over $K$ by Lemma \ref{lemMas}.

\subsection{The cases of $G_{7,1,1}$ and $G_{7,3,1}$ with $\ep_1=-1$}

We treat the case where $\ep_1=-1$. 
In this case, $G_{7,4,1}$ and $G_{7,5,1}$ do not appear and 
we first treat the cases of 
\[
G_{7,1,1}=\langle \ta_1,\la_1,\cb\rangle,\quad 
G_{7,3,1}=\langle \ta_1,\la_1,\cb,-\be_1\rangle.
\]
For $G_{7,2,1}$, see the next subsection. 
The actions of $\ta_1$ ,$\la_1$, $\cb$ and $-\be_1$ on $K(x,y,z)$ are given 
as in (\ref{act711}) and (\ref{act712}). 
Note that $G_{7,3,1}$ appears only in the case where $K\ni\sqrt{-1}$. 
We will show that $K(x,y,z)^{G_{7,1,1}}$ is rational over $K$ 
if $[K(\sqrt{a},\sqrt{-1}) : K]\leq 2$ and $K(\sqrt{-1})(x,y,z)^{G_{7,3,1}}$ is rational over 
$K(\sqrt{-1})$. 

When $[K(\sqrt{a},\sqrt{-1}) : K]=4$, we do not know whether $K(x,y,z)^{G_{7,1,1}}$ 
is rational over $K$ or not. 
Note that the group $G_{7,1,1}$ has a normal subgroup 
$\langle \ta_1,\la_1\rangle=G_{3,1,1}\in\mathcal{N}$ and $K(x,y,z)^{G_{3,1,1}}$ is rational 
over $K$ if and only if $[K(\sqrt{a},\sqrt{-1}) : K]\leq 2$ (see also Section \ref{seintro}). 
The group $G_{7,1,1}$ has no normal subgroup other than $G_{3,1,1}$, 
and hence the reduction of the problem to a factor group does not work for $G_{7,1,1}$ 
if $[K(\sqrt{a},\sqrt{-1}) : K]=4$. 

On the contrary, the group $G_{7,2,1}$ has a normal subgroup 
$G_{3,3,1}=\langle\ta_1,\la_1,-I_3\rangle$, and the monomial action of 
$G_{3,3,1}$ with $\ep_1=-1$ is an affirmative case of 
$G_{3,3,1}\in\mathcal{N}$ (see the next subsection). 
\begin{lemma}\label{lemex2}
Let $L=K(\sqrt{a},\sqrt{-1})$. 
The fixed field $L(x,y,z)^{\langle\ta_1,\la_1\rangle}$ is rational over $L$ 
and an explicit transcendental basis of 
$L(x,y,z)^{\langle\ta_1,\la_1\rangle}=L(w_1,w_2,w_3)$ over $L$ is given by 
\begin{align*}
w_1&=\frac{(a+\sqrt{a}\sqrt{-1}x+\sqrt{a}y-\sqrt{-1}xy)
(a+\sqrt{a}\sqrt{-1}y-\sqrt{a}z+\sqrt{-1}yz)}
{(a-\sqrt{a}\sqrt{-1}x-\sqrt{a}y-\sqrt{-1}xy)
(-\sqrt{-1}a+\sqrt{a}y-\sqrt{a}\sqrt{-1}z-yz)},\\
w_2&=\frac{(a-\sqrt{a}x-\sqrt{a}\sqrt{-1}z-\sqrt{-1}xz)
(a+\sqrt{a}\sqrt{-1}y-\sqrt{a}z+\sqrt{-1}yz)}
{(a-\sqrt{a}x+\sqrt{a}\sqrt{-1}z+\sqrt{-1}xz)
(a+\sqrt{a}\sqrt{-1}y+\sqrt{a}z-\sqrt{-1}yz)},\\
w_3&=\frac{(a+\sqrt{a}\sqrt{-1}x+\sqrt{a}y-\sqrt{-1}xy)
(a-\sqrt{a}x-\sqrt{a}\sqrt{-1}z-\sqrt{-1}xz)}
{(-\sqrt{-1}a-\sqrt{a}x+\sqrt{a}\sqrt{-1}y-xy)
(a-\sqrt{a}x+\sqrt{a}\sqrt{-1}z+\sqrt{-1}xz)}.
\end{align*}
\end{lemma}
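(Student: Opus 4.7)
I would work over $L=K(\alpha,i)$ with $\alpha=\sqrt{a}$, $i=\sqrt{-1}$, extending the action of $\langle\ta_1,\la_1\rangle$ trivially on $L$.  The first step is to make the M\"obius substitution
\[
X=\frac{x+\alpha}{x-\alpha},\qquad Y=\frac{y+i\alpha}{y-i\alpha},\qquad Z=\frac{z+\alpha}{z-\alpha},
\]
so that $L(x,y,z)=L(X,Y,Z)$; a direct calculation then gives
\[
\ta_1 : X\mapsto -X,\ Y\mapsto -Y,\ Z\mapsto 1/Z;\qquad
\la_1 : X\mapsto -1/X,\ Y\mapsto 1/Y,\ Z\mapsto -Z.
\]
Next I would form the three $\ta_1$-invariant auxiliary quantities $\xi=XY$, $\zeta=X(Z+1)/(Z-1)$, $\eta=Y(Z+1)/(Z-1)$, on which $\la_1$ acts by $\xi\mapsto -1/\xi$, $\zeta\mapsto -1/\zeta$ and $\eta\mapsto 1/\eta$.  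Their M\"obius images
\[
A_1=\frac{\xi-i}{\xi+i},\qquad A_2=\frac{\zeta-i}{\zeta+i},\qquad B=\frac{\eta-1}{\eta+1}
\]
are all $\ta_1$-invariant and $\la_1$-antisymmetric, so $A_1/B$, $A_2/B$ and $A_1A_2$ are fully $\langle\ta_1,\la_1\rangle$-invariant.  Substituting $(Z+1)/(Z-1)=z/\alpha$ and the analogues for $X,Y$, clearing denominators, and absorbing the resulting factors $(1\pm i)$, these three combinations reproduce---up to overall constants in $L$---precisely the expressions $w_1,w_2,w_3$ of the statement, so each $w_j$ lies in $L(x,y,z)^{\langle\ta_1,\la_1\rangle}$.

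To conclude, it will suffice to show $[L(X,Y,Z):L(w_1,w_2,w_3)]\le 4$, since $[L(X,Y,Z):L(x,y,z)^{\langle\ta_1,\la_1\rangle}]=4$ then forces equality.  From $w_1,w_2,w_3$ one extracts $A_2^{\,2}=w_2w_3/w_1$, and then $A_1=A_2\cdot w_1/w_2$, $B=A_1/w_1$, so $(A_1,A_2,B)$ is determined up to the simultaneous sign change that is exactly $\la_1$.  Inverting the M\"obius transforms recovers $(\xi,\zeta,\eta)$ modulo $\la_1$, and solving $Y^2=\xi\eta/\zeta$, $X=\xi/Y$, $Z=(Y+\eta)/(Y-\eta)$ then recovers $(X,Y,Z)$ up to the remaining involution $Y\mapsto -Y$, which is exactly $\ta_1$.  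Altogether this gives an ambiguity of order $|\langle\ta_1,\la_1\rangle|=4$, as needed.

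The main obstacle is the asymmetry of the action: the three variables $X,Y,Z$ transform under $\langle\ta_1,\la_1\rangle$ by three genuinely distinct characters mixing sign change with M\"obius inversion, so invariants cannot be read off from a single diagonal change of coordinates.  In particular, the sign discrepancy between $\la_1(\eta)=1/\eta$ and $\la_1(\xi)=-1/\xi$, $\la_1(\zeta)=-1/\zeta$ dictates the different M\"obius base-points ($\pm 1$ versus $\pm i$) in the definitions of $B$ and $A_1,A_2$.  Once these choices are made, the remaining verifications---invariance of the $w_j$, the degree count above, and the back-substitution into the polynomial form of the lemma---are routine but unavoidably lengthy.
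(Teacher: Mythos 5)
Your proof is correct and follows essentially the same route as the paper: both linearize $\ta_1$ to simultaneous sign changes by M\"obius substitutions in $x,y,z$, pass to the quadratic invariant field, linearize the inversion action of $\la_1$ by a second M\"obius substitution, and finish with products or ratios of the resulting sign-flipping generators (the paper premultiplies by $\sqrt{-1}$ so that a single map $u\mapsto (u+1)/(u-1)$ works for all three quantities, while you shift the M\"obius base point to $\pm\sqrt{-1}$; these are the same device). One small correction: your $A_1/B$ and $A_2/B$ are the \emph{ratios} of the paper's sign-flipping generators $v_1,v_2,v_3$, whereas the displayed $w_1,w_2$ are (up to units of $L$) the \emph{products} $v_1v_2$ and $v_2v_3$, so your assertion that your three combinations equal the stated $w_j$ up to constants is literally false for two of them; since ratios and products of the $v_i$ generate the same subfield, the argument still closes, but that sentence should be replaced by the field identity rather than an equality of elements.
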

\begin{proof}
Put 
\[
X:=\frac{x+\sqrt{a}}{x-\sqrt{a}},\quad 
Y:=\frac{y+\sqrt{a}\sqrt{-1}}{y-\sqrt{a}\sqrt{-1}},\quad 
Z:=\frac{z}{\sqrt{a}}. 
\]
Then $L(x,y,z)=L(X,Y,Z)$ and the actions of $\ta_1$ and $\la_1$ on $L(X,Y,Z)$ are given by
\begin{align*}
\ta_1\ :\ X\ \mapsto\ -X,\ Y\mapsto\ -Y,\ Z\mapsto\ -Z,\quad 
\la_1\ :\ X\ \mapsto\ -\frac{1}{X},\ Y\mapsto\ \frac{1}{Y},\ Z\mapsto\ \frac{1}{Z}.
\end{align*}
Hence we get $L(x,y,z)^{\langle\ta_1\rangle}=L(u_1,u_2,u_3)$ where 
\[
u_1:=\sqrt{-1}XY,\quad u_2:=YZ,\quad u_3:=\sqrt{-1}XZ
\]
and the action of $\la_1$ on $L(u_1,u_2,u_3)$ is given by 
\[
\la_1\ :\ u_1\ \mapsto\ \frac{1}{u_1},\ u_2\mapsto\ \frac{1}{u_2},\ u_3\mapsto\ \frac{1}{u_3}.
\]
Put 
\[
v_1:=\frac{u_1+1}{u_1-1},\quad v_2:=\frac{u_2+1}{u_2-1},\quad v_3:=\frac{u_3+1}{u_3-1}.
\]
Then the action of $\la_1$ on $L(x,y,z)^{\langle\ta_1\rangle}=L(u_1,u_2,u_3)=L(v_1,v_2,v_3)$ is 
given by 
\begin{align*}
\la_1\ :\ v_1\ \mapsto\ -v_1,\ v_2\mapsto\ -v_2,\ v_3\mapsto\ -v_3.
\end{align*}
Hence we get $L(x,y,z)^{\langle\ta_1,\la_1\rangle}=L(w_1,w_2,w_3)$ where 
\[
w_1:=v_1v_2,\quad w_2:=v_2v_3,\quad w_3:=v_3v_1. 
\]
We can evaluate the ${w_i}$'s in terms of $x,y,z$ by the definitions above. 
\end{proof}

Now we take 
\[
L:=K(\sqrt{a},\sqrt{-1})
\]
and $\mathrm{Gal}(L/K)=\langle\rho_{a},\rho_{-1}\rangle$ 
where 
\begin{align*}
\rho_a\ :\ \sqrt{a}\mapsto -\sqrt{a},\ \sqrt{-1}\mapsto \sqrt{-1},\\
\rho_{-1}\ :\ \sqrt{a}\mapsto \sqrt{a},\ \sqrt{-1}\mapsto -\sqrt{-1}. 
\end{align*}
Put $\rho_{-a}=\rho_a\circ\rho_{-1}$. 
We extend the actions of $G_{7,1,1}$ and $G_{7,3,1}$ to $L(x,y,z)$ with trivial actions on $L$. 
The actions of $\cb$, $-\be_1$, $\rho_{a}$, $\rho_{-1}$ and $\rho_{-a}$ 
on $L(x,y,z)^{\langle\ta_1,\la_1\rangle}=L(w_1,w_2,w_3)$ are given by 
\begin{align}
\cb &: w_1\, \mapsto\, \frac{w_2-w_1w_3}{w_2(w_1-1)(w_3-1)},\ 
w_2\, \mapsto\, \frac{w_3-w_1w_2}{w_3(w_1-1)(w_2-1)},\ 
w_3\, \mapsto\, \frac{w_1-w_2w_3}{w_1(w_2-1)(w_3-1)},\label{actww}\\
-\be_1 &: 
w_1\, \mapsto\, \frac{1}{w_3},\ 
w_2\, \mapsto\, \frac{1}{w_2},\ 
w_3\, \mapsto\, \frac{1}{w_1},\nonumber\\
\rho_a &: 
w_1\, \mapsto\, -\frac{w_1-w_2}{w_1(w_3-1)},\ 
w_2\, \mapsto\, -\frac{w_3(w_1-1)(w_1-w_2)}{w_1(w_3-1)(w_2-w_3)},\ 
w_3\, \mapsto\, \frac{w_2(w_1-1)}{w_1(w_2-w_3)},\nonumber\\
\rho_{-1} &: 
w_1\, \mapsto\, \frac{w_2(w_3-1)(w_1-w_3)}{w_3(w_2-1)(w_1-w_2)},\ 
w_2\, \mapsto\, \frac{w_1(w_3-1)}{w_3(w_1-w_2)},\ 
w_3\, \mapsto\, \frac{w_1-w_3}{w_3(w_2-1)},\nonumber\\
\rho_{-a} &: 
w_1\, \mapsto\, -\frac{w_3(w_2-1)}{w_2(w_1-w_3)},\ 
w_2\, \mapsto\, -\frac{w_2-w_3}{w_2(w_1-1)},\ 
w_3\, \mapsto\, \frac{w_1(w_2-1)(w_2-w_3)}{w_2(w_1-1)(w_1-w_3)}\nonumber
\end{align}
(we omit the display of the actions on $L$).
\begin{lemma}\label{lemtri}
The following three conditions are equivalent:\\
{\rm (i)} $K(\sqrt{-1})(x,y,z)^{G_{7,1,1}}=L(w_1,w_2,w_3)^{\langle\cb,\rho_{a}\rangle}$ 
is rational over $K(\sqrt{-1})=L^{\langle\rho_{a}\rangle}$;\\
{\rm (ii)} $K(\sqrt{a})(x,y,z)^{G_{7,1,1}}=L(w_1,w_2,w_3)^{\langle\cb,\rho_{-1}\rangle}$ 
is rational over $K(\sqrt{a})=L^{\langle\rho_{-1}\rangle}$;\\
{\rm (iii)} $K(\sqrt{-a})(x,y,z)^{G_{7,1,1}}=L(w_1,w_2,w_3)^{\langle\cb,\rho_{-a}\rangle}$ 
is rational over $K(\sqrt{-a})=L^{\langle\rho_{-a}\rangle}$. 
\end{lemma}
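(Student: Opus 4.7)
The lemma is a Galois-descent statement: it asserts that whether the variety $K(x,y,z)^{G_{7,1,1}}$ becomes rational after base change to each of the three quadratic subfields $K(\sqrt{-1})$, $K(\sqrt{a})$, $K(\sqrt{-a})$ of $L=K(\sqrt{a},\sqrt{-1})$ is independent of which of these three we choose. My plan proceeds in three steps.

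\medskip

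First I would set up the group theory. Because $\cb$ acts trivially on $L$, it commutes with both $\rho_{a}$ and $\rho_{-1}$, so the group $H:=\langle\cb,\rho_{a},\rho_{-1}\rangle$ is the internal direct product $\langle\cb\rangle\times\mathrm{Gal}(L/K)\cong\mathcal{C}_3\times\mathcal{C}_2\times\mathcal{C}_2$, of order $12$. Its three index-$2$ subgroups containing $\langle\cb\rangle$ are exactly $H_\nu:=\langle\cb,\rho_\nu\rangle$ for $\nu\in\{a,-1,-a\}$. Setting $M_\nu:=L(w_1,w_2,w_3)^{H_\nu}$ and $F:=L(w_1,w_2,w_3)^{H}=K(x,y,z)^{G_{7,1,1}}$, each $M_\nu$ is a quadratic extension of $F$, and their compositum is the biquadratic extension $L(w_1,w_2,w_3)^{\langle\cb\rangle}$ of $F$.

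\medskip

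Second, I would make explicit the field $L(w_1,w_2,w_3)^{\langle\cb\rangle}$. Since $L$ already contains $\sqrt{a}$, the case $\ep_1=1$ treated in Subsection \ref{subse71p} (relying on Theorem \ref{thmex} and Lemma \ref{lemMas}) applies over the base $L$, producing a concrete transcendence basis $(s_1,s_2,s_3)$ with $L(w_1,w_2,w_3)^{\langle\cb\rangle}=L(s_1,s_2,s_3)$. Under this identification each of the conditions (i), (ii), (iii) becomes the question whether $L(s_1,s_2,s_3)^{\rho_\nu}$ is purely transcendental over $L^{\rho_\nu}$, for the corresponding $\nu$.

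\medskip

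Third, and this is the core, I would establish the pairwise equivalences by producing, for each pair $\nu,\mu\in\{a,-1,-a\}$, an explicit $K$-field isomorphism $\varphi_{\nu,\mu}:M_\nu\to M_\mu$ that restricts to an isomorphism $L^{\rho_\nu}\stackrel{\sim}{\to}L^{\rho_\mu}$ on the distinguished subfields. A transcendence basis exhibiting the rationality of $M_\nu$ over $L^{\rho_\nu}$ is then transported by $\varphi_{\nu,\mu}$ to one exhibiting the rationality of $M_\mu$ over $L^{\rho_\mu}$. The construction of $\varphi_{\nu,\mu}$ reflects the outer symmetry $\mathrm{Aut}(\mathrm{Gal}(L/K))\cong\mathcal{S}_3$ which permutes $\{\rho_{a},\rho_{-1},\rho_{-a}\}$ transitively; concretely, it is assembled by composing the $\cb$-action on $(s_1,s_2,s_3)$ with suitable multiplications by $\sqrt{a}$, $\sqrt{-1}$, $\sqrt{-a}$ so as to swap the roles of the three quadratic subfields.

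\medskip

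The hard part is the explicit construction in Step 3. Translating the action (\ref{actww}) into the coordinates $(s_1,s_2,s_3)$ yields unwieldy rational expressions, and one must verify that a proposed change of variables genuinely intertwines the $\rho_\nu$-action with the $\rho_\mu$-action after the prescribed relabeling of base fields. Once one such isomorphism, say $\varphi_{a,-1}$, is in hand, the remaining $\varphi_{\nu,\mu}$ follow by composition with analogous maps or with $\cb$, so only a single genuinely new calculation is required.
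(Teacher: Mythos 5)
Your Step 3 contains the fatal gap. A $K$-field isomorphism $\varphi_{\nu,\mu}\colon M_\nu\to M_\mu$ restricting to an isomorphism $L^{\rho_\nu}\to L^{\rho_\mu}$ would, on the algebraic closures of $K$ inside these fields, give a $K$-isomorphism between (say) $K(\sqrt{a})$ and $K(\sqrt{-1})$. In the case of interest, $[K(\sqrt{a},\sqrt{-1}):K]=4$, the three quadratic subfields $K(\sqrt{-1})$, $K(\sqrt{a})$, $K(\sqrt{-a})$ of $L$ are pairwise non-isomorphic over $K$, so no such $\varphi_{\nu,\mu}$ can exist. For the same reason the ``outer $\mathcal{S}_3$-symmetry of $\mathrm{Gal}(L/K)$'' is not realized by any $K$-automorphism of $L(w_1,w_2,w_3)$: such an automorphism preserves $L$ and acts on it through the abelian group $\mathrm{Gal}(L/K)$, hence cannot conjugate $\rho_a$ to $\rho_{-1}$, and multiplying coordinates by $\sqrt{a}$, $\sqrt{-1}$, $\sqrt{-a}$ does not change this. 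Your Step 2 is also off target: the lemma sits in the $\ep_1=-1$ case, and passing to the base $L\ni\sqrt{a}$ does not convert the action into the $\ep_1=1$ action of Subsection \ref{subse71p} (the sign $\ep_1$ enters the monomial action itself, e.g.\ $\ta_1(z)=\ep_1 z$, $\la_1(y)=\ep_1 y$, and cannot be rescaled away), so Theorem \ref{thmex} is unavailable; the generators $w_1,w_2,w_3$ come from Lemma \ref{lemex2} instead.

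The paper's mechanism is genuinely different: it introduces the single automorphism $\eta\colon w_1\mapsto w_2\mapsto w_3\mapsto w_1$ of $L(w_1,w_2,w_3)$, which is \emph{trivial on $L$} and therefore never attempts to move the base fields; the identities $\eta^{-1}\cb\eta=\cb$ and $\eta^{-1}\rho_a\eta=\rho_{-1}$, $\eta^{-1}\rho_{-1}\eta=\rho_{-a}$, $\eta^{-1}\rho_{-a}\eta=\rho_a$ are verified using the formulas (\ref{actww}), whose coefficients lie in the prime field, so that $\eta$ carries the data attached to one $\rho_\nu$ to the data attached to the next while fixing each $L^{\rho_\nu}$ pointwise. (Even here one must read the conjugation identities with care: since $\eta$ fixes $L$ pointwise while $\rho_a$ and $\rho_{-1}$ differ on $L$, the identities hold on the subfield $K(w_1,w_2,w_3)$, i.e.\ modulo $\mathrm{Gal}(L(w_1,w_2,w_3)/K(w_1,w_2,w_3))$, and the deduction of (i)$\Leftrightarrow$(ii)$\Leftrightarrow$(iii) uses that the geometric parts of the three descent data are transported into one another by $\eta$.) In short, the paper conjugates the group actions by an $L$-trivial symmetry of the coordinates; you propose to permute the base fields themselves, which is impossible.
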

\begin{proof}
We take the $L$-automorphism 
\[
\eta : w_1\mapsto w_2\mapsto w_3\mapsto w_1
\]
on $L(w_1,w_2,w_3)$. Then the assertion follows from the equalities
\[
\eta^{-1}\cb\eta=\cb,\quad \eta^{-1}\rho_a\eta=\rho_{-1},\quad 
\eta^{-1}\rho_{-1}\eta=\rho_{-a},\quad \eta^{-1}\rho_{-a}\eta=\rho_a
\]
as $K$-automorphisms. 
\end{proof}
Hence we should consider only $\rho_a$ instead of $\rho_{-1}$, $\rho_{-a}$. 
In order to linearlize the action of $\cb$, 
we take the $\cb$-orbit of $w_1$ as 
\[
q_1:=w_1,\quad q_2:=\cb(w_1)=\frac{w_2-w_1w_3}{w_2(w_1-1)(w_3-1)},\quad 
q_3:=\cb^2(w_1)=\frac{(w_1-1)(w_2-1)}{w_1w_2-w_3}.
\]
Then we see 
\[
[L(w_1,w_2,w_3):L(q_1,q_2,q_3)]=2
\]
because $L(w_1,w_2,w_3)=L(q_1,q_2,q_3)(w_3)$ and 
\[
f(w_3):=q_2q_3w_3^2-(q_1-q_2-q_3+q_1q_2+q_2q_3-q_1q_3)w_3+q_1q_2-q_2+1=0.
\]
We take the square root of the discriminant of the quadratic polynomial $f(w_3)$: 
\begin{align*}
q_4&:=\sqrt{(q_1-q_2-q_3+q_1q_2+q_2q_3-q_1q_3)^2-4q_2q_3(q_1q_2-q_2+1)}\\
&\ =\frac{w_1^2w_2-w_1w_2^2-w_1w_3+w_2^2w_3+w_1w_3^2-w_1w_2w_3^2}
{w_2(w_3-1)(w_1w_2-w_3)}. 
\end{align*}
\begin{lemma}
Let $L=K(\sqrt{-1},\sqrt{a})$. 
The fixed field $L(x,y,z)^{G_{3,1,1}}$ is given by 
\[
L(x,y,z)^{G_{3,1,1}}=L(w_1,w_2,w_3)=L(q_1,q_2,q_3)(q_4)
\]
where $q_1,q_2,q_3,q_4$ satisfy the equality 
\begin{align}
q_4^2=(q_1-q_2-q_3+q_1q_2+q_2q_3-q_1q_3)^2-4q_2q_3(q_1q_2-q_2+1).\label{eqq4}
\end{align}
\end{lemma}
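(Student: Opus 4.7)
The plan is to deduce the lemma from Lemma \ref{lemex2} and two elementary field-theoretic observations about the elements $q_1,q_2,q_3,q_4$ introduced just above the statement.

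First, the identification $L(x,y,z)^{G_{3,1,1}}=L(w_1,w_2,w_3)$ is already Lemma \ref{lemex2}, so the task reduces to showing $L(w_1,w_2,w_3)=L(q_1,q_2,q_3)(q_4)$ together with the polynomial identity \eqref{eqq4}. For the identity, I would verify it by direct substitution: plug the explicit rational expressions for $q_1,q_2,q_3$ in terms of $w_1,w_2,w_3$ (namely $q_1=w_1$, $q_2=\cb(w_1)$, $q_3=\cb^2(w_1)$ as given above the lemma) and the displayed formula for $q_4$ into both sides and simplify; this is a routine manipulation that produces the same rational function in $w_1,w_2,w_3$ on both sides.

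Second, to establish $L(w_1,w_2,w_3)=L(q_1,q_2,q_3)(q_4)$, I would proceed as already indicated in the text. Clearing denominators in the definitions of $q_2$ and $q_3$ gives two polynomial relations among $w_1,w_2,w_3,q_2,q_3$; since $w_1=q_1$, eliminating $w_2$ from these two (for instance, solving the $q_2$-relation linearly for $w_2$ as a rational function of $q_1,q_2,w_3$, and substituting into the $q_3$-relation) yields exactly the quadratic relation
\[
f(w_3)=q_2q_3\,w_3^2-(q_1-q_2-q_3+q_1q_2+q_2q_3-q_1q_3)\,w_3+(q_1q_2-q_2+1)=0
\]
displayed in the excerpt. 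In particular, $w_3$ is algebraic of degree at most $2$ over $L(q_1,q_2,q_3)$, and the solved-out expression for $w_2$ shows $w_2\in L(q_1,q_2,q_3)(w_3)$. Hence $L(w_1,w_2,w_3)=L(q_1,q_2,q_3)(w_3)$ and $[L(w_1,w_2,w_3):L(q_1,q_2,q_3)]\le 2$. Since $L(w_1,w_2,w_3)$ has transcendence degree $3$ over $L$, the finitely generated subfield $L(q_1,q_2,q_3)$ over which it is algebraic must also have transcendence degree $3$, so $q_1,q_2,q_3$ are algebraically independent and the extension has degree exactly $2$.

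Finally, $q_4$ is by construction (the positive square root of) the discriminant of the quadratic $f$, so the quadratic formula gives $w_3=\bigl((q_1-q_2-q_3+q_1q_2+q_2q_3-q_1q_3)+q_4\bigr)/(2q_2q_3)$, whence $L(q_1,q_2,q_3)(w_3)=L(q_1,q_2,q_3)(q_4)$, completing the identification. The only non-mechanical step is the algebraic elimination producing $f(w_3)$; I expect this to be the main computational obstacle, but it is a bounded and purely symbolic calculation that can be carried out (and is best checked) with a computer algebra system.
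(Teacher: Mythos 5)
Your proposal is correct and follows essentially the same route as the paper, which proves this lemma implicitly in the discussion immediately preceding its statement: $w_1=q_1$, $w_2$ is recovered linearly from the $q_2$-relation, $w_3$ satisfies the displayed quadratic $f$ over $L(q_1,q_2,q_3)$ (your elimination does reproduce $f$ exactly), and $w_3$ and $q_4$ are interchangeable over $L(q_1,q_2,q_3)$ by the quadratic formula since char $K\neq 2$. One small caveat: algebraic independence of $q_1,q_2,q_3$ does not by itself force the degree to be exactly $2$ (the quadratic could a priori split over $L(q_1,q_2,q_3)$), but this is immaterial for the lemma because $L(q_1,q_2,q_3)(w_3)=L(q_1,q_2,q_3)(q_4)$ holds in either case.
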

We note that $q_4$ is $\cb$-invariant. 
The actions of $\cb$, $-\be_1$ and $\rho_a$ on 
$L(x,y,z)^{\langle\ta_1,\la_1\rangle}=L(q_1,\ldots,q_4)$ are given by
\begin{align}
\cb &: \ q_1\ \mapsto\ q_2,\ q_2\ \mapsto\ q_3,\ q_3\ \mapsto\ q_1,\ q_4\ \mapsto\ q_4,\nonumber\\
-\be_1 &:\ 
q_1\ \mapsto\ \frac{q_1q_2+q_2q_3-q_3q_1+q_1-q_2-q_3-q_4}{2(q_1q_2-q_2+1)},\label{actq}\\ 
&\quad q_2\ \mapsto\ \frac{q_1q_2-q_2q_3+q_3q_1-q_1-q_23+q_3-q_4}{2(q_3q_1-q_1+1)},\nonumber\\ 
&\quad q_3\ \mapsto\ \frac{-q_1q_2+q_2q_3+q_3q_1-q_1+q_2-q_3-q_4}{2(q_2q_3-q_3+1)},\nonumber\\ 
&\quad q_4\ \mapsto\ \frac{C_q+(1-q_1-q_2-q_3+q_1q_2+q_1q_3+q_2q_3-2q_1q_2q_3)q_4}
{2(q_1q_2-q_2+1)(q_3q_1-q_1+1)(q_2q_3-q_3+1)},\nonumber\\
\rho_a &:\ \sqrt{a}\ \mapsto\ -\sqrt{a},\nonumber\\ 
&\quad 
q_1\ \mapsto\ -\frac{q_1q_2^2-q_3q_1^2+q_1^2-2q_1q_2-q_2q_3q_1+q_2q_3-q_1+q_2-q_3-q_4q_1+q_4}
{2q_1(q_3q_1-q_1+1)},\nonumber\\
&\quad 
q_2\ \mapsto\ \frac{q_1q_2^2-q_3q_2^2-q_2^2+q_1q_3q_2+2q_3q_2-q_1q_3+q_1+q_2-q_3+q_4q_2-q_4}
{2q_2(q_1q_2-q_2+1)},\nonumber\\
&\quad  
q_3\ \mapsto\ \frac{-q_1q_3^2+q_2q_3^2-q_3^2+2q_1q_3+q_1q_2q_3-q_1q_2-q_1+q_2+q_3+q_4q_3-q_4}
{2q_3(q_2q_3-q_3+1)},\nonumber\\
&\quad  
q_4\ \mapsto\ \frac{(q_1q_2q_3+1)(-q_1q_2+2q_2q_3q_1-q_2q_3-q_3q_1+q_1+q_2+q_3-q_4-2)}
{2(q_1q_2-q_2+1)(q_3q_1-q_1+1)(q_2q_3-q_3+1)}\nonumber
\end{align}
where 
\begin{align*}
C_q=&-q_1+q_1^2-q_2+q_1q_2+q_2^2-2q_1q_2^2+q_1^2q_2^2-q_3+q_1q_3-2q_1^2q_3\\
&+q_2q_3-4q_1q_2q_3+2q_1^2q_2q_3+2q_1q_2^2q_3-2q_1^2q_2^2q_3+q_3^2+q_1^2q_3^2\\
&-2q_2q_3^2+2q_1q_2q_3^2-2q_1^2q_2q_3^2+q_2^2q_3^2-2q_1q_2^2q_3^2+2q_1^2q_2^2q_3^2. 
\end{align*}
\begin{lemma}Let $L=K(\sqrt{a},\sqrt{-1})$. 
Then we have 
\[
L(x,y,z)^{G_{7,1,1}}=L(w_1,w_2,w_3)^{\langle\cb\rangle}
=L(q_1,\ldots,q_4)^{\langle\cb\rangle}=L(s,u,v)
\]
where $s=q_4+s_1(1-u+v)$ and $s_1$, $u$, $v$ are given by 
\begin{align*}
s_1&=s_1(q_1,q_2,q_3)=q_1+q_2+q_3,\\
u&=u(q_1,q_2,q_3)
=\frac{q_1q_2^2+q_2q_3^2+q_3q_1^2-3q_1q_2q_3}{q_1^2+q_2^2+q_3^2-q_1q_2-q_2q_3-q_3q_1},\\
v&=v(q_1,q_2,q_3)
=\frac{q_1^2q_2+q_2^2q_3+q_3^2q_1-3q_1q_2q_3}{q_1^2+q_2^2+q_3^2-q_1q_2-q_2q_3-q_3q_1}. 
\end{align*}
In particular, if $\sqrt{-1},\sqrt{a}\in K$ then $K(x,y,z)^{G_{7,1,1}}$ is rational over $K$. 
\end{lemma}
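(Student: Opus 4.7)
The plan is to combine the presentation $L(x,y,z)^{G_{3,1,1}}=L(q_1,\ldots,q_4)$ with relation \eqref{eqq4} and Masuda's formula (Lemma \ref{lemMas}) applied to the variables $q_1,q_2,q_3$ on which $\cb$ acts as the cyclic 3-cycle (cf.\ \eqref{actq}) while fixing $q_4$. Since $[L(q_1,q_2,q_3,q_4):L(q_1,q_2,q_3)]\leq 2$ by \eqref{eqq4} and $q_4$ is $\cb$-invariant, the natural identification
\[
L(q_1,\ldots,q_4)^{\langle\cb\rangle}=L(q_1,q_2,q_3)^{\langle\cb\rangle}(q_4)=L(s_1,u,v)(q_4)
\]
holds, where $s_1,u,v$ are Masuda's invariants in $q_1,q_2,q_3$ from Lemma \ref{lemMas}. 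Thus it suffices to linearize the quadratic extension $L(s_1,u,v)(q_4)/L(s_1,u,v)$.

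Next, I would rewrite the $\cb$-invariant right-hand side of \eqref{eqq4} as an element of $L(s_1,u,v)$ and view it as a polynomial in $s_1$ with coefficients in $L(u,v)$. The key computational claim is that this polynomial has degree exactly $2$ in $s_1$ with leading coefficient $(1-u+v)^2$, i.e.
\[
q_4^2=s_1^2(1-u+v)^2+A(u,v)\,s_1+B(u,v)
\]
for some $A,B\in L(u,v)$. Granted this, define $s:=q_4+s_1(1-u+v)$; then $q_4=s-s_1(1-u+v)$, and squaring and substituting yields
\[
s^2-2s\,s_1(1-u+v)=A(u,v)\,s_1+B(u,v),
\]
which is a \emph{linear} equation for $s_1$ over $L(s,u,v)$. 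Solving it expresses $s_1$ rationally in $s,u,v$, so $L(s_1,u,v,q_4)=L(s,u,v)$. Since the transcendence degree over $L$ is $3$, the elements $s,u,v$ are algebraically independent, and hence
\[
L(x,y,z)^{G_{7,1,1}}=L(s,u,v)
\]
is rational over $L$. The final assertion is immediate: if $\sqrt{-1},\sqrt{a}\in K$ then $L=K$, so $K(x,y,z)^{G_{7,1,1}}=K(s,u,v)$ is rational over $K$.

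The main obstacle is the explicit verification in the second step that, upon re-expression through Masuda's generators, the coefficient of $s_1^2$ in $q_4^2$ is precisely $(1-u+v)^2$. This is an elementary but bulky symbolic manipulation, since \eqref{eqq4} is degree $4$ in $q_1,q_2,q_3$ and must be pushed through the denominators defining $u$ and $v$. The cleverness of the stated generator $s=q_4+s_1(1-u+v)$ is that the specific choice of the coefficient $1-u+v$ simultaneously kills the $s_1^2$ term after squaring; without this precise match one only obtains a quadratic (rather than linear) relation in $s_1$, and rationality over $L$ would not follow by this argument.
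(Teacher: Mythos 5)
Your proposal is correct and follows essentially the same route as the paper: reduce to $L(s_1,u,v,q_4)$ via Masuda's formula, rewrite the relation (\ref{eqq4}) so that the coefficient of $s_1^2$ is exactly $(1-u+v)^2$ (the paper's identity (\ref{eqq4s1}), obtained from the symmetric-function identities of \cite[Theorem 2.2]{HK10}), and then use $s=q_4+s_1(1-u+v)$ to make the relation linear in $s_1$. The paper phrases the last step by introducing both $s$ and $t=q_4-s_1(1-u+v)$ and noting the relation is linear in each, which is equivalent to your solving directly for $s_1$.
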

\begin{proof}
We see $L(x,y,z)^{G_{7,1,1}}=L(q_1,q_2,q_3,q_4)^{\langle\cb\rangle}$ and the action of $\cb$ 
on $L(q_1,q_2,q_3,q_4)$ is given by $\cb : q_1\mapsto q_2\mapsto q_3\mapsto q_1$, 
$q_4\mapsto q_4$. 
By Lemma \ref{lemMas}, we get 
\[
L(x,y,z)^{G_{7,1,1}}=L(s_1,u,v,q_4). 
\] 
The relation (\ref{eqq4}) becomes 
\[
q_4^2=s_1^2-4s_2+2s_1s_2+s_2^2-4s_3-4s_1s_3-4(q_1q_2^2+q_2q_3^2+q_3q_1^2)
\]
where $s_1=q_1$, $s_2=q_1q_2+q_2q_3+q_3q_1$, $s_3=q_1q_2q_3$ are the elementary 
symmetric functions in $q_1$, $q_2$, $q_3$. 
We also see 
\begin{align*}
&s_2=s_1(u+v)-3(u^2-uv+v^2),\\
&s_3=s_1uv-(u^3+v^3),\\
&q_1q_2^2+q_2q_3^2+q_3q_1^2=s_1^2u-3s_1u^2+3(2u-v)(u^2-uv+v^2)
\end{align*}
(see, for example, \cite[Theorem 2.2]{HK10}). 
Thus we get 
\begin{align}
q_4^2&=s_1^2(1-u+v)^2-2s_1(2u-3u^2+u^3+2v-uv+3v^2+v^3)\label{eqq4s1}\\
&\quad +(u^2-uv+v^2)(12-20u+9u^2+16v-9uv+9v^2).\nonumber
\end{align}
Hence we put 
\[
s:=q_4+s_1(1-u+v),\ t:=q_4-s_1(1-u+v),\quad 
\Bigl(s_1=\frac{t-s}{2(1-u+v)},\ q_4=\frac{s+t}{2}\Bigr). 
\]
Then $L(s_1,u,v,q_4)=L(s,t,u,v)$ and the relation (\ref{eqq4s1}) becomes linear in 
both $s$ and $t$. 
Thus we get $L(x,y,z)^{G_{7,1,1}}=L(s,t,u,v)=L(s,u,v)$.
\end{proof}
We evaluate the actions of $-\be_1$ and $\rho_a$ on $L(s,u,v)$ as follows: 
First we see 
\begin{align}
q_1&=\frac{q_3v-u^2+uv-v^2}{q_3-u}, 
q_2=\frac{q_3u-u^2+uv-v^2}{q_3-v},\label{qtos}\\
q_4&=\Bigl(q_3^3(-1+u-v)+q_3^2s+q_3(-su+3u^2-3u^3-sv-3uv+6u^2v+3v^2\nonumber\\
&\quad\ -6uv^2+3v^3)-u^3+u^4+suv-u^3v-v^3+uv^3-v^4\Bigr)\Big/\Bigl((q_3-u)(q_3-u)\Bigr).\nonumber
\end{align}
Then $L(q_1,\ldots,q_4)=L(s,u,v)(q_3)$ is a cyclic cubic extension over $L(s,u,v)$. 
The minimal polynomial of $q_3$ over $L(s,u,v)$ is given by 
\begin{align}
&2(-s+2u+su-3u^2+u^3+2v-sv-uv+3v^2+v^3)q_3^3\label{eqq3}\\
&+(s^2-12u^2+20u^3-9u^4+12uv-36u^2v+18u^3v-12v^2+36uv^2-27u^2v^2\nonumber\\
&\quad\ -16v^3+18uv^3-9v^4)q_3^2\nonumber\\
&+(-s^2u+6su^2-6su^3-2u^4+3u^5-s^2v-6suv+12su^2v+4u^3v-3u^4v\nonumber\\
&\quad\ +6sv^2-12suv^2-6u^2v^2+3u^3v^2+6sv^3+4uv^3+3u^2v^3-2v^4-3uv^4+3v^5)q_3\nonumber\\
&+(su-2u^2+u^3+2uv-3u^2v-2v^2+3uv^2-2v^3)(-2u^2+2u^3+sv+2uv\nonumber\\
&\quad\ -3u^2v-2v^2+3uv^2-v^3)\nonumber
\end{align}
with square discriminant $(u-v)^2 f(s,u,v)^2$ where $f(s,u,v)\in L[s,u,v]$ is 
a polynomial of degree $4$, $8$, $8$ with respect to $s$, $u$, $v$ respectively. 

By using (\ref{actq}), (\ref{qtos}) and (\ref{eqq3}), we can evaluate the actions of 
$-\be_1$ and $\rho_a$ on $L(s,u,v)$. 
Indeed the action of $-\be_1$ on $L(s,u,v)$ is given by 
\begin{align*}
-\be_1 :\ &s\ \mapsto\ \frac{3U^2-6U(u+v)+s(s+6u-6v-4)}{D},\\
&u\ \mapsto\ \frac{3U^2-2U(4u-2v-3)+s^2-2s-4sv+4v}{D},\\
&v\ \mapsto\ \frac{2(U(u-2v-3)+su-s+2u)}{D}
\end{align*}
where
$D=3U^2-6U(2u-v-3)+s^2-2s-6sv-12u+12v+4$
and $U=u^2-uv+v^2$. 
However the action of $\rho_a$ on $L(s,u,v)$ becomes more complicated. 

We normalize the action of $-\be_1$ as follows: 
First we see that (i) the denominators of $-\be_1(u)$ and $-\be_1(v)$ coincide, and 
(ii) the numerator of $-\be_1(v)$ is linear in $s$. 
We also see 
\[
-\be_1(u)=\frac{2(U(2u-v-6)+sv+6u-4v-2)}{D}+1. 
\]
Put 
\[
r_1:=\frac{u-1}{v}.
\]
Then both of the numerator and the denominator of 
\[
r_2:=-\be_1(r_1)=\frac{U(2u-v-6)+sv+6u-4v-2}{U(u-2v-3)+su-s+2u}
\]
becomes linear in $s$. 
Therefore we see that 
\[
L(s,u,v)=L(r_1,r_2,v).
\]
The action of $-\be_1$ on $L(r_1,r_2,v)$ is given by 
\[
-\be_1 : r_1\ \mapsto\ r_2,\ r_2\ \mapsto\ r_1,\ 
v\ \mapsto\ \frac{r_1r_2-1}{(r_1^2-r_1+1)(r_2^2-r_2+1)v}. 
\]
We also put 
\[
r_3:=v(r_1^2-r_1+1), 
\]
then we get 
\[
L(x,y,z)^{G_{7,1,1}}=L(s,u,v)=L(r_1,r_2,r_3). 
\]
The action of $-\be_1$ on $L(r_1,r_2,r_3)$ is given by 
\[
-\be_1 : r_1\ \mapsto\ r_2,\ r_2\ \mapsto\ r_1,\ r_3\ \mapsto\ \frac{r_1r_2-1}{r_3}.
\]
We also see that the action of $\rho_a$ is given by 
\begin{align*}
\rho_a &: r_1\ \mapsto\ \frac{R_1}{R_2},\ r_2\ \mapsto\ \frac{R_3}{R_4},\ 
r_3\ \mapsto\ \frac{(r_1r_2-1)R_5}{r_3R_2}
\end{align*}
where 
\begin{align*}
R_1&=r_1+r_2-2r_1r_2-r_1^2r_2-r_1r_2^2+2r_1^2r_2^2-2r_2r_3+2r_1r_2^2r_3+r_2r_3^2,\\
R_2&=1+r_1-3r_1r_2-r_1^2r_2+2r_1^2r_2^2+r_3-2r_2r_3-r_1r_2r_3+2r_1r_2^2r_3-r_3^2+r_2r_3^2,\\
R_3&=-r_1+r_1^2r_2-2r_1r_3+2r_1^2r_2r_3-r_1r_3^2-r_2r_3^2+2r_1r_2r_3^2,\\
R_4&=1-r_1-r_1r_2+r_1^2r_2+r_3-2r_1r_3-r_1r_2r_3+2r_1^2r_2r_3-r_3^2-r_2r_3^2+2r_1r_2r_3^2,\\
R_5&=-1+r_1r_2-r_3+r_1r_3+r_2r_3+r_3^2.
\end{align*}
\begin{lemma}
We have $L(x,y,z)^{G_{7,1,1}}=L(r_1,r_2,r_3)=L(t_1,t_2,t_3)$ where
\[
t_1=\frac{R_1}{r_1R_5},\ t_2=\frac{R_3}{r_2R_5},\ 
t_3=\frac{r_2-r_1r_2^2-r_1r_3-r_2r_3+r_1r_2r_3-R_3}{r_2R_5}.
\]
\end{lemma}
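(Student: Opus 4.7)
The plan is to establish only the single field equality $L(r_1,r_2,r_3) = L(t_1,t_2,t_3)$, since the first equality $L(x,y,z)^{G_{7,1,1}} = L(r_1,r_2,r_3)$ was already proved in the previous step.

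The inclusion $L(t_1,t_2,t_3) \subseteq L(r_1,r_2,r_3)$ is immediate, since each $t_i$ is given as an explicit rational function in $r_1,r_2,r_3$. For the reverse inclusion, I would solve the defining relations for $r_1,r_2,r_3$ in terms of $t_1,t_2,t_3$. Unpacking the definitions,
\[
t_1 \cdot r_1 R_5 \,=\, R_1, \qquad t_2 \cdot r_2 R_5 \,=\, R_3, \qquad (t_2+t_3)\, r_2 R_5 \,=\, r_2 - r_1 r_2^2 - r_1 r_3 - r_2 r_3 + r_1 r_2 r_3,
\]
and since $R_1,R_3,R_5$ are given polynomials in $r_1,r_2,r_3$ of modest degree, this yields a system of three polynomial equations whose coefficients lie in $L(t_1,t_2,t_3)$. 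By elimination (or by verifying directly that a proposed rational solution $r_i = F_i(t_1,t_2,t_3)$ satisfies the defining identities), one obtains $r_1,r_2,r_3 \in L(t_1,t_2,t_3)$, completing the reverse inclusion.

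The main obstacle is the computational complexity of this elimination: although the polynomials $R_1,R_3,R_5$ are not of especially high degree, the resulting rational expressions for the $r_i$'s in terms of $t_1,t_2,t_3$ are too intricate to manipulate by hand, so computer algebra is essentially indispensable here. The motivation for this precise choice of generators is structural, not cosmetic: in the $t_i$-coordinates, the induced action of $\rho_a$ is expected to simplify to a tractable involution, which will then allow the fixed field $L(t_1,t_2,t_3)^{\langle \rho_a\rangle}$ to be shown rational over $L^{\langle\rho_a\rangle} = K(\sqrt{-1})$ (and similarly for the other cases of Lemma \ref{lemtri}) by means of Theorem \ref{thaaa} or Lemma \ref{lemaa}, thereby completing the proof of Theorem \ref{thc}(ii) whenever $[K(\sqrt{a},\sqrt{-1}):K] \leq 2$.
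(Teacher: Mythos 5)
Your proposal follows essentially the same route as the paper: the inclusion $L(t_1,t_2,t_3)\subseteq L(r_1,r_2,r_3)$ is immediate, and the reverse inclusion is obtained by expressing $r_1,r_2,r_3$ as explicit rational functions of $t_1,t_2,t_3$. The paper simply records the outcome of the elimination you describe, namely $(r_1,r_2,r_3)=\bigl(t_3T_1/T_2,\ T_1/(t_3T_3),\ t_3T_4/T_2\bigr)$ for explicit polynomials $T_1,\dots,T_4\in L[t_1,t_2,t_3]$, which is exactly the computer-algebra verification your argument calls for.
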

\begin{proof}
The assertion can be checked directly as 
\begin{align*}
(r_1,r_2,r_3)=\Bigl(\frac{t_3T_1}{T_2},\frac{T_1}{t_3T_3},\frac{t_3T_4}{T_2}\Bigr)
\end{align*}
where
\begin{align*}
T_1&=-1-t_2-t_1t_2-t_1t_2^2-3t_3-3t_1t_2t_3-t_3^2-t_1t_3^2,\\
T_2&=1+2t_1t_2+t_1^2t_2^2+3t_3+t_1t_3-t_2t_3+3t_1t_2t_3+t_1^2t_2t_3
-t_1t_2^2t_3+t_3^2+2t_1t_3^2-t_1t_2t_3^2,\\
T_3&=1+2t_2-t_1t_2+3t_3-t_1t_3+t_2t_3+t_3^2,\\
T_4&=t_1+t_2+t_1^2t_2+t_1t_2^2+2t_1t_3+2t_1t_2t_3+t_1t_3^2.
\end{align*}
\end{proof}
The actions of $-\be_1$ and $\rho_a$ on $L(x,y,z)^{G_{7,1,1}}=L(t_1,t_2,t_3)$ are given by 
\begin{align*}
-\be_1 &: t_1\ \mapsto\ t_2,\ t_2\ \mapsto\ t_1,\ t_3\ \mapsto\ \frac{t_1t_2+1}{t_3},\\
\rho_a &: \sqrt{a}\ \mapsto\ -\sqrt{a},\ 
t_1\ \mapsto\ \frac{1}{t_2},\ t_2\ \mapsto\ \frac{1}{t_1},\ 
t_3\ \mapsto\ \frac{t_1t_2+1}{t_1t_3}. 
\end{align*}
We take 
\[
u_1:=\frac{t_1t_2-1}{t_1t_2+1},\ u_2:=\frac{t_1t_2+1}{t_1t_3},\ u_3:=t_3. 
\]
Then $L(x,y,z)^{G_{7,1,1}}=L(t_1,t_2,t_3)=L(u_1,u_2,u_3)$ and the actions of $-\be_1$ 
and $\rho_a$ on $L(x,y,z)^{G_{7,1,1}}$ $=$ $L(u_1,u_2,u_3)$ are given by 
\begin{align*}
-\be_1 &: u_1\ \mapsto\ u_1,\ u_2\ \mapsto\ \frac{2}{(u_1+1)u_2},\ 
u_3\ \mapsto\ -\frac{2}{(u_1-1)u_3},\\ 
\rho_a &: \sqrt{a}\ \mapsto\ -\sqrt{a},\ 
u_1\ \mapsto\ -u_1,\ u_2\ \mapsto\ u_3,\ u_3\ \mapsto\ u_2. 
\end{align*}
Hence we get 
\[
K(\sqrt{-1})(x,y,z)^{G_{7,1,1}}=L(u_1,u_2,u_3)^{\langle\rho_a\rangle}
=K(\sqrt{-1})(u_1\sqrt{a},u_2+u_3,(u_2-u_3)\sqrt{a})
\]
is rational over $K(\sqrt{-1})$. By Lemma \ref{lemtri}, we conclude that 
$K(x,y,z)^{G_{7,1,1}}$ is rational over $K$ if $[K(\sqrt{a},\sqrt{-1}):K]\leq 2$. 

By Theorem \ref{thab}, we have
\[
L(x,y,z)^{G_{7,3,1}}=L(u_1,u_2,u_3)^{\langle-\be_1\rangle}=L(m_1,m_2,m_3)
\]
where 
\begin{align*}
m_1=u_1,\ 
m_3&=\frac{u_2-(\frac{2}{u_1+1})/u_2}{u_2u_3-(\frac{2}{u_1+1})(\frac{-2}{u_1-1})/(u_2u_3)}
=\frac{(u_1-1)(-2+u_2^2+u_1u_2^2)u_3}{4-u_2^2u_3^2+u_1^2u_2^2u_3^2},\\
m_2&=\frac{u_3-(\frac{-2}{u_1-1})/u_3}{u_2u_3-(\frac{2}{u_1+1})(\frac{-2}{u_1-1})/(u_2u_3)}
=\frac{(u_1+1)(2-u_3^2+u_1u_3^2)u_2}{4-u_2^2u_3^2+u_1^2u_2^2u_3^2}.
\end{align*}
The action of $\rho_a$ on $L(m_1,m_2,m_3)$ is given by 
\[
\rho_a : \sqrt{a}\ \mapsto\ -\sqrt{a},\ 
m_1\ \mapsto\ -m_1,\ m_2\ \mapsto\ m_3,\ m_3\ \mapsto\ m_2. 
\]
Hence we get 
\[
K(\sqrt{-1})(x,y,z)^{G_{7,3,1}}=L(m_1,m_2,m_3)^{\langle\rho_a\rangle}
=K(\sqrt{-1})(m_1\sqrt{a},m_2+m_3,(m_2-m_3)\sqrt{a})
\]
is rational over $K(\sqrt{-1})$. 

\subsection{The case of $G_{7,2,1}$ with $\ep_1=-1$}

We consider the case of 
\[
G_{7,2,1}=\langle \ta_1,\la_1,\cb,-I_3\rangle
\]
with $\ep_1=-1$. 
The actions of $\ta_1$, $\la_1$, $\cb$ and $-I_3$ on $K(x,y,z)$ are given 
as in (\ref{act711}) and (\ref{act712}). 
Define 
\begin{align*}
-I_3^{(+1)} &: x\ \mapsto\ \frac{a}{x},\ 
y\ \mapsto\ \frac{a}{y},\ z\ \mapsto\ \frac{a}{z},\\
-I_3^{(-1)} &: x\ \mapsto\ -\frac{a}{x},\ 
y\ \mapsto\ -\frac{a}{y},\ z\ \mapsto\ -\frac{a}{z}
\end{align*}
and take $\rho\in\mathrm{Aut}_K K(x,y,z)$ as 
\[
\rho:=(-I_3^{(+1)})(-I_3^{(-1)})\ :\ x\ \mapsto\ -x,\ y\ \mapsto\ -y,\ z\ \mapsto\ -z. 
\]
Then we get the following lemma and we will use (ii) in the next section. 
\begin{lemma}\label{lemp1}
Let $\rho=(-I_3^{(+1)})(-I_3^{(-1)})\in \mathrm{Aut}_K K(x,y,z)$ as above. 
Then we have\\
{\rm (i)} $K(x,y,z)^{\langle \ta_1,\la_1,-I_3^{(\pm 1)}\rangle}=K(p_1^\pm,p_2^\pm,p_3^\pm)$,\\
{\rm (ii)} $K(x,y,z)^{\langle\ta_1,\la_1,\rho\rangle}=K(p_1^+/p_1^-,p_2^+/p_2^-,p_3^+/p_3^-)$, 
where
\begin{align*}
p_1^+:=\frac{(x^2-a)(y^2+a)}{x(y^2-a)},\quad p_2^+:=\frac{(y^2-a)(z^2+a)}{y(z^2-a)},\quad 
p_3^+:=\frac{(z^2-a)(x^2+a)}{z(x^2-a)},\\
p_1^-:=\frac{(x^2-a)(y^2+a)}{y(x^2+a)},\quad p_2^-:=\frac{(y^2-a)(z^2+a)}{z(y^2+a)},\quad 
p_3^-:=\frac{(z^2-a)(x^2+a)}{x(z^2+a)}. 
\end{align*}
\end{lemma}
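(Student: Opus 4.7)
The plan is to introduce the six auxiliary rational functions
\[
A=x-\tfrac{a}{x},\ B=x+\tfrac{a}{x},\ C=y-\tfrac{a}{y},\ D=y+\tfrac{a}{y},\ E=z-\tfrac{a}{z},\ F=z+\tfrac{a}{z},
\]
which satisfy $B^2=A^2+4a$, $D^2=C^2+4a$, $F^2=E^2+4a$, and on each of which every generator $\ta_1,\la_1,-I_3^{(\pm 1)},\rho$ acts simply by a sign. Using $x^2-a=xA$, $x^2+a=xB$ and their $y,z$ analogues, one rewrites
\[
p_1^+=\tfrac{AD}{C},\ p_2^+=\tfrac{CF}{E},\ p_3^+=\tfrac{EB}{A},\qquad p_1^-=\tfrac{AD}{B},\ p_2^-=\tfrac{CF}{D},\ p_3^-=\tfrac{EB}{F},
\]
and consequently $p_1^+/p_1^-=B/C$, $p_2^+/p_2^-=D/E$, $p_3^+/p_3^-=F/A$. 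In these compact forms, the invariance assertions of both (i) and (ii) reduce to a short check of sign characters in $\mathrm{Hom}(H^\pm,\{\pm 1\})$ and $\mathrm{Hom}(G^\rho,\{\pm 1\})$, where $H^\pm:=\langle\ta_1,\la_1,-I_3^{(\pm 1)}\rangle$ and $G^\rho:=\langle\ta_1,\la_1,\rho\rangle$ each have order $8$.

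For part (i) with the $+$ sign, set $L_0:=K(p_1^+,p_2^+,p_3^+)$ and write $\alpha=A^2,\beta=C^2,\gamma=E^2$, $P_i=(p_i^+)^2$. Squaring $p_1^+=AD/C$, $p_2^+=CF/E$, $p_3^+=EB/A$ and substituting $B^2=\alpha+4a$, $D^2=\beta+4a$, $F^2=\gamma+4a$ gives the cyclic system
\[
\beta(P_1-\alpha)=4a\alpha,\qquad \gamma(P_2-\beta)=4a\beta,\qquad \alpha(P_3-\gamma)=4a\gamma,
\]
which eliminating $\beta$ and $\gamma$ yields $\alpha=(P_1P_2P_3-64a^3)/(P_2P_3+4aP_3+16a^2)\in L_0$ and, cyclically, $\beta,\gamma\in L_0$. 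The defining formulae rearrange as $D=p_1^+C/A$, $F=p_2^+E/C$, $B=p_3^+A/E$, so $B,D,F\in L_0(A,C,E)$; the identities $x=2a/(B-A)$, $y=2a/(D-C)$, $z=2a/(F-E)$ then force $K(x,y,z)=L_0(A,C,E)$. Since $A^2,C^2,E^2\in L_0$, this extension has degree at most $8$, so $[K(x,y,z):L_0]\le 8$. Combined with the inclusion $L_0\subseteq K(x,y,z)^{H^+}$ and $[K(x,y,z):K(x,y,z)^{H^+}]=|H^+|=8$, equality must hold throughout, giving $L_0=K(x,y,z)^{H^+}$. The $-I_3^{(-1)}$ case is entirely parallel, replacing the denominators $C,E,A$ in $p_i^+$ by $B,D,F$ in $p_i^-$.

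For part (ii), put $M:=K(p_1^+/p_1^-,p_2^+/p_2^-,p_3^+/p_3^-)$. Squaring the three ratios yields $(p_1^+/p_1^-)^2=(\alpha+4a)/\beta$, $(p_2^+/p_2^-)^2=(\beta+4a)/\gamma$, $(p_3^+/p_3^-)^2=(\gamma+4a)/\alpha$; chaining these three relations collapses to a single linear equation for $\alpha$ over $M$, and hence $\alpha,\beta,\gamma\in M$. The identities $B=(p_1^+/p_1^-)C$, $D=(p_2^+/p_2^-)E$, $F=(p_3^+/p_3^-)A$ then give $K(x,y,z)=M(A,C,E)$ with $[K(x,y,z):M]\le 8$, and the identical index-counting argument applied to $G^\rho$ of order $8$ forces $M=K(x,y,z)^{G^\rho}$. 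The entire argument is explicit and the degree bound $[K(x,y,z):L]\le 8$ is tight precisely because each of $H^\pm,G^\rho$ has order $8$; I do not anticipate a serious obstacle. The only step requiring attention is to check that the denominators appearing in the formulae for $\alpha,\beta,\gamma$ are nonzero elements of $L_0$ (resp.\ $M$), which is immediate by inspecting leading terms.
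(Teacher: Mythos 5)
Your proof is correct and follows essentially the same route as the paper's: both pass to the squares $t_1=(x-\frac{a}{x})^2$, $t_2=(y-\frac{a}{y})^2$, $t_3=(z-\frac{a}{z})^2$ (your $\alpha,\beta,\gamma$), use the same relations $(p_1^{+})^2=t_1(t_2+4a)/t_2$, etc., to show $K\bigl((p_1^{\pm})^2,(p_2^{\pm})^2,(p_3^{\pm})^2\bigr)=K(t_1,t_2,t_3)$, and conclude by an index-$8$ count against the order-$8$ group. The only notable difference is organizational: by exhibiting $K(x,y,z)=L_0(A,C,E)$ with $A^2,C^2,E^2\in L_0$ you bound $[K(x,y,z):L_0]\le 8$ from above directly, which in effect justifies the step the paper merely asserts, namely $[K(p_1,p_2,p_3):K(p_1^2,p_2^2,p_3^2)]=8$, and you also write out case (ii), which the paper dispatches as ``similar.''
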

\begin{proof}
Here we show only the case $K(x,y,z)^{\langle \ta_1,\la_1,-I_3^{(+1)}\rangle}$ and 
omit the cases $K(x,y,z)^{\langle \ta_1,\la_1,-I_3^{(-1)}\rangle}$ 
and $K(x,y,z)^{\langle\ta_1,\la_1,\rho\rangle}$ because one can prove them by a similar way. 
From the definition, we have $K(x,y,z)^{\langle \ta_1,\la_1,-I_3^{(+1)}\rangle}\supset 
K(p_1^+,p_2^+,p_3^+)$. 
Write $(p_1,p_2,p_3):=(p_1^+,p_2^+,p_3^+)$. 
Put $t_1=(x-\frac{a}{x})^2$, $t_2=(y-\frac{a}{y})^2$, $t_3=(z-\frac{a}{z})^2$, 
then we have $[K(x,y,z) : K(t_1,t_2,t_3)]=4^3=64$. 
On the other hand, we have 
\[
p_1^2=\frac{t_1(t_2+4a)}{t_2},\quad 
p_2^2=\frac{t_2(t_3+4a)}{t_3},\quad 
p_3^2=\frac{t_3(t_1+4a)}{t_1}, 
\]
from which we see $K(p_1^2,p_2^2,p_3^2)=K(t_1,t_2,t_3)$. 
Since $[K(p_1,p_2,p_3) : K(p_1^2,p_2^2,p_3^2)]=8$, we get 
$[K(x,y,z) : K(p_1,p_2,p_3)]=64/8=8$, and the assertion follows. 
\end{proof}
The action of $\cb$ on $K(p_1^\pm,p_2^\pm,p_3^\pm)$ is given by 
$\cb\,:\, p_1^\pm \mapsto p_2^\pm \mapsto p_3^\pm \mapsto p_1^\pm$. 
Hence we conclude that $K(x,y,z)^{G_{7,2,1}}$ $=$ 
$K(p_1^\pm,p_2^\pm,p_3^\pm)^{\langle \cb\rangle}$ is rational over $K$. 
\begin{remark}
We may use Lemma \ref{lemex2} to obtain another proof of Lemma \ref{lemp1} (i). 
It follows from Lemma \ref{lemex2} that 
\begin{align*}
K(x,y,z)^{\langle\ta_1,\la_1,-I_3^{(\pm 1)}\rangle}
=L(w_1,w_2,w_3)^{\langle\rho_a,\rho_{-1},-I_3^{(\pm 1)}\rangle}=K(p_1^\pm, p_2^\pm, p_3^\pm).
\end{align*}
Note that the actions of $\rho_a$ and $\rho_{-1}$ on $L(w_1,w_2,w_3)$ are 
given as in (\ref{actww}) and the action of $-I_3^{(\pm 1)}$ is given by 
\begin{align*}
-I_3^{(+1)} &: 
w_1\, \mapsto\, -\frac{w_3(w_2-1)}{w_2(w_1-w_3)},\ 
w_2\, \mapsto\, -\frac{w_2-w_3}{w_2(w_1-1)},\ 
w_3\, \mapsto\, \frac{w_1(w_2-1)(w_2-w_3)}{w_2(w_1-1)(w_1-w_3)},\\
-I_3^{(-1)} &: 
w_1\, \mapsto\, \frac{w_2(w_3-1)(w_1-w_3)}{w_3(w_2-1)(w_1-w_2)},\ 
w_2\, \mapsto\, \frac{w_1(w_3-1)}{w_3(w_1-w_2)},\ 
w_3\, \mapsto\, \frac{w_1-w_3}{(w_2-1)w_3}. 
\end{align*}
We also may get the transcendental basis $p_1^\pm,p_2^\pm,p_3^\pm$ of 
$K(x,y,z)^{\langle \ta_1,\la_1,-I_3^{(\pm 1)}\rangle}$ via $w_1,w_2,w_3$: 
\begin{align*}
p_1^+&=\frac{2\sqrt{a}\sqrt{-1}(w_3-w_1w_2)(w_2-w_1w_3)(w_1-w_2w_3)}
{(w_1w_2-w_1w_3-w_2w_3+w_1w_2w_3)(w_1w_2+w_1w_3-w_2w_3-w_1w_2w_3)},\\
p_2^+&=\frac{2\sqrt{a}\sqrt{-1}(w_1w_2-w_1w_3+w_2w_3-w_1w_2w_3)(w_1w_2+w_1w_3-w_2w_3-w_1w_2w_3)}
{w_1w_2w_3(1+w_1-w_2-w_3)(1-w_1+w_2-w_3)},\\
p_3^+&=\frac{2\sqrt{a}\sqrt{-1}w_1w_2w_3(1-w_1-w_2+w_3)(1-w_1+w_2-w_3)}
{(w_3-w_1w_2)(w_2-w_1w_3)(w_1-w_2w_3)},\\
p_1^-&=\frac{2\sqrt{a}(w_3-w_1w_2)(w_2-w_1w_3)(w_1-w_2w_3)}
{w_1w_2w_3(1+w_1-w_2-w_3)(1-w_1-w_2+w_3)},\\
p_2^-&=-\frac{2\sqrt{a}(w_1w_2-w_1w_3+w_2w_3-w_1w_2w_3)(w_1w_2+w_1w_3-w_2w_3-w_1w_2w_3)}
{(w_3-w_1w_2)(w_2-w_1w_3)(w_1-w_2w_3)},\\
p_3^-&=-\frac{2\sqrt{a}w_1w_2w_3(1+w_1-w_2-w_3)(1-w_1-w_2+w_3)}
{(w_1w_2-w_1w_3+w_2w_3-w_1w_2w_3)(w_1w_2-w_1w_3-w_2w_3+w_1w_2w_3)}.
\end{align*}
\end{remark}


%
%
\section{The case of $G_{7,j,2}$}\label{se72}

In this section, we consider the following five groups $G=G_{7,j,2}$, $1\leq j\leq 5$, 
which have a normal subgroup $\langle\ta_2,\la_2\rangle$: 
\begin{align*}
\hspace*{1.5cm}
G_{7,1,2}&=\langle \ta_2,\la_2,\cb\rangle&\hspace{-1cm} 
&\cong \mathcal{A}_4&\hspace{-1cm} 
&\cong (\mathcal{C}_2\times \mathcal{C}_2)\rtimes \mathcal{C}_3, \\
G_{7,2,2}&=\langle \ta_2,\la_2,\cb,-I_3\rangle&\hspace{-1cm} 
&\cong \mathcal{A}_4\times \mathcal{C}_2&\hspace{-1cm} 
&\cong (\mathcal{C}_2\times \mathcal{C}_2\times \mathcal{C}_2)\rtimes \mathcal{C}_3,\\
G_{7,3,2}&=\langle \ta_2,\la_2,\cb,-\be_2\rangle&\hspace{-1cm} 
&\cong \mathcal{S}_4&\hspace{-1cm} 
&\cong (\mathcal{C}_2\times \mathcal{C}_2)\rtimes \mathcal{S}_3, \\
G_{7,4,2}&=\langle \ta_2,\la_2,\cb,\be_2\rangle&\hspace{-1cm} 
&\cong \mathcal{S}_4&\hspace{-1cm} 
&\cong (\mathcal{C}_2\times \mathcal{C}_2)\rtimes \mathcal{S}_3, \\
G_{7,5,2}&=\langle \ta_2,\la_2,\cb,\be_2,-I_3\rangle&\hspace{-1cm} 
&\cong \mathcal{S}_4\times \mathcal{C}_2&\hspace{-1cm} 
&\cong (\mathcal{C}_2\times \mathcal{C}_2\times \mathcal{C}_2)\rtimes \mathcal{S}_3.
\end{align*}

The actions of $\ta_2$, $\la_2$, $\cb$, $-\be_2$, $\be_2$ and $-I_3$ on $K(x,y,z)$ are given by
\begin{align*}
\ta_2 &: x\ \mapsto\ \frac{ay}{z},\ y\ \mapsto\ \frac{bx}{z},\ z\ \mapsto\  \frac{c}{z},& 
\la_2 &: x\ \mapsto\ \frac{dz}{y},\ y\ \mapsto\ \frac{e}{y},\ z\ \mapsto\ \frac{fx}{y},\\
\cb &: x\ \mapsto\ gy,\ y\ \mapsto\  hz,\ z\ \mapsto\ ix,& 
-\be_2 &: x\ \mapsto\ \frac{jz}{x},\ y\ \mapsto\  \frac{kz}{y},\ z\ \mapsto\ lz,\\
\be_2 &: x\ \mapsto\ \frac{mx}{z},\ y\ \mapsto\ \frac{ny}{z},\ z\ \mapsto\  \frac{o}{z},& 
-I_3 &: x\ \mapsto\ \frac{p}{x},\ y\ \mapsto\ \frac{q}{y},\ z\ \mapsto\ \frac{r}{z}.
\end{align*}

We may assume that $g=h=i=1$ by replacing $(gy,ghz)$ by $(y,z)$ and the other coefficients.
By the equalities $\ta_2^2=\la_2^2=\cb^3=(-\be_2)^2=\be_2^2=(-I_3)^2=I_3$, 
we have $ab=c$, $df=e$, $l=1$ and $m^2=n^2=o$. 

By the relations of the generators of $G_{7,1,2}$ as in (\ref{relmat7}), 
we have $a=f$, $b=d=\ep a$, $c=e$ with $\ep=\pm 1$. 

By the relations of $-\be_2$, $\be_2$, $-I_3$ with the generators of $G_{7,1,2}$, we have 
$j=m=\ep k=a$, $nm=c$, $p=q=r=a^2$ so that the problem reduces to the following cases: 
\begin{align}
\ta_2 &: x\ \mapsto\ \frac{ay}{z},\ y\ \mapsto\ \frac{\ep ax}{z},\ 
z\ \mapsto\ \frac{\ep a^2}{z},& 
\la_2 &: x\ \mapsto\ \frac{\ep az}{y},\ y\ \mapsto\ \frac{\ep a^2}{y},\ 
z\ \mapsto\ \frac{ax}{y},\nonumber\\
\cb &: x\ \mapsto\ y,\ y\ \mapsto\  z,\ z\ \mapsto\ x,& 
\hspace*{-5mm}
-\be_2 &: x\ \mapsto\ \frac{az}{x},\ y\ \mapsto\ \frac{\ep az}{y},\ 
z\ \mapsto\ z,\label{act72}\\
\be_2 &: x\ \mapsto\ \frac{ax}{z},\ y\ \mapsto\ \frac{\ep ay}{z},\ 
z\ \mapsto\  \frac{a^2}{z},& 
-I_3 &: x\ \mapsto\ \frac{a^2}{x},\ y\ \mapsto\ \frac{a^2}{y},\ z\ \mapsto\ \frac{a^2}{z}\nonumber
\end{align}
where $a\in K^\times$ and $\ep=\pm 1$. 

\subsection{The case of $\ep=1$}

We treat the case where $\ep=1$ in this subsection. 
If we replace $(x/a,y/a,z/a)$ by $(x,y,z)$ then 
the actions of $\ta_2$, $\la_2$, $\cb$, $-\be_2$, $\be_2$ and $-I_3$ on $K(x,y,z)$ 
are given by 
\begin{align*}
\ta_2 &: x\ \mapsto\ \frac{y}{z},\ y\ \mapsto\ \frac{x}{z},\ z\ \mapsto\ \frac{1}{z},& 
\la_2 &: x\ \mapsto\ \frac{z}{y},\ y\ \mapsto\ \frac{1}{y},\ z\ \mapsto\ \frac{x}{y},\\
\cb &: x\ \mapsto\ y,\ y\ \mapsto\  z,\ z\ \mapsto\ x,& 
-\be_2 &: x\ \mapsto\ \frac{z}{x},\ y\ \mapsto\ \frac{z}{y},\ z\ \mapsto\ z,\\
\be_2 &: x\ \mapsto\ \frac{x}{z},\ y\ \mapsto\ \frac{y}{z},\ z\ \mapsto\  \frac{1}{z},& 
-I_3 &: x\ \mapsto\ \frac{1}{x},\ y\ \mapsto\ \frac{1}{y},\ z\ \mapsto\ \frac{1}{z}.
\end{align*}

Because the actions of $G_{7,j,2}$, $1\leq j\leq 5$, on $K(x,y,z)$ are $3$-dimensional 
purely monomial, it follows from Theorem \ref{thHKHR} that 
all of the fixed fields $K(x,y,z)^{G_{7,j,2}}$, $1\leq j\leq 5$, are rational over $K$. 

Indeed, using Theorem \ref{thmex}, we get explicit transcendental bases of the fixed 
fields over $K$ as follows: 
Put 
\[
X:=\frac{x+1}{y+z},\quad Y:=\frac{y+1}{x+z},\quad Z:=\frac{z+1}{x+y}. 
\]
Then $K(x,y,z)=K(X,Y,Z)$ because for any $a\in K^\times$, we have
\begin{align}
K(x,y,z)&=K(x+y,y+z,z+x)=K(x+y+z,y+z,z+x)\nonumber\\
&=K\Bigl(\frac{x+y+z}{x+y+z+a},\frac{y+z}{x+y+z+a},\frac{z+x}{x+y+z+a}\Bigr)\label{KxX}\\
&=K\Bigl(\frac{x+y}{x+y+z+a},\frac{y+z}{x+y+z+a},\frac{z+x}{x+y+z+a}\Bigr)\nonumber\\
&=K\Bigl(\frac{x+a}{y+z},\frac{y+a}{x+z},\frac{z+a}{x+y}\Bigr).\nonumber
\end{align}
The actions of $\ta_2$, $\la_2$ on $K(X,Y,Z)$ are given by 
\begin{align*}
\ta_2 &: X\ \mapsto\ \frac{1}{X},\ Y\ \mapsto\ \frac{1}{Y},\ Z\ \mapsto\ Z,\quad 
\la_2 : X\ \mapsto\ \frac{1}{X},\ Y\ \mapsto\ Y,\ Z\ \mapsto\ \frac{1}{Z}.
\end{align*}
We note that the action of $\langle\ta_2,\la_2\rangle$ on $K(X,Y,Z)$ is 
an affirmative case of $G_{3,1,1}\in\mathcal{N}$ (cf. Theorem \ref{thmex}). 
By Theorem \ref{thmex}, we have 
$K(x,y,z)^{\langle\ta_2,\la_2\rangle}=K(v_1,v_2,v_3)$ where 
\begin{align*}
v_1&:=\frac{2(-X+Y+Z-XYZ)}{1-XY-XZ+YZ}=\frac{x^2-y^2-z^2+1}{x-yz},\\
v_2&:=\cb(v_1)=\frac{-x^2+y^2-z^2+1}{y-xz},\quad 
v_3:=\cb^2(v_1)=\frac{-x^2-y^2+z^2+1}{z-xy}.
\end{align*}
Now we take 
\begin{align*}
u_1:=\frac{v_1+v_2}{v_3-2},\quad 
u_2:=\frac{v_2+v_3}{v_1-2},\quad 
u_3:=\frac{v_1+v_3}{v_2-2}.
\end{align*}
Then we see $K(u_1,u_2,u_3)=K(v_1,v_2,v_3)$ (cf. (\ref{KxX})). 
The generators $u_1,u_2,u_3$ are given in terms of $x$, $y$ and $z$ explicitly as follows: 
\begin{lemma}\label{lem72pp}
We have $K(x,y,z)^{\langle\ta_2,\la_2\rangle}=K(u_1,u_2,u_3)$ where 
\begin{align*}
u_1=\frac{(x+y)(z+1)(z-xy)}{(x-yz)(y-xz)},\quad 
u_2=\frac{(y+z)(x+1)(x-yz)}{(y-xz)(z-xy)},\quad 
u_3=\frac{(x+z)(y+1)(y-xz)}{(x-yz)(z-xy)}. 
\end{align*}
\end{lemma}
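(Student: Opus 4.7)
The plan is to recognize that almost all of the lemma is already established in the two paragraphs immediately preceding its statement: Theorem \ref{thmex} gives $K(x,y,z)^{\langle\ta_2,\la_2\rangle}=K(v_1,v_2,v_3)$, and specializing the chain of identities (\ref{KxX}) to the triple $(v_3,v_1,v_2)$ with parameter $a=-2$ (admissible since $\mathrm{char}\,K\neq 2$) yields
\[
K(v_1,v_2,v_3)\ =\ K\!\left(\frac{v_3-2}{v_1+v_2},\,\frac{v_1-2}{v_2+v_3},\,\frac{v_2-2}{v_3+v_1}\right)\ =\ K(u_1,u_2,u_3)
\]
after taking reciprocals. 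Thus the only content of the lemma is the verification that the three rational functions $u_1,u_2,u_3$ in $K(x,y,z)$ admit the closed forms displayed in the statement.

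Since $\cb:x\mapsto y\mapsto z\mapsto x$ acts on the $v_i$'s by $v_1\mapsto v_2\mapsto v_3\mapsto v_1$, the definitions give $u_2=\cb(u_1)$ and $u_3=\cb^2(u_1)$. Hence it suffices to verify the closed form for $u_1$; the formulas for $u_2$ and $u_3$ then follow by applying the cyclic substitution $x\mapsto y\mapsto z\mapsto x$ to the resulting expression, and a direct inspection confirms that this indeed reproduces the displayed formulas.

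To verify $u_1=(v_1+v_2)/(v_3-2)$ in closed form, I would first simplify the denominator by a difference-of-squares:
\[
v_3-2\ =\ \frac{(-x^2-y^2+z^2+1)-2(z-xy)}{z-xy}\ =\ \frac{(z-1)^2-(x-y)^2}{z-xy}.
\]
Next, bringing $v_1+v_2$ to the common denominator $(x-yz)(y-xz)$ produces a polynomial numerator of total degree four in $x,y,z$. Two features make it factor cleanly: the involution $x\leftrightarrow y$ sends $v_1$ to $v_2$, so the numerator vanishes at $x=-y$, forcing $(x+y)$ as a factor; and the resulting cubic equals $(z+1)\bigl((z-1)^2-(x-y)^2\bigr)$. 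Dividing $v_1+v_2$ by $v_3-2$ then cancels the quadratic factor $(z-1)^2-(x-y)^2$, leaving
\[
u_1\ =\ \frac{(x+y)(z+1)(z-xy)}{(x-yz)(y-xz)},
\]
exactly as claimed.

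The only obstacle is the factorization of the numerator of $v_1+v_2$ into $(x+y)(z+1)\bigl((z-1)^2-(x-y)^2\bigr)$. Given the $x\leftrightarrow y$ symmetry and the vanishing at $x=-y$, the linear factor $(x+y)$ is automatic; the remaining cubic in $x-y$ and $z$ is small enough to be factored by direct inspection, so the verification is mild and the proof is essentially bookkeeping.
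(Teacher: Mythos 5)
Your proof is correct and is essentially the paper's own (implicit) argument: Theorem \ref{thmex} gives $K(x,y,z)^{\langle\ta_2,\la_2\rangle}=K(v_1,v_2,v_3)$, the identity (\ref{KxX}) applied to $(v_1,v_2,v_3)$ with $a=-2$ (legitimate since char $K\neq 2$) gives $K(v_1,v_2,v_3)=K(u_1,u_2,u_3)$, and the closed forms follow from the factorization
$v_1+v_2=(x+y)(z+1)\bigl((z-1)^2-(x-y)^2\bigr)/\bigl((x-yz)(y-xz)\bigr)$ together with $v_3-2=\bigl((z-1)^2-(x-y)^2\bigr)/(z-xy)$ and $u_2=\cb(u_1)$, $u_3=\cb^2(u_1)$, exactly as you describe. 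One small caveat: the $x\leftrightarrow y$ symmetry of the numerator of $v_1+v_2$ does not by itself force vanishing at $x=-y$ (e.g.\ $x^2+y^2$ is symmetric but nonzero there); the vanishing does hold and is immediate once the numerator is written as $(x^2-y^2)(y-x)(1+z)+(1-z^2)(x+y)(1-z)$, so the factorization you assert is correct and nothing in the argument breaks.
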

The actions of $\cb$, $-\be_2$, $\be_2$ and $-I_3$ on $K(u_1,u_2,u_3)$ are given by 
\begin{align}
\cb &: u_1\ \mapsto\ u_2,\ u_2\ \mapsto\ u_3,\ u_3\ \mapsto\ u_1,& 
-\be_2 &: u_1\ \mapsto\ -u_1,\ u_2\ \mapsto\ -u_3,\ u_3\ \mapsto\ -u_2,\label{act72u}\\
\be_2&: u_1\ \mapsto\ u_1,\ u_2\ \mapsto\ u_3,\ u_3\ \mapsto\ u_2,& 
-I_3 &: u_1\ \mapsto\ -u_1,\ u_2\ \mapsto\ -u_2,\ u_3\ \mapsto\ -u_3. \nonumber
\end{align}
From this, we can easily get explicit transcendental basis of 
$K(x,y,z)^{G_{7,j,2}}$, $1\leq j\leq 5$, over $K$ for $\ep=1$. 
\begin{remark}
Hoshi-Rikuna \cite{HR08} solved the rationality problem of the group 
$G_{7,3,2}$ $=$ $\langle \ta_2,\la_2,\cb,-\be_2\rangle$ with $\ep=1$ 
which is the remaining case of purely monomial actions in Hajja-Kang's paper \cite{HK94}. 
Indeed the fact that $G_{7,3,2}\cong\mathcal{S}_4$ acts on 
$K(x,y,z)^{\langle\ta_2,\la_2\rangle}$ via the twisted $\mathcal{S}_3$-action 
as in Theorem \ref{thHK97} was given in \cite[page 1827]{HR08}. 
\end{remark}

\subsection{The case of $\ep=-1$}

We treat the case where $\ep=-1$ in this subsection. 
We consider a method to solve the rationality problem of $G_{7,j,2}$ over $K$ 
by applying results of $G_{7,j,1}$ as in Section \ref{se71}. 
We also give an another proof to the case $\ep=1$, 
although we treated it in the previous subsection. 

We assume that the action of $G_{7,j,1}$ on $K(y_1,y_2,y_3):=K(x,y,z)$ 
is given as in (\ref{act711}) and (\ref{act712}). 
For $1\leq j\leq 5$, the subgroups $G_{7,j,1}$ and $G_{7,j,2}$ of $\mathrm{GL}(3,\bQ)$ 
are conjugate in $\mathrm{GL}(3,\bQ)$ but not in $\mathrm{GL}(3,\bZ)$. 
Indeed we have 
\begin{align}
P^{-1}G_{7,j,1}P=G_{7,j,2},\quad (1\leq j\leq 5)\label{actPG}
\end{align}
where 
\[
P=\left[\begin{array}{ccc} 0 & 1 & 1\\ 1 & 0 & 1\\ 1 & 1 & 0\end{array}\right],\quad 
P^{-1}=\frac{1}{2}\left[\begin{array}{ccc} -1 & 1 & 1\\ 1 & -1 & 1\\ 1 & 1 & -1\end{array}\right].
\]
We put 
\[
x_j:=\prod_{i=1}^3 y_i^{p_{i,j}},\quad  (j=1,2,3)\quad \mathrm{where}\quad 
P=[p_{i,j}]_{1\leq i,j\leq 3}.
\] 

We assume that $G_{7,j,2}$ acts on $K(x_1,x_2,x_3):=K(x,y,z)$ as in (\ref{act72}). 
Then we have the following theorem:
\begin{theorem}\label{thrho}
Let $G_{7,j,1}$ $($resp. $G_{7,j,2})$ act on $K(y_1,y_2,y_3)$ $($resp. $K(x_1,x_2,x_3))$ 
by monomial actions as in $(\ref{act711})$ and $(\ref{act712})$ $($resp. $(\ref{act72}))$. 
Let $\rho$ be a $K$-automorphism on $K(y_1,y_2,y_3)$ defined by 
\[
\rho\ :\ y_1\ \mapsto\ -y_1,\ y_2\ \mapsto\ -y_2,\ y_3\ \mapsto\ -y_3. 
\]
Then we have\\
{\rm (i)} $K(y_1,y_2,y_3)^{\langle\rho\rangle}=K(x_1,x_2,x_3)$ and 
$[K(y_1,y_2,y_3):K(x_1,x_2,x_3)]=\mathrm{det}(P)=2$;\\
{\rm (ii)} under the assumption $\ep_1=1$ for $G_{7,j,1}$ and $\ep=1$ for $G_{7,j,2}$, 
the action of $G_{7,j,1}$ on $K(x_1,x_2,x_3)$ coincides with 
the action of $G_{7,j,2}$ and 
$K(x_1,x_2,x_3)^{G_{7,j,2}}=K(y_1,y_2,y_3)^{\langle G_{7,j,1},\rho\rangle}$ 
for $1\leq j\leq 5$;\\
{\rm (iii)} the action of $G_{7,1,1}$ with $\ep_1=-1$ on $K(x_1,x_2,x_3)$ coincides with 
the action of $G_{7,1,2}$ with $\ep=-1$ and $K(x_1,x_2,x_3)^{\langle\ta_2,\la_2\rangle}
=K(y_1,y_2,y_3)^{\langle \ta_1,\la_1,\rho\rangle}$.
\end{theorem}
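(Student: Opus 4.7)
The plan is to prove \textbf{(i)} as a structural fact about the integer matrix $P$ and then derive \textbf{(ii)} and \textbf{(iii)} by direct verification that each generator of $G_{7,j,1}$ induces on the subfield $K(x_1,x_2,x_3)$ exactly the matching generator of $G_{7,j,2}$. The key observation is that the three columns of $P$ each have entry-sum equal to $2$, so every $x_j$ is a monomial of total even degree in the $y_i$'s and is therefore fixed by $\rho$.

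For \textbf{(i)}, I would first write out $x_1 = y_2 y_3$, $x_2 = y_1 y_3$, $x_3 = y_1 y_2$ explicitly from the definition, so $\rho(x_j) = x_j$ gives $K(x_1,x_2,x_3) \subset K(y_1,y_2,y_3)^{\langle \rho \rangle}$. The identity
\[
x_1 x_2 x_3 \,=\, (y_1 y_2 y_3)^2
\]
combined with $y_j = (y_1 y_2 y_3)/x_j$ shows that $K(y_1,y_2,y_3) = K(x_1,x_2,x_3)\bigl(\sqrt{x_1 x_2 x_3}\bigr)$. Since $x_1, x_2, x_3$ are algebraically independent over $K$ (the $y_i$ are, and they lie in a finite extension of $K(x_1,x_2,x_3)$), the element $x_1 x_2 x_3$ is not a square in $K(x_1,x_2,x_3)$, so the extension has degree exactly $2 = \det P$, and $\rho$ is the non-trivial automorphism of this quadratic extension. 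This proves (i).

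For \textbf{(ii)}, I would verify for each generator listed in (\ref{act711}) and (\ref{act712}) (with $\ep_1 = 1$) that its action on the $x_j$'s matches the corresponding generator of $G_{7,j,2}$ as in (\ref{act72}) (with $\ep = 1$). For example,
\[
\ta_1(x_1) = \tfrac{a}{y_2}\cdot y_3 = \tfrac{ax_2}{x_3} = \ta_2(x_1),\qquad
\ta_1(x_3) = \tfrac{a}{y_1}\cdot \tfrac{a}{y_2} = \tfrac{a^2}{x_3} = \ta_2(x_3),
\]
and similarly for $\la_1 \leftrightarrow \la_2$, $\cb \leftrightarrow \cb$, $\pm\be_1 \leftrightarrow \pm\be_2$, $-I_3 \leftrightarrow -I_3$. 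Once the actions are shown to coincide on $K(x_1,x_2,x_3)$, the identity $K(x_1,x_2,x_3)^{G_{7,j,2}} = K(y_1,y_2,y_3)^{\langle G_{7,j,1},\rho\rangle}$ follows by taking fixed fields and using (i), together with the easy check that $\rho$ commutes with every generator of $G_{7,j,1}$ on $K(y_1,y_2,y_3)$ (all generators are monomial and $\rho$ is the center of the group of sign-changes).

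Part \textbf{(iii)} is analogous: I check that with $\ep_1 = -1$, the automorphisms $\ta_1, \la_1$ of $K(y_1,y_2,y_3)$ restrict on $K(x_1,x_2,x_3)$ to $\ta_2, \la_2$ with $\ep = -1$, e.g. $\ta_1(x_3) = (a/y_1)(-a/y_2) = -a^2/x_3$. Since $\cb$ visibly restricts to $\cb$ in both parametrizations, this also gives the claim about the action of all of $G_{7,1,1}$ matching $G_{7,1,2}$ when $\ep_1 = \ep = -1$; the fixed-field identification $K(x_1,x_2,x_3)^{\langle \ta_2,\la_2\rangle} = K(y_1,y_2,y_3)^{\langle \ta_1,\la_1,\rho\rangle}$ is then immediate from (i). The only step requiring any thought is the degree computation in (i); the rest is bookkeeping, so no serious obstacle is expected.
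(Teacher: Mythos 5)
Your proof is correct and follows essentially the same route as the paper: identify $(x_1,x_2,x_3)=(y_2y_3,y_1y_3,y_1y_2)$, verify generator by generator that the induced action on $K(x_1,x_2,x_3)$ is that of $G_{7,j,2}$, and use the commutation of $\rho$ with $G_{7,j,1}$ to pass to fixed fields; your degree-$2$ argument for (i), via $K(y_1,y_2,y_3)=K(x_1,x_2,x_3)(y_1y_2y_3)$ with $(y_1y_2y_3)^2=x_1x_2x_3$ not a square, is a welcome expansion of what the paper dismisses as ``evident.'' One caveat: the parenthetical reason you give for $\rho$ commuting with the generators (``all generators are monomial and $\rho$ is the center of the group of sign-changes'') is not a valid general principle --- a monomial automorphism with exponent matrix $A$ commutes with $\rho$ if and only if every column sum of $A$ is odd (for instance $y_1\mapsto y_1y_2$, $y_2\mapsto y_2$, $y_3\mapsto y_3$ is monomial yet fails to commute with $\rho$); this parity condition does hold for $\ta_1,\la_1,\cb,\pm\be_1,-I_3$, so your conclusion stands, but the commutation must actually be checked rather than deduced from centrality among sign-changes.
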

\begin{proof}
By the definition, we have $(x_1,x_2,x_3)=(y_2y_3,y_1y_3,y_1y_2)$. 
Then (i) is evident.\\
From (\ref{actPG}), the first assertion (ii) follows if the actions of 
$G_{7,j,1}$ and $G_{7,j,2}$ are purely monomial. 
For the other cases, we can check the assertion by direct calculation. \\
Because we have $\rho\,\sigma=\sigma\rho$ for any $\sigma\in G_{7,j,1}$, 
we see $K(y_1,y_2,y_3)^{\langle G_{7,j,1},\rho\rangle}
=(K(y_1,y_2,y_3)^{\langle\rho\rangle})^{G_{7,j,1}}=K(x_1,x_2,x_3)^{G_{7,j,2}}$. 
\end{proof}
By Theorem \ref{thrho}, we should study the fixed field 
$K(y_1,y_2,y_3)^{\langle \ta_1,\la_1,\rho\rangle}$. 
Using results in the previous section, we obtain the following lemma: 
\begin{lemma}\label{lemv0}
We have 
\[
K(y_1,y_2,y_3)^{\langle\ta_1,\la_1,\rho\rangle}=K(u_1,u_2,u_3)
\]
where 
\begin{align*}
&\begin{cases}
u_1=\displaystyle{\frac{(a(-y_1+y_2+y_3)-y_1y_2y_3)(a(y_1-y_2+y_3)-y_1y_2y_3)}
{(a-y_1y_2-y_1y_3+y_2y_3)(a-y_1y_2+y_1y_3-y_2y_3)}},\ \mathrm{if}\ \ep_1=1,\vspace*{1mm}\\
u_1=\displaystyle{\frac{y_2(y_1^2+a)}{y_1(y_2^2-a)}},\ \mathrm{if}\ \ep_1=-1,
\end{cases}\\
&u_2=\cb(u_1),\quad u_3=\cb^2(u_1)
\end{align*}
and $\ep_1$ is given as in $(\ref{act711})$. 
\end{lemma}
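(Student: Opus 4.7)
The plan is to treat the two cases $\ep_1 = 1$ and $\ep_1 = -1$ separately, in each case reducing the claim to a previously established result. For $\ep_1 = 1$, the actions of $\ta_1$ and $\la_1$ on $K(y_1,y_2,y_3)$ coincide exactly with those in Theorem \ref{thmex}, so $K(y_1,y_2,y_3)^{\langle\ta_1,\la_1\rangle} = K(v_1,v_2,v_3)$ with $v_1,v_2,v_3$ the invariants constructed there. I would first substitute $y_i \mapsto -y_i$ in the formula for each $v_i$ and observe that the numerator picks up a global sign while the denominator is unchanged, so $\rho(v_i) = -v_i$. It then follows by a standard argument (the three products $v_1v_2$, $v_2v_3$, $v_1v_3$ are invariant; each $v_i^2$ lies in the subfield they generate because, e.g., $v_1^2 = (v_1v_2)(v_1v_3)/(v_2v_3)$; and $[K(v_1,v_2,v_3):K(v_1v_2,v_2v_3,v_1v_3)] = 2$) that $K(v_1,v_2,v_3)^{\langle\rho\rangle} = K(v_1v_2,v_2v_3,v_1v_3)$. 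Comparing formulas, the stated $u_1$ is literally $v_1 v_2$; since $\cb$ acts on the $v_i$ as the cyclic permutation $v_1 \mapsto v_2 \mapsto v_3 \mapsto v_1$ (as established in Subsection \ref{subse71p}), I obtain $u_2 = \cb(u_1) = v_2 v_3$ and $u_3 = \cb^2(u_1) = v_3 v_1$, giving the claim.

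For $\ep_1 = -1$, the action of $\langle\ta_1,\la_1,\rho\rangle$ on $K(y_1,y_2,y_3)$ coincides with the action treated in Lemma \ref{lemp1}, which was set up under precisely the hypothesis $\ep_1 = -1$. Lemma \ref{lemp1}~(ii) already asserts
\[
K(y_1,y_2,y_3)^{\langle\ta_1,\la_1,\rho\rangle} = K(p_1^+/p_1^-,\, p_2^+/p_2^-,\, p_3^+/p_3^-).
\]
A direct simplification yields $p_1^+/p_1^- = y_2(y_1^2+a)/(y_1(y_2^2-a))$, which is exactly the $u_1$ in the lemma's statement; cyclic symmetry then identifies $p_2^+/p_2^-$ and $p_3^+/p_3^-$ with $u_2 = \cb(u_1)$ and $u_3 = \cb^2(u_1)$, completing this case.

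There is no real obstacle: once the two reductions to Theorem \ref{thmex} and Lemma \ref{lemp1}~(ii) are in place, everything is an explicit matching of closed-form expressions. The one point requiring a moment of care is that the $\cb$-action on each of the previously computed bases $(v_1,v_2,v_3)$ and $(p_1^\pm,p_2^\pm,p_3^\pm)$ is a genuine cyclic permutation, which is exactly what makes the cyclic definition $u_j = \cb^{j-1}(u_1)$ in the statement consistent with the two different closed-form expressions for $u_1$ arising in the two cases.
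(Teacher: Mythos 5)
Your proof is correct and follows essentially the same route as the paper: for $\ep_1=1$ the paper likewise invokes the basis $v_1,v_2,v_3$ of Theorem \ref{thmex}, observes $\rho(v_i)=-v_i$, and concludes that the fixed field is $K(v_1v_2,v_2v_3,v_3v_1)$; for $\ep_1=-1$ it simply cites Lemma \ref{lemp1}~(ii). The only difference is that you spell out the degree-$2$ argument and the simplification $p_1^+/p_1^-=y_2(y_1^2+a)/(y_1(y_2^2-a))$, which the paper leaves implicit.
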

\begin{proof}
When $\ep_1=1$, we see $K(y_1,y_2,y_3)^{\langle\ta_1,\la_1\rangle}=K(v_1,v_2,v_3)$ 
where $v_1,v_2,v_3$ are given by (\ref{defvv}) in Subsection \ref{subse71p}. 
The action of $\rho$ on $K(v_1,v_2,v_3)$ is given by $\rho:v_i\mapsto -v_i$, $(i=1,2,3)$ 
and hence $K(y_1,y_2,y_3)^{\langle\ta_1,\la_1,\rho\rangle}=K(v_1v_2,v_2v_3,v_3v_1)
=K(u_1,u_2,u_3)$. 

When $\ep_1=-1$, the assertion follows from Lemma \ref{lemp1} (ii). 
\end{proof}
\begin{lemma}\label{lemv1}
We have 
\begin{align*}
K(x_1,x_2,x_3)^{\langle \ta_2,\la_2\rangle}=K(u_1,u_2,u_3)
\end{align*}
where 
\begin{align*}
&\begin{cases}
u_1=\displaystyle{\frac{(a(x_1x_2+x_1x_3-x_2x_3)-x_1x_2x_3)(a(x_1x_2-x_1x_3+x_2x_3)-x_1x_2x_3)}
{x_1x_2x_3(a+x_1-x_2-x_3)(a-x_1+x_2-x_3)}},\ \mathrm{if}\ \ep=1,\vspace*{1mm}\\
u_1=\displaystyle{\frac{x_2x_3+ax_1}{x_1x_3-ax_2}},\ \mathrm{if}\ \ep=-1,
\end{cases}\\
&u_2=\cb(u_1),\quad u_3=\cb^2(u_1)
\end{align*}
and $\ep$ is given as in $(\ref{act72})$. 

\end{lemma}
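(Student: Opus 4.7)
The plan is to derive Lemma \ref{lemv1} from Lemma \ref{lemv0} using the bridge supplied by Theorem \ref{thrho}. First I would note that $P^{-1}\langle\ta_1,\la_1\rangle P=\langle\ta_2,\la_2\rangle$ (so the subgroup structure matches the conjugation (\ref{actPG})) and that the scalar involution $\rho$ commutes with $\ta_1$ and $\la_1$. Combining Theorem \ref{thrho} (i) with part (ii) in the case $\ep_1=\ep=1$, and with part (iii) in the case $\ep_1=\ep=-1$, each applied to the normal subgroup $\langle\ta_1,\la_1\rangle\subset G_{7,j,1}$ (the argument for the full group carries over verbatim since it uses only that the two actions on $K(x_1,x_2,x_3)$ coincide), yields the chain
\[
K(x_1,x_2,x_3)^{\langle\ta_2,\la_2\rangle}
=K(x_1,x_2,x_3)^{\langle\ta_1,\la_1\rangle}
=\bigl(K(y_1,y_2,y_3)^{\langle\rho\rangle}\bigr)^{\langle\ta_1,\la_1\rangle}
=K(y_1,y_2,y_3)^{\langle\ta_1,\la_1,\rho\rangle}.
\]
By Lemma \ref{lemv0} this equals $K(u_1,u_2,u_3)$, with the $u_i$ written explicitly in $y_1,y_2,y_3$.

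The remaining task is to rewrite each such $u_i$ as a rational function in $x_1,x_2,x_3$, using
\[
y_1y_2=x_3,\quad y_2y_3=x_1,\quad y_1y_3=x_2,\quad y_i^2=\frac{x_jx_k}{x_i}\ \ (\{i,j,k\}=\{1,2,3\}),
\]
so in particular $y_2/y_1=x_1/x_2$ and $(y_1y_2y_3)^2=x_1x_2x_3$. In the case $\ep_1=-1$, this substitution transforms $u_1=y_2(y_1^2+a)/[y_1(y_2^2-a)]$ directly into $(x_2x_3+ax_1)/(x_1x_3-ax_2)$, matching the claimed $u_1$; the formulas for $u_2,u_3$ then follow by applying $\cb$. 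In the case $\ep_1=1$, write $s=y_1y_2y_3$, $A=-y_1+y_2+y_3$, $B=y_1-y_2+y_3$ and expand the numerator as $a^2AB-as(A+B)+s^2$. The $\rho$-invariance forces all odd contributions of $s$ to disappear: $s(A+B)=2y_1y_2y_3^2=2x_1x_2$ lies in $K(x_1,x_2,x_3)$, $s^2=x_1x_2x_3$, and $AB=y_3^2-(y_1-y_2)^2$ is manifestly even in each $y_i$. After clearing a common factor of $x_1x_2x_3$, the resulting rational function is the claimed $u_1$; again, $u_2,u_3$ follow by applying $\cb$.

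The main obstacle is not conceptual but bookkeeping: verifying that the expanded polynomial in the $\ep_1=1$ case factors exactly as $(a(x_1x_2+x_1x_3-x_2x_3)-x_1x_2x_3)(a(x_1x_2-x_1x_3+x_2x_3)-x_1x_2x_3)$ over $x_1x_2x_3(a+x_1-x_2-x_3)(a-x_1+x_2-x_3)$, rather than just as an unfactored rational identity. Since Lemma \ref{lemv0} already guarantees that the value lies in $K(x_1,x_2,x_3)$, this is a direct algebraic check and no further ingredient is needed.
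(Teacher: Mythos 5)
Your proposal is correct and takes essentially the same route as the paper: both deduce $K(x_1,x_2,x_3)^{\langle\ta_2,\la_2\rangle}=K(y_1,y_2,y_3)^{\langle\ta_1,\la_1,\rho\rangle}=K(u_1,u_2,u_3)$ from Theorem \ref{thrho} and Lemma \ref{lemv0}, and then rewrite the generators via $(x_1,x_2,x_3)=(y_2y_3,y_1y_3,y_1y_2)$. Your substitution details (including the identities $s(A+B)=2x_1x_2$, $s^2=x_1x_2x_3$, $AB=y_3^2-(y_1-y_2)^2$ and the resulting factorization) are accurate and simply make explicit the computation the paper leaves to the reader.
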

\begin{proof}
By Theorem \ref{thrho} and Lemma \ref{lemv0}, we get 
$K(x_1,x_2,x_3)^{\langle \ta_2,\la_2\rangle}=
K(y_1,y_2,y_3)^{\langle\ta_1,\la_1,\rho\rangle}=K(u_1,u_2,u_3)$. 
By the definition $(x_1,x_2,x_3)=(y_2y_3,y_1y_3,y_1y_2)$, 
the $u_1$ in Lemma \ref{lemv0} is expressed in terms of $x_1$, $x_2$, $x_3$ as given 
in Lemma \ref{lemv1}. 
\end{proof}
%

When $\ep=1$, the actions of $\cb$, $-\be_2$, $\be_2$ and $-I_3$ on 
$K(x,y,z)^{\langle\ta_2,\la_2\rangle}=K(u_1,u_2,u_3)$ are given by 
\begin{align*}
\cb &: u_1\ \mapsto\ u_2,\ u_2\ \mapsto u_3,\ u_3\ \mapsto\ u_1,& 
-\be_2 &: u_1\ \mapsto\ \frac{a^2}{u_1},\ u_2\ \mapsto\ \frac{a^2}{u_3},\ 
u_3\ \mapsto\ \frac{a^2}{u_2},\\
\be_2 &: u_1\ \mapsto\ u_1,\ u_2\ \mapsto\ u_3,\ u_3\ \mapsto\ u_2,& 
-I_3 & : u_1\ \mapsto\ \frac{a^2}{u_1},\ u_2\ \mapsto\ \frac{a^2}{u_2},\ 
u_3\ \mapsto\ \frac{a^2}{u_3}.
\end{align*}
Therefore if we put 
\[
v_1:=\frac{u_1+a}{u_1-a},\quad v_2:=\frac{u_2+a}{u_2-a},\quad 
v_3:=\frac{u_3+a}{u_3-a}
\]
then $K(u_1,u_2,u_3)=K(v_1,v_2,v_3)$ and the actions of 
$G_{7,j,2}$, $1\leq j\leq 5$, on $K(x,y,z)^{\langle\ta_2,\la_2\rangle}=K(v_1,v_2,v_3)$ 
are given as the same as in (\ref{act72u}). 
Then we can confirm that the fixed fields $K(x,y,z)^{G_{7,j,2}}$, $1\leq j\leq 5$, 
are rational over $K$. 
Note that 
\[
v_1=\frac{(xy+az)(xz-ay)(yz-ax)}{(xy-az)(a^2xy+ax^2z-4axyz+ay^2z+xyz^2)}
\]
(cf. Lemma \ref{lem72pp}). 
Though the transcendental basis of $K(x,y,z)^{\langle\ta_2,\la_2\rangle}$ over $K$ 
obtained here is different from the one obtained in Lemma \ref{lem72pp}, the purely monomial 
actions of $G_{7,j,2}$ act in a same way for both of them. 

When $\ep=-1$, the actions of $\cb$, $-\be_2$, $\be_2$ and $-I_3$ on 
$K(x,y,z)^{\langle\ta_2,\la_2\rangle}=K(u_1,u_2,u_3)$ are given by 
\begin{align*}
\cb &: u_1\ \mapsto\ u_2,\ u_2\ \mapsto u_3,\ u_3\ \mapsto\ u_1,& 
-\be_2 &: u_1\ \mapsto\ \frac{-1}{u_2},\ u_2\ \mapsto\ \frac{-1}{u_1},\ 
u_3\ \mapsto\ \frac{-1}{u_3},\\
\be_2 &: u_1\ \mapsto\ \frac{1}{u_2},\ u_2\ \mapsto\ \frac{1}{u_1},\ 
u_3\ \mapsto\ \frac{1}{u_3},& 
-I_3 & : u_1\ \mapsto\ -u_1,\ u_2\ \mapsto -u_2,\ u_3\ \mapsto\ -u_3.
\end{align*}
By Lemma \ref{lemMas}, $K(x,y,z)^{G_{7,1,2}}=K(u_1,u_2,u_3)^{\langle \cb\rangle}$ 
is rational over $K$. 
The groups $G_{7,3,2}$ and $G_{7,4,2}$ act on $K(u_1,u_2,u_3)$ by 
the twisted $\mathcal{S}_3$-action as in Theorem \ref{thHK1}. 
Hence $K(x,y,z)^{G_{7,3,2}}=K(u_1,u_2,u_3)^{\langle \cb,-\be_2\rangle}$ 
and $K(x,y,z)^{G_{7,4,2}}=K(u_1,u_2,u_3)^{\langle \cb,\be_2\rangle}$ are rational 
over $K$ (see Theorem \ref{thHK1} or Section \ref{se5B}). 
By a result of Section \ref{se5B}, we can also get explicit transcendental bases 
of the fixed fields over $K$. 
For $G_{7,2,2}=\langle \ta_2,\la_2,\cb,-I_3\rangle$ and 
$G_{7,5,2}=\langle \ta_2,\la_2,\cb,\be_2,-I_3\rangle$, 
we see 
\begin{align*}
K(x,y,z)^{\langle \ta_2,\la_2,-I_3\rangle}&=K(u_1,u_2,u_3)^{\langle -I_3\rangle}
=K(t_1,t_2,t_3)
\end{align*}
where
\begin{align*}
t_1:=u_2u_3,\quad t_2:=u_3u_1,\quad t_3:=u_1u_2. 
\end{align*}

The actions of $\cb$ and $\be_2$ on $K(t_1,t_2,t_3)$ are given by 
\begin{align*}
\cb &: t_1\ \mapsto\ t_2,\ t_2\ \mapsto\ t_3,\ t_3\ \mapsto\ t_1,& 
\be_2 &: t_1\ \mapsto\ \frac{1}{t_2},\ t_2\ \mapsto\ \frac{1}{t_1},\ 
t_3\ \mapsto\ \frac{1}{t_3}.
\end{align*}

We now see $K(x,y,z)^{G_{7,2,2}}=K(t_1,t_2,t_3)^{\langle \cb\rangle}$ is rational over $K$ 
by Lemma \ref{lemMas}. 
The group $G_{7,5,2}$ acts on $K(t_1,t_2,t_3)$ by the twisted $\mathcal{S}_3$-action. 
Thus it follows from Theorem \ref{thHK97} that 
$K(x,y,z)^{G_{7,5,2}}=K(t_1,t_2,t_3)^{\langle \cb,\be_2\rangle}$ is rational over $K$. 

\begin{remark}
Conversely, the monomial action of $G_{7,1,2}$ on $K(y_1,y_2,y_3)$ leads to the monomial 
action of $G_{7,1,1}$ on $K(x_1,x_2,x_3)$ by putting 
$(x_1,x_2,x_3):=(y_2y_3/y_1,y_1y_3/y_2,y_1y_2/y_3)$. 
However both of $\ep=\pm 1$ for $G_{7,1,2}$ lead to the purely monomial action of 
$\langle\ta_1,\la_1\rangle$, and non-purely monomial action of 
$\langle\ta_1,\la_1\rangle$ does not appear by this method. 
In many cases, the method used in this subsection is useful to investigate purely 
monomial actions, but not so useful for non-purely monomial actions. 
\end{remark}
\begin{example}
We give an application of the result in this section to linear Noether's problem 
over $\bQ$ (see Corollary \ref{cor1}). 
The groups $H:=\mathrm{SL}(2,\mathbb{F}_3)$ and 
$G:=\mathrm{GL}(2,\mathbb{F}_3)$ have a monomial representation $\rho$ 
in $\mathrm{GL}(4,\mathbb{Q})$. 
Indeed we have 
\[
\rho(H)=\langle A,B,C\rangle,\quad 
\rho(G)=\langle A,B,C,D\rangle
\]
where
\begin{center}
$A=${\tiny $\left[\begin{array}{cccc} 
0 & 1 & 0 & 0\\ -1 & 0 & 0 & 0\\
0 & 0 & 0 & 1\\ 0 & 0 & -1 & 0\end{array}\right]$},\quad
$B=${\tiny $\left[\begin{array}{cccc} 
0 & 0 & 1 & 0\\ 0 & 0 & 0 & -1\\
-1 & 0 & 0 & 0\\ 0 & 1 & 0 & 0\end{array}\right]$},\quad
$C=${\tiny $\left[\begin{array}{cccc} 
0 & 0 & 1 & 0\\ -1 & 0 & 0 & 0\\
0 & -1 & 0 & 0\\ 0 & 0 & 0 & 1\end{array}\right]$},\quad
$D=${\tiny $\left[\begin{array}{cccc} 
1 & 0 & 0 & 0\\ 0 & -1 & 0 & 0\\
0 & 0 & 0 & 1\\ 0 & 0 & 1 & 0\end{array}\right]$}.
\end{center}
The linear actions of $H$ and of $G$ on 
$\mathbb{Q}(V)=\mathbb{Q}(w_1,w_2,w_3,w_4)$ are given by 
\begin{align*}
A &: w_1\mapsto -w_2\mapsto -w_1\mapsto w_2\mapsto w_1,\, 
w_3\mapsto -w_4\mapsto -w_3\mapsto w_4\mapsto w_3,\\
B &: w_1\mapsto -w_3\mapsto -w_1\mapsto w_3\mapsto w_1,\, 
w_2\mapsto w_4\mapsto -w_2\mapsto -w_4\mapsto w_2,\\
C &: w_1 \mapsto -w_2\mapsto w_3\mapsto w_1,\, w_4 \mapsto w_4,\quad 
D : w_1 \mapsto w_1,\, w_2\mapsto -w_2,\, w_3\leftrightarrow w_4.
\end{align*}
Then the induced linear actions of $H$ and of $G$ 
on $\mathbb{Q}(V)_0=\mathbb{Q}(x,y,z)$ with $x=w_1/w_4$, $y=-w_2/w_4$, $z=w_3/w_4$ 
coincide with the monomial actions of $G_{7,1,2}$ and of $G_{7,4,2}$ with $\ep=-1$ and $a=1$ 
as in (\ref{act72}) respectively. 
Hence we get another proof of the results of Rikuna \cite{Rik} and Plans \cite{Pla07} 
which claim that $\mathbb{Q}(V)_0^H$, $\mathbb{Q}(V)^H$, 
$\mathbb{Q}(V)_0^G$ and $\mathbb{Q}(V)^G$ 
are rational over $\mathbb{Q}$. 
Indeed Plans showed that the action of $G$ on 
$\mathbb{Q}(V)_0^{\langle A,B\rangle}$ is the twisted action of $\mathcal{S}_3$ 
as in Theorem \ref{thHK97}. 
\end{example}
%


\section{The case of $G_{7,j,3}$}\label{se73}

In this section, we treat the following five groups $G=G_{7,j,3}$, $1\leq j\leq 5$, 
which have a normal subgroup $\langle\ta_3,\la_3\rangle$: 
\begin{align*}
\hspace*{1.5cm}
G_{7,1,3}&=\langle \ta_3,\la_3,\cb\rangle&\hspace{-1cm} 
&\cong \mathcal{A}_4&\hspace{-1cm} 
&\cong (\mathcal{C}_2\times \mathcal{C}_2)\rtimes \mathcal{C}_3, \\
G_{7,2,3}&=\langle \ta_3,\la_3,\cb,-I_3\rangle&\hspace{-1cm} 
&\cong \mathcal{A}_4\times \mathcal{C}_2&\hspace{-1cm} 
&\cong (\mathcal{C}_2\times \mathcal{C}_2\times \mathcal{C}_2)\rtimes \mathcal{C}_3,\\
G_{7,3,3}&=\langle \ta_3,\la_3,\cb,-\be_3\rangle&\hspace{-1cm} 
&\cong \mathcal{S}_4&\hspace{-1cm} 
&\cong (\mathcal{C}_2\times \mathcal{C}_2)\rtimes \mathcal{S}_3, \\
G_{7,4,3}&=\langle \ta_3,\la_3,\cb,\be_3\rangle&\hspace{-1cm} 
&\cong \mathcal{S}_4&\hspace{-1cm} 
&\cong (\mathcal{C}_2\times \mathcal{C}_2)\rtimes \mathcal{S}_3, \\
G_{7,5,3}&=\langle \ta_3,\la_3,\cb,\be_3,-I_3\rangle&\hspace{-1cm} 
&\cong \mathcal{S}_4\times \mathcal{C}_2&\hspace{-1cm} 
&\cong (\mathcal{C}_2\times \mathcal{C}_2\times \mathcal{C}_2)\rtimes \mathcal{S}_3.
\end{align*}

The actions of $\ta_3$, $\la_3$, $\cb$, $-\be_3$, $\be_3$ and $-I_3$ on $K(x,y,z)$ 
are given by
\begin{align*}
\ta_3 &: x\ \mapsto\ ay,\ y\ \mapsto\ bx,\ z\ \mapsto\ \frac{c}{xyz},& 
\la_3 &: x\ \mapsto\ dz,\ y\ \mapsto\ \frac{e}{xyz},\ z\ \mapsto\ fx,\\
\cb &: x\ \mapsto\ gy,\ y\ \mapsto\  hz,\ z\ \mapsto\ ix,& 
-\be_3 &: x\ \mapsto\ \frac{j}{x},\ y\ \mapsto\  \frac{k}{y},\ z\ \mapsto\ lxyz,\\
\be_3 &: x\ \mapsto\ mx,\ y\ \mapsto\ ny,\ z\ \mapsto\ \frac{o}{xyz},& 
-I_3 &: x\ \mapsto\ \frac{p}{x},\ y\ \mapsto\ \frac{q}{y},\ z\ \mapsto\ \frac{r}{z}.
\end{align*}

We may assume that $g=h=i=1$ by replacing $(gy,ghz)$ by $(y,z)$ and the other coefficients.
By the equalities $\ta_3^2=\la_3^2=\cb^3=(-\be_3)^2=\be_3^2=(-I_3)^2=I_3$, 
we see $ab=df=jkl^2=m^2=n^2=1$. 

By the relations of the generators of $G_{7,1,3}$ as in (\ref{relmat7}), 
we have $a=f=1$, $c=e$. 

By the relations of $-\be_3$, $\be_3$ and $-I_3$ with the generators of $G_{7,1,3}$, 
we have $j=k=lc$, $mn=1$, $mo=c$, $p=q=r$, $c^2=p^4$. 
So that the problem reduces to the following cases: 
\begin{align*}
\ta_3 &: x\ \mapsto\ y,\ y\ \mapsto\ x,\ z\ \mapsto\ \frac{c}{xyz},&
\la_3 &: x\ \mapsto\ z,\ y\ \mapsto\ \frac{c}{xyz},\ z\ \mapsto\ x,\\
\cb &: x\ \mapsto\  y,\ y\ \mapsto\  z,\ z\ \mapsto\ x,& 
-\be_3 &: x\ \mapsto\ \frac{j}{x},\ y\ \mapsto\  \frac{j}{y},\ 
z\ \mapsto\ \frac{\ep_1 xyz}{j},\\
\be_3 &: x\ \mapsto\ \ep_2 x,\ y\ \mapsto\ \ep_2 y,\ z\ \mapsto\ \frac{\ep_2 c}{xyz},& 
-I_3 &: x\ \mapsto\ \frac{r}{x},\ y\ \mapsto\ \frac{r}{y},\ z\ \mapsto\ \frac{r}{z}
\end{align*}
where $c,j,r\in K^\times$ and $\ep_1,\ep_2=\pm 1$. 
We have $c=\ep_1 j^2$ for $G=G_{7,3,3}$ and $G_{7,5,3}$, 
$c=\ep_3r^2$ for $G=G_{7,2,3}$ and $G_{7,5,3}$, 
and $j=\ep_2r$, $\ep_1=\ep_3$ for $G=G_{7,5,3}$. 
Put 
\begin{align*}
w:=\frac{c}{xyz}. 
\end{align*}
Then $K(x,y,z)=K(x,y,z,w)$ with $xyzw=c$, and the actions of $\ta_3$, $\la_3$, $\cb$, $-\be_3$, 
$\be_3$ and $-I_3$ on $K(x,y,z,w)$ are: 
\begin{align*}
\ta_3 &: x\ \mapsto\ y,\ y\ \mapsto\ x,\ z\ \mapsto\ w,\ w\ \mapsto\ z,\\
\la_3 &: x\ \mapsto\ z,\ y\ \mapsto\ w,\ z\ \mapsto\ x,\ w\ \mapsto\ y,\\
\cb &: x\ \mapsto\  y,\ y\ \mapsto\  z,\ z\ \mapsto\ x,\ w\ \mapsto\ w,\\
-\be_3 &: x\ \mapsto\ \frac{j}{x},\ y\ \mapsto\  \frac{j}{y},\ 
z\ \mapsto\ \frac{j}{w},\ w\ \mapsto\ \frac{j}{z},\\
\be_3 &: x\ \mapsto\ \ep_2 x,\ y\ \mapsto\ \ep_2 y,\ z\ \mapsto\ \ep_2 w,\ w\ \mapsto\ \ep_2 z,\\
-I_3 &: x\ \mapsto\ \frac{r}{x},\ y\ \mapsto\ \frac{r}{y},\ z\ 
\mapsto\ \frac{r}{z},\ w\ \mapsto\ \frac{r}{w}.
\end{align*}

By Lemma \ref{lemV42}, we have $K(x,y,z)^{\langle \ta_3,\la_3\rangle}
=K(x,y,z,w)^{\langle \ta_3,\la_3\rangle}=K(u_1,u_2,u_3)$ where
\begin{align*}
u_1\ :=\ \frac{x+y-z-w}{xy-zw},\quad
u_2\ :=\ \frac{x-y-z+w}{xw-yz},\quad
u_3\ :=\ \frac{x-y+z-w}{xz-yw}
\end{align*}
with $w=c/(xyz)$. 
The actions of $\cb$, $-\be_3$, $\be_3$ and $-I_3$ on 
$K(x,y,z)^{\langle \ta_3,\la_3\rangle}=K(u_1,u_2,u_3)$ are given by 
\begin{align*}
\cb &: u_1\ \mapsto\  u_2,\ u_2\ \mapsto\  u_3,\ u_3\ \mapsto\ u_1,\\
-\be_3 &: u_1\ \mapsto\ \frac{-u_1+u_2+u_3}{j\, u_2u_3},\ 
u_2\ \mapsto\ \frac{u_1+u_2-u_3}{j\, u_1u_2},\ 
u_3\ \mapsto\ \frac{u_1-u_2+u_3}{j\, u_1u_3},\\
\be_3 &: u_1\ \mapsto\ \ep_2 u_1,\ u_2\ \mapsto\ \ep_2 u_3,\ u_3\ \mapsto\ \ep_2 u_2,\\
-I_3 &: u_1\ \mapsto\ \frac{-u_1+u_2+u_3}{r\, u_2u_3},\ u_2\ \mapsto\ 
\frac{u_1-u_2+u_3}{r\, u_1u_3},\ u_3\ \mapsto\ \frac{u_1+u_2-u_3}{r\, u_1u_2}.
\end{align*}

\subsection{The cases of $G_{7,1,3}$ and $G_{7,4,3}$}\label{subse713}

We consider the cases of 
\[
G_{7,1,3}=\langle \ta_3,\la_3,\cb\rangle,\quad 
G_{7,4,3}=\langle \ta_3,\la_3,\cb,\be_3\rangle.
\]

On the field $K(u_1,u_2,u_3)$, $\cb$ acts as a cyclic permutation, and 
$\langle\cb,\be_3\rangle$ acts as a natural or twisted $S_3$-action according to $\ep_2=1$ 
or $-1$. 
So $K(x,y,z)^{G_{7,1,3}}=K(u_1,u_2,u_3)^{\langle \cb\rangle}$ and 
$K(x,y,z)^{G_{7,4,3}}=K(u_1,u_2,u_3)^{\langle \cb,\be_3\rangle}$ are rational over $K$ 
(For twisted $S_3$-action, see Theorem \ref{thHK97}.).

\subsection{The cases of $G_{7,2,3}$ and $G_{7,5,3}$}
We treat the cases of 
\[
G_{7,2,3}=\langle \ta_3,\la_3,\cb,-I_3\rangle,\quad 
G_{7,5,3}=\langle \ta_3,\la_3,\cb,\be_3,-I_3\rangle.
\]
In these cases, we first consider the fixed field $K(x,y,z)^{\langle\ta_3,\la_3,-I_3\rangle}$ 
since $\langle\ta_3,\la_3,-I_3\rangle$ is a normal subgroup of $G_{7,2,3}$ and of $G_{7,5,3}$ 
respectively. 
By Lemma \ref{lemtlm}, we have 
\[
K(x,y,z)^{\langle\ta_3,\la_3,-I_3\rangle}=K(t_1,t_2,t_3)
\]
where 
\begin{align*}
t_1&:=u_1+(-I_3)(u_1)=\frac{-u_1+u_2+u_3+r\,u_1u_2u_3}{r\,u_2u_3}\\
&\ =\frac{r(x+y-z-w)+xy(z+w)-zw(x+y)}{r(xy-zw)},\\
t_2&:=u_2+(-I_3)(u_2),\quad t_3:=u_3+(-I_3)(u_3)\ \ \mathrm{with}\ \ w=c/(xyz).
\end{align*}
The actions of $\cb$ and $\be_3$ on 
$K(x,y,z)^{\langle\ta_3,\la_3,-I_3\rangle}=K(t_1,t_2,t_3)$ are given by
\begin{align*}
\cb &: t_1\ \mapsto\  t_2,\ t_2\ \mapsto\  t_3,\ t_3\ \mapsto\ t_1,\\
\be_3 &: t_1\ \mapsto\ \ep_2 t_1,\ t_2\ \mapsto\ \ep_2 t_3,\ t_3\ \mapsto\ \ep_2 t_2.
\end{align*}
By the same reason as in Subsection \ref{subse713}, 
$K(x,y,z)^{G_{7,2,3}}=K(t_1,t_2,t_3)^{\langle\cb\rangle}$ and 
$K(x,y,z)^{G_{7,5,3}}=K(t_1,t_2,t_3)^{\langle \cb,\be_3\rangle}$ are rational over $K$. 

\subsection{The case of $G_{7,3,3}$}
We treat the case of 
\[
G_{7,3,3}=\langle \ta_3,\la_3,\cb,-\be_3\rangle.
\]
The actions of $\cb$ and $-\be_3$ on 
$K(x,y,z)^{\langle \ta_3,\la_3\rangle}$ $=$ $K(u_1,u_2,u_3)$ are given by 
\begin{align*}
\cb &: u_1\ \mapsto\  u_2,\ u_2\ \mapsto\  u_3,\ u_3\ \mapsto\ u_1,\\
-\be_3 &: u_1\ \mapsto\ \frac{-u_1+u_2+u_3}{j\, u_2u_3},\ 
u_2\ \mapsto\ \frac{u_1+u_2-u_3}{j\, u_1u_2},\ 
u_3\ \mapsto\ \frac{u_1-u_2+u_3}{j\, u_1u_3}.
\end{align*}

The action of $G_{7,3,3}$ on $K(u_1,u_2,u_3)$ through 
$G_{7,3,3}/\langle\ta_3,\la_3\rangle=\langle \cb,-\be_3\rangle\cong\mathcal{S}_3$ coincides 
with the action as in Theorem \ref{thHK2}. 
By Theorem \ref{thHK2}, we conclude that $K(x,y,z)^{G_{7,3,3}}$ $=$ 
$K(u_1,u_2,u_3)^{\langle \cb,-\be_3\rangle}$ is rational over $K$. 

Note that $\langle\ta_3,\la_3,-\be_3\rangle=\langle\cbb,\la_3\rangle=G_{4,4,2}$ and 
we already showed that $K(x,y,z)^{G_{4,4,2}}=K(u_1,u_2,u_3)^{\langle-\be_3\rangle}$ 
is rational over $K$ as in Lemma \ref{lemuc2}. 

\begin{acknowledgments}
The authors would like to thank Professor Ming-chang Kang for giving them 
valuable and useful comments and for informing them about the recent paper \cite{KP10} 
by Kang and Prokhorov which gives affirmative solutions to the same rationality 
problem of $2$-groups over quadratically closed field. 
They also thank the referee for suggesting many improvements and corrections. 
\end{acknowledgments}


\end{document}